\theoremstyle{plain}
\newtheorem{theorem}{Theorem}[section]
\newtheorem{lemma}[theorem]{Lemma}
\newtheorem{proposition}[theorem]{Proposition}
\newtheorem{corollary}[theorem]{Corollary}
\newtheorem{remark}[theorem]{Remark}
\newcommand{\N}{\mathbb{N}}
\newcommand{\E}{\mathbb{E}}
\newcommand{\Z}{\mathbb{Z}}
\newcommand{\R}{\mathbb{R}}
\newcommand{\tn}{\mathsf{t}_N}
\def \P {{\mathbb{P}}}
\begin{document}

\begin{frontmatter}
\title{Scaling limit of an adaptive contact process}
\runtitle{Adaptive contact process}

\begin{aug}
\author[A]{\fnms{Adri\'an}~\snm{Gonz\'alez Casanova}},
\author[B]{\fnms{Andr\'as}~\snm{T\'obi\'as}}
\and
\author[C]{\fnms{Daniel}~\snm{Valesin}}
\address[A]{Institute of Mathematics, National University of Mexico, adrian.gonzalez@im.unam.mx, and\\  Department of Statistics, University of California at Berkeley, gonzalez.casanova@berkeley.edu }
\address[B]{Department of Computer Science and Information Theory, Budapest University of Technology and Economics, and \\ Alfréd Rényi Institute of Mathematics, tobias@cs.bme.hu}
\address[C]{Department of Statistics, University of Warwick, daniel.valesin@warwick.ac.uk}
\end{aug}

\begin{center} May 3, 2023 \end{center}

\vspace{-6pt}

\begin{abstract}
	We introduce and study an interacting particle system evolving on the~$d$-dimensional torus~$(\Z/N\Z)^d$. Each vertex of the torus can be either empty or occupied by an individual of type~$\lambda \in (0,\infty)$. An individual of type~$\lambda$ dies with rate one and gives birth at each neighboring empty position with rate~$\lambda$; moreover, when the birth takes place, the newborn individual is likely to have the same type as the parent, but has a small probability of being a mutant. A mutant child of an individual of type~$\lambda$ has type chosen according to a probability kernel. We consider the asymptotic behavior of this process when~$N\to \infty$ and { simultaneously, the mutation probability} tends to zero fast enough that mutations are sufficiently separated in time, so that the amount of time spent on configurations with more than one type becomes negligible. We show that, after a suitable time scaling and deletion of the periods of time spent on configurations with more than one type, the process converges to a Markov jump process on~$(0,\infty)$, whose rates we characterize.
\end{abstract}

\begin{keyword}[class=MSC]
\kwd[Primary ]{60F99}
\kwd{60K35}
\kwd[; secondary ]{92D15}
\end{keyword}

\begin{keyword}
\kwd{Adaptive contact process}
\kwd{interacting particle systems}
\kwd{scaling limit} 
\kwd{mutation} 
\kwd{fixation} 
\kwd{trait substitution sequence}
\end{keyword}

\end{frontmatter}


\section{Introduction}\label{s_intro}
From its origins~\cite{H74}, the contact process has been a model to study the propagation of an epidemic { 
on a graph. It has a positive parameter~$\lambda > 0$, the birth rate of the pathogen (also called the infection rate in part of the literature).
For the case of the process on~$\Z^d$, there exists a (dimension-dependent) threshold parameter value~$\lambda_c(\Z^d)$ such that the following holds. Assume that the process starts from a non-zero and finite number of infections. If~$\lambda > \lambda_c(\Z^d)$, then the infection can persist indefinitely, whereas if~$\lambda \le \lambda_c(\Z^d)$, then it is naturally wiped out from the population with probability one.
}
On a large box contained in $\Z^d$ or a large~$d$-dimensional torus, for $\lambda<\lambda_c(\Z^d)$ the contact process dies out rapidly with high probability, whereas for $\lambda>\lambda_c(\Z^d)$,  one observes a metastable behaviour for an amount of time that grows exponentially with $N$
\cite{CGOV84,  DL88, DS88, M93, M99, S85}.
	
A natural extension of the contact process consists in allowing the birth rate of the pathogen to be subject to mutations. That is, in a birth event, the new strain and its direct ancestor will typically have the same birth rate, but with a small probability a mutation can occur, causing the birth rate of the new strain to be different from that of its ancestor. If this perturbation allows for an increase in the birth rate with reasonable probability, then strains with higher reproduction rates will overcome their weaker ancestors, so that we will observe adaptation in the contact process. 
	
This paper is concerned with introducing and studying this \textit{adaptive contact process}. In particular, we will focus on cases in which mutations are very rare, so that when a mutation occurs, the competition between the newborn mutant and the existing population is already resolved (by the extinction of one of the two) by the time that a further mutation takes place. On the other hand, mutations are assumed to be sufficiently frequent such that typically, the first successful mutation occurs before the metastable contact process would die out.

{
The adaptive contact process is denoted~$(X_{N,t})_{t \ge 0}$, where~$N$ parametrizes two things: first, the size of the torus where the process evolves, and second, a parameter~$\delta_N$ controlling the probability that a mutation occurs with each birth event (the larger~$N$, the smaller this probability). We then define a process~$(Z_{N,t})$ (taking values in~$(0,\infty)$), obtained as a pathwise function of~$(X_{N,t})$ (including a time rescaling), and interpreted as a trait substitution process.
}
Roughly speaking,~$Z_{N,t}>0\color{black}$ is the reproduction rate of the last strain that managed to invade the whole population before (rescaled) time $t$. We assume that at time zero all the sites are occupied by a strain which has reproduction rate $\lambda_0>\lambda_c(\Z^d)$, so that $Z_{N,0}=\lambda_0$. Then, perhaps, some mutations occur but they are unsuccessful and do not manage to persist.  Imagine that at time $t_1>0$ a mutation with reproduction rate $\lambda_1  \neq \lambda_0 \color{black}$ successfully reaches fixation, making the last individual of reproduction rate $\lambda_0$ die precisely at this time. In this case, the process  $(Z_{N,t})$ jumps from the state $\lambda_0$ to the state $\lambda_1$ at time $t_1$. At time $t_2$, perhaps after several births of mutants who do not manage to fixate, a new strain vanquishes over the previous one, so at time $t_2$ all the occupied sites have birth rate $\lambda_2$ etc.

Our main result, Theorem \ref{thm_main}, gives formally conditions under which the sequence of trait substitution processes converges to a Markov pure jump process with values in $[0,\infty)$. At least on a heuristic level, this provides an analogue of certain invasion models of adaptive dynamics in the framework of interacting particle systems. In Section~\ref{sec:discussion} below we discuss the relation and main differences between those models and ours, and we mention some related open questions.
	
	\subsection{Model and results}\label{s_model}
	We introduce the \textit{adaptive contact process} on the~$d$-dimensional torus~$\Z^d_N:= (\Z/N\Z)^d$ as a Markov jump process~$(X_{N,t})_{t \ge 0}$ with state space~$\{\chi:\Z^d_N \to [0,\infty)\}$. We regard an element~$\chi$ in this state space as a description of the occupation of space by a population: we say that~$u\in \Z^d_N$ is empty if~$\chi(u) = 0$ and that it is occupied by an individual of type~$\lambda$ if~$\chi(u) = \lambda$. 

	In order to define the dynamics, we fix:
\begin{itemize}
	\item a number~$\delta_N \in (0,1)$;
	\item a  measurable \color{black} function~$b: (0,\infty) \to (0,\infty)$, called the \textit{mutation rate function};
	\item a probability kernel~$K:(0,\infty) \times \{\text{Borel sets of }(0,\infty)\} \to [0,1]$, called the \textit{mutation kernel}, with~$K(\lambda,\{\lambda\}) = 0$ for all~$\lambda$.
\end{itemize}
From a configuration~$\chi$, the possible jumps and associated jump rates are:
\begin{itemize}
	\item \textit{death:} for each~$u$ such that~$\chi(u) \neq 0$, with rate one, the process jumps from~$\chi$ to the configuration~$\chi'$ defined by~$\chi'(u) = 0$,~$\chi'(v) = \chi(v)$ for all~$v \neq u$;
	\item \textit{birth with no mutation:} for each pair~$u\sim v$ of neighboring vertices such that~$\chi(u) \neq 0$ and~$\chi(v) = 0$, with rate
		\[\chi(u)\cdot \max(0,1-\delta_N\cdot b(\chi(u))),\]
		the process jumps from~$\chi$ to the configuration~$\chi'$ defined by~$\chi'(v) = \chi(u)$,~$\chi'(w) = \chi(w)$ for all~$w \neq v$;
	\item \textit{birth with mutation:} for each pair~$u \sim v$ such that~$\chi(u) \neq 0$ and~$\chi(v) = 0$, with rate
		\[\chi(u)\cdot \min(\delta_N \cdot b(\chi(u)),1),\] 
		the process jumps from~$\chi$ to the configuration~$\chi'$ defined by~$\chi'(v) = U$,~$\chi'(w) = \chi(w)$ for all~$w \neq v$, where~$U$ is a random variable distributed as~$K(\chi(u),\cdot)$.
\end{itemize}

{
This description gives rise to a continuous-time Markov chain on~$[0,\infty)^{\Z^d_N}$. We prove that this chain is non-explosive: 
\begin{proposition}\label{prop_doesnt_explode}
	For any~$N > 0$ and any choice of~$\delta_N$, mutation rate function~$b$ and mutation kernel~$K$, the adaptive contact process is non-explosive, that is, for any~$t > 0$, it only performs finitely many jumps in~$[0,t]$ with probability one.
\end{proposition}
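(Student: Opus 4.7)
The plan is to exploit two elementary observations: (i) the total death rate is at all times bounded by the number of occupied sites, hence by~$N^d$, because each occupied vertex dies at rate one regardless of its type; and (ii) the number of occupied sites always lies in~$\{0,1,\dots,N^d\}$, so births and deaths are balanced up to an additive~$N^d$. The essential point is that (i) lets us dominate death events by a family of Poisson clocks independent of the types, even though the birth rates depend unboundedly on the configuration.

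Concretely, on an enlarged probability space I would introduce independent rate-one Poisson processes $(N^{\mathrm{d},u})_{u \in \Z^d_N}$, one per vertex, and use the standard graphical coupling in which a death occurs at site~$u$ at an epoch of~$N^{\mathrm{d},u}$ precisely when~$u$ is currently occupied. Writing $D_t$ and $B_t$ for the numbers of death and birth events of~$(X_{N,t})$ during~$[0,t]$, this coupling gives
\[ D_t \;\le\; \sum_{u\in \Z^d_N} N^{\mathrm{d},u}_t \qquad\text{for every } t<\tau_\infty, \]
where $\tau_\infty=\lim_{n\to\infty}\tau_n$ is the putative explosion time. The right-hand side is Poisson with parameter~$N^d t$, hence a.s.\ finite for every deterministic~$t$. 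Combined with the conservation identity
\[ B_t-D_t \;=\; |\{u:X_{N,t}(u)\ne 0\}|-|\{u:X_{N,0}(u)\ne 0\}|\;\in\;[-N^d,N^d], \]
this yields $B_t+D_t\le 2D_t+N^d$ a.s.\ on $\{t<\tau_\infty\}$.

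To conclude, I argue by contradiction: if $\P(\tau_\infty\le T)>0$ for some $T>0$, then on this event the process performs infinitely many jumps before time~$T$, so $B_{\tau_\infty^-}+D_{\tau_\infty^-}=\infty$; but the bound above, applied on~$[0,t]$ for any $t<\tau_\infty\le T$ and then letting $t\uparrow\tau_\infty$, gives $B_{\tau_\infty^-}+D_{\tau_\infty^-}\le 2\sum_u N^{\mathrm{d},u}_T+N^d$, which is a.s.\ finite. This contradiction forces $\P(\tau_\infty\le T)=0$, and since $T$ is arbitrary, $\tau_\infty=\infty$ almost surely.

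The only step requiring any real care is justifying the coupling of death events with the independent Poisson family; since an occupied site dies at rate exactly one no matter what its type is, this is the standard graphical construction for the death part of the dynamics, and crucially we do \emph{not} need to dominate the birth and mutation clocks (whose rates depend on the unbounded type values) to make the argument work.
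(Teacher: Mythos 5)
Your proof is correct, and it reaches the conclusion by a genuinely different technical route than the paper's, although both rest on the same two observations: the total death rate is at most~$N^d$ (each occupied site dies at rate one regardless of its type), and conservation of the occupied-site count forces~$B_t \le D_t + N^d$. The paper works entirely in discrete time: it passes to the skeleton chain~$(\mathscr{X}_n)$, invokes the criterion that non-explosion is equivalent to~$\sum_n 1/r(\mathscr{X}_n) = \infty$ a.s.\ (citing Norris), reduces this to comparing the number of downward steps with its compensator, and closes the argument with the fact that a martingale with bounded increments cannot tend to infinity. You instead stay in continuous time, dominate the death-counting process by a superposition of~$N^d$ independent rate-one Poisson clocks, and derive a contradiction with the definition of the explosion time. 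Your route is more elementary (no appeal to the discrete-time non-explosion criterion or to martingale convergence theorems), at the cost of having to justify the graphical coupling of the death events --- which, as you correctly flag, is unproblematic precisely because the death rates do not depend on the unbounded type values; if you wanted to avoid the coupling altogether, you could instead note that $D_{t\wedge\tau_n} - \int_0^{t\wedge\tau_n} r_\mathrm{D}(X_{N,s})\,\mathrm{d}s$ is a martingale for each~$n$, so that $\E[D_{t\wedge\tau_n}]\le N^d t$ uniformly in~$n$ and monotone convergence gives $D_{\tau_\infty^- \wedge t}<\infty$ almost surely, after which your conservation identity finishes the proof exactly as written.
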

We prove this proposition at the end of this section.
A heuristic explanation for this fact is that in our model the mutation events are contained in reproduction events, and in order to reproduce, individuals need to be adjacent to empty spaces. These lead to the fact that the death rate regulates the speed of adaptation. In turn, the fact that death rate is constant in our model excludes the possibility of an explosion. }

We are interested in studying the adaptive contact process for~$N \to \infty$, with~$b$ and~$K$ fixed and~$\delta_N \to 0$. In fact, we assume throughout that
\begin{equation}\label{eq_assumption_delta}
	\text{there exists $\varepsilon_0 > 0$ such that } \delta_N\cdot N^{d+1+\varepsilon_0} \xrightarrow{N \to \infty} 0.
\end{equation}
As we will detail later, this assumption will guarantee that  mutation events are sufficiently separated in time, so that the competition between a homogeneous host population and a newborn mutant is typically resolved (by the disappearance of one of the competing types) before the next mutant appears. On the other hand, we want to ensure that~$\delta_N$ is not so small that the process dies out altogether before any mutation occurs, so we also assume that
\begin{equation}\label{eq_assumption_delta_2}
	\limsup_{N \to \infty} \frac{-\log \delta_N}{\log N} < \infty,
\end{equation}
that is, there exists~$a > 0$ such that~$\delta_N > N^{-a}$ for~$N$ large enough. We discuss this assumption further in Remark~\ref{rem_assumption2} below.

Our main result, Theorem~\ref{thm_main} below, describes the scaling limit of the sped-up process~$(X_{N,(\delta_NN^d)^{-1}\cdot t})_{t \ge 0}$. { The time scaling is justified as follows. In the original process, the number of births in one unit of time is of order~$N^d$, so (if~$\delta_N$ is small compared to~$N^d$) the amount of time elapsed between successive mutations is of order~$(\delta_NN^d)^{-1}$. Hence, speeding up time by~$(\delta_NN^d)^{-1}$ makes it so that the amount of time between mutations is of order one.
}

\color{black} In order to prepare the ground to state the theorem, we need to make brief expositions on the classical contact process and the two-type contact process, giving the relevant definitions along the way.\smallskip

\noindent \textit{The contact process on~$\Z^d$ and~$\Z^d_N$.}
Harris' contact process~\cite{H74} on $\Z^d$ is defined as the
continuous-time Markov process  $(\zeta)_{t\geq 0}$ on $\{ 0,1\}^{\Z^d}$ with infinitesimal pregenerator
\begin{equation}\label{eq_contactgenerator} \mathcal L f(\zeta) = \sum_{x \in \Z^d} (f(\zeta^{x \leftarrow 0})-f(\zeta)) + \lambda \sum_{\substack{x \in \Z^d:\\ \zeta(x) =1}}\;\sum_{\substack{y \sim x:\\ \zeta(y) =0}}   (f(\zeta^{x\leftarrow 1}) - f(\zeta)), \end{equation}
where 
\begin{equation}\label{eq_xarrowi} \zeta^{x \leftarrow i} (y) = \begin{cases} i, &\text{ if } y =x; \\ \zeta(y), &\text{ otherwise}, \end{cases}\qquad i \in \{0,1\}, \end{equation}
and $f \colon \{ 0,1\}^{\Z^d} \to \R$ is a local function. 
The parameter $\lambda>0$ is usually called the infection rate, but we will call it the birth rate of the process. The configuration that is identically equal to zero is an absorbing state for this dynamics. As already mentioned, there exists a critical birth rate $\lambda_c(\Z^d) \in (0,\infty)$ such that, for the contact process started from a single~$1$ at the origin  $o$ of $\Z^d$ \color{black}, this absorbing state is reached almost surely if and only if~$\lambda \le \lambda_c(\Z^d)$ {(see \cite[p.43]{L99})}.

In Section~\ref{ss_one_type_prelim}, we will give the construction of the contact process on general graphs, and in particular on the torus~$\Z^d_N$.

The so-called \textit{upper stationary distribution} of the contact process on~$\Z^d$ { is
\[\mu_\lambda:= \lim_{\substack{t \to \infty\\ \text{(distr.)}}} \zeta^1_t,\]
where~$(\zeta^1_t)_{t \ge 0}$ is the contact process on~$\Z^d$ started from the identically 1 configuration.
The existence of this limit in distribution is proved with standard monotonicity arguments, using the fact that the contact process is an attractive spin system (see~\cite[p.35]{L99})}. In case~$\lambda > \lambda_c(\Z^d)$, this measure is supported on configurations containing infinitely many~$0$'s and~$1$'s.
{ We define
	\begin{equation} \label{eq_def_of_R} R(\lambda):= \lambda\cdot \int_{\{0,1\}^{\Z^d}} \mu_\lambda(\mathrm{d}\zeta)\; \; (1-\zeta(o))\cdot  \sum_{x \sim o} \zeta(x).\end{equation}
We also define~$\hat{\mu}_\lambda$ as the probability measure on~$\{0,1\}^{\Z^d}$ with Radon-Nikodym derivative with respect to~$\mu_\lambda$ given by
\[\frac{\mathrm{d}\hat{\mu}_\lambda}{\mathrm{d}\mu_\lambda}(\zeta) = \frac{\lambda}{R(\lambda)} \cdot (1-\zeta(o)) \cdot \sum_{x \sim o} \zeta(x),\]
that is,~$\hat{\mu}_\lambda$ is the unique measure that satisfies, for all measurable and bounded functions~$f:\{0,1\}^{\Z^d} \to \R$,
	\begin{equation} \label{eq_def_of_hat_mu}\int f\;\mathrm{d}\hat{\mu}_\lambda = \frac{\lambda}{R(\lambda)}\cdot \int_{\{0,1\}^{\Z^d}} \mu_\lambda(\mathrm{d}\zeta)\;f(\zeta)\cdot  (1-\zeta(o))\cdot  \sum_{x \sim o} \zeta(x).\end{equation}
		The results we prove in Section~\ref{s_appearance} suggest that~$\hat{\mu}_\lambda$ and~$R(\lambda)$ are naturally associated to the contact process on~$\Z^d$ with rate~$\lambda$ started from~$\mu_\lambda$, as follows.  The average number of times per unit interval that the origin switches from state~0 to state~$1$ is~$R(\lambda)$, so we think of~$R(\lambda)$ as an \textit{effective birth rate}. Moreover,~$\hat{\mu}_\lambda$ is the \textit{distribution of the occupied landscape} around the origin (and not including the origin itself) at times when there is a~$0 \to 1$ switch at the origin. (To be completely precise, the statements we prove in Section~\ref{s_appearance} pertain to the contact process on~$\Z^d_N$, not on~$\Z^d$; however, it is not difficult to extrapolate the interpretation given in the last few sentences from~$\Z^d_N$ to~$\Z^d$).
}
\smallskip

\noindent \textit{The two-type contact process on~$\Z^d$.} 
Neuhauser's two-type contact process~\cite{N92} on $\Z^d$ is defined as the continuous-time Markov process $(\xi_t)_{t\geq 0}$ on $\{ 0,1,2\}^{\Z^d}$ with infinitesimal pregenerator 
\begin{equation} \label{eq_twotypecontactgenerator}
	\begin{split} \mathscr L f(\xi) = \sum_{x \in \Z^d} (f(\xi^{x \leftarrow 0})-f(\xi)) &+ \lambda \sum_{\substack{x \in \Z^d:\\ \xi(x) = 1}}\; \sum_{\substack{y \sim x:\\ \xi(y) = 0}}  (f(\xi^{y\leftarrow 1}) - f(\xi)) \\[.2cm] &+ \lambda' \sum_{\substack{x \in \Z^d:\\ \xi(x) = 2}}\; \sum_{\substack{y \sim x:\\ \xi(y) = 0}}  (f(\xi^{y\leftarrow 2}) - f(\xi)),
\end{split} \end{equation}
where we extend the definition of $\xi^{x \leftarrow i}$ to the case $i=2\color{black}$ analogously to~\eqref{eq_xarrowi}. The parameter $\lambda>0$ is called the birth rate of type $1$ and the parameter $\lambda'>0$ the birth rate of type $2$.  We emphasize that each type can only occupy sites that are not occupied by the other type. Also note that a process with more than two types (and rates~$\lambda_1,\lambda_2,\lambda_3,\ldots$) could be defined analogously, but we will only need to consider the two-types case.  \color{black}

	Given~$A \subseteq \Z^d \backslash \{o\}$, let~$S(\lambda,\lambda';A)$ denote the probability that type 2 survives (that is, of the event~$\{\forall t\; \exists x:\xi_t(x) = 2\}$), for the two-type contact process on~$\Z^d$ in which type 1 has birth rate~$\lambda$ and initially occupies (only) the set~$A$, and type~$2$ has birth rate~$\lambda'$ and initially occupies (only) the origin. Theorem~1.1 in~\cite{MPV20} states that
\begin{equation}\label{eq:Sdef} S(\lambda,\lambda';A) > 0 \text{ when } \lambda' > \lambda \text{ and } \lambda' > \lambda_c(\Z^d). \end{equation}
We refer the reader to Section~\ref{ss_multiple} for further details regarding the two-type contact process.\smallskip

\noindent \textit{Projection and time change.}
For each~$\chi \in [0,\infty)^{\Z^d_N}$, define
\begin{equation}\label{eq_def_projection}\Phi(\chi):= \begin{cases}0&\text{if }\chi \equiv 0; \\ \lambda &\text{if } \{\chi(u):u \in \Z^d_N\} \cap (0,\infty) =\{\lambda\};\\{\Asterisk}&\text{otherwise,}\end{cases}\end{equation}
	where~$\Asterisk$ is an auxiliary state. The states~$\chi$ in which there are individuals with different reproduction rates  are sent to $\Asterisk$. Our assumptions will imply that the population has the same infection rates most of the time. We introduce $\Asterisk$ to deal with the atypical moments in which individuals of different types coexist.

Assume that the adaptive contact process~$(X_{N,t})_{t \ge 0}$ on~$\Z^d_N$ is started from $X_{N,0} \equiv \lambda_0$. We define an auxiliary process~$(Z_{N,t}\color{black})_{t \ge 0}$ as follows.
We define the time-changed process 
\begin{equation}\label{eq_def_of_Zt}
	\tilde{Z}_{N,t} := \Phi(X_{N,(\delta_NN^d)^{-1}\cdot t}),\quad t \ge 0.
\end{equation}
Next, since the time spent in the auxiliary state will be negligible in the scaling limit, we define a new process~$(Z_{N,t})_{t \ge 0}$ where this state is artificially removed. Formally, we define~$Z_{N,0} = \lambda_0$ and, for~$t > 0$,
\begin{equation}
	\label{eq_def_of_Z}
	Z_{N,t} := \lim_{s \nearrow \alpha_{N,t}} \tilde{Z}_{N,s},\quad \text{where} \quad \alpha_{N,t}:= \sup\{t' \le t:\;\tilde{Z}_{N,t'} \neq \Asterisk\}.
\end{equation}
In words,~$Z_{N,t}$ is the last value different from~$\Asterisk$ attained by~$\tilde{Z}_{N,\cdot}$ before time~$t$.\smallskip

{
\noindent \textit{Description of scaling limit.} We now describe the scaling limit that will arise in Theorem~\ref{thm_main}. It is a Markov jump process~$(Z_t)_{t \ge 0}$ on~$(0,\infty)$ with dynamics given as follows. When the process is in state~$\lambda > 0$, it stays there for a time that is exponentially distributed with parameter~$b(\lambda)R(\lambda)$, where~$R$ is defined in~\eqref{eq_def_of_R}. It then \textit{attempts} to jump to a prospective location~$\lambda'$ chosen from the kernel~$K(\lambda,\cdot)$. Recall that we assume that~$K(\lambda,\{\lambda\}) = 0$, so~$\lambda' \neq \lambda$.  The jump is accepted with probability
\begin{equation}\label{eq_acceptance_prob} 
\mathds{1}\{\lambda' > \lambda\}\cdot \int_{\{0,1\}^{\Z^d}} \hat{\mu}_\lambda(\mathrm{d}\zeta) \;S(\lambda,\lambda';\{x:\zeta(x) = 1\}), 
\end{equation}
where~$\mathds{1}$ denotes the indicator function; the jump is rejected otherwise (so that the process stays in~$\lambda$ in that case). The formal generator of this process is given by
			\begin{equation}	Lf(\lambda) = b(\lambda)R(\lambda)\int_\lambda^\infty K(\lambda,\mathrm{d}\lambda')\;\int_{\{0,1\}^{\Z^d}} \hat{\mu}_\lambda(\mathrm{d}\zeta) \;  S(\lambda,\lambda';\{x:\zeta(x) = 1\})\cdot (f(\lambda')-f(\lambda))
			\end{equation}
(this is only a formal expression given for the reader's convenience; we do not prove the existence of a generator, as we will not need it).

	In Theorem~\ref{thm_main}, we assume that~$(Z_t)_{t \ge 0}$ is non-explosive, meaning that for any~$t > 0$, it only jumps finitely many times before time~$t$, with probability one. We comment on this assumption in Remark~\ref{rem_explosive_b} below.
}\smallskip

We are finally ready to state our main result.
\begin{theorem} \label{thm_main} Assume that~\eqref{eq_assumption_delta} { and~\eqref{eq_assumption_delta_2} hold and the Markov jump process~$(Z_t)_{t \ge 0}$ is non-explosive. Fix~$\lambda_0 > \lambda_c(\Z^d)$ and assume that for each~$N$, the adaptive contact process~$(X_{N,t})_{t \ge 0}$ is started from~$X_{N,0} \equiv \lambda_0$ and~$(Z_t)_{t \ge 0}$ is started from~$Z_0 = \lambda_0$.  Then,
	\begin{itemize}
		\item[(a)] for any~$t > 0$ we have
			\[\int_0^t \mathds{1}\{\Phi(\tilde{Z}_{N,s}) = \Asterisk\}\;\mathrm{d}s \xrightarrow{N \to \infty} 0 \quad \text{in probability};\]
\item[(b)] as~$N \to \infty$,~$(Z_{N,t})_{t \ge 0}$ converges in distribution  to~$(Z_t)_{t \ge 0}$ with respect to the Skorokhod topology.
\end{itemize}
}
\end{theorem}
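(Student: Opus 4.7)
The plan is to decompose $(X_{N,t})$ into successive epochs separated by \emph{successful mutations}, exploiting a separation of three time scales: the time for the monomorphic contact process on $\Z_N^d$ to approach its upper stationary law, the resolution time of a two-type competition, and the mean inter-mutation time $(\delta_N N^d)^{-1}$ in the original time parametrization. Condition~\eqref{eq_assumption_delta} makes the last scale dominate the other two (both of which one expects to be polynomial in $N$), while~\eqref{eq_assumption_delta_2} prevents it from being so large that the resident type dies out on its own before a successful mutation occurs. I would proceed by induction on the number of jumps of $(Z_t)$, reducing the analysis to understanding a single epoch starting from a monomorphic configuration $X_{N,0} \equiv \lambda$.

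Within one epoch, the first step is to show that on a time scale of polynomial order in $N$, much shorter than $(\delta_N N^d)^{-1}$, the monomorphic $\lambda$-process on $\Z_N^d$ reaches a near-stationary state whose local statistics approximate those of $\mu_\lambda$. Building on the results of Section~\ref{s_appearance}, mutation events (births thinned independently with probability $\delta_N b(\lambda)$) then form, after scaling time by $(\delta_N N^d)^{-1}$, a Poisson process of rate $b(\lambda) R(\lambda)$ on bounded sped-up intervals; the Palm identity~\eqref{eq_def_of_hat_mu} identifies the landscape in a growing ball around the birth site, conditional on a mutation, as approximately distributed according to $\hat{\mu}_\lambda$, while the mutant's type $\lambda'$ is drawn independently from $K(\lambda, \cdot)$.

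Conditional on such a mutation event, the ensuing two-type competition on $\Z_N^d$ is compared, via a coupling of graphical representations, to the corresponding two-type contact process on $\Z^d$ with $\lambda$-landscape $\zeta \sim \hat{\mu}_\lambda$. The couplings agree on a growing ball for a polynomial time, during which the mutant either dies out or grows linearly; by~\eqref{eq:Sdef}, the latter scenario has asymptotic probability $\mathds{1}\{\lambda' > \lambda\}\cdot S(\lambda,\lambda';\{x:\zeta(x)=1\})$, and in this case the mutant invades the entire torus. One must additionally verify that on $\Z_N^d$ the inferior type is completely extinguished within a further time $O(N^{1+\varepsilon})$ for some $\varepsilon < \varepsilon_0$, so that the process fully transitions to the monomorphic $\lambda'$-state before the next mutation, with high probability. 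In either case, the total time spent during the epoch at configurations with more than one type is $o_\P((\delta_N N^d)^{-1})$; since the number of mutations in a sped-up interval $[0,t]$ is $O_\P(1)$, summing over epochs gives part~(a).

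For part~(b), the preceding steps show that successive waiting times of $(Z_{N,t})$ converge in distribution to exponentials of the correct rate and that the destinations are drawn from the conditional law derived from~\eqref{eq_acceptance_prob}. Iterating this and using non-explosivity of $(Z_t)$ to truncate at a large but finite number of jumps yields convergence of finite-dimensional distributions; tightness in the Skorokhod topology is routine because both processes are pure-jump with $O_\P(1)$ jumps in bounded intervals and waiting times bounded away from $0$ with high probability. The principal obstacle is the quantitative comparison of the two-type dynamics on $\Z_N^d$ with those on $\Z^d$: one must establish that the torus survival probability, averaged against the torus analogue of $\hat{\mu}_\lambda$, converges to the expression in~\eqref{eq_acceptance_prob}, and that after a successful invasion the residual inferior-type cluster is extinguished in time $o((\delta_N N^d)^{-1})$. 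A secondary but non-trivial step is transferring the Palm description~\eqref{eq_def_of_hat_mu} from $\Z^d$ to $\Z_N^d$ with quantitative error bounds compatible with~\eqref{eq_assumption_delta}.
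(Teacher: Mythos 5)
Your proposal follows essentially the same route as the paper: decomposition into rounds around each mutation event, Poisson convergence of rescaled mutation times with landscape law $\hat{\mu}_\lambda$ (Proposition~\ref{prop_appearance_new}), box-insulation couplings reducing the torus competition to the two-type process on $\Z^d$ and the survival probability $S(\lambda,\lambda';\cdot)$ (Propositions~\ref{prop_fixation_one_new}--\ref{prop_fixation_three_new}), the bound $k\cdot\delta_N N^d\tn\to 0$ for part (a), and truncation at finitely many jumps via non-explosiveness for part (b). The only cosmetic difference is that for part (b) the paper bypasses a tightness argument by applying the continuous mapping theorem to the map from jump times and locations to Skorokhod paths (Lemma~\ref{lem_conv_Skk}), which is what your "routine tightness" step amounts to.
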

	
\begin{remark}[Monotonicity of limiting process] It is worth emphasizing that we do not impose that the measure~$K(\lambda,\cdot)$ is supported on~$(\lambda,\infty)$, so the adaptive contact process may have mutants with lower birth rate than their parents. However, this possibility vanishes in the scaling limit, because mutant populations born in these circumstances are increasingly likely to lose the battle against the host population, becoming extinct in an amount of time that is negligible on the time scale we consider.
\end{remark}

	To give an intuitive explanation for { the dynamics of the limiting process~$(Z_t)_{t \ge 0}$, we observe that:
		 \begin{itemize}
			\item in the microscopic dynamics, an \textit{attempt to jump} from~$\lambda > 0$  corresponds to the appearance of a mutant. The value of the rate~$R(\lambda)b(\lambda)$ with which this happens is justified as follows: each unit time interval of~$(Z_t)$ corresponds to a time interval of length~$(\delta_NN^d)^{-1}$ for~$(X_{N,t})$; in such an interval, the number of births is of order~$(\delta_NN^d)^{-1}$ times~$N^d$ (volume of the torus) times~$R(\lambda)$ (effective birth rate) -- so all in all,~$\delta_N^{-1} R(\lambda)$ -- and each of such births results in a mutation with probability~$\delta_Nb(\lambda)$;
			\item the probability~\eqref{eq_acceptance_prob} of accepting a jump in the limiting dynamics corresponds to the probability of fixation of the mutant in the microscopic dynamics. The idea is that when the mutant is born, it faces a landscape of type-$\lambda$ enemies whose distribution looks like~$\hat{\mu}_\lambda$ (in a local sense). The probability that the mutant wins the competition against these enemies (fixation) is close to the probability that, in a two-type contact process on~$\Z^d$, an individual of type~$\lambda'$ initially occupying the origin manages to survive against a population of type~$\lambda$ initially occupying a set sampled from~$\hat{\mu}_\lambda$.
		\end{itemize}
Summing up, the transitions of the limiting process are given by the mutation kernel biased by the success probability of the mutants. Fitter mutants are more likely to succeed in invading the whole population, and mutants that do not succeed are not visible in the limiting process. The probability of success depends of the environment around the mutant and is increasing as a function of the mutant's birth rate. 
}

We now make a few more remarks about the statement of Theorem~\ref{thm_main}.
\begin{remark}[Non-fixation of weak mutants]
	As described above, when~$\lambda' > \lambda$, we use the values~$S(\lambda,\lambda';\cdot)$ pertaining to the two-type contact process on~$\Z^d$ to approximate the fixation probability on the torus. However, in case~$\lambda' < \lambda$, we have no knowledge about the values~$S(\lambda,\lambda';\cdot)$, and are able to draw the conclusion that fixation is unlikely by other means, which do not require this knowledge.
\end{remark}

\begin{remark}[Positivity of mutation rates]
	Although we assume that our mutation rate function is strictly positive, this is only done for convenience, and to make some of our arguments shorter. We could extend the statement of Theorem~\ref{thm_main} to~$b$ that is allowed to take the value zero, and this would correspond to having absorbing states in the limiting dynamics.
\end{remark}

{
\begin{remark}[Choice of torus]
	We believe that the statement of Theorem~\ref{thm_main} would hold if we had defined the adaptive contact process in a \textit{box} of~$\Z^d$ rather than in a torus. However, some aspects of our analysis  would be significantly complicated by the presence of a boundary (this is especially the case for the analysis of fixation). Since these complications would not add much to the interest of the model, we have chosen to only consider the process in the torus.
\end{remark}

\begin{remark}[Assumption~\eqref{eq_assumption_delta_2}]
	\label{rem_assumption2}
	We have not aimed for sharpness in assumption~\eqref{eq_assumption_delta_2}. The (one-type) contact process with~$\lambda > \lambda_c(\Z^d)$ is known to survive in a box of side length~$N$ (and hence also in a torus, by monotonicity), for at least as long as~$\exp\{c_\lambda N^d\}$, see~\cite{M93}. For this reason, our results should hold under the weaker assumption that~$\delta_N$ tends to zero slower than~$\exp\{-cN^d\}$ for any~$c > 0$. However, in some of our arguments (notably in Section~\ref{s_appendix}), it is convenient for technical reasons to only consider the process up to times that are much smaller than exponential in~$N^d$, so assumption~\eqref{eq_assumption_delta_2} is a straightforward way to ensure this.
\end{remark}

\begin{remark}[Non-explosiveness of limiting process]
	\label{rem_explosive_b}
	Recall that Proposition~\ref{prop_doesnt_explode} states that the adaptive contact process is never explosive (with no assumptions needed on~$\delta_N$,~$b$ or~$K$). Still, it could happen that due to the time rescaling, the number of jumps of the process~$(Z_{N,t})_{t \ge 0}$ in a finite time interval increased to infinity with~$N$ (in a stochastic sense), which is why we need to assume in Theorem~\ref{thm_main} that~$(Z_t)_{t \ge 0}$ is non-explosive.  This assumption  is not too restrictive. Recall that~$(Z_t)$ jumps from a state~$\lambda$ with rate~$b(\lambda)R(\lambda)$. We prove in Section~\ref{s_appearance} that~$R(\lambda) \le 1$ (see Proposition~\ref{prop_leo}), so for instance if~$b$ is bounded, non-explosiveness is guaranteed. Using standard criteria for non-explosiveness of continuous-time Markov chains (as in~\cite[Chapter 2]{norris}), it is also easy to prove that~$(Z_t)$ is non-explosive if~$b(\lambda)\le \lambda$ and there exists~$c > 0$ such that~$K(\lambda,\cdot)$ is supported on~$[0,\lambda+c]$ for all~$\lambda$.
\end{remark}

}

\subsection{Discussion}\label{sec:discussion}
The adaptation of a population to its environment via consecutive invasions of beneficial mutations has been classically studied within the framework of population genetics, population dynamics and adaptive dynamics. 
In the stochastic but non-spatial case, proof techniques are often based on coupling mutant populations with branching processes as long as their sizes are small and approximating the entire population by suitably chosen deterministic dynamics during phases when all sub-populations are macroscopic.

In the context of adaptive dynamics, a prototypical example of this approach was provided by the seminal paper~\cite{C06} of Champagnat. In his paper, the underlying deterministic dynamics was described by competitive Lotka--Volterra systems where \color{black} populations of different traits are not able to coexist, and hence a successful invasion implies fixation, i.e., elimination of the former resident population. His assumptions on the mutation rate have a similar effect as our assumptions on $\delta_N$ in this paper: Mutations are rare enough so that invasions of different mutant strains do not interfere with each other, but frequent enough so that mutants occur before the metastable population dies out. Among such conditions, after rescaling time suitably and dealing with the negligible time periods when there are multiple traits present in the population, the trait of the resident population converges to a pure jump Markov process. This process, called the \textit{trait substitution  sequence}, originates from earlier, deterministic works in adaptive dynamics (starting from~\cite{DL96} and~\cite{M96}; for further references see~\cite[Section 1]{C06}).  His methods have been adapted to various settings since then, including competitive Lotka--Volterra systems with coexistences between different traits~\cite{CM11}, later also extended to the case of phenotypic plasticity~\cite{BB18}, but also to multiple models with different biological motivation, for instance predator-prey systems~\cite{ManonCostaetal}. \color{black}

In the context of epidemiology, it has been observed that mutations of pathogens can cause successive selective sweeps, in which mutants with higher reproduction rates invade the population and wipe out previous strains. Our adaptive contact process is a straightforward 
generalization of the classical contact process in this direction; in our model the reproduction rate is affected by recurrent mutations. On an intuitive level, Theorem~\ref{thm_main} is an analogue of the main results of~\cite{C06}. Indeed, in our model, a coexistence of different strains is not possible either, and the limiting Markov jump process $(Z_t)_{t\geq 0}$  can be seen as an analogue of the trait substitution sequence. 

Despite these similarities in biological interpretation, our model is mathematically rather different from the usual settings of stochastic adaptive dynamics. The difference is not only that due to spatial constraints, mutant populations tend to spread linearly rather than exponentially and thus a branching process approximation is not applicable. In fact, in Theorem~\ref{thm_main}, geometry plays a prominent role: The success probability of a mutant depends on the local configuration around the position where the mutation occurred. 


For future work, various directions of inquiry could be pursued; let us provide a few examples. 
\begin{itemize}
\item[(a)] It would be interesting to study the model with a choice of sequence~$(\delta_N)_{N \ge 1}$ that tends to zero slower than what our assumption requires, so that there is a non-vanishing chance that more than two types appear at a given time.
\item[(b)] The dynamics of the adaptive contact process mimics that of Neuhauser's process~\cite{N92}, in the sense that individuals cannot be born on occupied sites. Neuhauser's process has been studied in infinite volume, and one could also study the behaviour of the adaptive contact process in infinite volume.
\item[(c)] { Still pertaining to the previous comment, one could consider models in which there is an alternate, and perhaps richer, description of the behavior of mutants. For instance, different types could be described not only by their birth rates, but also by their range, and possibly invasion probabilities (that is, they could give birth on top of existing individuals). A competition model based on the contact process that incorporates invasions is the \textit{grass-bushes-trees} process~\cite{DM91,DS91}.}
\item[(d)] In the case of the adaptive contact process on the torus $\Z^d_N$, it would be interesting to investigate how an interaction range growing with $N$ could affect the limiting dynamics. 
\end{itemize}

It is also worth mentioning that, as is the case for the classical contact process, the adaptive contact process has a unique absorbing state: the empty configuration. On finite graphs, the contact process reaches this configuration almost surely. In contrast, the adaptive contact process could have positive probability of avoiding the absorbing state forever even in a finite graph, for certain choices of the parameters (say, with the type increasing dramatically with each mutation). It is easy to give sufficient conditions for absorption:  for instance, if the discrete-time Markov chain with transition probability given by~$K$ remains bounded almost surely, then the adaptive contact process on~$\Z^d_N$ reaches the empty configuration almost surely. Giving a sharp condition seems beyond reach.

{
	Finally, let us say a few words regarding extensions to other graphs. In the present work, we use two main inputs from the existing literature. First,  the fact that the one-type contact process with sufficiently large~$\lambda$ survives for a long time in the torus (``metastability''), see for instance~\cite{M93}. Second, the fact that in the two-type contact process on~$\Z^d$, the stronger type can survive (provided that it is strong enough to survive in the absence of competition); this is proved in~\cite{MPV20}. To extend our results to other sequences of finite graphs (say, converging locally to some infinite graph), one would also need to have metastability for the one-type process along the sequence, and an analysis of the two-type process on the limiting graph. We expect that for instance, other Euclidean lattices should not pose significant problems in these respects. Another setting in which the analysis could be interesting is locally tree-like graphs, for instance random~$d$-regular graphs. For these, there are existing metastability results~\cite{CD21, LS17,MV16}, as well as an analysis of the two-type process in the limiting graph~\cite{CS09}.
}

\subsection{Organization of paper}
The rest of this paper is organized as follows. { In the end of this Introduction, we prove Proposition~\ref{prop_doesnt_explode}.} Section~\ref{s_convergence} contains the skeleton of the proof our main result, Theorem~\ref{thm_main}. This proof is based on the investigation of the rounds of mutation, each consisting of a first stage lasting until the appearance of the next mutant and a second stage of deterministic duration, by the end of which the mutant typically either fixes or goes extinct (while the previous resident state stays in a metastable state). 
The remaining sections are devoted to the proof of certain auxiliary results appearing in Section~\ref{s_convergence}. In particular, Section~\ref{s_preliminary} contains prerequisites about the classical and the two-type contact process. In Section~\ref{s_appearance}, we analyze the appearance of a mutant, i.e., the first stage of a round. Finally, in Section~\ref{s_fixation} we investigate the second stage of a round, i.e., the fixation (or extinction) of a mutant. Some technical results included in these sections are postponed until the Appendix.

\subsection{Notation}
Generic vertices of~$\Z^d$ will typically be denoted by the letters~$x,y$ and vertices of~$\Z^d_N$ by the letters~$u,v$. Given~$u \in \Z^d_N$, define the shift~$\theta_u:\Z^d_N \to \Z^d_N$ by~$\theta_u(v) := v-u$, where the difference is taken $\mathrm{mod}\; N$. For each~$N$, define a bijection between a subset of~$\Z^d$ (namely a box around the origin) \color{black} and~$\Z_N^d$ by letting
\begin{equation}\label{eq_def_of_psi}
	\begin{array}{ccc}\psi_N(x) = [x]_N, &\text{with}&\psi_N: \{-\tfrac{N}{2},\ldots, o,\ldots, \tfrac{N}{2}-1\}^d \to \Z_N^d \quad \text{if $N$ is even}\\[.2cm]
	&\text{and}&\psi_N: \{-\lfloor \tfrac{N}{2}\rfloor,\ldots, o,\ldots, \lfloor \tfrac{N}{2} \rfloor\}^d \to \Z_N^d\quad \text{ if $N$ is odd}.\end{array}
\end{equation}

For~$x \in \Z^d$ and~$r \ge 0$, define
\[Q_{\Z^d}(x,r):= \{y \in \Z^d:\; \|x-y\|_\infty \le r\};\]
we call this set the   box in~$\Z^d$ with center~$x$ and radius~$r$. For~$u \in \Z^d_N$ and~$0 \le r < N/2$, define
\[Q_{\Z^d_N}(u,r):= \{[y]_N:\;y \in Q_{\Z^d}(x,r)\},\quad \text{with } x \in \Z^d,\; [x]_N = u,\]
the box in~$\Z^d_N$ with center~$u$ and radius~$r$. 
We may omit the subscript~$\Z^d$ from~$Q_{\Z^d}$ or the subscript~$\Z^d_N$ from~$Q_{\Z^d_N}$ when they are clear from the context or unimportant. Additionally, although~$Q_{\Z^d}(x,r)$ (resp.~$Q_{\Z^d_N}(u,r)$) is a set of vertices of~$\Z^d$ (resp.~$\Z^d_N$), we will also  sometimes treat it as the subgraph of~$\Z^d$ (resp.~$\Z^d_N$) induced by this set of vertices.

{
\subsection{Non-explosiveness of the adaptive contact process}
We now prove Proposition~\ref{prop_doesnt_explode}.
\begin{proof}[Proof of Proposition~\ref{prop_doesnt_explode}]
	Fix~$N \in \N$ and~$\delta_N$,~$b$ and~$K$ as in the definition of the adaptive contact process. We will denote the identically-zero element of~$[0,\infty)^{\Z^d_N}$ by~$\underline{0}$ (this is the unique absorbing state for the process).

	We start by constructing the ``skeleton chain'', that is, the discrete-time Markov chain that represents the process at jump times (taking some extra care to deal with the absorbing state). For each~$\chi \in [0,\infty)^{\Z^d_N}$ with~$\chi \neq \underline{0}$, let~$P(\chi,\cdot)$ denote the distribution of the adaptive contact process (immediately) after the first jump, when the state at time~$0$ is~$\chi$. Also let~$P(\underline{0},\cdot) = \delta_{\underline{0}}(\cdot)$, that is, the measure in~$[0,\infty)^{\Z^d_N}$ given by a unit mass on~$\underline{0}$. Then, we fix some initial state~$\chi_0$ and construct a discrete-time Markov chain~$(\mathscr{X}_n)_{n \in \N_0}$ on the (uncountable) state space~$[0,\infty)^{\Z^d_N}$ with~$\mathscr{X}_0 = \chi_0$ and transition function~$P(\cdot,\cdot)$. This is done in a probability space with probability measure~$\P$. Note that if this chain hits~$\underline{0}$, it stays there forever (this is an arbitrary choice that has no impact for the argument).

	Next, let~$r(\chi)$ denote the total rate with which the adaptive contact process jumps away from~$\chi$ (with~$r(\underline{0}) = 0$). We write~$r(\chi) = r_\mathrm{D}(\chi) + r_\mathrm{U}(\chi)$, with~$r_\mathrm{D}(\chi)$ being the rate with which a death occurs in~$\chi$, and~$r_\mathrm{U}(\chi)$ the rate with which a birth occurs in~$\chi$.
	A standard criterion for continuous-time Markov chains (see~\cite[Theorem 2.3.3]{norris}) is that non-explosiveness holds if and only if
	\begin{equation*}
		\P\left(\sum_{n=0}^\infty \frac{1}{r(\mathscr{X}_n)} < \infty\right) = 0
	\end{equation*}
	(where we interpret~$1/0 = \infty$).
	Noting that~$r_\mathrm{D}(\chi)$ equals the number of occupied vertices in~$\chi$, which is at most~$N^d$, and using~$r(\underline{0}) = 0$, we have
	\begin{equation}\label{eq_aux_events_exp}
\left\{\sum_{n=0}^\infty \frac{1}{r(\mathscr{X}_n)} < \infty\right\} = \left\{ \mathscr{X}_n \neq \underline{0} \text{ for all }n,\; \sum_{n=0}^\infty \frac{r_\mathrm{D}(\mathscr{X}_n)}{r(\mathscr{X}_n)} < \infty\right\}.
	\end{equation}
	Let~$D_n$ denote the number of~$k \in \{0,\ldots,n-1\}$ such that the jump from~$\mathscr{X}_k$ to~$\mathscr{X}_{k+1}$ is a death (so~$D_n$ is the number of ``downward steps'' up to time~$n$). Note that, since the process takes place in a finite graph, we have
	\[\{\mathscr{X}_n \neq \underline{0} \text{ for all }n\}= \{\lim_{n\to \infty}D_n = \infty\},\]
	so the right-hand side of~\eqref{eq_aux_events_exp} equals
	\[\left\{ \lim_{n \to \infty} D_n = \infty,\;\sum_{n=0}^\infty \frac{r_\mathrm{D}(\mathscr{X}_n)}{r(\mathscr{X}_n)} < \infty\right\} \subseteq \left\{ \lim_{n \to \infty} \left(D_n - \sum_{k=0}^n \frac{r_\mathrm{D}(\mathscr{X}_k)}{r(\mathscr{X}_k)}\right) = \infty\right\}.\]
	Moreover, for~$\chi \neq \underline{0}$,~$\frac{r_\mathrm{D}(\chi)}{r(\chi)}$ is the probability that the first jump from~$\chi$ is a death; using this, it is easy to check that the process
	\[ \mathds{1}\{\mathscr{X}_n \neq \underline{0}\}\cdot \left( D_n - \sum_{k=0}^{n-1} \frac{r_\mathrm{D}(\mathscr{X}_k)}{r(\mathscr{X}_k)}\right),\quad n \ge 0\]
	is a martingale with bounded increments. By~\cite[Chapter 4, Theorem 3.1]{D19}, a martingale with bounded increments cannot tend to infinity. Putting things together, we have proved that
	\[\P\left(\sum_{n=0}^\infty \frac{1}{r(\mathscr{X}_n)} < \infty\right) \le \P \left(\lim_{n \to \infty} \left(D_n - \sum_{k=0}^n \frac{r_\mathrm{D}(\mathscr{X}_k)}{r(\mathscr{X}_k)}\right) = \infty\right) = 0.\]
\end{proof}
}

\section{Convergence to Markov jump process}\label{s_convergence}

\subsection{Description of limiting Markov jump process}\label{ss_fake}

{
In this section, we will make some observations and introduce some notation concerning the limiting process~$(Z_t)_{t \ge 0}$ defined before the statement of Theorem~\ref{thm_main}. 

For any~$\lambda > \lambda_c(\Z^d)$ and~$\lambda' \neq \lambda$, we introduce the notation~$\mathscr{S}(\lambda,\lambda')$ for the acceptance probability of a jump from~$\lambda$ to~$\lambda'$, given in~\eqref{eq_acceptance_prob}:
\begin{equation}\label{eq_def_acceptance_probability}
	\mathscr{S}(\lambda,\lambda'):= \mathds{1}\{\lambda' > \lambda\}\cdot \int_{\{0,1\}^{\Z^d}} \hat{\mu}_\lambda(\mathrm{d}\zeta)\; S(\lambda,\lambda';\{x:\zeta(x) = 1\}).
\end{equation}
We also define
\begin{equation} \label{eq_def_rejection_probability}
\mathscr{S}(\lambda,\lambda):= \int_{\{\lambda' \neq \lambda\}} (1-\mathscr{S}(\lambda,\lambda'))\; K(\lambda,\mathrm{d}\lambda'),
\end{equation}
that is,~$\mathscr{S}(\lambda,\lambda)$ accumulates the mass of rejection probability of jumping from~$\lambda$ to~$\lambda'$, for~$\lambda'$ sampled from the kernel~$K(\lambda,\cdot)$ (recall that we assume that~$K(\lambda,\{\lambda\}) = 0$).
}

Then define the new probability kernel~$\mathscr{K}$ by
\[{\mathscr{K}}(\lambda,I):= \int_{I\cap (\lambda,\infty)} \mathscr{S}(\lambda,\lambda')\;K(\lambda,\mathrm{d}\lambda')+\mathscr{S}(\lambda,\lambda)\cdot \delta_\lambda(I), \quad I \subseteq (0,\infty) \text{ Borel}.\]
Defining the ``diagonal'' kernel~$D(\lambda,I):=\delta_\lambda(I)$, we can rewrite
\begin{equation}\label{eq_alt_alt_ker}
	{\mathscr{K}}(\lambda,I) = \int_{I} \mathscr{S}(\lambda,\lambda')\;(K+D)(\lambda,\mathrm{d}\lambda'),\quad I \subseteq (0,\infty) \text{ Borel}.
\end{equation}
{
With these definitions, we can give a new (and equivalent) description of~$(Z_t)_{t \ge 0}$, by saying that it jumps from~$\lambda \in (0,\infty)$ with rate~$b(\lambda)R(\lambda)$, to a state~$\lambda'$ chosen from~$\mathscr{K}(\lambda,\cdot)$. This description does not include the acceptance/rejection step, since it is incorporated in the kernel~$\mathscr{K}$. The corresponding (formal) generator is
\begin{equation*}
	Lf(\lambda)=b(\lambda)R(\lambda)\int_0^\infty \mathscr{K}(\lambda,\mathrm{d}\lambda')\;(f(\lambda') -f(\lambda)).
\end{equation*}
}

We let~$\Lambda_0 = \lambda_0$,~$\Lambda_1$,~$\Lambda_2,\ldots$ denote {the locations of~$(Z_t)_{t \ge 0}$ after each attempted jump (so~$\Lambda_i = \Lambda_{i+1}$ in case of a failed jump)}, and let~$\sigma_1,\sigma_2,\ldots$ the holding times of this process, so that we can write
\begin{equation}\label{eq_Z_fromjumps}
	Z_t = \Lambda_0\color{black}\cdot \mathds{1}_{[0,\sigma_1)}(t) +  \sum_{k=1}^\infty \Lambda_k \cdot \mathds{1}_{[\sum_{j=1}^k \sigma_j,\;\sum_{j=1}^{k+1} \sigma_j)}(t).
\end{equation}
We note that the distribution of~$(Z_t)_{t \ge 0}$ is characterized by the fact that, for any~$t_1,\ldots,t_k > 0$ and measurable  bounded functions~$g_1,\ldots,g_k:(0,\infty) \to \R$, we have
\begin{equation}\label{eq_original_K}\begin{split}
	\E\left[ \prod_{j=1}^k \mathds{1}\{\sigma_j \le t_j\} \cdot g_j(\Lambda_j)\right] &= \int_{\lambda_0}^\infty \mathscr{K}(\lambda_0,\mathrm{d}\lambda_1) \cdots \int_{\lambda_{k-1}}^\infty \mathscr{K}(\lambda_{k-1},\mathrm{d}\lambda_k)\cdot \\[-.3cm]
&\hspace{1.6cm}\prod_{j=1}^k (1- \mathrm{e}^{-b(\lambda_{j-1})R(\lambda_{j-1})\cdot t_j})\cdot g_j(\lambda_j).\end{split}
\end{equation}
In this expectation the likelihood of the sequence of states visited is pondered by the probability kernels, and the product of exponentials contains the information of the holding times.

\subsection{Round of mutation and competition}\label{ss_round}
 Throughout the rest of Section~\ref{s_convergence}, we will assume that the three assumptions of Theorem~\ref{thm_main} hold. For each~$N$, we take a probability space where the adaptive contact process is defined. It can be constructed with the standard method for continuous-time Markov chains, starting with the discrete-time ``skeleton chain'' and augmenting it with the holding times (Proposition~\ref{prop_doesnt_explode} guarantees no explosion). This construction makes it simple to see that the process has the strong Markov property, which will be exploited in some of our proofs. \color{black}
\subsubsection{Definitions and description of a round}

We now introduce some definitions that will be used throughout the paper.  First, we let
\[\mathscr{G}_N := \{A \subseteq \Z^d_N:\; A \text{ intersects all boxes of radius $N^{1/288}$ in $\Z^d_N$}\}.\]
It will be important for us to argue that the set of occupied vertices in the adaptive contact process at certain moments in time belongs to this class, with high probability. This will allow us to appeal to mixing properties of the process. { For more on the constant~$1/288$, and our overall treatment of constants throughout the paper, see Remark~\ref{rem_constants} below.}

Second, letting~$(X_{N,t})_{t \ge 0}$ denote the adaptive contact process on~$\Z^d_N$ started from an arbitrary, but fixed, \color{black} initial configuration, let
\begin{equation*}T_N := \inf\{ t\ge 0: \text{there is a birth with mutation at time $t$}\}.\end{equation*} We will typically take the rescaling~$\delta_NN^d\cdot T_N$, which is the same as the time scaling seen in Theorem~\ref{thm_main}.

Third, we define
\begin{equation} \label{eq_def_of_tn}\tn := N^{1+\frac{\varepsilon_0}{2}},\end{equation}
{ where~$\varepsilon_0$ is the constant in~\eqref{eq_assumption_delta}.}
This will serve as an amount of time that is long enough for the resolution of a competition between two types, but too short for the appearance of a new mutant. The latter part is guaranteed by the following.

\begin{lemma}\label{lem_bound_Ttn}
	For any~$\varepsilon > 0$ and any finite set~$L \subseteq [0,\infty)$, the following holds for~$N$ large enough. If the adaptive contact process~$(X_{N,t})_{t \ge 0}$ on~$\Z^d_N$ is started from a configuration satisfying~$X_{N,0}(u) \in L$ for all~$u$, then~$\P(T_N \le \tn) < \varepsilon$.
\end{lemma}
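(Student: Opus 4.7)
The plan is to bound the first-mutation time $T_N$ from below by a quantity stochastically dominating an exponential with a small enough rate. The key observation is that by definition of $T_N$, no mutation has occurred in $[0, T_N)$, so the set of types appearing in the configuration $X_{N,t}$ for $t < T_N$ remains contained in the finite initial set $L$. This allows us to uniformly bound the total mutation rate on this interval.

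More precisely, at any time $t < T_N$, the total rate at which a birth-with-mutation event can occur is
\[
\sum_{\substack{u\sim v:\,X_{N,t}(u)\neq 0,\\ X_{N,t}(v)=0}} X_{N,t}(u)\cdot \min\bigl(\delta_N\, b(X_{N,t}(u)),\,1\bigr) \;\le\; 2d\,N^d\,\delta_N\cdot C_L,
\]
where $C_L := \max_{\lambda\in L\setminus\{0\}} \lambda\, b(\lambda) < \infty$ (finite because $L$ is finite and $b$ is defined on $(0,\infty)$). Call this uniform bound $\alpha_N := 2d\,C_L\, N^d\,\delta_N$.

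From here I would argue the probability bound using the compensator $A_t$ of the counting process $N_t$ of birth-with-mutation events: since $\mathrm{d}A_t \le \alpha_N\,\mathrm{d}t$ on $[0,T_N)$, optional stopping at $T_N\wedge \tn$ applied to the martingale $N_t - A_t$ gives
\[
\mathbb{P}(T_N\le \tn) \;=\; \mathbb{E}[N_{T_N\wedge \tn}] \;=\; \mathbb{E}[A_{T_N\wedge \tn}] \;\le\; \alpha_N\cdot \tn.
\]
(Equivalently, one can couple the first-mutation time to a rate-$\alpha_N$ exponential clock driving a Poisson process, obtaining the same tail bound.) Substituting $\tn = N^{1+\varepsilon_0/2}$ yields
\[
\mathbb{P}(T_N\le \tn) \;\le\; 2d\,C_L\cdot \delta_N\, N^{d+1+\varepsilon_0/2} \;=\; 2d\,C_L\cdot (\delta_N N^{d+1+\varepsilon_0})\cdot N^{-\varepsilon_0/2},
\]
and by assumption~\eqref{eq_assumption_delta} the right-hand side tends to $0$ as $N\to\infty$, in particular falling below $\varepsilon$ for all $N$ large enough.

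There is no real obstacle here; the statement is essentially a first-moment/compensator computation made possible by the fact that $L$ is finite, so that $\lambda\,b(\lambda)$ stays bounded until the first mutation. The only mild care required is the justification that the types remain in $L$ before $T_N$ (immediate from the definition of the dynamics and of $T_N$) and the optional stopping step, for which Proposition~\ref{prop_doesnt_explode} guarantees that $N$ is a well-defined finite counting process.
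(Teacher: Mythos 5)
Your proof is correct and follows essentially the same route as the paper: the paper uses the martingale $\mathds{1}\{T_N \le t\} - \int_0^{t\wedge T_N} r(X_{N,s})\,\mathrm{d}s$, which is exactly your counting-process compensator stopped at $T_N \wedge \tn$, together with the same uniform rate bound $2d\,N^d\,\delta_N\max_{\lambda\in L}\lambda\,b(\lambda)$ valid before the first mutation. The only cosmetic difference is that you phrase the optional-stopping step via the counting process $N_t$ rather than the indicator $\mathds{1}\{T_N\le t\}$; both give the identical first-moment bound $\tn\cdot 2d\,N^d\,\delta_N\,C_L$, which vanishes by assumption~\eqref{eq_assumption_delta}.
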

\begin{proof}
	 For each~$\chi:\Z^d_N \to [0,\infty)$, let~$r(\chi)$ denote the rate with which a birth with mutation occurs from~$\chi$. We note that the process
	\[ M_t :=\left(\mathds{1}\{T_N \le t\} - \int_0^{t \wedge T_N} r(X_{N,s})\;\mathrm{d}s\right)_{t \ge 0}\] is a martingale and that, assuming that the process has~$X_{N,0}(u) \in L$ for all~$u$,
	\[T_N > t \quad \Longrightarrow \quad r(X_{N,t}) \le N^d \cdot 2d \cdot  \delta_N \cdot  \max_{\lambda \in L} (\lambda \cdot b(\lambda)).\]
	Using the fact that~$\E[M_{\tn}] = \E[M_0] = 0$, we obtain	
	\begin{align*}
		\P(T_N \le \tn) = \E \left[ \int_0^{\tn \wedge T_N} r(X_{N,s})\;\mathrm{d}s\right] 
		&\le \tn \cdot N^d \cdot 2d \cdot  \delta_N \cdot  \max_{\lambda \in L}(\lambda\cdot b(\lambda)) \\
		&= N^{d+1+\frac{\varepsilon_0}{2}} \cdot 2d \cdot  \delta_N \cdot  \max_{\lambda \in L}(\lambda\cdot b(\lambda)).
	\end{align*}
	By~\eqref{eq_assumption_delta}, the right-hand side tends to zero as~$N \to \infty$.
	\end{proof}

	{
\begin{remark}[A word about constants]
	\label{rem_constants}
	Throughout the paper, there are numerous partial results and definitions in which a sufficiently large or sufficiently small constant is needed. To avoid using too much notation (or using the same letter for different constants), we have opted to use numerical values, such as~$1/288$ in the definition of~$\mathscr{G}_N$ above. We do not aim for sharpness with these constants; for instance, when we need a power of~$N$ asymptotically larger than~$N^2$, we may take~$N^3$. Hence, the value $1/288$ is a result of a compound usage of this practice (the last step of which is in Section~\ref{s_fixation}, where a result concerning sub-boxes of radius~$r^{1/24}$ of a box of radius~$r$ is applied with~$r = N^{1/12}$).
\end{remark}
}

Assume that the adaptive contact process is started from a configuration in which a single type  is present (together, possibly, with empty sites). In case~$T_N < \infty$, we refer to the time interval~$[0,T_N+\tn]$ as a \textit{round}, to~$[0,T_N]$ as \textit{stage~1} of the round, and to~$[T_N,T_N + \tn]$ as \textit{stage 2} of the round. Note that stage 2 starts with a configuration with exactly two types, one of which is represented by exactly one (newborn) individual. For the time being, we will focus on a single round, but later, in Section~\ref{ss_multiple}, we will also consider multiple rounds in succession.

\subsubsection{Stage 1: Appearance of a mutant}
For this discussion of stage 1, we focus on the adaptive contact process started from configurations of the form~$X_{N,0} = \lambda \cdot \mathds{1}_A$, where~$\lambda > 0$ and~$A \in \mathscr{G}_N$.

On~$\{T_N < \infty\}$, define~$\mathcal{X}_N$ as the position of the newborn mutant at time~$T_N$ (on~$\{T_N = \infty\}$, we can define~$\mathcal{X}_N$ arbitrarily, for example equal to~$o$).  Note that~$X_{N,T_N} \circ \theta_{\mathcal{X}_N}$ is the configuration at time~$T_N$, shifted so that the position of the mutant becomes the origin of~$\Z^d_N$. We define the occupied landscape around the newborn mutant as
\begin{equation*}
	\mathscr{L}_N:= \{v \in \Z^d_N \backslash \{o\}: (X_{N,T_N} \circ \theta_{\mathcal{X}_N})(v) \neq 0\}
\end{equation*}
(on $\{T_N = \infty\}$, we can define~$\mathscr{L}_N$ arbitrarily, for instance equal to the empty configuration).

For~$\lambda > \lambda_c(\Z^d)$,~$t > 0$ and~$A,B \subseteq \Z^d_N$, define
\[\mathcal{U}_{N,t}(\lambda;A,B):= \P(\delta_N N^d \cdot T_N  \le t,\; \mathscr{L}_N = B \mid X_{N,0} = \lambda\cdot \mathds{1}_A).\]
In words,~$\mathcal{U}_{N,t}$ is the probability, for the adaptive contact process started from the configuration where only type~$\lambda$ is present, occupying the set~$A$, that a birth with mutation occurs before time~$(\delta_N N^d)^{-1} \cdot t$, and that the landscape around this newborn mutant is equal to the set~$B$. 

The following proposition states that, as~$N\to \infty$, the time until the appearance of the first mutant (rescaled by~$\delta_NN^d$) and the landscape around this mutant are asymptotically independent, with the rescaled time converging to an exponential distribution and the landscape converging to~$\hat{\mu}_\lambda$. Moreover, this convergence holds uniformly in the set~$A \in \mathscr{G}_N$ that defines the initial condition. Lastly, the configuration at time~$T_N$ is with high probability in the collection~$\mathscr{G}_N$.  Recall also the definition~\eqref{eq_def_of_psi} of the function $\psi_N$. \color{black} The proposition will be proved in Section~\ref{s_appearance}.
\begin{proposition}\label{prop_appearance_new}
	For any~$\lambda > \lambda_c(\Z^d)$,~$t > 0$ and any local function~$f: \{0,1\}^{\Z^d} \to \mathbb{R}$, we have
	\begin{align*}
		\sup_{A \in \mathscr{G}_N} \left| \sum_{B \in \mathscr{G}_N} \mathcal{U}_{N,t}(\lambda;A,B)\cdot f(\psi_N^{-1}(B)) - (1-\mathrm{e}^{-b(\lambda)R(\lambda)t})\cdot \int f\;\mathrm{d}\hat{\mu}_\lambda\right| \xrightarrow{N \to \infty} 0.
	\end{align*}
\end{proposition}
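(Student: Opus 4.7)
The plan is to reduce the analysis to that of a one-type contact process run until the first ``mutation mark'', and then to combine a metastability statement with a Palm-type identity encoding the spatial bias of conditioning on a mutation event.

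First I would couple~$(X_{N,t})_{0 \le t < T_N}$ with a classical contact process~$(\zeta_t)_{t \ge 0}$ of rate~$\lambda$ on~$\Z^d_N$ started from~$\mathds{1}_A$, via the graphical construction. On this time window only type~$\lambda$ is present, the total birth rate per active directed edge equals~$\lambda$, and~$(X_{N,t})$ is obtained by independently flagging each birth of~$\zeta$ as a mutation with probability~$\min(\delta_N b(\lambda),1)$. Hence~$T_N$ is the first such flagged birth, and~$\mathscr{L}_N$ is a local functional of~$\zeta_{T_N^-}$ shifted so that the flagged site becomes the origin.

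Next, I would invoke the metastable behaviour of the supercritical contact process on~$\Z^d_N$ (as in~\cite{DS88,M93}): for~$\lambda > \lambda_c(\Z^d)$, started from any~$A \in \mathscr{G}_N$, after a burn-in of at most polynomial order~$N^\alpha$ (with~$\alpha$ much smaller than the exponent of the rescaled mutation time-scale), the law of~$\zeta_t$, when restricted to any fixed local window and pulled back through~$\psi_N$, is close in total variation to~$\mu_\lambda$. The role of~$\mathscr{G}_N$ is precisely to guarantee that the initial configuration is thick enough on scale~$N^{1/288}$ that local extinction is ruled out and the metastable density is reached quickly. Assumption~\eqref{eq_assumption_delta} ensures that~$(\delta_N N^d)^{-1}$ is much larger than the mixing scale, while~\eqref{eq_assumption_delta_2} keeps it well below the metastable extinction time~$\exp(cN^d)$, so that the process stays in the metastable regime throughout~$[0, T_N \wedge \tn]$ with overwhelming probability.

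The total rate of attempted mutations at time~$t < T_N$ equals
\[ r_N(t) \;=\; \lambda\min(\delta_N b(\lambda),1)\sum_{u \sim v}\mathds{1}\{\zeta_t(u)=1,\,\zeta_t(v)=0\}. \]
By translation invariance and the definition~\eqref{eq_def_of_R} of~$R(\lambda)$, the~$\mu_\lambda$-expectation of the counting sum over ordered neighbouring pairs equals~$N^d R(\lambda)/\lambda$, so~$\E_{\mu_\lambda}[r_N(t)] \sim \delta_N b(\lambda) N^d R(\lambda)$; concentration follows from spatial mixing of~$\mu_\lambda$ via a second-moment bound. A Cox-process / time-change argument then gives that~$\delta_N N^d T_N$ converges in distribution to an exponential random variable of parameter~$b(\lambda) R(\lambda)$, uniformly in~$A \in \mathscr{G}_N$. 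For the landscape, conditioning on the flagged edge~$(u,v)$ biases the local law of~$\zeta_{T_N^-}$ by the weight~$\mathds{1}\{\zeta(u)=1,\,\zeta(v)=0\}$; translating so that~$u$ sits at the origin and summing over~$v \sim o$, one recognises exactly the Radon--Nikodym derivative in~\eqref{eq_def_of_hat_mu}, whence the pulled-back~$\mathscr{L}_N$ converges weakly to~$\hat{\mu}_\lambda$. Asymptotic independence of~$T_N$ and~$\mathscr{L}_N$ follows because the cumulative mutation intensity is essentially deterministic on the rescaled time-scale, which decouples ``when'' from ``where''. The fact that~$\mathscr{L}_N \in \mathscr{G}_N$ with high probability is a direct consequence of the same metastable density statement applied to~$\zeta_{T_N^-}$.

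The main obstacle will be making the metastability statement fully quantitative and \emph{uniform} in~$A \in \mathscr{G}_N$: one needs both total variation proximity of~$\zeta_t$ to~$\mu_\lambda$ on every fixed local window, and sufficient spatial decorrelation for the concentration of~$r_N(t)$ over an interval of length of order~$(\delta_N N^d)^{-1}$. This rests on combining block-and-coupling methods for the supercritical contact process with a careful bookkeeping of~$o_N(1)$ error terms, uniformly over the class~$\mathscr{G}_N$; assumption~\eqref{eq_assumption_delta_2} is used precisely to keep~$T_N$ well below the metastable extinction time, so that the stationary approximation remains legitimate throughout stage~$1$.
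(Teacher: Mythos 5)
Your overall strategy runs parallel to the paper's: both reduce to a one-type contact process whose birth arrows carry independent ``mutation marks'' with probability $\delta_N b(\lambda)$, both rest on the process locally equilibrating to $\mu_\lambda$, and both identify the landscape law by size-biasing. The technical routes diverge at the concentration step. You propose fixed-time total-variation proximity of the local law of $\zeta_t$ to $\mu_\lambda$ (uniformly in $A\in\mathscr{G}_N$ and in $t$ up to $(\delta_N N^d)^{-1}t$, which by~\eqref{eq_assumption_delta_2} can be a polynomial in $N$ of arbitrarily large degree), followed by a second-moment bound on the instantaneous mutation rate and a Cox-process argument. The paper instead works with the cumulative counting processes $\mathcal{N}^{(f)}_t$ that add $f$ of the shifted pre-birth configuration at each birth, proves a space--time ergodic average $\frac{1}{N^d t_N}\mathcal{N}^{(f)}_{t_N}\to\lambda\int f q\,\mathrm{d}\mu_\lambda$ via a martingale/compensator decomposition and a ``past-truncated'' (coupling-from-the-past style) stationary process, and then uses the exact identity $\P(\mathcal{J}_N>s,\;\mathcal{J}'_N>s')=\E[(1-\delta_N b(\lambda))^{\mathcal{N}^{(f)}+\mathcal{N}^{(1-f)}}]$ for the first marked birth of each kind. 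The ergodic-average route is weaker than uniform-in-time local equilibrium but suffices, and it is what the paper actually establishes; your stronger fixed-time statement would need its own proof and is the genuinely hard part you defer to ``block-and-coupling methods''.

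Two concrete gaps remain. First, the asymptotic independence of $\delta_N N^d T_N$ and $\mathscr{L}_N$ is asserted but not argued: saying the cumulative intensity is ``essentially deterministic'' explains the exponential law of $T_N$ but does not by itself show that the landscape seen at the (size-biased, random) mutation time has a limit law not depending on that time. The paper's splitting of births into those with $f=1$ and those with $f=0$, and the joint convergence of the two first-mark times to \emph{independent} exponentials with parameters $\alpha b(\lambda)$ and $\alpha' b(\lambda)$, is precisely the device that delivers both the landscape law $\alpha/(\alpha+\alpha')=\int f\,\mathrm{d}\hat\mu_\lambda$ and the independence in one stroke; your sketch has no substitute for it. Second, your Palm computation centres the configuration at the \emph{parent} $u$ and sums over $v\sim o$, which produces the weight $\zeta(o)\sum_{v\sim o}(1-\zeta(v))$; the Radon--Nikodym derivative in~\eqref{eq_def_of_hat_mu} is $(1-\zeta(o))\sum_{x\sim o}\zeta(x)$, so one must centre at the newborn's site $v$ (which is how $\mathscr{L}_N$ is defined). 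This is easily repaired but, as written, the measure you recognise is not $\hat\mu_\lambda$.
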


\subsubsection{Stage 2: Competition}
For this discussion of stage 2, we focus on the adaptive contact process started from configurations of the form~$X_{N,0} = \lambda \cdot \mathds{1}_B+ \lambda' \cdot \mathds{1}_{\{o\}}$, where~$\lambda, \lambda' > 0$ and~$B \in \mathscr{G}_N$ with~$o \notin B$.

Given~$\lambda > \lambda_c(\Z^d)$,~$\lambda' \neq \lambda$ and~$B,B' \subseteq \Z^d_N$ with~$o \notin B$, define
\[\mathcal{V}_N(\lambda,\lambda';B,B'):= \P(T_N > \tn,\; X_{N,\tn} =  \lambda' \cdot  \mathds{1}_{B'} \mid X_{N,0} = \lambda\cdot \mathds{1}_B + \lambda' \cdot \mathds{1}_{\{o\}}).\]
In words,~$\mathcal{V}_N(\lambda,\lambda';B,B')$ is the probability, for the process started from~$\lambda\cdot \mathds{1}_B + \lambda'\cdot \mathds{1}_{\{o\}}$, that by time~$\tn$ there is no mutation, and type~$\lambda$ has disappeared, so only type~$\lambda'$ remains, occupying the set~$B'$.  Also define, for the same collection of~$\lambda,\lambda',B,B'$,
\[\bar{\mathcal{V}}_N(\lambda,\lambda';B,B'):= \P(T_N > \tn,\; X_{N,\tn} =  \lambda \cdot  \mathds{1}_{B'} \mid X_{N,0} = \lambda\cdot \mathds{1}_B + \lambda' \cdot \mathds{1}_{\{o\}}).\]

In Section~\ref{s_fixation}, we will prove the following two propositions. Recall the definition  of~$S(\lambda,\lambda';A)$ above \eqref{eq:Sdef}.\color{black}
\begin{proposition}\label{prop_fixation_one_new}
	If~$\lambda' > \lambda> \lambda_c(\Z^d)$, then we have
	\[ \sup_{\substack{B \in \mathscr{G}_N\\ o \notin B}} \left| \sum_{B' \in \mathscr{G}_N}\mathcal{V}_N(\lambda,\lambda';B,B') - S(\lambda,\lambda';\psi^{-1}_N(B)) \right| \xrightarrow{N \to \infty} 0\]
	and
	\[ \sup_{\substack{B \in \mathscr{G}_N\\ o \notin B}} \left| \sum_{B' \in \mathscr{G}_N}\bar{\mathcal{V}}_N(\lambda,\lambda';B,B') - (1- S(\lambda,\lambda';\psi^{-1}_N(B))) \right| \xrightarrow{N \to \infty} 0.\]
\end{proposition}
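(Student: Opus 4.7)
The plan is to decompose the interval $[0,\tn]$ into a local phase $[0,T_1]$ and a takeover phase $[T_1,\tn]$, at an intermediate scale $T_1 := N^{1/12}$, chosen much smaller than $N$ so that torus wrap-around effects are absent and yet tending to infinity so that the survival/extinction dichotomy of the $\Z^d$ two-type contact process is resolved with high probability by time $T_1$. Applying Lemma~\ref{lem_bound_Ttn} to $L = \{\lambda,\lambda'\}$, the no-mutation event $\{T_N > \tn\}$ holds with probability $1-o(1)$ uniformly in $B \in \mathscr{G}_N$; on this event the process in $[0,\tn]$ is simply Neuhauser's two-type contact process on $\Z^d_N$ with birth rates $\lambda$ and $\lambda'$. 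In the local phase I would couple this torus process, via the graphical construction and the bijection $\psi_N$, with the two-type contact process $(\xi_t)$ on $\Z^d$ started from $\mathds{1}_{\psi^{-1}_N(B)} + 2\cdot \mathds{1}_{\{o\}}$; the bounded speed of propagation of the graphical construction ensures that for $t \le T_1$ the two processes agree on the ball of radius $N/4$ around $o$ with probability $1-o(1)$.

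By Theorem~1.1 of~\cite{MPV20} together with its underlying block-construction arguments, by time $T_1$ the $\Z^d$ two-type process either has its type-$2$ population extinct or has a macroscopic type-$2$ cluster of radius $\Omega(T_1)$ containing a positive density of type-$2$ sites; the probabilities of these events differ from $1-S(\lambda,\lambda';\psi^{-1}_N(B))$ and $S(\lambda,\lambda';\psi^{-1}_N(B))$, respectively, by $o(1)$ as $T_1 \to \infty$, uniformly in $B$ thanks to standard exponential tail bounds on extinction/takeover times for supercritical and subcritical contact-like processes. In the takeover phase the two scenarios evolve separately on the torus. On the extinction event, the remaining process is the one-type contact process of rate $\lambda > \lambda_c(\Z^d)$, started from a configuration whose occupied set is in $\mathscr{G}_N$; by metastability (e.g.~\cite{M93}), at time $\tn$ the occupied set still lies in $\mathscr{G}_N$ with probability $1-o(1)$, yielding the $\bar{\mathcal{V}}_N$ statement. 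On the survival event, a supercritical block argument shows that type $2$ spreads ballistically and visits every point of the torus in time $O(N)$, and since $\lambda' > \lambda$ the type-$1$ sites in any region already invaded by type $2$ to positive density are dominated by a subcritical one-type contact process and die out in $O(\log N)$ further time; since $\tn \gg N$, both events occur by time $\tn$ with high probability, and the residual time is more than enough for metastability of the rate-$\lambda'$ process to place the final occupied set in $\mathscr{G}_N$, yielding the $\mathcal{V}_N$ statement.

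The main obstacle is the takeover/elimination step in the survival scenario, executed uniformly over $B \in \mathscr{G}_N$. Because membership in $\mathscr{G}_N$ allows the type-$1$ configuration to be dense everywhere at the relevant scale, the growing type-$2$ cluster cannot simply diffuse into empty space but must actively invade a crowded landscape; therefore the strict inequality $\lambda' > \lambda$ must be used quantitatively, through a mesoscopic renormalization that simultaneously compares the type-$2$ growth to a supercritical oriented percolation and the surviving type-$1$ sites inside type-$2$-invaded blocks to a subcritical one-type contact process. I expect this renormalization — together with careful control of the coupling breakdown at scale $N/4$ and of the metastability ``clean-up'' in the final subinterval — to account for most of the technical content to be carried out in Section~\ref{s_fixation}.
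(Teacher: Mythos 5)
Your proposal is correct and follows essentially the same route as the paper: remove mutations via Lemma~\ref{lem_bound_Ttn} and couple with Neuhauser's two-type process, use finite speed of propagation (the paper's ``box insulation'') to reduce to the $\Z^d$ process on a mesoscopic time scale where the extinction/macroscopic-cluster dichotomy derived from~\cite{MPV20} yields the $S(\lambda,\lambda';\psi_N^{-1}(B))$ approximation uniformly in $B$, and then let type $2$ sweep the torus and clean up into $\mathscr{G}_N$ via metastability. The only notable difference is presentational: the paper formalizes your ``macroscopic type-$2$ cluster'' as a \emph{good box} and carries out the takeover by iterating a propagation lemma that excludes type $1$ from a linearly growing region (rather than by a literal stochastic domination of type $1$ by a subcritical process, which is not how the mechanism works pointwise), but this is precisely the renormalization you correctly flag as the main technical content.
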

\begin{remark}
	It may seem at first glance that one of the assertions in the above statement immediately implies the other, but that is not so, because the sum
	\[\sum_{B' \in \mathscr{G}_N}(\mathcal{V}_N(\lambda,\lambda';B,B') +\bar{\mathcal{V}}_N(\lambda,\lambda';B,B'))\]
	does not equal~$1$.  Indeed, for any fixed $N$, apart from the scenarios whose probabilities are captured by $\sum_{B' \in \mathscr{G}_N}\mathcal{V}_N(\lambda,\lambda';B,B')$ and~$\sum_{B' \in \mathscr{G}_N}\bar{\mathcal{V}}_N(\lambda,\lambda';B,B')$, many other things can happen with positive probability by time~$\tn$. For instance, both types~$\lambda$ and~$\lambda'$ could still be alive at time~$\tn$, or further mutants might have appeared etc. Of course, a consequence of the proposition is that the probability of all these other situations vanishes as~$N \to \infty$. \color{black}
\end{remark}

\begin{proposition}\label{prop_no_fixation}
	If~$\lambda> \lambda_c(\Z^d)$ and~$\lambda > \lambda'$, then we have
		\[ \inf_{\substack{B \in \mathscr{G}_N\\ o \notin B}}  \sum_{B' \in \mathscr{G}_N}\bar{\mathcal{V}}_N(\lambda,\lambda';B,B')  \xrightarrow{N \to \infty} 1.\]

\end{proposition}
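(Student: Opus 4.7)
The plan is to decompose the event whose probability is computed by $\sum_{B' \in \mathscr{G}_N} \bar{\mathcal{V}}_N(\lambda,\lambda';B,B')$ into three sub-events which together suffice, and to show that each holds with probability tending to~$1$ uniformly in~$B \in \mathscr{G}_N$: (i) no birth-with-mutation occurs by time~$\mathsf{t}_N$; (ii) the type-$\lambda'$ set is empty at time~$\mathsf{t}_N$; (iii) the set of type-$\lambda$ occupied sites at time~$\mathsf{t}_N$ lies in~$\mathscr{G}_N$.

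For (i) I would apply Lemma~\ref{lem_bound_Ttn} with~$L=\{\lambda,\lambda'\}$, which delivers the bound directly, with no dependence on~$B$. For (iii), I would invoke metastability of the classical supercritical contact process on the torus (starting from any~$B \in \mathscr{G}_N$ and ignoring the~$\lambda'$ particles, the type-$\lambda$ subprocess behaves like a one-type CP with rate~$\lambda > \lambda_c(\Z^d)$, which stays close to~$\mu_\lambda$ for time at least~$\exp\{cN^d\}$, cf.~\cite{M93}, and thus lies in~$\mathscr{G}_N$ for all times up to~$\mathsf{t}_N$ with probability tending to~$1$). One needs only to check that the tiny and localized type-$\lambda'$ cluster, controlled in~(ii), cannot perturb this metastability on the relevant time scale.

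The heart of the matter is~(ii). The natural first step is the standard pathwise domination of the type-$\lambda'$ set~$C_t$ by a one-type contact process~$(\eta_t)$ with birth rate~$\lambda'$ started from~$\{o\}$, obtained by ignoring the suppression of type-$\lambda'$ births onto type-$\lambda$ sites. When~$\lambda' \leq \lambda_c(\Z^d)$, subcritical/critical extinction bounds immediately yield extinction of~$\eta_t$ (hence of~$C_t$) well before~$\mathsf{t}_N$. The delicate regime is~$\lambda_c(\Z^d) < \lambda' < \lambda$, where the dominating one-type process has positive probability of surviving throughout~$[0,\mathsf{t}_N]$ and the competition with type~$\lambda$ must be used in an essential way.

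For this regime I would transfer the problem to the infinite-volume two-type contact process on~$\Z^d$: type~$\lambda$ started from (a configuration whose distribution approximates)~$\mu_\lambda$, type~$\lambda'$ started from~$\{o\}$, and prove that the weaker type goes extinct almost surely whenever~$\lambda' < \lambda$. This is the main obstacle and cannot be reduced to one-type domination; it is a genuine two-type statement, using the strict inequality~$\rho(\lambda') < \rho(\lambda)$ for the densities and the fact that any site freed by a type-$\lambda'$ death is effectively reclaimed faster by the denser, faster type-$\lambda$ neighborhood. Given such almost-sure extinction on~$\Z^d$, I would transfer back to~$\Z^d_N$ by a finite-speed-of-propagation coupling. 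Because every~$B \in \mathscr{G}_N$ is everywhere-dense (it meets every box of radius~$N^{1/288}$), one may first let the type-$\lambda$ configuration around~$o$ mix to a configuration locally close to~$\mu_\lambda$ (using the same mixing tools as for Proposition~\ref{prop_appearance_new}), and then couple the process on~$\Z^d_N$ with its~$\Z^d$ counterpart on a ball of polynomial radius for a constant-in-$N$ time horizon~$T^\star$ large enough that extinction on~$\Z^d$ has occurred with probability~$1 - \varepsilon$. Since~$T^\star \ll \mathsf{t}_N$ for large~$N$, this yields extinction of~$C_t$ by time~$\mathsf{t}_N$ with probability at least~$1-\varepsilon - o(1)$, uniformly in~$B \in \mathscr{G}_N$.
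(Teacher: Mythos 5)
Your decomposition into (i)--(iii) is reasonable, and (i) is handled exactly as the paper does it (Lemma~\ref{lem_bound_Ttn}). The problem is step (ii) in the regime $\lambda_c(\Z^d) < \lambda' < \lambda$, which you correctly identify as ``the heart of the matter'' but then do not prove. The claim that in the two-type process on $\Z^d$ a weaker supercritical type started from $\{o\}$ against a dense stronger type dies out almost surely is precisely the deep two-type input here; the density heuristic (``any site freed by a type-$\lambda'$ death is effectively reclaimed faster\dots'') is not an argument, and the known proofs do not proceed that way --- they go through the block/renormalization machinery of~\cite{MPV20} (the source of Lemma~\ref{lem_input1}), which the paper has packaged into the good-box lemmas of Section~\ref{s_fixation}. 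As written, you have reduced the proposition to an unproved statement of comparable difficulty, so there is a genuine gap at the core of the proposal.

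The paper's route is much shorter and avoids both infinite volume and any mixing-to-$\mu_\lambda$ step. It reuses the coupling from the proof of Proposition~\ref{prop_fixation_one_new} (adaptive process $=$ two-type process up to the first mutation time) and then applies Corollary~\ref{cor_propagate_torus} with the roles of the two types \emph{reversed}: now the resident type $\lambda$ is the stronger one, and since $B \in \mathscr{G}_N$ and $o \notin B$, some box $Q_{\Z^d_N}(u,N^{1/12})$ avoiding $o$ is already ``good'' for the strong type at time $0$ --- no appearance step and no waiting for local equilibration is needed. The propagation corollary then gives that by time $\tn$ the weak type is extinct and the resident's occupied set lies in $\mathscr{G}_N$, with probability at least $1-\exp\{-(\log N)^{5/4}\}$ uniformly in $B$; this delivers your (ii) and (iii) simultaneously. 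Note also that your (iii) is slightly off on its own terms: the type-$\lambda$ sites do \emph{not} evolve as a one-type contact process (type-$\lambda'$ sites block $\lambda$-births), and since $\lambda' < \lambda$ the union of the two types only dominates a rate-$\lambda'$ process, which may be subcritical; it is the good-box propagation that certifies membership in $\mathscr{G}_N$ at time $\tn$. To repair your proof you would either invoke the good-box machinery (recovering the paper's argument) or supply an independent proof of almost-sure extinction of the weaker type on $\Z^d$, which is a substantial project in itself.
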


The work to prove the above two propositions in Section~\ref{s_fixation} involves, for the most part, studying the two-type contact process on~$\Z^d$, boxes of~$\Z^d$ and~$\Z^d_N$. As a by-product of the analysis, we also obtain the following statement  in Section~\ref{s_fixation}\color{black}, which will also be needed and has independent interest:
\begin{proposition}\label{prop_fixation_three_new}
	If~$\lambda' > \lambda> \lambda_c(\Z^d)$, we have
	\[\lim_{\ell \to \infty} \sup_{A \subseteq \Z^d \backslash \{o\}} \left| S(\lambda,\lambda';A) - S(\lambda,\lambda';A \cap Q_{\Z^d}(o,\ell))\right| = 0.\]
\end{proposition}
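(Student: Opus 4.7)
The plan is to combine a coupling via the Harris graphical construction with two ingredients: finite speed of propagation, and a robustness estimate for an established macroscopic type-$2$ cluster. First I would observe that $S(\lambda, \lambda'; A) \leq S(\lambda, \lambda'; A \cap Q_{\Z^d}(o,\ell))$ by the standard monotonicity of the two-type contact process: coupling both processes with a common Harris construction, the type-$2$ region is a decreasing function of the initial type-$1$ set, so the type-$2$ region of the process from $(A \cap Q_{\Z^d}(o,\ell), \{o\})$ always contains that of the process from $(A, \{o\})$. Writing $A_\ell := A \cap Q_{\Z^d}(o,\ell)$, denoting the coupled processes by $\xi^A$ and $\xi^{A_\ell}$ and their type-$2$ survival events by $V \subseteq V_\ell$, the quantity under the supremum equals $\P(V_\ell \setminus V)$, and the task reduces to showing this vanishes as $\ell \to \infty$ uniformly in $A$.

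The two key ingredients are as follows. First, a finite speed of propagation estimate for the Harris construction (see e.g.\ \cite{L99}): there exist constants $v, c, C > 0$ depending only on $\lambda'$ and $d$ such that the probability of an active Harris path from a site $y$ reaching another site $z$ by time $t$ is at most $C \mathrm{e}^{-c(|y-z|-vt)}$ for $|y-z| > vt$. By a union bound, if $\tau_\ell$ denotes the first time at which any Harris path starting from $A \setminus Q_{\Z^d}(o,\ell)$ enters a fixed inner box $Q_{\Z^d}(o,r)$, then $\P(\tau_\ell \leq t) \leq C' \mathrm{e}^{-c(\ell/2 - r - vt)}$ uniformly in $A$. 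Second, a robustness statement: for any $\varepsilon > 0$ there exists $r$ such that, uniformly in $B \subseteq \Z^d \setminus Q_{\Z^d}(o,r)$, the two-type contact process started from $(B, Q_{\Z^d}(o,r))$ has type-$2$ survival probability at least $1 - \varepsilon$. This should follow from the fact that the one-type contact process with rate $\lambda' > \lambda_c(\Z^d)$ started from $Q_{\Z^d}(o,r)$ has survival probability tending to $1$, combined with arguments in the spirit of \cite{MPV20} to dominate the competition from the surrounding type-$1$ population.

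The argument then proceeds as follows. Let $\sigma$ denote the first time that the type-$2$ region of $\xi^{A_\ell}$ contains $Q_{\Z^d}(o,r)$. A standard property of the Harris construction yields that $\xi^A$ and $\xi^{A_\ell}$ coincide on $Q_{\Z^d}(o,r)$ throughout $[0,\tau_\ell)$, since by definition no extra influence from $A \setminus Q_{\Z^d}(o,\ell)$ has reached the box; thus on $\{\sigma \leq \tau_\ell\}$, the process $\xi^A$ also has type-$2$ filling $Q_{\Z^d}(o,r)$ at time $\sigma$, and the strong Markov property together with the robustness statement give $\P(V \mid V_\ell, \sigma \leq \tau_\ell) \geq 1 - \varepsilon$. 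This reduces the problem to bounding $\P(V_\ell, \sigma > \tau_\ell)$ uniformly in $A$, which I expect to be the main obstacle: on the event $V_\ell$ the type-$2$ population must fill $Q_{\Z^d}(o,r)$ in time at most of order $\ell/v$, even when it is born into the potentially adversarial environment $A_\ell$. I would handle this by dominating $\sigma$ via the coupling by the corresponding filling time in the ``worst case'' process in which $A_\ell$ is replaced by $\Z^d \setminus \{o\}$, and combining the positivity of $S(\lambda,\lambda';\Z^d \setminus \{o\})$ from \cite{MPV20} with a shape-theorem-type bound on the growth of type-$2$ conditional on survival. Establishing such a uniform linear-growth estimate against dense adversarial type-$1$ is the technical heart of the proof.
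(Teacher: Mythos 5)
Your reduction via monotonicity (with the order $1 \preceq 0 \preceq 2$, removing type-$1$ sites from the initial condition only enlarges the type-$2$ region, so $V \subseteq V_\ell$) is correct, and the overall architecture — finite speed of propagation keeps the influence of $A \setminus Q_{\Z^d}(o,\ell)$ away from an inner box for a time of order $\ell$, during which type $2$ must either die or establish itself irreversibly — is indeed the right idea. But the step you yourself flag as the technical heart, namely $\P(V_\ell,\, \sigma > \tau_\ell) \to 0$ uniformly in $A$, is a genuine gap, and the route you sketch for it does not close it. First, defining $\sigma$ as the first time type $2$ \emph{fills} $Q_{\Z^d}(o,r)$ is too strong: on the survival event the type-$2$ population may establish itself far from the origin and, even near the origin, full simultaneous occupancy of a box of $(2r+1)^d$ sites is an exponentially costly local event; there is no reason $\sigma$ should be finite, let alone $O(\ell)$, on $V_\ell$. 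Second, the proposed domination by the ``worst case'' initial condition $\Z^d\setminus\{o\}$ gives $\sigma^{A_\ell} \le \sigma^{\mathrm{worst}}$ in the coupling, but $\P(\sigma^{\mathrm{worst}} > \tau_\ell)$ does \emph{not} tend to zero: the worst-case process dies with probability $1-S(\lambda,\lambda';\Z^d\setminus\{o\})$, bounded away from zero, and on that event $\sigma^{\mathrm{worst}}=\infty$. What you actually need is a dichotomy — ``by time $t$, with probability $1-o(1)$ uniformly in the environment, type $2$ is either extinct or has built a macroscopic, type-$1$-free, densely occupied region somewhere'' — and no shape theorem for the two-type process conditional on survival, uniform over adversarial type-$1$ environments, exists in the literature to supply this. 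Proving that dichotomy is precisely what the paper's Lemmas~\ref{lem_appearance}--\ref{lem_alternatives} (appearance of a ``good box'' with probability $\sigma_0$ in each time window of length $\ell^2$, hence within $\ell^3$ except with probability $(1-\sigma_0)^{\ell}$, followed by its self-propagation) are for; this is the content you would have to reconstruct.

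For comparison, the paper avoids your conditioning on $V_\ell$ entirely. It proves a box-approximation lemma (Lemma~\ref{lem_box_approx_Zd}): uniformly in $B$, $S(\lambda,\lambda';B)$ differs by at most $\exp\{-(\log r)^{3/2}\}$ from $S^{\mathrm{box}}_r(\lambda,\lambda';B\cap Q_{\Z^d}(o,r))$, the probability that type $2$ is alive at the deterministic time $\sqrt{r}$ for the process \emph{restricted} to $Q_{\Z^d}(o,r)$. This is proved by insulation of $Q_{\Z^d}(o,r)\times[0,\sqrt r]$ plus the good-box dichotomy at time $\sqrt r$. The proposition then follows in three lines by applying this lemma twice with the same inner radius $r_1'\le\ell$: since $A$ and $A\cap Q_{\Z^d}(o,\ell)$ have the same intersection with $Q_{\Z^d}(o,r_1')$, both survival probabilities are within $\varepsilon/2$ of the \emph{same} box quantity. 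You may find it instructive that, once the locality lemma is in hand, no coupling of the two initial conditions and no analysis of a random establishment time $\sigma$ is needed at all.
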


\subsubsection{Convergence of round transition probabilities}

Define, for~$\lambda > \lambda_c(\Z^d)$,~$\lambda' \neq \lambda$ and~$A,B' \in \mathscr{G}_N$,
\begin{equation*}
	\mathscr{S}_{N,t}(\lambda,\lambda';A,B'):= \sum_{\substack{B \in \mathscr{G}_N\\ o \notin B}} \mathcal{U}_{N,t}(\lambda;A,B)\cdot \mathcal{V}_{N}(\lambda,\lambda';B,B').
\end{equation*}
Let us motivate this definition in words. Consider the adaptive contact process on~$\Z^d_N$ started from~$X_{N,0} = \lambda\cdot \mathds{1}_A$, and modified in the following way: when (and if) a mutant first appears, we \textit{artificially} replace the type of this mutant by~$\lambda'$, and then continue evolving the process (with no further artificial changes{{; in particular, further mutations can still happen, following the same rules of dynamics as in the adaptive contact process}). Denote this modified process by~$(Y_{N,t})_{t \ge 0}$ (this notation will only be employed for this heuristic explanation). Then,~$\mathscr{S}_{N,t}$ is the probability that~$(Y_{N,t})$ satisfies the following:
\begin{itemize}
	\item the time~$T_N$ is smaller than or equal to~$(\delta_N N^d)^{-1}\cdot t$, and every box of radius~$N^{1/288}$ in~$\Z^d_N$ has an individual of type~$\lambda$ at time~$T_N$;
	\item the competition started at time~$T_N$, between the population of type~$\lambda$ and the individual of type~$\lambda'$, is resolved by time~$T_N + \tn$, with the disappearance of type~$\lambda$, and type~$\lambda'$ occupying the set~$B'$.
\end{itemize}

Also define, for the same collection of~$\lambda,\lambda',A,B'$,
\[\bar{\mathscr{S}}_{N,t}(\lambda,\lambda';A,B'):= \sum_{\substack{B \in \mathscr{G}_N\\ o \notin B}} \mathcal{U}_{N,t}(\lambda;A,B)\cdot \bar{\mathcal{V}}_{N}(\lambda,\lambda';B,B').\]

{We now prove the following result using Propositions \ref{prop_appearance_new}, \ref{prop_fixation_one_new} and \ref{prop_no_fixation}. Recall the definition of~$\mathscr{S}(\lambda,\lambda')$ for~$\lambda \neq \lambda'$ in~\eqref{eq_def_acceptance_probability}.}
\begin{lemma}\label{lem_first_sup}
	If~$\lambda' > \lambda > \lambda_c(\Z^d)$ and~$t > 0$, then
	\[\sup_{A \in \mathscr{G}_N} \left| \sum_{B' \in \mathscr{G}_N}\mathscr{S}_{N,t}(\lambda,\lambda';A,B') - (1-\mathrm{e}^{-b(\lambda)R(\lambda)\cdot t})\cdot \mathscr{S}(\lambda,\lambda')\right| \xrightarrow{N \to \infty} 0\]
	and
	\[\sup_{A \in \mathscr{G}_N} \left| \sum_{B' \in \mathscr{G}_N}\bar{\mathscr{S}}_{N,t}(\lambda,\lambda';A,B') - (1-\mathrm{e}^{-b(\lambda)R(\lambda)\cdot t})\cdot (1-\mathscr{S}(\lambda,\lambda'))\right| \xrightarrow{N \to \infty} 0.\]
\end{lemma}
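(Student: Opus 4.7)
The strategy is to rewrite each of the two sums by interchanging the order of summation and then apply the two input propositions in sequence, with an intermediate localization step. Concretely, for the first statement write
\[\sum_{B' \in \mathscr{G}_N}\mathscr{S}_{N,t}(\lambda,\lambda';A,B') = \sum_{\substack{B \in \mathscr{G}_N \\ o \notin B}} \mathcal{U}_{N,t}(\lambda;A,B)\cdot \Bigl(\sum_{B' \in \mathscr{G}_N}\mathcal{V}_N(\lambda,\lambda';B,B')\Bigr).\]
Using Proposition~\ref{prop_fixation_one_new}, I replace the inner parenthesis by $S(\lambda,\lambda';\psi_N^{-1}(B))$ at the cost of an error bounded by a quantity that vanishes uniformly in $B$, multiplied by $\sum_{B}\mathcal{U}_{N,t}(\lambda;A,B) \le 1$, so the overall error is $o(1)$ uniformly in $A \in \mathscr{G}_N$.

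The main obstacle is that $\zeta \mapsto S(\lambda,\lambda';\{x:\zeta(x)=1\})$ is not a local function of $\zeta$, so Proposition~\ref{prop_appearance_new} cannot be applied directly. I would circumvent this with Proposition~\ref{prop_fixation_three_new}: for every $\varepsilon > 0$ choose $\ell$ so large that $|S(\lambda,\lambda';A') - S(\lambda,\lambda';A' \cap Q_{\Z^d}(o,\ell))| < \varepsilon$ uniformly in $A' \subseteq \Z^d \setminus \{o\}$, and define the \emph{local} function
\[f_\ell(\zeta) := S\bigl(\lambda,\lambda';\{x:\zeta(x)=1\} \cap Q_{\Z^d}(o,\ell)\bigr),\qquad \zeta \in \{0,1\}^{\Z^d},\]
which is bounded by $1$ and depends only on coordinates in $Q_{\Z^d}(o,\ell)$. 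Replacing $S(\lambda,\lambda';\psi_N^{-1}(B))$ by $f_\ell(\psi_N^{-1}(B))$ introduces another error of at most $\varepsilon$ (again uniformly in $A$, after summing against $\mathcal{U}_{N,t}$).

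Applying Proposition~\ref{prop_appearance_new} to $f_\ell$ yields
\[\sup_{A \in \mathscr{G}_N}\Bigl|\sum_{B \in \mathscr{G}_N}\mathcal{U}_{N,t}(\lambda;A,B)\cdot f_\ell(\psi_N^{-1}(B)) - (1-\mathrm{e}^{-b(\lambda)R(\lambda)t})\int f_\ell \,\mathrm{d}\hat{\mu}_\lambda\Bigr| \xrightarrow{N\to\infty} 0\]
(note that the summability condition $o \notin B$ is automatic in the support of $\mathcal{U}_{N,t}$, and $\hat{\mu}_\lambda$ is supported on $\{\zeta(o)=0\}$ by construction, so the restriction is compatible). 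Finally, since $|f_\ell - S(\lambda,\lambda';\{x:\zeta(x)=1\})| < \varepsilon$ pointwise, we have $|\int f_\ell\,\mathrm{d}\hat\mu_\lambda - \mathscr{S}(\lambda,\lambda')| < \varepsilon$ (recalling that $\lambda' > \lambda$, so the indicator in~\eqref{eq_def_acceptance_probability} equals $1$). Combining the three error bounds, letting $N \to \infty$ and then $\varepsilon \to 0$ gives the first claim.

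For the second claim, the argument is structurally identical: use the second half of Proposition~\ref{prop_fixation_one_new} to replace $\sum_{B'} \bar{\mathcal{V}}_N(\lambda,\lambda';B,B')$ by $1 - S(\lambda,\lambda';\psi_N^{-1}(B))$, then apply Proposition~\ref{prop_fixation_three_new} to the function $1 - S(\lambda,\lambda';\cdot)$, and finally use Proposition~\ref{prop_appearance_new} with the local function $\zeta \mapsto 1 - f_\ell(\zeta)$. The limit is $(1-\mathrm{e}^{-b(\lambda)R(\lambda)t})\cdot (1 - \mathscr{S}(\lambda,\lambda'))$, as required. The only conceptual subtlety throughout is the necessity of the localization step: it is exactly what allows us to bridge the uniform convergence of landscapes in Proposition~\ref{prop_appearance_new} (available only for local test functions) with the genuinely non-local survival probability $S(\lambda,\lambda';\cdot)$.
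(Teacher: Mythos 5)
Your proposal is correct and follows essentially the same route as the paper's proof: interchange the order of summation, use Proposition~\ref{prop_fixation_one_new} to replace the inner sum by $S(\lambda,\lambda';\psi_N^{-1}(B))$, invoke Proposition~\ref{prop_fixation_three_new} to approximate $S$ uniformly by a local function (the paper's $f$ is exactly your $f_\ell$), and then apply Proposition~\ref{prop_appearance_new}. Your explicit remark that $\hat{\mu}_\lambda$ is supported on $\{\zeta(o)=0\}$ is a nice touch the paper leaves implicit, but the argument is the same.
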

\begin{proof}
	Let us address the first convergence. Let~$\varepsilon > 0$. Fix~$N \in \N$, whose value will be assumed to be large enough along the proof. Also let~$A \in \mathscr{G}_N$. Using Proposition~\ref{prop_fixation_three_new}, we can find a local function~$f:\{0,1\}^{\Z^d}\to \mathbb{R}$ such that
	\begin{equation} \label{eq_bottomof11}|S(\lambda,\lambda';B) - f(B)| < \frac{\varepsilon}{4} \quad \text{for all } B \subseteq \Z^d,\; o \notin B. \end{equation}
	Additionally, using the first part of Proposition~\ref{prop_fixation_one_new}, if~$N$ is large we have
	\[\left| \sum_{B' \in \mathscr{G}_N} \mathcal{V}_N(\lambda,\lambda';B,B') - S(\lambda,\lambda';\psi^{-1}_N(B))\right| < \frac{\varepsilon}{4} \quad \text{for all } B \in \mathscr{G}_N,\; o \notin B.\]
	Noting that
	\[\sum_{B' \in \mathscr{G}_N} \mathscr{S}_{N,t}(\lambda,\lambda';A,B') = \sum_{\substack{B \in \mathscr{G}_N\\o \notin B}} \mathcal{U}_{N,t}(\lambda;A,B) \cdot \sum_{B' \in \mathscr{G}_N} \mathcal{V}_N(\lambda,\lambda';B,B'),\]
	and that~$\sum_{B \in \mathscr{G}_N,o \notin B} \mathcal{U}_{N,t}(\lambda;A,B) \le 1$, we then obtain
	\[\left| \sum_{B' \in \mathscr{G}_N} \mathscr{S}_{N,t}(\lambda,\lambda';A,B') - \sum_{\substack{B \in \mathscr{G}_N\\ o \notin B}} \mathcal{U}_{N,t}(\lambda;A,B)\cdot f(\psi^{-1}_N(B))\right| < \frac{\varepsilon}{4}+\frac{\varepsilon}{4}=  \frac{\varepsilon}{2}.\]
	Next, Proposition~\ref{prop_appearance_new} implies that, increasing~$N$ if necessary, we have
	\[\left| \sum_{\substack{B \in \mathscr{G}_N\\ o \notin B}} \mathcal{U}_{N,t}(\lambda;A,B)\cdot f(\psi^{-1}_N(B)) - (1-\mathrm{e}^{-b(\lambda)R(\lambda)t})\cdot \int f\;\mathrm{d}\hat{\mu}_\lambda \right| < \frac{\varepsilon}{4}.\]
	Finally, recalling that~$\mathscr{S}(\lambda,\lambda') = \int S(\lambda,\lambda';B)\;\hat{\mu}_\lambda(\mathrm{d}B)$ { and using~\eqref{eq_bottomof11},} we have
	\[\left| (1-\mathrm{e}^{-b(\lambda)R(\lambda)t})\cdot \int f\;\mathrm{d}\hat{\mu}_\lambda - (1-\mathrm{e}^{-b(\lambda)R(\lambda)t})\cdot \mathscr{S}(\lambda,\lambda') \right| < \frac{\varepsilon}{4}.\]
	This concludes the proof of the first convergence, and the second is proved in the same way, using the second part of Proposition~\ref{prop_fixation_one_new}.
\end{proof}

\begin{lemma}\label{lem_second_sup}
	If~$\lambda > \lambda_c(\Z^d)$,~$\lambda' < \lambda$ and~$t> 0$, then
	\[\sup_{A \in \mathscr{G}_N}\left|  \sum_{B' \in \mathscr{G}_N}\bar{\mathscr{S}}_{N,t}(\lambda,\lambda';A,B')   - (1-\mathrm{e}^{-b(\lambda)R(\lambda)\cdot t})\right|\xrightarrow{N \to \infty} 0.\]
\end{lemma}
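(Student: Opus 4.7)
The plan is to proceed much as in the proof of Lemma~\ref{lem_first_sup}, but replacing the use of Proposition~\ref{prop_fixation_one_new} (which requires $\lambda' > \lambda$) by Proposition~\ref{prop_no_fixation} (which applies precisely when $\lambda' < \lambda$). The key simplification relative to Lemma~\ref{lem_first_sup} is that here the inner ``fixation" probabilities $\sum_{B' \in \mathscr{G}_N}\bar{\mathcal{V}}_{N}(\lambda,\lambda';B,B')$ are uniformly close to $1$, regardless of the landscape $B$, so no integration against $\hat{\mu}_\lambda$ is needed in the limit.

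First I would unfold the definition and exchange the order of summation:
\[\sum_{B'\in\mathscr{G}_N}\bar{\mathscr{S}}_{N,t}(\lambda,\lambda';A,B')
= \sum_{\substack{B\in\mathscr{G}_N\\ o\notin B}}\mathcal{U}_{N,t}(\lambda;A,B)\cdot\Bigl(\sum_{B'\in\mathscr{G}_N}\bar{\mathcal{V}}_{N}(\lambda,\lambda';B,B')\Bigr).\]
Set $\eta_N := \sup_{B\in\mathscr{G}_N,\,o\notin B}\bigl|1-\sum_{B'\in\mathscr{G}_N}\bar{\mathcal{V}}_N(\lambda,\lambda';B,B')\bigr|$. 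Since each inner sum is at most $1$, Proposition~\ref{prop_no_fixation} gives $\eta_N\to 0$ as $N\to\infty$. Using $\sum_{B,o\notin B}\mathcal{U}_{N,t}(\lambda;A,B)\le 1$, we deduce
\[\sup_{A\in\mathscr{G}_N}\left|\sum_{B'\in\mathscr{G}_N}\bar{\mathscr{S}}_{N,t}(\lambda,\lambda';A,B') \;-\; \sum_{\substack{B\in\mathscr{G}_N\\ o\notin B}}\mathcal{U}_{N,t}(\lambda;A,B)\right|\le\eta_N \xrightarrow{N\to\infty} 0.\]

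Second, I would apply Proposition~\ref{prop_appearance_new} with the local function $f\equiv 1$, for which $\int f\,\mathrm{d}\hat{\mu}_\lambda = 1$. Noting that $\mathcal{U}_{N,t}(\lambda;A,B)=0$ whenever $o\in B$ (since by construction $\mathscr{L}_N\subseteq \Z^d_N\setminus\{o\}$), the sum over $B\in\mathscr{G}_N$ coincides with the sum over $B\in\mathscr{G}_N$ with $o\notin B$, and Proposition~\ref{prop_appearance_new} yields
\[\sup_{A\in\mathscr{G}_N}\left|\sum_{\substack{B\in\mathscr{G}_N\\ o\notin B}}\mathcal{U}_{N,t}(\lambda;A,B)-\bigl(1-\mathrm{e}^{-b(\lambda)R(\lambda)t}\bigr)\right|\xrightarrow{N\to\infty} 0.\]
Combining the two displays via the triangle inequality gives the claim. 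I do not foresee any genuine obstacle here: the only mildly delicate point is verifying that $\mathcal{U}_{N,t}(\lambda;A,B)$ vanishes for $B$ with $o\in B$ so that Proposition~\ref{prop_appearance_new} may be used with $f\equiv 1$; all the substantive work has already been carried out in Propositions~\ref{prop_appearance_new} and~\ref{prop_no_fixation}.
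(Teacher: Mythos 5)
Your proposal is correct and follows essentially the same route as the paper: the paper's proof likewise combines Proposition~\ref{prop_no_fixation} (to replace the inner sums $\sum_{B'}\bar{\mathcal{V}}_N$ by $1$, uniformly in $B$) with Proposition~\ref{prop_appearance_new} applied to $f\equiv 1$, and concludes by the triangle inequality. Your additional check that $\mathcal{U}_{N,t}(\lambda;A,B)=0$ when $o\in B$ is a correct (and slightly more careful) handling of a point the paper passes over silently.
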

\begin{proof}
	Let~$\varepsilon > 0$. Fix~$N$ (to be assumed large) and~$A \in \mathscr{G}_N$. Using the definition of~$\bar{\mathscr{S}}_{N,t}(\lambda,\lambda';A,B')$ and Proposition~\ref{prop_no_fixation}, if~$N$ is large we have
	\[\left| \bar{\mathscr{S}}_{N,t}(\lambda,\lambda';A,B') - \sum_{B \in \mathscr{G}_N} \mathcal{U}_{N,t}(\lambda;A,B)\right| < \frac{\varepsilon}{2}.\]
	Next, using Proposition~\ref{prop_appearance_new} with~$f\equiv 1$, if~$N$ is large we have
	\[\left| \sum_{B \in \mathscr{G}_N} \mathcal{U}_{N,t}(\lambda;A,B) - (1-\mathrm{e}^{-b(\lambda)R(\lambda)\cdot t})\right| < \frac{\varepsilon}{2}.\]
The desired result now follows from the triangle inequality.
\end{proof}

Next, by analogy with { the definition of~$\mathscr{S}(\lambda,\lambda)$ in~\eqref{eq_def_rejection_probability}},  for~$\lambda > \lambda_c(\Z^d)$ and~$A,B' \in \mathscr{G}_N$, define
\begin{equation}
	\label{eq_def_of_barS}
\mathscr{S}_{N,t}(\lambda,\lambda;A,B') := \int_{(0,\infty)\backslash \{\lambda\}} \bar{\mathscr{S}}_{N,t}(\lambda,\lambda';A,B')\;K(\lambda,\mathrm{d}\lambda').
\end{equation}
{
The idea is that~$\mathscr{S}_{N,t}(\lambda,\lambda;A,B')$ accumulates the probabilities of scenarios where a mutant appears (with some type sampled from~$K(\lambda,\cdot)$) but loses the battle against the~$\lambda$ population. 
}
\begin{lemma}\label{lem_new_diagonal}
	For any~$\lambda > \lambda_c(\Z^d)$ and~$t > 0$, we have
	\[\sup_{A \in \mathscr{G}_N} \left| \sum_{B' \in \mathscr{G}_N} \mathscr{S}_{N,t}(\lambda,\lambda;A,B') - (1-\mathrm{e}^{-b(\lambda)R(\lambda)\cdot t})\cdot \mathscr{S}(\lambda,\lambda) \right| \xrightarrow{N \to \infty} 0.\]
\end{lemma}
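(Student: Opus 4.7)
The plan is to reduce the claim to the already-established Lemmas~\ref{lem_first_sup} and~\ref{lem_second_sup} via Fubini and dominated convergence against the probability kernel~$K(\lambda,\cdot)$.

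First, I would swap the finite sum over $B' \in \mathscr{G}_N$ with the integral against $K(\lambda,\cdot)$ in the definition~\eqref{eq_def_of_barS} to write
\[
\sum_{B' \in \mathscr{G}_N} \mathscr{S}_{N,t}(\lambda,\lambda;A,B') = \int_{(0,\infty)\setminus\{\lambda\}} F_N(\lambda';A)\; K(\lambda,\mathrm{d}\lambda'),
\]
where $F_N(\lambda';A):= \sum_{B' \in \mathscr{G}_N} \bar{\mathscr{S}}_{N,t}(\lambda,\lambda';A,B')$. Since the events being summed (over distinct $B'$) are disjoint, we have $0 \le F_N(\lambda';A) \le 1$ for every $\lambda', A, N$. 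Likewise, recalling the definition~\eqref{eq_def_rejection_probability}, I rewrite the target
\[
(1-\mathrm{e}^{-b(\lambda)R(\lambda)t})\cdot \mathscr{S}(\lambda,\lambda) = \int_{(0,\infty)\setminus\{\lambda\}} F(\lambda')\; K(\lambda,\mathrm{d}\lambda'), \quad F(\lambda'):=(1-\mathrm{e}^{-b(\lambda)R(\lambda)t})(1-\mathscr{S}(\lambda,\lambda')).
\]

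Next, for each fixed $\lambda' \neq \lambda$, I combine the two cases $\lambda' > \lambda$ and $\lambda' < \lambda$ to show that $\sup_{A \in \mathscr{G}_N} |F_N(\lambda';A) - F(\lambda')| \to 0$ as $N \to \infty$. Indeed, if $\lambda' > \lambda$, this is precisely the second part of Lemma~\ref{lem_first_sup}. If $\lambda' < \lambda$, then $\mathscr{S}(\lambda,\lambda') = 0$ by definition~\eqref{eq_def_acceptance_probability}, so $F(\lambda') = 1-\mathrm{e}^{-b(\lambda)R(\lambda)t}$, and the convergence is exactly Lemma~\ref{lem_second_sup}.

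Finally, set $g_N(\lambda'):= \sup_{A \in \mathscr{G}_N} |F_N(\lambda';A) - F(\lambda')|$; since $\mathscr{G}_N$ is a finite collection (it is a subset of the power set of the finite graph $\Z^d_N$), $g_N$ is a measurable function of $\lambda'$. By the previous paragraph, $g_N(\lambda') \to 0$ pointwise on $(0,\infty)\setminus\{\lambda\}$, and $g_N(\lambda') \le 2$ uniformly. Applying the dominated convergence theorem with the probability measure $K(\lambda,\cdot)$ yields $\int g_N\;\mathrm{d}K(\lambda,\cdot) \to 0$, and by the triangle inequality
\[
\sup_{A \in \mathscr{G}_N} \left| \int F_N(\lambda';A)\; K(\lambda,\mathrm{d}\lambda') - \int F(\lambda')\; K(\lambda,\mathrm{d}\lambda')\right| \le \int g_N(\lambda')\; K(\lambda,\mathrm{d}\lambda') \xrightarrow{N \to \infty} 0,
\]
which is the desired conclusion. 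The only subtle point here is that the convergence supplied by Lemmas~\ref{lem_first_sup} and~\ref{lem_second_sup} is uniform in $A$ but merely pointwise in $\lambda'$, so one cannot bring the $A$-supremum outside the integral directly; the role of dominated convergence is precisely to promote pointwise-in-$\lambda'$ convergence of the $A$-suprema to convergence of their $K$-integrals. Apart from this, the argument is routine.
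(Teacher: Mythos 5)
Your proof is correct and follows essentially the same route as the paper's: both rewrite the two quantities as integrals against $K(\lambda,\cdot)$ of $\sum_{B'}\bar{\mathscr{S}}_{N,t}(\lambda,\lambda';A,B')$ and its limit, split according to $\lambda'<\lambda$ versus $\lambda'>\lambda$ to invoke Lemma~\ref{lem_second_sup} and the second part of Lemma~\ref{lem_first_sup} respectively, and conclude by dominated convergence. Your explicit device of integrating the $A$-supremum $g_N(\lambda')$ is a slightly more careful rendering of the uniformity-in-$A$ claim that the paper states in passing, but it is the same argument.
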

{
\begin{proof}
	Fix~$\lambda > \lambda_c(\Z^d)$ and~$A \in \mathscr{G}_N$.  Using the definition of~$\mathscr{S}(\lambda,\lambda)$ in~\eqref{eq_def_rejection_probability} and the definition of~$\mathscr{S}_{N,t}(\lambda,\lambda;A,B')$ in~\eqref{eq_def_of_barS}, we bound the absolute value that appears in the statement of the lemma by:
	\begin{align*}
		&\int_{(0,\lambda)} \left|\sum_{B' \in \mathscr{G}_N} \bar{\mathscr{S}}_{N,t}(\lambda,\lambda';A,B')-(1-\mathrm{e}^{-b(\lambda)R(\lambda)\cdot t})\right|\;K(\lambda,\mathrm{d}\lambda') \\
		&+\quad  \int_{(\lambda,\infty)} \left|\sum_{B' \in \mathscr{G}_N} \bar{\mathscr{S}}_{N,t}(\lambda,\lambda';A,B') -(1-\mathrm{e}^{-b(\lambda)R(\lambda)\cdot t})\cdot (1-{\mathscr{S}}(\lambda,\lambda')) \right| \;K(\lambda,\mathrm{d}\lambda').
	\end{align*}
	Now, the first integral on the right-hand side tends to zero as~$N \to \infty$ (uniformly in~$A$) by Lemma~\ref{lem_second_sup} and the dominated convergence theorem, and the second integral tends to zero as~$N \to \infty$ (uniformly in~$A$) by the second part of Lemma~\ref{lem_first_sup}  and the dominated convergence theorem.
\end{proof}
}

	Putting together the first part of Lemma~\ref{lem_first_sup} and Lemma~\ref{lem_new_diagonal}, we have that for any~$\lambda > \lambda_c(\Z^d)$ and any~$\lambda' \in [\lambda,\infty)$,
	\[\sup_{A \in \mathscr{G}_N} \left| \sum_{B' \in \mathscr{G}_N} \mathscr{S}_{N,t}(\lambda,\lambda';A,B') - (1-\mathrm{e}^{-b(\lambda)R(\lambda)\cdot t})\cdot \mathscr{S}(\lambda,\lambda') \right| \xrightarrow{N \to \infty} 0.\]
	The following lemma easily follows from this and induction.

\begin{lemma}\label{prop_multiple_new}
	For any~$k \in \N$,~$\lambda_c <\lambda_0\le \lambda_1 \le \cdots  \lambda_k$ and~$t_1,\ldots, t_k> 0 $, letting~$A_0 = \Z^d_N$, we have
	\begin{equation*}\begin{split}
	&\sum_{A_1,\ldots, A_k \in \mathscr{G}_N} \;\prod_{j=1}^k \mathscr{S}_{N,t_j}(\lambda_{j-1},\lambda_j;A_{j-1},A_j) \\ &\hspace{2cm}\xrightarrow{N \to \infty} \;\prod_{j=1}^k (1-\mathrm{e}^{-b(\lambda_{j-1})R(\lambda_{j-1})\cdot t_j})\cdot \mathscr{S}(\lambda_{j-1},\lambda_j). \end{split}
	\end{equation*}
\end{lemma}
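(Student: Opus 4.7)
My plan is to prove the result by induction on $k$, but I will strengthen the statement to include uniform convergence in the initial set $A_0 \in \mathscr{G}_N$. The precise inductive claim I would establish is:
\begin{equation*}
\sup_{A_0 \in \mathscr{G}_N}\left|\sum_{A_1,\ldots,A_k \in \mathscr{G}_N}\prod_{j=1}^k \mathscr{S}_{N,t_j}(\lambda_{j-1},\lambda_j;A_{j-1},A_j) - \prod_{j=1}^k (1-\mathrm{e}^{-b(\lambda_{j-1})R(\lambda_{j-1})t_j})\mathscr{S}(\lambda_{j-1},\lambda_j)\right| \xrightarrow{N\to\infty} 0.
\end{equation*}
The lemma as stated then follows by noting that $A_0 = \Z^d_N$ belongs to $\mathscr{G}_N$ (it intersects every set). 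The base case $k=1$ is exactly the combined result of Lemma~\ref{lem_first_sup} and Lemma~\ref{lem_new_diagonal} quoted immediately before the statement of the lemma, valid since the assumption $\lambda_0 \le \lambda_1$ allows both cases $\lambda_1 > \lambda_0$ and $\lambda_1 = \lambda_0$.

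For the inductive step, I would write
\begin{equation*}
\sum_{A_1,\ldots,A_k \in \mathscr{G}_N}\prod_{j=1}^k \mathscr{S}_{N,t_j}(\lambda_{j-1},\lambda_j;A_{j-1},A_j) = \sum_{A_1 \in \mathscr{G}_N} \mathscr{S}_{N,t_1}(\lambda_0,\lambda_1;A_0,A_1)\cdot T_N(A_1),
\end{equation*}
where $T_N(A_1) := \sum_{A_2,\ldots,A_k\in\mathscr{G}_N} \prod_{j=2}^k \mathscr{S}_{N,t_j}(\lambda_{j-1},\lambda_j;A_{j-1},A_j)$. By the inductive hypothesis applied to the sequence $\lambda_1\le \cdots\le \lambda_k$, we have $T_N(A_1) \to L := \prod_{j=2}^k (1-\mathrm{e}^{-b(\lambda_{j-1})R(\lambda_{j-1})t_j})\mathscr{S}(\lambda_{j-1},\lambda_j)$ uniformly in $A_1\in\mathscr{G}_N$. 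Writing $T_N(A_1) = L + \varepsilon_N(A_1)$ with $\sup_{A_1\in\mathscr{G}_N}|\varepsilon_N(A_1)|\to 0$, the displayed sum equals
\begin{equation*}
L\cdot \sum_{A_1\in\mathscr{G}_N}\mathscr{S}_{N,t_1}(\lambda_0,\lambda_1;A_0,A_1) + \sum_{A_1\in\mathscr{G}_N}\mathscr{S}_{N,t_1}(\lambda_0,\lambda_1;A_0,A_1)\cdot\varepsilon_N(A_1).
\end{equation*}
The first term converges (uniformly in $A_0$) to $L\cdot (1-\mathrm{e}^{-b(\lambda_0)R(\lambda_0)t_1})\mathscr{S}(\lambda_0,\lambda_1)$ by the base case. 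For the error term, I would use the crucial bound $\sum_{A_1\in\mathscr{G}_N}\mathscr{S}_{N,t_1}(\lambda_0,\lambda_1;A_0,A_1)\le 1$, obtained from the probabilistic interpretation: unrolling the definitions of $\mathscr{S}_{N,t_1}$, $\bar{\mathscr{S}}_{N,t_1}$, $\mathcal{U}_{N,t_1}$, and $\mathcal{V}_N$ (respectively $\bar{\mathcal{V}}_N$), these are probabilities of families of disjoint events, and $K(\lambda,\cdot)$ is a probability kernel; a short computation gives the total mass at most $1$ in both cases $\lambda_1>\lambda_0$ and $\lambda_1=\lambda_0$. Hence the error term is bounded by $\sup_{A_1}|\varepsilon_N(A_1)|\to 0$, uniformly in $A_0\in\mathscr{G}_N$.

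There is no real obstacle here; the argument is a routine telescoping induction. The only point requiring minor care is the strengthening of the statement to uniform convergence in $A_0\in\mathscr{G}_N$ (which is what allows the induction to close on itself), together with the simple observation that the single-round ``transition masses'' $\sum_{A_1}\mathscr{S}_{N,t_1}(\lambda_0,\lambda_1;A_0,A_1)$ are bounded by one uniformly in $N$ and $A_0$, so that vanishing errors do not blow up when multiplied by them.
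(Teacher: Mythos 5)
Your proof is correct and takes essentially the same route as the paper, which simply asserts that the lemma "easily follows from this and induction," where "this" is the uniform-in-$A$ single-round convergence you use as your base case; your induction, with the strengthening to uniformity in $A_0 \in \mathscr{G}_N$ and the observation that $\sum_{A_1}\mathscr{S}_{N,t_1}(\lambda_0,\lambda_1;A_0,A_1)\le 1$, is exactly the argument the authors leave implicit.
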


Now define, for~$\lambda > \lambda_c(\Z^d)$,~$I \subseteq [\lambda,\infty)$ and~$A,A' \in \mathscr{G}_N$,
\begin{align*}
{\mathscr{K}}_{N,t}(\lambda,I;A,A')& :=\mathscr{S}_{N,t}(\lambda,\lambda;A,A')\cdot  \delta_\lambda(I) + \int_I \mathscr{S}_{N,t}(\lambda,\lambda';A,A')\;K(\lambda,\mathrm{d}\lambda') .\end{align*}
Letting~$D(\lambda,I) = \delta_\lambda(I)$ for all~$\lambda$ and~$I$, we can also write
\begin{equation}\label{eq_new_kd}
	{\mathscr{K}}_{N,t}(\lambda,I;A,A')= \int_I \mathscr{S}_{N,t}(\lambda,\lambda';A,A')\;(K+D)(\lambda,\mathrm{d}\lambda').
\end{equation}

\subsection{Multiple rounds: convergence of holding times and jump locations}\label{ss_multiple}

For the rest of this section, we consider the adaptive contact process~$(X_{N,t})_{t \ge 0}$ started from~$X_{N,0} \equiv \lambda_0$, where~$\lambda_0 > \lambda_c(\Z^d)$.  Define~$T_{N,1}:= T_N$, which as before denotes the time of the first birth with mutation, and recursively define
\[T_{N,k} := \inf\{t > T_{N,k-1}: \text{a birth with mutation occurs at time~$t$}\},\quad k \ge 2.\]
Now, for~$k \in \N$ let
\begin{align*}
&T_{N,k}':= \inf\{t > T_{N,k}: \text{only one type is present at time $t$}\},\\[.2cm]
&T_{N,k}'':= T_{N,k} + \tn. \end{align*}
We then define, for~$k\in \N$, the event
\begin{align*}E_{N,k}&:= \{T_{N,1} < T_{N,1}' < T_{N,1}'' < \cdots < T_{N,k} < T_{N,k}' < T_{N,k}''\}\\
	&\quad \cap \{\{u: X_{N,t}(u) \neq \varnothing\} \in \mathscr{G}_N \text{ for each } t \in \{T_{N,1}'',T_{N,2}'',\ldots,T_{N,k}''\}\}.
\end{align*}
 In words, this event means that for all $l \in \{1,\ldots,k\}$, after the appearance of the $l$-th mutant, within a time interval of length $\tn$ the conflict is resolved, i.e., either only the mutant or the previous resident type remains, and the remaining type is sufficiently densely populated in $\Z^d_N$ (characterized in terms of $\mathscr G_N$). \color{black}
Note that~$E_{N,1}\supseteq E_{N,2}\supseteq \ldots$. 
Next, let
\[\Lambda_{N,k} := \mathds{1}_{E_{N,k}}\cdot \Phi(X_{N,T_{N,k}'}),\quad k \in \N,\]
that is, on the event~$E_{N,k}$,~$\Lambda_{N,k}$ is the (unique) type that is present at time~$T_{N,k}'$; note that this is the type that wins the conflict in round~$k$. 
{ 
\begin{remark}
	\label{rem_weaker}
	There is nothing in the definition of~$E_{N,k}$ preventing the possibility that~$\Lambda_{N,l+1} < \Lambda_{N,l}$ for some~$l$ (that is, fixation of a mutant that is weaker than the population where it appears), but the probability of this situation is small when~$N$ is large and vanishes in the scaling limit, see~\eqref{eq_Enk_tends} below.
\end{remark}
}
Finally, define
\[\sigma_{N,1}:= \mathds{1}_{E_{N,1}}\cdot \delta_N N^d\cdot T_{N,1}'\]
and
\[\sigma_{N,k}:= \mathds{1}_{E_{N,k}}\cdot  \delta_N N^d\cdot (T_{N,k}'-T_{N,k-1}'),\quad k \ge 2.\]
 That is, on the event $E_{N,k}$, $\sigma_{N,k}$ is the rescaled increment between the time $T'_{N,k}$ where either the $k$-th mutant or the previous resident goes extinct and the analogously defined event $T'_{N,k-1}$ corresponding to the previous mutant (with $T'_{N,0}=0$). \color{black}

\begin{lemma} \label{lem_convergence_lv}
	For any~$k \in \N$, we have
	\begin{equation}\label{eq_Enk_tends}
		\P(E_{N,k} \cap \{\lambda_0 \le \Lambda_{N,1} \le \cdots \le \Lambda_{N,k}\}) \xrightarrow{N \to \infty} 1.
	\end{equation}
Moreover,
	\begin{equation}\label{eq_with_tildes}\mathds{1}_{E_{N,k}} \cdot (\sigma_{N,1},\Lambda_{N,1},\ldots,\sigma_{N,k},\Lambda_{N,k}) \xrightarrow{N \to \infty} (\sigma_1,\Lambda_1,\ldots,\sigma_k,\Lambda_k)\; \text{in distribution},\end{equation}
	where~$(\sigma_1,\Lambda_1,\ldots,\sigma_k,\Lambda_k)$ is distributed as in~\eqref{eq_original_K}. 
\end{lemma}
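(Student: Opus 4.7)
The plan is to prove both assertions simultaneously by induction on $k$, reducing a multi-round expectation to a product via the strong Markov property at each stopping time $T_{N,j}''$, and then invoking the one-round convergences packaged in Lemmas~\ref{lem_first_sup}, \ref{lem_second_sup}, \ref{lem_new_diagonal} and \ref{prop_multiple_new}. Concretely, for bounded measurable $g_1,\ldots,g_k:(0,\infty)\to\R$ and $t_1,\ldots,t_k>0$, I would compute
\[
E_N:=\E\!\left[\mathds{1}_{E_{N,k}}\prod_{j=1}^k\mathds{1}\{\sigma_{N,j}\le t_j\}\,g_j(\Lambda_{N,j})\right]
\]
and show that its limit matches the right-hand side of~\eqref{eq_original_K}. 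Taking $g_j\equiv 1$ and $t_j\to\infty$ will then yield $\P(E_{N,k})\to 1$. The monotonicity statement $\lambda_0\le\Lambda_{N,1}\le\cdots\le\Lambda_{N,k}$ reduces to controlling, for each $j\in\{1,\ldots,k\}$, the event that a mutant of type $\lambda'<\Lambda_{N,j-1}$ drawn from $K(\Lambda_{N,j-1},\cdot)$ reaches fixation; Proposition~\ref{prop_no_fixation} bounds this probability uniformly over landscapes in $\mathscr{G}_N$, and a union bound over the at most $k$ rounds closes the claim.

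To evaluate $E_N$, I would apply the strong Markov property at $T_{N,j}''$ for $j=0,\ldots,k-1$ (with $T_{N,0}'':=0$ and $A_{N,0}:=\Z^d_N\in\mathscr{G}_N$). On $E_{N,k}$, the configuration at time $T_{N,j}''$ is of the single-type form $\Lambda_{N,j}\cdot\mathds{1}_{A_{N,j}}$ with $A_{N,j}\in\mathscr{G}_N$, and conditional on it, the evolution in $[T_{N,j}'',T_{N,j+1}'']$ is a fresh adaptive contact process started from this configuration. The discrepancy between $\sigma_{N,j}=\delta_NN^d(T_{N,j}'-T_{N,j-1}')$ and the corresponding "stage-1 appearance time" of the fresh round is at most $\delta_NN^d\cdot \tn=\delta_NN^{d+1+\varepsilon_0/2}\to 0$ by~\eqref{eq_assumption_delta}, and Lemma~\ref{lem_bound_Ttn} rules out a secondary mutation during stage 2. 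This lets me write
\[
E_N=\sum_{A_1,\ldots,A_k\in\mathscr{G}_N}\int\!\cdots\!\int\prod_{j=1}^k \mathscr{K}_{N,t_j}(\lambda_{j-1},\mathrm{d}\lambda_j;A_{j-1},A_j)\,g_j(\lambda_j)+o(1),
\]
with $\lambda_0$ the starting type. The \emph{uniform-in-$A$} convergences provided by Lemmas~\ref{lem_first_sup}, \ref{lem_second_sup}, \ref{lem_new_diagonal}, combined with~\eqref{eq_new_kd}, yield the analogous uniform convergence $\sum_{A'\in\mathscr{G}_N}\mathscr{K}_{N,t}(\lambda,\cdot;A,A')\to(1-\mathrm{e}^{-b(\lambda)R(\lambda)t})\mathscr{K}(\lambda,\cdot)$. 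Iterating this $k$ times, exactly as in the proof of Lemma~\ref{prop_multiple_new}, converts the display above into the right-hand side of~\eqref{eq_original_K}.

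The main obstacle is the bookkeeping needed to justify the strong Markov reduction: one must check that, with probability tending to one, the configuration at each $T_{N,j}''$ really is of the single-type form $\Lambda_{N,j}\cdot\mathds{1}_{A_{N,j}}$ with $A_{N,j}\in\mathscr{G}_N$, since only then do the uniform-in-$A$ versions of Propositions~\ref{prop_appearance_new}, \ref{prop_fixation_one_new} and \ref{prop_no_fixation} apply cleanly to the next round. This closing-into-$\mathscr{G}_N$ property is precisely why the returning sets $B'$ in Propositions~\ref{prop_fixation_one_new} and \ref{prop_no_fixation} are restricted to $\mathscr{G}_N$, and it is what forces $\P(E_{N,k})\to 1$ to be established jointly with the distributional statement rather than separately. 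A secondary technical wrinkle is aligning $\sigma_{N,k}$ (measured from $T_{N,k-1}'$ to $T_{N,k}'$) with the natural "stage-1 time" of the Markov-restarted round (measured from $T_{N,k-1}''$ to $T_{N,k}$); as noted above, both endpoints differ by at most $\tn$, whose rescaling vanishes, so the discrepancy is absorbed into the $o(1)$ error.
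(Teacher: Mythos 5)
Your proposal follows essentially the same route as the paper's proof: the strong Markov property at the times $T_{N,j}''$, the replacement of $\sigma_{N,j}$ by the stage-1 appearance time at cost $\delta_N N^d\cdot 2\tn\to 0$, the reduction to a nested product of the one-round kernels $\mathscr{K}_{N,t_j}$ handled by Lemma~\ref{prop_multiple_new}, and finally $g_j\equiv 1$, $t\to\infty$ to extract $\P(E_{N,k})\to 1$. The only (harmless) difference is that the paper inserts the indicator of $\{\lambda_0\le\Lambda_{N,1}\le\cdots\le\Lambda_{N,k}\}$ into the expectation so that the identity with the kernels $\mathscr{K}_{N,t_j}$ (which only charge $[\lambda_{j-1},\infty)$) is exact, whereas you keep that event outside and absorb the weak-mutant-fixation contributions into an $o(1)$ error via Proposition~\ref{prop_no_fixation} and a union bound; both are valid.
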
 

\begin{proof}
	Define the auxiliary random variables
	\[\tilde\sigma_{N,1}:=   \mathds{1}_{E_{N,1}}\cdot \delta_N N^d\cdot T_{N,1}\]
and
\[\tilde\sigma_{N,j}:= \mathds{1}_{E_{N,j}}\cdot  \delta_N N^d\cdot (T_{N,j}-T_{N,j-1}''),\quad j \ge 2.\]
	Note that, for any~$k \in \N$ and any~$j \in \{1,\ldots k\}$, using the definition of~$E_{N,k}$ we have
	\[\mathds{1}_{E_{N,k}} \cdot  |\tilde{\sigma}_{N,j} - \sigma_{N,j}| \le \delta_NN^d\cdot 2\tn = 2\delta_N N^{d+1+\frac{\varepsilon_0}{2}} \xrightarrow{N\to \infty} 0.\]
	With this observation in mind, we note that in order to prove~\eqref{eq_with_tildes}, it suffices to prove
	\begin{equation}\label{eq_with_tildes1}\mathds{1}_{E_{N,k}} \cdot (\tilde\sigma_{N,1},\Lambda_{N,1},\ldots,\tilde\sigma_{N,k},\Lambda_{N,k}) \xrightarrow{N \to \infty} (\sigma_1,\Lambda_1,\ldots,\sigma_k,\Lambda_k)\; \text{in distribution}.\end{equation}

Fix~$k \in \N$,~$t_1,\ldots, t_k > 0$ and functions~$g_1,\ldots,g_k: (0,\infty) \to \mathbb{R}$ that are measurable and bounded.
	Using the strong Markov property and the definition of~${\mathscr{K}}_{N,t}$, we have
	\begin{align}
		\label{eq_this_exp}&\E\left[ \mathds{1}_{E_{N,k}}\cdot \mathds{1}\{\lambda_0 \le \Lambda_{N,1} \le \cdots \le \Lambda_{N,k}\}\cdot  \prod_{j=1}^k \mathds{1}\{\tilde{\sigma}_{N,j} \le t_j\}\cdot g_j(\Lambda_{N,j})\right]\\
		\nonumber&=\sum_{A_1 \in \mathscr{G}_N}\int_{{ \lambda_0} }^\infty {\mathscr{K}}_{N,t_1}(\lambda_0,\mathrm{d}\lambda_1;\Z^d_N,A_1)\sum_{A_2 \in \mathscr{G}_N} \int_{\lambda_1}^\infty{\mathscr{K}}_{N,t_2}(\lambda_1,\mathrm{d}\lambda_2;A_1,A_2)\\
		\nonumber&\hspace{3cm}\cdots \sum_{A_k \in \mathscr{G}_N} \int_{\lambda_{k-1}}^\infty {\mathscr{K}}_{N,t_k}(\lambda_{k-1},\mathrm{d}\lambda_k; A_{k-1},A_k)\cdot  \prod_{j=1}^k g_j(\lambda_j).
	\end{align}
	Using~\eqref{eq_new_kd}, this equals {
	\begin{align*}
		&\sum_{A_1 \in \mathscr{G}_N}\int_{\lambda_0}^\infty (K+D)(\lambda_0,\mathrm{d}\lambda_1)\; \mathscr{S}_{N,t_1}(\lambda_0,\lambda_1;\Z^d_N,A_1)\\
		&\quad \cdot \sum_{A_2 \in \mathscr{G}_N} \int_{\lambda_1}^\infty (K+D)(\lambda_1,\mathrm{d}\lambda_2)\;\mathscr{S}_{N,t_2}(\lambda_1,\lambda_2;A_1,A_2)\\
		&\quad \cdots \sum_{A_k \in \mathscr{G}_N} \int_{\lambda_{k-1}}^\infty (K+D)(\lambda_{k-1},\mathrm{d}\lambda_k)\;\mathscr{S}_{N,t_k}(\lambda_{k-1},\lambda_k;A_{k-1},A_k)\cdot  \prod_{j=1}^k g_j(\lambda_j).
	\end{align*}
Letting~$A_0 = \Z^d_N$, this is further equal to
	\begin{align*}
		&\int_{\lambda_0}^\infty (K+D)(\lambda_0,\mathrm{d}\lambda_1)\cdots \int_{\lambda_{k-1}}^\infty (K+D)(\lambda_{k-1},\mathrm{d}\lambda_k)\cdot \\ &\hspace{3cm}\left( \sum_{A_1,\ldots, A_k \in \mathscr{G}_N }\;\prod_{j=1}^k \mathscr{S}_{N,t_j}(\lambda_{j-1},\lambda_j;A_{j-1},A_j) \right) \cdot \left( \prod_{j=1}^k g_j(\lambda_j)\right).
	\end{align*}
	}
	By Lemma~\ref{prop_multiple_new} and the dominated convergence theorem, as~$N \to \infty$ this converges to
	\begin{align*}
	&\int_{\lambda_0}^\infty (K+D)(\lambda_0,\mathrm{d}\lambda_1)\cdots \int_{\lambda_{k-1}}^\infty (K+D)(\lambda_{k-1},\mathrm{d}\lambda_k)\cdot \\ &\hspace{3cm} \prod_{j=1}^k(1-\mathrm{e}^{-b(\lambda_{j-1})R(\lambda_{j-1})\cdot t_j})\cdot \mathscr{S}(\lambda_{j-1},\lambda_j)\cdot g_j(\lambda_j).	\end{align*}
	By~\eqref{eq_alt_alt_ker}, this is equal to
	\[ \int_{\lambda_0}^\infty {\mathscr{K}}(\lambda_0,\mathrm{d}\lambda_1)\cdots \int_{\lambda_{k-1}}^\infty {\mathscr{K}}(\lambda_{k-1},\mathrm{d}\lambda_k)\;\prod_{j=1}^k(1-\mathrm{e}^{-b(\lambda_{j-1})R(\lambda_{j-1})\cdot t_j})\cdot g_j(\lambda_j).\]
We have thus proved that
	\begin{equation}\label{eq_almost_tconv} \begin{split} &E_{N,k}\cdot \mathds{1}\{\lambda_0 \le \Lambda_{N,1} \le \cdots \le \Lambda_{N,k}\}\cdot (\tilde{\sigma}_{N,1},\Lambda_{N,1},\ldots,\tilde{\sigma}_{N,k},\Lambda_{N,k})\\
		&\hspace{3cm} \xrightarrow{N \to \infty}(\sigma_1,\Lambda_1,\ldots,\sigma_k,\Lambda_k) \quad \text{in distribution}.
	\end{split}\end{equation}
	Moreover, using the convergence of the expectation in~\eqref{eq_this_exp} with~$t_1=\cdots=t_k = t$ and~$g_1 = \cdots = g_k \equiv 1$, we have
	\begin{align*}
		&\liminf_{N\to \infty}\P\left(E_{N,k} \cap \{\lambda_0 \le \Lambda_{N,1} \le \cdots \le \Lambda_{N,k}\}\right) \\
		&\ge \liminf_{N \to \infty} \P\left(E_{N,k} \cap \{\lambda_0 \le \Lambda_{N,1} \le \cdots \le \Lambda_{N,k}\}\cap\left\{ \max_{1\le j \le k} \tilde{\sigma}_{N,j} \le t\right\}\right) \\
		&= \int_{\lambda_0}^\infty {\mathscr{K}}(\lambda_0,\mathrm{d}\lambda_1)\cdots \int_{\lambda_{k-1}}^\infty {\mathscr{K}}(\lambda_{k-1},\mathrm{d}\lambda_k)\;\prod_{j=1}^k(1-\mathrm{e}^{-b(\lambda_{j-1})R(\lambda_{j-1})\cdot t}).
	\end{align*}
	By dominated convergence, by taking~$t$ large we can make the last expression as close to 1 as desired { (also using the fact that~$b$ is strictly positive)}, proving~\eqref{eq_Enk_tends}. Combining this with~\eqref{eq_almost_tconv}, we also obtain~\eqref{eq_with_tildes1}.
\end{proof}

\begin{proof}[Proof of Theorem~\ref{thm_main}, part (a)]
Since the processes
	\[\int_0^{t} \mathds{1}\{\Phi(X_{N,(\delta_NN^d)^{-1}\cdot s}) = \Asterisk\}\;\mathrm{d}s,\quad t \ge 0\]
	 indexed by $N$ \color{black} are non-negative and increasing  with respect to time\color{black}, the convergence in the statement will follow from proving that, for any~$t > 0$,
	\begin{equation}\label{eq_convp}\int_0^{t} \mathds{1}\{\Phi(X_{N,(\delta_NN^d)^{-1}\cdot s}) = \Asterisk\}\;\mathrm{d}s \xrightarrow{N\to \infty} 0\quad \text{in probability}.\end{equation}
Fix~$t > 0$. Recalling~\eqref{eq_Z_fromjumps} and our non-explosiveness assumption, given~$\varepsilon > 0$ we can choose~$k \in \N$ such that
		\[\P\left(\sum_{j=1}^k \sigma_j > t\right) > 1-\varepsilon.\]
		Then, by Lemma~\ref{lem_convergence_lv}, for~$N$ large enough we have
		\[\P\left(E_{N,k} \cap \left\{\sum_{j=1}^k \sigma_{N,j} > t\right\}\right) > 1-2\varepsilon.\]
		Now, on the event~$E_{N,k} \cap \left\{\sum_{j=1}^k \sigma_{N,j} > t\right\}$, we have
	\[[0, (\delta_NN^d)^{-1}\cdot t] \cap \{t':\;\Phi(X_{N, t'}) = \Asterisk\}  \subseteq \cup_{j=1}^k [T_{N,j},T_{N,j}'] \subseteq \cup_{j=1}^k [T_{N,j},T_{N,j}''],\]
	so the Lebesgue measure of the set on the left-hand side is at most~$k\cdot \tn$, so, after changing variables,
	\[\int_0^{t} \mathds{1}\{\Phi(X_{N,(\delta_NN^d)^{-1}\cdot s}) = \Asterisk\}\;\mathrm{d}s \le \delta_NN^d\cdot k\cdot \tn = \delta_NN^{d+1+\frac{\varepsilon_0}{2}} \xrightarrow{N \to \infty} 0.\]
\end{proof}

Now define, for~$k \in \N$,
	\[Z_t^{(k)} = \lambda_0\cdot \mathds{1}_{[0,\sigma_1)}(t) +  \sum_{j=1}^{k-1} \Lambda_j \cdot \mathds{1}_{[\sum_{i=1}^j \sigma_i,\;\sum_{i=1}^{j+1} \sigma_i)}(t),\quad t \ge 0\]
	and, for each~$N$,
	\[Z_{N,t}^{(k)} = \mathds{1}_{E_{N,k}}\cdot  \left(\lambda_0\cdot \mathds{1}_{[0,\sigma_{N,1})}(t) +  \sum_{j=1}^{k-1} \Lambda_{N,j} \cdot \mathds{1}_{[\sum_{i=1}^j \sigma_{N,i},\;\sum_{i=1}^{j+1} \sigma_{N,i})}(t)\right),\quad t \ge 0.\]
Note that
\begin{equation*}
	\text{on } E_{N,k},\quad Z_{N,t} = Z_{N,t}^{(k)} \quad \text{for } 0 \le t < \sum_{j=1}^k \sigma_{N,j}.
\end{equation*}

\begin{lemma}\label{lem_conv_Skk}
	For each~$k \in \N$, we have that~$(Z_{N,t}^{(k)})_{t \ge 0}$ converges in distribution as~$N \to \infty$ (with respect to the Skorokhod topology) to~$(Z_t^{(k)})_{t \ge 0}$.
\end{lemma}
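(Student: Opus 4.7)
The plan is to obtain this lemma as a direct consequence of Lemma~\ref{lem_convergence_lv} via the continuous mapping theorem, since both processes are simple deterministic functions of finitely many random variables. The only real point to check is the continuity of the map that builds the step function from its parameters, at a set of full measure under the limit distribution.

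More precisely, define the ``pasting map'' $\Psi : (0,\infty)^{2k} \to D([0,\infty),\mathbb{R})$ by
\[
\Psi(s_1,l_1,\ldots,s_k,l_k)(t) := \lambda_0 \cdot \mathds{1}_{[0,s_1)}(t) + \sum_{j=1}^{k-1} l_j \cdot \mathds{1}_{[\sum_{i=1}^j s_i,\; \sum_{i=1}^{j+1} s_i)}(t),
\]
so that $Z^{(k)} = \Psi(\sigma_1,\Lambda_1,\ldots,\sigma_k,\Lambda_k)$ and, on the event $E_{N,k}$, also $Z^{(k)}_N = \Psi(\sigma_{N,1},\Lambda_{N,1},\ldots,\sigma_{N,k},\Lambda_{N,k})$. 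I claim $\Psi$ is continuous, with respect to the Skorokhod topology on the target, at every point with all $s_j>0$: for such a point the partial sums $\sum_{i=1}^j s_i$ are strictly increasing, so small perturbations of the inputs produce a step function with $k$ distinct jumps at nearby times and nearby heights, and a finite number of jumps converging in position and value yields Skorokhod convergence. Since the random variables $\sigma_j$ are exponentially distributed with strictly positive rate $b(\Lambda_{j-1})R(\Lambda_{j-1})$, the limit distribution assigns full measure to the continuity set of $\Psi$.

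It remains to remove the factor $\mathds{1}_{E_{N,k}}$. Lemma~\ref{lem_convergence_lv} gives both $\P(E_{N,k}) \to 1$ and the joint convergence
\[
\mathds{1}_{E_{N,k}} \cdot (\sigma_{N,1},\Lambda_{N,1},\ldots,\sigma_{N,k},\Lambda_{N,k}) \xrightarrow{N\to\infty} (\sigma_1,\Lambda_1,\ldots,\sigma_k,\Lambda_k).
\]
Since $\P(E_{N,k}^c) \to 0$, a standard bound of the form $|\E f(X\mathds{1}_A) - \E f(X)| \le 2\|f\|_\infty \P(A^c)$ for any bounded continuous test function $f$ shows that the unindicated tuples $(\sigma_{N,1},\Lambda_{N,1},\ldots,\sigma_{N,k},\Lambda_{N,k})$ converge in distribution to the same limit. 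Applying the continuous mapping theorem to $\Psi$ then yields
\[
\Psi(\sigma_{N,1},\Lambda_{N,1},\ldots,\sigma_{N,k},\Lambda_{N,k}) \xrightarrow{d} Z^{(k)}.
\]
Finally, the difference $Z^{(k)}_N - \Psi(\sigma_{N,1},\Lambda_{N,1},\ldots)$ is supported on $E_{N,k}^c$, whose probability vanishes, so $Z^{(k)}_N$ has the same limit in distribution. The only mildly delicate step is the Skorokhod continuity of $\Psi$ on its full-measure continuity set, which is the classical fact recalled above; everything else is bookkeeping.
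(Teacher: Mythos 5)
Your proposal is correct and follows essentially the same route as the paper: the paper's proof likewise consists of invoking Lemma~\ref{lem_convergence_lv} together with the Skorokhod-continuity of the pasting map $(t_1,\lambda_1,\ldots,t_k,\lambda_k)\mapsto \lambda_0\mathds{1}_{[0,t_1)}+\sum_j \lambda_j\mathds{1}_{[\sum_{i\le j}t_i,\sum_{i\le j+1}t_i)}$ on $(0,\infty)^{2k}$. Your extra care in identifying the continuity set and in discarding the indicator $\mathds{1}_{E_{N,k}}$ via $\P(E_{N,k}^c)\to 0$ only fills in details the paper leaves implicit.
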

\begin{proof}
	The statement follows from Lemma~\ref{lem_convergence_lv} and the observation that the mapping from~$(0,\infty)^{2k}$ to the Skorokhod space~$D([0,\infty),(0,\infty))$ given by
	\begin{equation*}(t_1,\lambda_1,\ldots,t_k,\lambda_k) \mapsto \gamma(\cdot) := \lambda_0 \cdot \mathds{1}_{[0,t_1)}(\cdot) + \sum_{j=1}^{k-1} \lambda_j \cdot \mathds{1}_{[\sum_{i=1}^j t_i,\;\sum_{i=1}^{j+1} t_i)}(\cdot) 
	\end{equation*}
	is continuous.	
\end{proof}

\begin{proof}[Proof of Theorem~\ref{thm_main}, part~(b)]
	It suffices to prove that
	\[\E\left[F((Z_{N,t})_{t \ge 0})\right] \xrightarrow{N \to \infty} \E\left[F((Z_{t})_{t \ge 0})\right]\]
	for any function~$F: D([0,\infty),[0,\infty)) \to \mathbb{R}$ that is bounded, measurable, and so that~$F(\gamma)$ only depends on~$\gamma$ through~$(\gamma(t))_{0 \le t \le \bar{t}}$ for some~$\bar{t} > 0$. We note that, for any~$k$,
	\begin{align}
\nonumber		&\left| \E[F((Z_{t})_{t \ge 0})] - \E[F((Z^{(k)}_{t})_{t \ge 0})]\right| \\
		\label{eq_last_triangle1}&\quad\le \|F\|_\infty \cdot  \P((Z_{t})_{0 \le t \le \bar{t}} \neq (Z_{t}^{(k)})_{0 \le t \le \bar{t}}) \le\|F\|_\infty \cdot \P(\sigma_k \le \bar{t}).
\end{align}
Similarly, for any~$k$ and~$N$,
	\begin{align}
		\nonumber&\left| \E[F((Z_{N,t})_{t \ge 0})] - \E[F((Z^{(k)}_{N,t})_{t \ge 0})]\right| \\
		\label{eq_last_triangle2}&\quad  \le \|F\|_\infty\cdot (\P((E_{N,k})^c) + \P(E_{N,k} \cap \{\sigma_{N,k} \le \bar{t}\})).
\end{align}
	Fix~$\varepsilon > 0$. By our non-explosiveness assumption, we can choose~$k$ large enough that the right-hand side of~\eqref{eq_last_triangle1} is smaller than~$\varepsilon/6$. Next, since Lemma~\ref{lem_convergence_lv} implies that
	\[\P((E_{N,k})^c)\xrightarrow{N \to \infty} 0 \quad \text{and}\quad \P(E_{N,k} \cap \{\sigma_{N,k} \le \bar{t}\}))\xrightarrow{N \to \infty} \P(\sigma_k \le \bar{t}),\] 
	we can choose~$N_0$ such that, for all~$N \ge N_0$, the right-hand side of~\eqref{eq_last_triangle2} is smaller than~$\varepsilon/3$. Finally, using Lemma~\ref{lem_conv_Skk}, enlarging~$N_0$ if necessary, we can guarantee that for any~$N \ge N_0$,
	\[\left| \E[F((Z^{(k)}_{N,t})_{t \ge 0})] - \E[F((Z^{(k)}_{t})_{t \ge 0})]\right| < \varepsilon/3.\]
	Now, by the triangle inequality, for any~$N \ge N_0$ we have
	\[\left| \E[F((Z_{N,t})_{t \ge 0})] - \E[F((Z_{t})_{t \ge 0})]\right| < \varepsilon,\]
	completing the proof.
\end{proof}

\section{Preliminaries on the classical and two-type contact process}\label{s_preliminary}
\subsection{Classical contact process}\label{ss_one_type_prelim}
Here we will briefly list some notations and properties involving the contact process that we will need in the next sections. We refer the reader to~\cite{L99} for a complete exposition and proofs of the statements that appear below lacking justification.

We adopt the standard abuse of notation of identifying, for a set~$A$, the configuration~$\zeta \in \{0,1\}^A$ with the set  of points\color{black}~$\{x \in A: \zeta(x) = 1\}$. In particular, the element of~$\{0,1\}^A$ that is identically zero is sometimes denoted by~$\varnothing$. For a contact process configuration~$\zeta \in \{0,1\}^A$, it is common to say that~$x \in A$ is infected if~$\zeta(x) = 1$ and healthy otherwise. While we will maintain some of the infection-related terminology (for instance, we write about infection paths below), we will simply say that~$x$ is \textit{occupied} (by, say, a particle) when~$\zeta(x) = 1$ and \textit{empty} otherwise. 

Let~$G=(V,E)$ be a graph and~$\lambda > 0$. A \textit{graphical construction} for the contact process with parameter~$\lambda$ on~$G$ is a collection of independent Poisson point processes on~$[0,\infty)$: first, processes~$\{R^x: x \in V\}$, each with rate one, and second, processes~$\{R^{(x,y)}:x,y \in V,\;\{x,y\} \in E\}$, each with rate~$\lambda$. In case~$t \in R^x$, we say that~$x$ has a \textit{death mark} at time~$t$, and in case~$t \in R^{(x,y)}$, we say that there is a \textit{birth arrow} from~$x$ to~$y$ at time~$t$. An \textit{infection path} is a function~$\gamma: I \to V$, where~$I \subseteq [0,\infty)$ is an interval, so that~$\gamma$ is right continuous with left limits and satisfies the two requirements that~$t \notin R^{\gamma(t)}$ for all~$t \in I$ and~$t \in R^{(\gamma(t-),\gamma(t))}$ whenever~$\gamma(t-) \neq \gamma(t)$. Given~$(x,s),(y,t) \in V\times [0,\infty)$ with~$s \le t$, we write~$(x,s)\rightsquigarrow (y,t)$ to represent the event that there exists an infection path starting at~$x$ at time~$s$ and ending at~$y$ at time~$t$. Additionally, for~$A \subseteq V$, we write~$A \times \{s\} \rightsquigarrow (y,t)$ to indicate the event that~$(x,s) \rightsquigarrow (y,t)$ for some~$x \in A$. We employ the similar notations~$(x,s) \rightsquigarrow B \times \{t\}$ and~$A \times \{s\} \rightsquigarrow B \times \{t\}$ with obvious meanings. 
We write~$(x,s) \not \rightsquigarrow (y,t)$ (or~$A \times \{s\} \not \rightsquigarrow (y,t)$, etc.) to denote the event that there is no infection path between the starting point (or set) and ending point (or set).
Finally, given~$A' \subseteq A$,~$x,y \in A'$ and~$s \le t$, we say that~$(x,s) \rightsquigarrow (y,t)$ \textit{inside}~$A'$ to mean the event that there is an infection path~$\gamma: [s,t] \to A$ from~$(x,s)$ to~$(y,t)$ so that~$\gamma(r) \in A'$ for all~$r$.

Given a graphical construction and any initial configuration~$\zeta_0 \in \{0,1\}^{V}$, we construct the contact process started from~$\zeta_0$ by letting
\[\zeta_t(x) := \mathds{1}\{\zeta_0 \times \{0\} \rightsquigarrow (x,t)\},\quad x \in V,\; t \ge 0\]
(formally speaking, the contact process is the process on~$\{0,1\}^V$ whose distribution is equal to that of the process obtained from the above construction). The graphical construction has several good features, notably it allows one to obtain processes started from all possible initial configurations in a single probability space. Moreover, if~$(\zeta_t)_{t \ge 0}$ and~$(\zeta_t')_{t \ge 0}$ are obtained with the same graphical construction, then we have~$\zeta_t = \zeta_t'$ for all~$t \ge \inf\{s: \zeta_s = \zeta_s'\}$. In addition, with the aid of the graphical construction, it is easy to verify the so-called \textit{self-duality}  of the contact process, namely, the property that for any~$A,B \subseteq V$,
\[ \P(A \times \{0\} \rightsquigarrow B \times \{t\}) = \P(B \times \{0\} \rightsquigarrow A \times \{t\}).\]

The contact process on~$\Z^d$ exhibits a phase transition, as follows. Let
\[ \lambda_c(\Z^d) := \inf\{\lambda > 0:\; \P(\zeta^{\{o\}}_t \neq \varnothing \; \forall t) > 0\},\]
where~$(\zeta^{\{o\color{black}\}}_t)_{t \ge 0}$ denotes the process started from the configuration with a single~$1$ at the origin. Then, we have that~$\lambda_c(\Z^d) \in (0,\infty)$ {(\cite[p.307]{L85}, \cite[p.43]{L99})}.  Using monotonicity arguments, one can show that the process started from full occupancy (that is, the all-one configuration) converges in distribution, as~$t \to \infty$ {(see~\cite[p.35]{L99})}. The limiting distribution is called the \textit{upper stationary distribution} of the process, denoted~$\mu_\lambda$. In case~$\lambda > \lambda_c(\Z^d)$, this distribution is supported on configurations containing infinitely many~$0$'s and~$1$'s (whereas in case~$\lambda \le \lambda_c(\Z^d)$, it is equal to the unit mass on the all-zero configuration).

For future reference, we 
{
gather together in the following proposition a few properties of the supercritical contact process on $\Z^d$. Given $\zeta \in \{0,1\}^{\Z^d}$, we let $|\zeta|$ be the number of $x$ such that $\zeta(x) = 1$.
\begin{proposition}\label{prop_facts_contact}
    Let $\lambda > \lambda_c(\Z^d)$, let $(\zeta_t)_{t \ge 0}$ be the contact process on $\Z^d$ with rate~$\lambda$ started from an arbitrary initial configuration, and  $(\zeta^{\{o\}}_t)_{t \ge 0}$ be the contact process on $\Z^d$ 
 with rate $\lambda$ started from $\{o\}$. 
    \begin{itemize}
        \item[(a)]
    There exists a constant $c_\mathrm{death} > 0$ such that
    \begin{align} 
       \nonumber & \P(\zeta_t \neq \varnothing,\;\zeta_s = \varnothing \text{ for some } s > t) < \exp\{- c_\mathrm{death} \cdot t\}, \quad t > 0,\\[.2cm] 
       \label{eq_const_death} & \P(\exists t \ge 0:\;\zeta_t = \varnothing) < \exp\{-c_\mathrm{death}\cdot |\zeta_0|\}.
    \end{align}
\item[(b)] There exists a constant $c_\mathrm{speed} > 0$ such that
 \begin{equation}\label{eq_constant_speed}
	\P(\zeta^{\{o\}}_t \not\subseteq Q_{\Z^d}(o,c_{\mathrm{speed}}^{-1}\cdot t)) \le \exp\{-c_{\mathrm{speed}}\cdot t\}.
\end{equation}
\item[(c)] There exists a constant $c_\mathrm{full} >0$ such that 
\begin{equation*}
    \P\big( |\zeta_t^{\{o\}}| > c_\mathrm{full} \cdot t \big| \zeta_t^{\{o\}} \neq \varnothing \big) > 1- \exp\{-c_\mathrm{full} \cdot t\},\quad t > 0.
\end{equation*}
\end{itemize}
\end{proposition}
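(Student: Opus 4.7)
All three items are classical statements about the supercritical contact process on $\Z^d$, and the proof will consist in assembling standard tools---the graphical construction, the Bezuidenhout--Grimmett block comparison with a supercritical $1$-dependent oriented percolation, and the associated exponential large-deviation estimates---rather than developing new arguments. I treat the three items in turn.

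For part~(b), the exponential spread bound is a ``finite speed of propagation'' estimate obtained directly from the graphical construction: any infection path from $(o,0)$ reaching outside $Q_{\Z^d}(o,c^{-1}t)$ in time $t$ must traverse at least $c^{-1}t$ consecutive birth arrows within time $t$. A union bound over the relevant starting directions combined with a Cram\'er-type tail estimate for the Poisson processes driving the birth arrows gives a bound of the form $\exp\{-c_\mathrm{speed} t\}$ provided $c_\mathrm{speed}$ is small enough. This is a routine calculation; see e.g.\ the exposition in~\cite{L99}.

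For part~(a), the first inequality is the ``no slow death'' property. It follows from the Bezuidenhout--Grimmett block construction, which shows that on $\{\zeta_t \ne \varnothing\}$, with probability at least $1 - e^{-ct}$ the configuration $\zeta_t$ contains a ``seed block'' of the associated supercritical oriented percolation, which then survives forever almost surely. For the second inequality, I extract from $\zeta_0$ a maximal sub-collection $A' \subseteq \zeta_0$ of points pairwise at distance $\ge R$, for $R$ a fixed but sufficiently large constant; this gives $|A'| \ge |\zeta_0|/(2R{+}1)^d$. Since $A'$ is well-spaced, the events that each $x \in A'$ generates a ``good block'' of the Bezuidenhout--Grimmett percolation depend on disjoint portions of the graphical construction and so are independent, and each has probability bounded below by some $\theta_0 > 0$. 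Extinction of $\zeta^{\zeta_0}$ forces failure of all these blocks, giving probability at most $(1-\theta_0)^{|A'|} \le \exp\{-c|\zeta_0|\}$ as required.

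For part~(c), the Bezuidenhout--Grimmett shape theorem together with its exponential large-deviation refinement (see~\cite{DS88} and the exposition in~\cite{L99}) shows that on survival the set $\zeta_t^{\{o\}}$ fills a positive fraction of a ball of radius $\Theta(t)$ in $\Z^d$; in particular $|\zeta_t^{\{o\}}| \ge c t^d \ge c_\mathrm{full} t$ holds with conditional probability at least $1 - \exp\{-c_\mathrm{full} t\}$ on survival. Combining this with the first inequality of part~(a), which bounds the discrepancy between $\{\zeta_t^{\{o\}} \ne \varnothing\}$ and the survival event by an exponentially small probability, yields the stated form. The main obstacle in the whole proof is bookkeeping: calibrating the block parameters, the spacing $R$ of part~(a), and the space/time scales of part~(b) so that a single constant $c_\mathrm{death}$ (respectively $c_\mathrm{speed}$, $c_\mathrm{full}$) can serve the two exponential rates where needed. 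No new ideas are required beyond the classical techniques.
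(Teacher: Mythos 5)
Part (b) of your plan is fine and matches the paper's (omitted) argument in spirit: the paper invokes a comparison with first-passage percolation plus large deviations, while your direct path-counting with Poisson tail bounds is an equivalent standard route. Part (a) is not proved in the paper at all --- it is quoted verbatim from \cite[Theorem 2.30, p.~57]{L99} --- so a citation suffices; but your sketch of the second inequality contains a genuine logical flaw. The events ``$x\in A'$ generates a good block'' can only be made independent if each is measurable with respect to a bounded, pairwise disjoint space--time box $Q_{\Z^d}(x,L)\times[0,T]$; no such finite-window event can imply survival, so the implication ``extinction of $\zeta^{\zeta_0}$ forces failure of all these blocks'' is false for the independent version of the blocks. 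Conversely, the events that do imply survival (infinitude of the embedded oriented-percolation clusters) are not independent across seeds, and Harris positive correlation goes in the wrong direction for the bound you want. The standard repair is a Peierls/contour argument for the renormalized $1$-dependent oriented percolation with density close to $1$ (this is how \cite{L99} proves it), not a product of independent seed events.

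Part (c) is the only item the paper actually proves, and your route diverges from it in a way that matters. You appeal to ``the shape theorem together with its exponential large-deviation refinement'' in general dimension to conclude $|\zeta_t^{\{o\}}|\ge ct^d$ on survival. The authors state explicitly that they could not find item (c) in the literature, and their proof is engineered precisely to avoid this input: they reduce to dimension one by restricting the Bezuidenhout--Grimmett renormalization to a slab, where exponential control is available, namely (i) the edge-speed bound $\P(l_t<-\alpha t<\alpha t<r_t\mid \zeta_t^{\{o\}}\neq\varnothing)>1-\mathrm{e}^{-ct}$ obtained from Durrett's contour argument for oriented percolation \cite{D84}, (ii) the Durrett--Schonmann large deviations \cite{DS88b} for the density of $\mu_\lambda$ in $[-\alpha t,\alpha t]$ (transferred to $\zeta^1_t$ by stochastic domination), and (iii) the fact that crossing infection paths intersect, so that on the edge event $\zeta^{\{o\}}_t$ and $\zeta^1_t$ agree on $[-\alpha t,\alpha t]$. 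This yields only linear growth $|\zeta_t^{\{o\}}|>c_\mathrm{full}t$, which is all the proposition asserts. Unless you can supply a reference for an exponential-rate shape/density theorem in $d\ge 2$, your part (c) rests on an unsupported citation (note also that the relevant large-deviation reference is \cite{DS88b}, not \cite{DS88}, which is a different paper on the contact process on a finite set).
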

Item (a) in the proposition is~\cite[Theorem 2.30, pp.57]{L99}. Item (b) follows for instance from a comparison between the contact process and first-passage percolation, and large deviation estimates { (using the amenability of~$\Z^d$)}. We omit the details. Since we have not found item~(c) stated in the literature, we provide a sketch of proof.
\begin{proof}[Sketch of proof of Proposition~\label{prop_facts_contact}(c)]
    First assume that the dimension is one. Assume that $(\zeta^{\{o\}}_t)$ is obtained from a graphical construction, and using this same graphical construction, let~$(\zeta^1_t)_{t \ge 0}$ be the process started from all vertices occupied. Define the left and right edges of~$\zeta^{\{o\}}_t$ as
    \[l_t:= \inf\{x:\zeta^{\{o\}}_t = 1\}, \qquad r_t:= \sup\{x: \zeta^{\{o\}}_t = 1\},\]
    with~$\inf \varnothing = \infty$ and~$\sup \varnothing = -\infty$. Then, there exist~$\alpha > 0$ and~$c > 0$ (depending on~$\lambda$) such that
\begin{equation}\label{eq_full1}
    \P(l_t < -\alpha t < \alpha t < r_t \mid \zeta^{\{o\}}_t \neq \varnothing) > 1- \mathrm{e}^{-ct},\quad t > 0.
\end{equation}
    A statement of this form is proved for supercritical oriented percolation in~\cite[Section 11]{D84} using a contour argument, and it may be translated to the contact process using the standard renormalization technique explained in~\cite[Section VI.3]{L85}.

    Recall that~$\mu_\lambda$ denotes the upper stationary distribution of the process. By the main theorem in~\cite{DS88b}, there exist~$\alpha' > 0$ and~$c' > 0$ such that
    \[\mu_\lambda(\{\zeta:\;|\zeta \cap [-\alpha t, \alpha t]| > \alpha' t\}) > 1- \mathrm{e}^{-c' t},\quad t > 0.\]
    Since the law of~$\zeta^1_t$ stochastically dominates~$\mu_\lambda$, this gives 
    \begin{equation}\label{eq_full2}
        \P(|\zeta^1_t \cap [-\alpha t, \alpha t]| > \alpha' t)> 1- \mathrm{e}^{-c' t},\quad t > 0.
    \end{equation}
        Using the fact that infection paths that cross each other must intersect at some space-time point, it is easy to see that on~$\{l_t < -\alpha t < \alpha t < r_t\}$, we have~$\zeta^{\{o\}}_t(x) = \zeta^1_t(x)$ for all~$x \in [-\alpha t, \alpha t]$. Combining this observation with~\eqref{eq_full1} and~\eqref{eq_full2}, we obtain
        \[
        \P(|\zeta^{\{o\}}_t \cap [-\alpha t, \alpha t]| > \alpha' t \mid \zeta^{\{o\}}_t \neq \varnothing) > 1- \mathrm{e}^{-ct} - \frac{\mathrm{e}^{-c't}}{\P(\zeta^{\{o\}}_t \neq \varnothing)}.\]
The desired bound now follows from noting that~$\P(\zeta^{\{o\}}_t \neq \varnothing)$ is bounded away from zero (it is larger than~$\P(\zeta^{\{o\}}_t \neq \varnothing \;\forall t) > 0$).

In order to extend this to higher dimension, we first take the Bezuidenhout--Grimmett renormalization, which maps the supercritical contact process on~$\Z^d$ into supercritical oriented percolation on dimension~$d$ (plus the time dimension), see~\cite{BG90}, or~\cite[Chapter I.2]{L99}. Restricting the model to a slab, we obtain a supercritical oriented percolation model on dimension $1$ (plus the time dimension). We then carry out the proof in the preceding paragraphs to this oriented percolation model.
\end{proof}

}

The phase transition of the contact process on the infinite lattice~$\Z^d$ has a counterpart on large boxes of~$\Z^d$. In particular, in~\cite{M93} and~\cite{M99}, it is proved that for any~$\lambda > \lambda_c(\Z^d)$,  there exists a constant~$c_\mathrm{long} = c_\mathrm{long}(\lambda) > 0$ such that the following holds. Let~$\tau(Q_{\Z^d}(o,N))$ denote the \textit{extinction time} of the contact process on~$Q_{\Z^d}(o,N)$ with rate~$\lambda$ (the extinction time of the contact process on a graph is defined as the hitting time of the empty configuration, when the process is started from full occupancy). Then, we have that
\begin{equation}\label{eq_from_tom_expectation}
	\frac{\log \E[\tau(Q_{\Z^d}(o,N))]}{N^d} \xrightarrow{N \to \infty} c_\mathrm{long}.
\end{equation}

We will need the following statement concerning the occupation of the supercritical contact process on~$\Z^d_N$.
\begin{proposition}\label{prop_density}
	For any~$\lambda > \lambda_c(\Z^d)$ and~$\varepsilon \in (0,1)$, the following holds for~$N$ large enough. Let~$(\zeta_t^1)_{t \ge 0}$ denote the contact process with rate~$\lambda$ on~$\Z^d_N$ started from full occupancy. Then,
	\begin{equation}
		\mathbb{P}\left(\begin{array}{l}\text{for all }t \le N^{\log N},\; \zeta^1_t \text{ intersects} \\[.1cm] \text{ all boxes of radius $N^\varepsilon$ in $\Z^d_N$}\end{array}\right)> 1- N^{-\log N}.
	\end{equation}
\end{proposition}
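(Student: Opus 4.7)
The plan is to bound from below the process $(\zeta^1_t)$ by a family of restricted contact processes living on small sub-boxes of $\Z^d_N$, and then to exploit the metastability of the supercritical contact process in a box to show that each of these restricted processes survives up to time $N^{\log N}$ with overwhelming probability.

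\emph{Step 1 (reduction to sub-boxes).} For each $u \in \Z^d_N$, set $Q'_u := Q_{\Z^d_N}(u, \lfloor N^\varepsilon/2 \rfloor)$, so that $Q'_u \subseteq Q_{\Z^d_N}(u, N^\varepsilon)$. Since every box of radius $N^\varepsilon$ in $\Z^d_N$ is of the form $Q_{\Z^d_N}(u, N^\varepsilon)$ for some $u$, it suffices to show
\[
\P\bigl(\exists\, u \in \Z^d_N,\; \exists\, t \le N^{\log N}:\; \zeta^1_t \cap Q'_u = \varnothing\bigr) < N^{-\log N}.
\]

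\emph{Step 2 (stochastic domination from below).} Using the graphical construction of $(\zeta^1_t)$, for each $u$ I define a contact process $(\tilde{\zeta}^u_t)_{t \ge 0}$ on $Q'_u$ with rate $\lambda$, started from full occupancy, that uses \emph{only} the death marks at vertices of $Q'_u$ and the birth arrows between vertices of $Q'_u$. Any infection path valid for $\tilde{\zeta}^u$ is also valid for $\zeta^1$ (the initial occupied set is a subset, and the used Poisson events are a subset), so by monotonicity $\tilde{\zeta}^u_t \subseteq \zeta^1_t \cap Q'_u$ for all $t$. Since the empty configuration is absorbing for $\tilde{\zeta}^u$, non-extinction by time $N^{\log N}$ is equivalent to $\tilde{\zeta}^u_t \neq \varnothing$ throughout $[0, N^{\log N}]$.

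\emph{Step 3 (metastability tail bound).} The process $\tilde{\zeta}^u$ is a supercritical contact process on a box that can be identified with $Q_{\Z^d}(o, \lfloor N^\varepsilon / 2 \rfloor)$. By \eqref{eq_from_tom_expectation}, its expected extinction time $\E[\tau_{Q'_u}]$ grows like $\exp(c_0 N^{\varepsilon d})$ for some $c_0 > 0$. Combining this with the exponential-in-volume metastability estimates in \cite{DS88, M93, M99} (which show that, starting from full occupancy, the process stays near the upper stationary density $\rho_\lambda > 0$ for a time with exponentially decaying tail), we obtain constants $c_1, C > 0$ such that, for $N$ large,
\[
\P\bigl(\tau_{Q'_u} \le N^{\log N}\bigr) \le C \cdot N^{\log N} \cdot \exp(-c_1 N^{\varepsilon d}).
\]

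\emph{Step 4 (union bound).} Summing over the at most $N^d$ centers $u \in \Z^d_N$ yields
\[
\P(\text{bad event}) \le C \cdot N^d \cdot N^{\log N} \cdot \exp(-c_1 N^{\varepsilon d}) = C \exp\bigl(d\log N + (\log N)^2 - c_1 N^{\varepsilon d}\bigr),
\]
which is bounded by $\exp(-(\log N)^2) = N^{-\log N}$ once $N$ is large enough, since $N^{\varepsilon d}$ dominates any polylogarithm of $N$.

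The main obstacle is the quantitative tail bound in Step 3. The estimate \eqref{eq_from_tom_expectation} alone only controls the \emph{expectation} of $\tau_{Q'_u}$, which gives upper tail bounds on $\tau$ via Markov but not the \emph{lower} tail bound we need. The required inequality is not a direct corollary of \eqref{eq_from_tom_expectation} and must be extracted from the more refined metastability analysis of \cite{DS88, M93, M99}: once the process has relaxed to a configuration of density close to $\rho_\lambda$, the probability of emptying in one unit of time is at most $\exp(-c N^{\varepsilon d})$, and iterating this along a grid of times of length $N^{\log N}$ produces the stated bound (with the trivial initial relaxation phase absorbed into the constants).
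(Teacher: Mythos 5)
Your overall strategy is the same as the paper's: compare $\zeta^1$ from below with independent restricted contact processes on sub-boxes of radius of order $N^\varepsilon$, show each survives past time $N^{\log N}$ with probability $1-\exp\{-cN^{\varepsilon d}\}$, and conclude by a union bound over the $N^d$ boxes. Steps 1, 2 and 4 are correct as written.

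The one place where you go astray is your diagnosis of Step 3. You claim that \eqref{eq_from_tom_expectation} "only controls the expectation" and that the lower tail bound on $\tau_{Q'_u}$ "is not a direct corollary" of it, requiring instead a refined density/relaxation argument from \cite{DS88,M93,M99}. This is backwards: the bound you need \emph{is} a two-line corollary of \eqref{eq_from_tom_expectation}, via the elementary inequality $\P(\tau \le t) \le t/\E[\tau]$. (Proof: partition time into blocks of length $t$ and let $\hat\tau$ be the first block endpoint $s \in t\N$ such that no infection path joins the fully occupied box at time $s-t$ to the box at time $s$; then $\tau \le \hat\tau$, and $\hat\tau$ is $t$ times a geometric random variable with success probability $p = \P(\tau\le t)$, so $\E[\tau] \le t/p$.) Taking $t = \exp\{\tfrac{c_{\mathrm{long}}}{2}N^{\varepsilon d}\} \gg N^{\log N}$ gives $\P(\tau_{Q'_u} \le N^{\log N}) \le \exp\{-cN^{\varepsilon d}\}$ directly; this is exactly the content of the paper's Lemma~\ref{lem_from_tom}, which is then fed into the union bound. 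Your alternative route through "density close to $\rho_\lambda$ at each grid time, iterated" would also work in principle, but as sketched it is under-justified (one must control the conditional law at the grid times given survival, which is precisely the delicate part of the metastability literature) and it is unnecessary machinery for this statement.
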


Finally, we will need a statement guaranteeing that with high probability, the process started from a sufficiently dense configuration couples in a short time with the process started from full occupancy.

\begin{proposition}\label{prop_coupling}
	For any~$\lambda > \lambda_c(\Z^d)$ and~$\varepsilon \in (0,1)$, the following holds for~$N$ large enough. Let~$(\zeta_t)_{t \ge 0}$ and~$(\zeta^1_t)_{t \ge 0}$ denote contact processes with rate~$\lambda$ on~$\Z^d_N$, both built from a single graphical construction, so that~$\zeta^1_0 = \Z^d_N$ and~$\zeta_0$ is a configuration that intersects all boxes of radius~$N^\varepsilon$ in~$\Z^d_N$. Then,
	\begin{equation*}
	\P(\zeta_t = \zeta^1_t \text{ for all } t \ge N^{12\varepsilon}) > 1- N^{-\log N}.
	\end{equation*}
\end{proposition}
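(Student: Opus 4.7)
Since both processes are built from the same graphical construction, the monotone coupling $\zeta_t \subseteq \zeta^1_t$ holds for every $t \ge 0$, and once $\zeta_s = \zeta^1_s$ at some $s$, equality persists for all later times. Setting $t^* := N^{12\varepsilon}$, it therefore suffices to establish $\P(\zeta_{t^*} = \zeta^1_{t^*}) > 1 - N^{-\log N}$. By the self-duality of the contact process on the vertex-transitive torus $\Z^d_N$, for each $y \in \Z^d_N$ we have
\[
\{y \in \zeta^1_{t^*} \setminus \zeta_{t^*}\} \;=\; \{\tilde{\zeta}^y_{t^*} \neq \varnothing\} \cap \{\tilde{\zeta}^y_{t^*} \cap \zeta_0 = \varnothing\},
\]
where $\tilde{\zeta}^y$ is the dual contact process from $\{y\}$, which by the symmetry of the graphical construction on the torus is distributed as a standard forward contact process on $\Z^d_N$ from $\{y\}$ with rate $\lambda$. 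A union bound over $y$ reduces the task to showing
\[
\sup_{y \in \Z^d_N} \P\bigl(\tilde{\zeta}^y_{t^*} \neq \varnothing,\; \tilde{\zeta}^y_{t^*} \cap \zeta_0 = \varnothing\bigr) \;\le\; \tfrac{1}{2} N^{-d - \log N}.
\]

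For a fixed $y$, the plan is to argue in two steps. In Step 1, I would show that, conditional on $\tilde{\zeta}^y$ surviving up to $t^*$, the configuration $\tilde{\zeta}^y_{t^*}$ is close in total variation --- either locally on a box of radius $\asymp N^{12\varepsilon}$ around $y$, or globally on $\Z^d_N$ in the case $12\varepsilon \ge 1$ --- to the upper stationary measure $\mu_\lambda$ restricted to the relevant region, with error at most $N^{-2(d+\log N)}$. This is the metastable behavior of the supercritical contact process: a Bezuidenhout--Grimmett block construction grows a single-site cluster into a locally dense configuration within time $O(N^{\varepsilon/2})$, and Proposition~\ref{prop_density} --- applied via a coupling with the full-occupancy process --- then propagates the density to an approximately stationary state well before $t^*$. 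The exponential separation between $t^*$ and the metastable extinction time $\exp(c_{\mathrm{long}} N^d)$ provides the quantitative closeness.

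In Step 2, since $\zeta_0$ intersects every $N^\varepsilon$-box, one can extract a subset $S \subseteq \zeta_0$ of at least $c N^{d(1-\varepsilon)}$ points pairwise separated by distance $3N^\varepsilon$. The exponential decay of correlations for $\mu_\lambda$ in the supercritical regime (a classical consequence of the Bezuidenhout--Grimmett theory; see~\cite[Ch.~I.2]{L99}) implies that the family $\{\mathds{1}\{w \in \zeta\}\}_{w \in S}$ under $\mu_\lambda$ is approximately a product of Bernoulli$(\rho_\lambda)$ variables with $\rho_\lambda := \mu_\lambda(\zeta(o)=1) > 0$, up to corrections of order $\exp(-cN^\varepsilon)$ per pair. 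Combining with Step 1 yields
\[
\P\bigl(\tilde{\zeta}^y_{t^*} \neq \varnothing,\; \tilde{\zeta}^y_{t^*} \cap \zeta_0 = \varnothing\bigr) \;\le\; (1-\rho_\lambda)^{|S|(1-o(1))} \;+\; |S|^2 \exp(-cN^\varepsilon) \;+\; N^{-2(d+\log N)},
\]
which is stretched-exponentially small in $N$, well below the required threshold.

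The main obstacle will be Step 1: obtaining the polynomial total-variation accuracy $N^{-2(d+\log N)}$ for a single-site supercritical contact process on the torus, conditioned on survival, at time $t^*$. The qualitative convergence to the metastable (quasi-)stationary state is classical, but achieving this quantitative bound demands a careful renormalization argument together with an explicit coupling with the full-occupancy process via Proposition~\ref{prop_density}. Step 2, by contrast, reduces to a routine application of the exponential mixing of $\mu_\lambda$.
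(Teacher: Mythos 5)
Your reduction to time $t^* = N^{12\varepsilon}$ and your duality identity are correct, and the overall architecture (dual from a single site must survive and hit $\zeta_0$) is a genuinely different route from the paper's. However, Step 1 is a genuine gap, not merely a technical obstacle. What you need there is a quantitative complete convergence theorem: the law of the supercritical contact process started from a single site, conditioned on survival to time $t^*$, restricted to a box of radius $\gg N^\varepsilon$ around $y$, within total-variation distance $N^{-2(d+\log N)}$ of $\mu_\lambda$. This is not available off the shelf, and your sketch of how to obtain it is essentially circular: to upgrade ``the single-site cluster becomes locally dense'' to ``its law is close to $\mu_\lambda$ on a box,'' you must couple the surviving single-site process with the full-occupancy process on that box --- which is a statement of exactly the same type as the proposition you are proving (Proposition~\ref{prop_density} alone does not do this, as it concerns only the full-occupancy process). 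Moreover, in the regime $12\varepsilon<1$ the dual has only reached a region of radius $O(N^{12\varepsilon})$, so any total-variation comparison with $\mu_\lambda$ must be carefully localized to a sub-window of that region, and your set $S$ must be drawn from there; this is repairable but unaddressed.

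The paper avoids single-site equilibration entirely. It tiles the torus with boxes $Q$ of radius $N^{2\varepsilon}$; since $\zeta_0$ meets every $N^\varepsilon$-box, each $Q$ contains $\gtrsim (N^{2\varepsilon})^{d/4}$ seeds, so the $Q$-restricted process from $\zeta_0\cap Q$ survives to time $N^{12\varepsilon}$ with stretched-exponentially high probability (Lemma~\ref{lem_numbers}); Mountford's ``die or couple'' lemma (Lemma~\ref{lem_die_or_couple}) then forces it to agree with the $Q$-restricted process from full occupancy by time $(N^{2\varepsilon})^6=N^{12\varepsilon}$; and duality is used only to localize infection paths (Lemma~\ref{lem_well_behaved}): any $x$ occupied in $\zeta^1_{t^*}$ is, with high probability, reached by a path confined to $Q_{\Z^d_N}(x,N^\varepsilon)$, hence already produced by the box-restricted dynamics. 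If you want to keep your dual-process framing, the fix is to replace your Step 1 by this box-local ``survive, then die-or-couple'' argument rather than by convergence to $\mu_\lambda$; your Step 2 (extracting $\gtrsim N^{11\varepsilon d}$ well-separated points of $\zeta_0$ in the relevant window and using mixing of $\mu_\lambda$) is workable but is also rendered unnecessary by that fix.
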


The proofs of Propositions~\ref{prop_density} and \ref{prop_coupling} will be carried out in Section~\ref{ss_proofs_classical} in the Appendix. 
{
	The idea behind Proposition~\ref{prop_coupling} is as follows. Let us say we tile the torus with sub-boxes of side length~$N^\varepsilon$. The assumption on~$\zeta_0$ guarantees that each sub-box~$Q$ has a vertex~$u(Q) \in Q$ such that~$\zeta_0(u)= 1$. For fixed~$T > 0$, in order to have~$\zeta_T = \zeta^1_T$, loosely speaking, we need to ensure that the union of the regions reached by the contact processes started from each~$u(Q)$ and ran up to time~$T$ span the whole torus. This already suggests that~$T$ should be at least~$CN^\varepsilon$ for some large constant~$C$. Since there are many boxes, it could happen that things do not go well in some of them (for instance, some of the contact processes may quickly die, or grow too slowly), so we take~$T$ as a large power of~$N^\varepsilon$  to get concentration (the reason for the specific choice of 12 will become clear throughout the proof). 
}

Combining Propositions~\ref{prop_density} and \ref{prop_coupling} {(with~$\varepsilon = \frac{1}{288}$, so that~$(N^\varepsilon)^{12} = N^{1/24}$) }, we obtain that, for~$\lambda > \lambda_c(\Z^d)$,~$N$ large enough and~$(\zeta_t)_{t \ge 0}$ the contact process on~$\Z^d_N$, we have
\begin{equation}\label{eq_coupling_combining}
	\zeta_0 \in \mathscr{G}_N \quad \Longrightarrow \quad \P(\zeta_t \in \mathscr{G}_N \text{ for all } t \in [N^{1/24},N^{\log N}]) > 1-2N^{-\log N}.
\end{equation}

	\subsection{Two-type contact process}\label{ss_two_type_prelim}
As mentioned in the Introduction, the two-type contact process is a variant of the contact process in which we have two types of occupants, both of which can die and give birth at vacant neighboring sites. We now give a construction of this process.

Let~$G=(V,E)$ be a graph, which we assume to be finite for the moment.  Also let~$\lambda,\lambda' > 0$. Whenever we deal with the two-type contact process,~$\lambda$ will denote the birth rate of type~$1$ and~$\lambda'$ the birth rate of type~$2$ (both types die with rate one). For the rest of this section, also assume that~$\lambda' > \lambda$\color{black}. We take three collections of independent Poisson point processes on~$[0,\infty)$: first, death marks processes~$\{R^x:x \in V\}$ with rate one; second, basic birth arrow processes~$\{R^{(x,y)}:x,y \in V,\;\{x,y\} \in E\}$ with rate~$\lambda$; third, extra birth arrow processes~$\{\tilde{R}^{(x,y)}:x,y \in V,\; \{x,y\} \in E\}$ with rate~$\lambda' - \lambda$. Intuitively, both the basic and the extra birth arrows are usable by type~$2$, but only the basic ones are usable by type~$1$. That is, denoting the process by~$(\xi_t)_{t \ge 0}$, for any~$t > 0$ we have the rules:
\begin{equation*}
	\begin{array}{rll}
		t \in R^{x}& \Longrightarrow & \xi_t = \xi_{t-}^{x \leftarrow 0};\\[.2cm]
		t \in R^{(x,y)},\;\xi_{t-}(x) = 1,\; \xi_{t-}(y) = 0 & \Longrightarrow & \xi_t = \xi_{t-}^{y \leftarrow 1};\\[.2cm]
		t \in R^{(x,y)}\cup \tilde{R}^{(x,y)},\;\xi_{t-}(x) = 2,\; \xi_{t-}(y) = 0 & \Longrightarrow &\xi_t=\xi_{t-}^{y \leftarrow 2}
	\end{array}
\end{equation*}
where~$\xi^{x \leftarrow i}$ is defined  according to \eqref{eq_xarrowi}. 
Since~$G$ is finite, these rules define the process~$(\xi_t)_{t \ge 0}$ as a continuous-time Markov chain on~$\{0,1,2\}^{V}$.

We can extend this construction to define the process on~$\Z^d$ through a standard limiting procedure, which we now briefly outline. Take a graphical construction as above, with~$G$ equal to~$\Z^d$. Denote the full collection of Poisson processes by~$H$, and let~$H_r$ denote its restriction to the box~$Q_{\Z^d}(o,r)$ (that is,~$H_r$ includes the death marks and birth arrows for which the vertices involved belong to this box). Given~$\xi_0 \in \{0,1,2\}^{\Z^d}$, we define~$\xi_{r,0}$ as the restriction of~$\xi_0$ to~$Q_{\Z^d}(o,r)$, and let~$(\xi_{r,t})_{t \ge 0}$ be the process started from~$\xi_{r,0}$ and obtained from~$H_r$, as in the previous paragraph. A key observation is as follows. For any~$r$ and any~$(x,t) \in Q_{\Z^d}(o,r) \times [0,\infty)$, keeping a realization of~$H$ fixed, we have
\begin{align*}
	&\{y \in Q_{\Z^d}(o,r): \text{changing $\xi_{r,0}(y)$ affects $\xi_{r,t}(x)$}\} \\
	&\subseteq \{y \in Q_{\Z^d}(o,r): \text{there is an infection path in $H_r$ from $(y,0)$ to $(x,t)$}\}
\end{align*}
(here, infection paths are allowed to follow both basic and extra arrows in~$H_r$). As~$r \to \infty$, the set on the right-hand side increases to the (almost-surely finite) set of~$y \in \Z^d$ such that there is an infection path in~$H$ from~$(y,0)$ to~$(x,t)$.
 This proves that the process~$(\xi_t)_{t \ge 0}$ obtained through~$\xi_t(x) = \lim_{r \to \infty} \xi_{r,t}(x)$ is well defined. Moreover, this process has the feature that, for any~$\bar{r}$ and~$\bar{t}$, for a fixed realization of~$H$, the values of~$\{\xi_t(x): 0 \le t \le \bar{t},\; x \in Q_{\Z^d}(o,\bar{r})\}$ only depends on the values of~$\xi_0$ at the sites~$y$ for which there is an infection path in~$H$ from~$(y,0)$ to~$Q_{\Z^d}(o,\bar{r}) \times [0,\bar{t}]$.

 {
Alternatively to the graphical construction outlined above, the multitype contact process can be constructed from its generator with the Hille--Yosida approach, see~\cite[Theorem 3.9, p.27]{L85}, whose assumptions are easy to check in our setting.
 }

\section{Appearance of a mutant}\label{s_appearance}
\subsection{The contact process with inane mutation marks}\label{ss_preparation}
The goal of this section is to prove Proposition~\ref{prop_appearance_new}. Recall that this proposition involves the adaptive contact process started from a configuration containing a single type, and ran until  a mutant appears (if it ever does). With this in mind, we can reformulate the statement of Proposition~\ref{prop_appearance_new} as a statement involving a classical contact process in which the birth arrows have mutation marks that have no effect in the dynamics (which is why we call them inane). This is what we do in this section.

Throughout this section, we fix~$\lambda > \lambda_c(\Z^d)$. { We assume for the rest of the section that~$N$ is large enough (in a way that depends on the value~$\lambda$ that has been fixed) that~$\delta_N\cdot b(\lambda) < 1$. We can do this in this section because we are interested in proving Proposition~\ref{prop_appearance_new}, and the statement of this proposition concerns fixed~$\lambda$ and~$N \to \infty$. We emphasize that~$\delta_N \cdot b(\lambda) < 1$ is \textit{not} a global assumption in the paper.
}

Let~$H_{\Z^d_N}$ be a graphical construction for the contact process on~$\Z^d_N$. After enlarging the probability space, assume that independently, each birth arrow of~$H_{\Z^d_N}$ has a mark with probability~$\delta_N\cdot b(\lambda)$, { which in this section is assumed to be less than or equal to one}. For each~$A \subseteq \Z^d_N$, let~$(\zeta^A_t)_{t \ge 0}$ denote the contact process obtained from this graphical construction (with the marks having no effect) and started from~$\zeta^A_0 = \mathds{1}_A$.

Now, define
\[\tilde{T}_N^A:= \inf\left\{\begin{array}{c} t > 0: \; \exists u: \zeta_{t-}^A(u) = 0,\;\zeta_t^A(u) = 1 \text{ and the birth arrow  }\\ \text{that caused the birth at $u$ at time $t$ has a mark}\end{array} \right\}.\]
	On~$\{\tilde{T}_N^A < \infty\}$, letting~$\tilde{\mathcal{X}}_N^A$ denote the position of the birth at time~$\tilde{T}_N^A$, define
	\[\tilde{\mathscr{L}}_N^A:= \{v \in \Z^d_N \backslash \{o\}:\; (\zeta_{\tilde{T}_N^A}^A \circ \theta_{\tilde{\mathcal{X}}_N^A})(v) = 1\}\] 
	(on~$\{\tilde{T}_N^A = \infty\}$, we can define~$\tilde{\mathscr{L}}_N^A$ in some arbitrary way).  It is now readily seen that~$(\tilde{T}_N^A,\tilde{\mathscr{L}}_N^A)\cdot \mathds{1}\{\tilde{T}_N^A < \infty\}$ has the same distribution as that of~$(T_N,\mathscr{L}_N)\cdot \mathds{1}\{T_N < \infty\}$ for the adaptive contact process on~$\Z^d_N$ started from~$X_{N,0} =\lambda \cdot \mathds{1}_A$. In particular, for any~$A,B,t$,
\begin{equation}\label{eq_reform_ucp}
	\mathcal{U}_{N,t}(\lambda;A,B) = \P(\delta_NN^d\cdot \tilde{T}_N^A\le t,\; \tilde{\mathscr{L}}_N^A = B).
\end{equation}
Due to this equality, we will only deal with the classical contact process (with inane mutation marks) for the rest of this section.

\begin{figure}[htb]
\begin{center}
\setlength\fboxsep{0cm}
\setlength\fboxrule{0cm}
\fbox{\includegraphics[width=\textwidth]{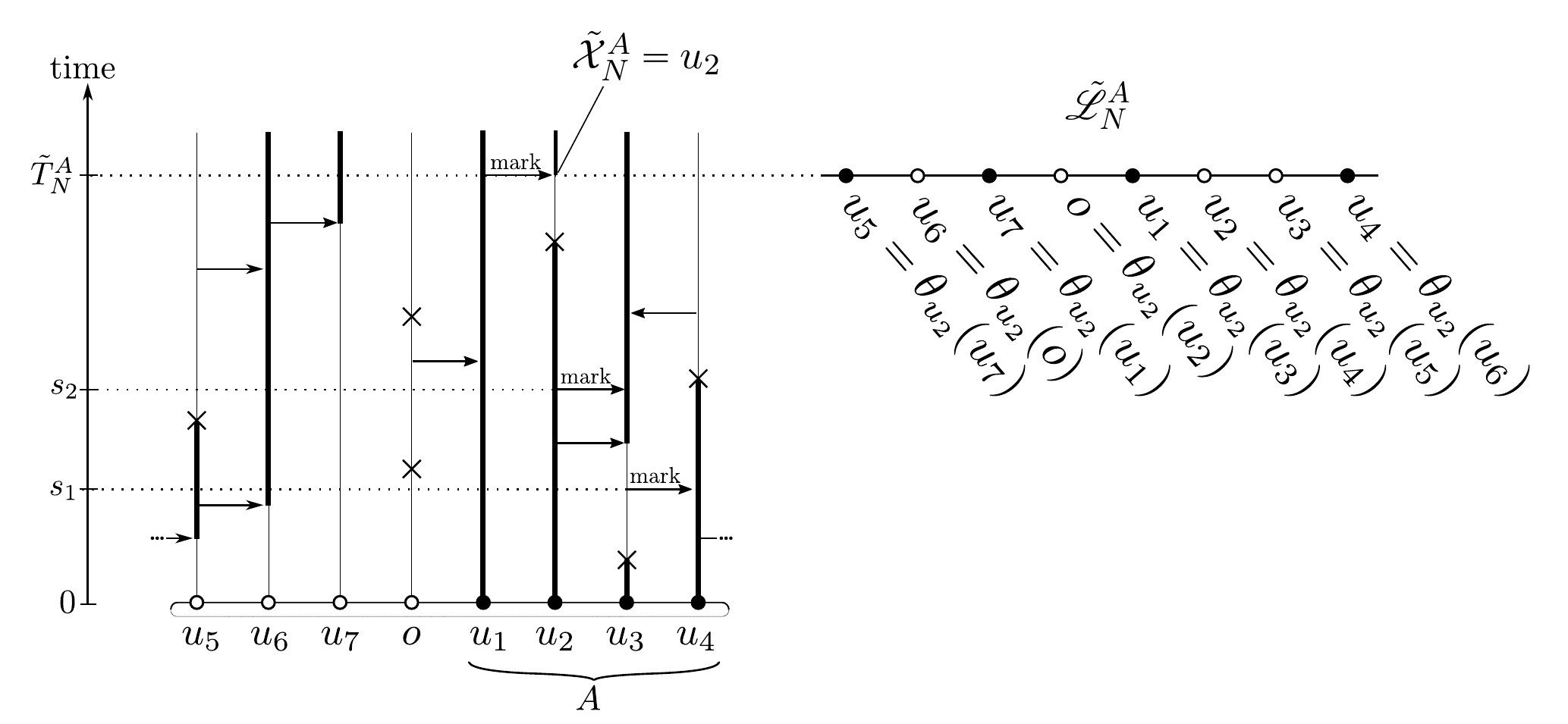}}
\end{center}
	\caption{\label{fig_landscape} { Illustration of $\tilde{T}_N^A$, $\tilde{\mathcal{X}}_N^A$ and $\tilde{\mathscr{L}}_N^A$. Note that $\tilde{T}_N^A$ does not occur at times $s_1$ or $s_2$, as the birth arrows with mutation marks that appear at these times do not produce a shift from state 0 to state 1 at their endpoint.}}
\end{figure}

We now proceed to a few reductions that will make it easier to prove Proposition~\ref{prop_appearance_new}. The first one is the following, which will allow us to focus on the process started from full occupancy.

\begin{lemma}\label{lem_reduction_full_occ}
	For any~$t > 0$ we have
	\[\sup_{A \in \mathscr{G}_N} \sum_{B \in \mathscr{G}_N}| \mathcal{U}_{N,t}(\lambda;A,B) - \mathcal{U}_{N,t}(\lambda;\Z^d_N,B)| \xrightarrow{N \to \infty}0. \]
\end{lemma}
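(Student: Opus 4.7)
The plan is to realize $(\zeta^A_t)_{t \ge 0}$ and $(\zeta^{\Z^d_N}_t)_{t \ge 0}$ on a single graphical construction $H_{\Z^d_N}$ (with a common assignment of mutation marks on the birth arrows) and to argue that with high probability, uniformly over $A \in \mathscr{G}_N$, the two processes produce the same triple $(\tilde{T}_N, \tilde{\mathcal{X}}_N, \tilde{\mathscr{L}}_N)$. Since by~\eqref{eq_reform_ucp} the family $\{\mathcal{U}_{N,t}(\lambda;A,B)\}_{B \in \mathscr{G}_N}$ consists of the probabilities of pairwise disjoint events, this will directly control the $\ell^1$ sum in the statement by twice the probability that the coupling fails.

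Concretely, I would first apply Proposition~\ref{prop_coupling} with $\varepsilon = 1/288$: membership in $\mathscr{G}_N$ is exactly the density condition of that proposition, and $N^{12\varepsilon} = N^{1/24}$. This yields an event $\Omega_1$, depending only on the Poisson processes in $H_{\Z^d_N}$, with $\P(\Omega_1) \ge 1 - N^{-\log N}$ on which $\zeta^A_t = \zeta^{\Z^d_N}_t$ for every $t \ge N^{1/24}$. Next, let $\Omega_2$ be the event that no marked birth arrow occurs anywhere in $H_{\Z^d_N}$ during $[0, N^{1/24}]$. The total rate of marked arrows on $\Z^d_N$ is at most $2d\lambda b(\lambda)\cdot \delta_N N^d$, so a union bound gives $\P(\Omega_2^c) \le 2d\lambda b(\lambda)\cdot \delta_N N^{d+1/24}$, which tends to zero by~\eqref{eq_assumption_delta} (since $1/24 < 1 + \varepsilon_0$).

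On $\Omega_1 \cap \Omega_2$, no mutation mark fires before the coupling time, after which the two configurations evolve identically under identical marks; hence the triples $(\tilde{T}_N^A, \tilde{\mathcal{X}}_N^A, \tilde{\mathscr{L}}_N^A)$ and $(\tilde{T}_N^{\Z^d_N}, \tilde{\mathcal{X}}_N^{\Z^d_N}, \tilde{\mathscr{L}}_N^{\Z^d_N})$ coincide. Setting $E_B^{\bullet} := \{\delta_N N^d \tilde{T}_N^{\bullet} \le t,\; \tilde{\mathscr{L}}_N^{\bullet} = B\}$, the events $\{E_B^A\}_B$ are pairwise disjoint (as are $\{E_B^{\Z^d_N}\}_B$), and $E_B^A \triangle E_B^{\Z^d_N} \subseteq (\Omega_1 \cap \Omega_2)^c$ for each $B \in \mathscr{G}_N$. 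Summing yields
\[
\sum_{B \in \mathscr{G}_N} |\mathcal{U}_{N,t}(\lambda;A,B) - \mathcal{U}_{N,t}(\lambda;\Z^d_N,B)| \;\le\; 2\,\P((\Omega_1 \cap \Omega_2)^c) \;\xrightarrow{N \to \infty}\; 0,
\]
uniformly in $A \in \mathscr{G}_N$. The only thing to check is the alignment of polynomial scales: $N^{1/24}$ must be short enough for~\eqref{eq_assumption_delta} to suppress early marks, yet long enough for Proposition~\ref{prop_coupling} to guarantee coupling. Both hold comfortably, so I do not expect a substantive obstacle.
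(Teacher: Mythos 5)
Your proposal is correct, and it reaches the conclusion by a genuinely more direct route than the paper. The paper uses the same two ingredients --- Proposition~\ref{prop_coupling} and the rarity of early mutations --- but organizes them in two stages: it first discards the event $\{\tilde{T}_N^A \le \tn\}$ via Lemma~\ref{lem_bound_Ttn} (with $\tn = N^{1+\varepsilon_0/2}$), then applies the Markov property at time $\tn$ to factor $\mathcal{U}'_{N,t}(\lambda;A,B) = \sum_{A'}\kappa_N(A,A')\,\ell_N(A',B)$, reducing the claim to total-variation closeness of the kernels $\kappa_N(A,\cdot)$ and $\kappa_N(\Z^d_N,\cdot)$, which is where Proposition~\ref{prop_coupling} enters. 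You instead run a single pathwise coupling of the full trajectories and bound the $\ell^1$ sum by $2\,\P((\Omega_1\cap\Omega_2)^c)$; the disjointness of $\{E_B^{\bullet}\}_B$, which you correctly flag, is exactly what prevents the sum over the (exponentially many) sets $B$ from multiplying the error. Your control of early mutations --- no marked arrow anywhere in $[0,N^{1/24}]$, with expected number at most $2d\lambda b(\lambda)\delta_N N^{d+1/24}\to 0$ by~\eqref{eq_assumption_delta} --- is a stronger requirement than the paper's $\{\tilde{T}_N > \tn\}$, but it is what makes the coupling event determine the whole triple $(\tilde{T}_N,\tilde{\mathcal{X}}_N,\tilde{\mathscr{L}}_N)$, and it lets you bypass Lemma~\ref{lem_bound_Ttn} entirely. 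The only cosmetic imprecision is that $\Omega_1$ depends on $A$; since the bound $\P(\Omega_1^c)\le N^{-\log N}$ is uniform over $A\in\mathscr{G}_N$, the supremum is still controlled. Both arguments buy the same estimate; yours is shorter because it never decomposes over the intermediate configuration $A'$.
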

\begin{proof}
	{ Recall the definition of~$\tn$ in~\eqref{eq_def_of_tn}.} By Lemma~\ref{lem_bound_Ttn},  we have  
	\begin{equation}
		\label{eq_T_bound}
		\sup_{A \subseteq \Z^d_N} \mathbb{P}(\tilde{T}_N^A < {\tn}) \xrightarrow{N \to \infty} 0.
	\end{equation}
{ Fix~$t>0$ and assume that~$N$ is large enough that $(\delta_NN^d)^{-1}\cdot t >  \tn$.
This is possible by~\eqref{eq_assumption_delta} and the definition of~$\tn$ in~\eqref{eq_def_of_tn}).} Defining
	\[\mathcal{U}_{N,t}'(\lambda;A,B):= \P({\tn} < \tilde{T}^A_N < (\delta_NN^d)^{-1}\cdot t,\; \tilde{\mathscr{L}}^A_N = B),\]
	we have
	\begin{equation*}
		\sup_{A \subseteq \Z^d_N} \sum_{B \in \mathscr{G}_N}\left| \mathcal{U}_{N,t}(\lambda;A,B) - \mathcal{U}_{N,t}'(\lambda;A,B)\right| \xrightarrow{N\to\infty}0.
	\end{equation*}
In particular, the statement of the lemma will follow from
	\begin{equation}\label{eq_next_Q}
		\sup_{A \in \mathscr{G}_N} \sum_{B \in \mathscr{G}_N} \left| \mathcal{U}'_{N,t}(\lambda;A,B) - \mathcal{U}'_{N,t}(\lambda;\Z^d_N,B)\right| \xrightarrow{N\to \infty}0.
	\end{equation}
	Now,  we have
	\[\mathcal{U}_{N,t}'(\lambda;A,B) = \sum_{A' \subseteq \Z^d_N} \kappa_N(A,A') \cdot \ell_N(A',B),\]
where we define
	\begin{align*}
		&\kappa_N(A, A'):= \mathbb{P}(\tilde{T}^A_N > {\tn},\; \zeta^A_N =  \mathds{1}_{A'} ),\\[.2cm]
		&\ell_{N}(A', B):= \P(\tilde{T}^{A'}_N < (\delta_NN^d)^{-1}\cdot t - {\tn},\;\; \tilde{\mathscr{L}}^{A'}_N = B).
	\end{align*}
	Using the fact that~$\sum_{B} \ell_N(A',B) \in [0,1]$ and the triangle inequality, we see that to prove~\eqref{eq_next_Q} it suffices to prove that
	\begin{equation}\label{eq_next_Q1}
		\sup_{A \in \mathscr{G}_N} \sum_{A'} |\kappa_N(A,A') - \kappa_N(\Z^d_N,A')| \xrightarrow{N \to \infty} 0.
	\end{equation}
	This follows readily from Proposition~\ref{prop_coupling}.
\end{proof}

Our next goal is to replace, in Proposition~\ref{prop_appearance_new}, the sum over~$B \in \mathscr{G}_N$ by a sum over all~$B \subseteq \Z^d_N$. 

\begin{lemma}\label{lem_after_in_G}
	For any~$t > 0$,  we have
	\[\sum_{{B \subseteq \Z^d_N, B \notin \mathscr{G}_N}} \mathcal{U}_{N,t}(\lambda;\Z^d_N,B) \xrightarrow{N \to \infty} 0.\]
\end{lemma}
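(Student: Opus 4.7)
The plan is to re-express the sum as a single probability for the classical contact process via~\eqref{eq_reform_ucp} and then to rule out the offending event using a pointwise-density strengthening of Proposition~\ref{prop_density}. By~\eqref{eq_reform_ucp} (with $A = \Z^d_N$), the sum in the statement equals
\[ \P\bigl(\delta_N N^d \cdot \tilde{T}_N^{\Z^d_N} \le t,\; \tilde{\mathscr{L}}_N^{\Z^d_N} \notin \mathscr{G}_N\bigr), \]
where $(\zeta_s^{\Z^d_N})_{s\ge 0}$ is the classical contact process on $\Z^d_N$ started from full occupancy with inane mutation marks. Unwinding the definition of~$\tilde{\mathscr{L}}_N^{\Z^d_N}$ as a translation of $\zeta_{\tilde{T}_N^{\Z^d_N}}^{\Z^d_N}$ with the origin removed, and using the translation-invariance of the family of boxes of radius~$N^{1/288}$ in the torus, the event $\tilde{\mathscr L}_N^{\Z^d_N}\notin\mathscr G_N$ forces some box of radius~$N^{1/288}$ to contain at most one point of~$\zeta_{\tilde{T}_N^{\Z^d_N}}^{\Z^d_N}$.

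The next step is to rule out this deficient scenario by applying Proposition~\ref{prop_density} with a much smaller exponent, say~$\varepsilon' = 1/1000$: with probability at least $1-N^{-\log N}$, the configuration $\zeta^{\Z^d_N}_s$ intersects every box of radius~$N^{\varepsilon'}$ simultaneously for all $s \le N^{\log N}$. A box of radius $N^{1/288}$ contains on the order of $N^{d(1/288 - 1/1000)}$ pairwise-disjoint sub-boxes of radius $N^{\varepsilon'}$, each contributing at least one occupied site; so, for $N$ large, every box of radius~$N^{1/288}$ contains far more than one point of $\zeta^{\Z^d_N}_s$, ruling out the event identified above.

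To close the argument, I would use~\eqref{eq_assumption_delta_2} to bound $(\delta_N N^d)^{-1} \cdot t$ by a polynomial in $N$, and in particular by $N^{\log N}$ for $N$ large. Hence on the good event of probability at least $1 - N^{-\log N}$, the condition $\delta_N N^d\cdot \tilde T_N^{\Z^d_N} \le t$ implies $\tilde T_N^{\Z^d_N}\le N^{\log N}$, the pointwise-density property of the previous paragraph applies, and consequently $\tilde{\mathscr{L}}_N^{\Z^d_N}\in\mathscr{G}_N$. The quantity of interest is therefore $O(N^{-\log N})$, which vanishes as~$N\to\infty$. The only subtlety is the bookkeeping of exponents; no serious obstacle is anticipated.
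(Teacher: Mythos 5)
Your argument is correct and follows essentially the same route as the paper: rewrite the sum via~\eqref{eq_reform_ucp}, observe that the one deleted point in the definition of $\tilde{\mathscr{L}}_N$ is handled by applying Proposition~\ref{prop_density} with a strictly smaller exponent (the paper uses $1/289$ where you use $1/1000$), and bound the time horizon by $N^{\log N}$ using~\eqref{eq_assumption_delta_2}. No gap.
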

\begin{proof}
	Let us omit the superscript~$\Z^d_N$, so that
 \[\zeta_t = \zeta^{\Z^d_N}_t,\qquad \tilde{T}_N = \tilde{T}_N^{\Z^d_N},\qquad \tilde{\mathscr{L}}_N =\tilde{\mathscr{L}}_N^{\Z^d_N}.\]
 By~\eqref{eq_reform_ucp}, the sum in the statement of the lemma equals
	\begin{align*}
		\P(\tilde{T}_N < (\delta_NN^d)^{-1}\cdot t,\;\tilde{\mathscr{L}}_N \notin \mathscr{G}_N) \le \P(\tilde{T}_N < (\delta_NN^d)^{-1}\cdot t,\;\zeta_{\tilde{T}_N} \notin \mathscr{G}_N^*),
	\end{align*}
	where~$\mathscr{G}_N^*$ is the set of subsets of~$\Z^d_N$ that intersect all boxes of radius~$N^{1/289}$ in~$\Z^d_N$. (The reason we cannot just repeat~$\mathscr{G}_N$ in the expression on the right-hand side is that one of the 1's of~$\zeta_{\tilde{T}_N}$ is deleted in the definition of~$\tilde{\mathscr{L}}_N$, so it could be that~$\zeta_{\tilde{T}_N} \in \mathscr{G}_N$ but~$\tilde{\mathscr{L}}_N \notin \mathscr{G}_N$). By~\eqref{eq_assumption_delta_2}, we have~$(\delta_NN^d)^{-1}\cdot t \le N^{\log(N)}$ for~$N$ large enough, so
	\[\P(\tilde{T}_N < (\delta_NN^d)^{-1}\cdot t,\;\zeta_{\tilde{T}_N} \notin \mathscr{G}_N^*) \le \P(\exists t \le N^{\log(N)}:\; \zeta_t \notin \mathscr{G}_N^*).\]
	By Proposition~\ref{prop_density}, the right-hand side tends to zero as~$N \to \infty$.
\end{proof}

\subsection{Poisson convergence}\label{s_poisson}
For the rest of Section~\ref{s_appearance}, we continue assuming that the contact process is obtained from a graphical construction with inane marks, and continue omitting the superscript~$\Z^d_N$, so that
 \[\zeta_t = \zeta^{\Z^d_N}_t,\qquad \tilde{T}_N = \tilde{T}_N^{\Z^d_N},\qquad \tilde{\mathscr{L}}_N =\tilde{\mathscr{L}}_N^{\Z^d_N}.\]

In view of Lemma~\ref{lem_reduction_full_occ} and Lemma~\ref{lem_after_in_G}, in order to prove Proposition~\ref{prop_appearance_new}, it suffices to prove that for any~$t > 0$ and any local function~$f:\{0,1\}^{\Z^d} \to \mathbb{R}$, we have
\begin{equation}\label{eq_conv_tilde_new}
	\sum_{B \subseteq \Z^d_N} \mathcal{U}_{N,t}(\lambda;\Z^d_N,B)\cdot f(\psi_N^{-1}(B))  \xrightarrow{N\to \infty} (1-\mathrm{e}^{-b(\lambda)R(\lambda)\cdot t})\cdot \int f\;\mathrm{d}\hat{\mu}_\lambda.
\end{equation} 
 This is the same as
	\begin{equation}\label{eq_conv_tilde}
		\mathbb{E}\left[\mathds{1}\{\delta_NN^d\cdot \tilde{T}_N\le  t\}\cdot f(\psi^{-1}_N(\tilde{\mathscr{L}}_N))\right] \xrightarrow{N \to \infty} (1-\mathrm{e}^{-b(\lambda)R(\lambda)\cdot t})\cdot \int f\;\mathrm{d}\hat{\mu}_\lambda.
	\end{equation}

Define the process~$(\mathcal{N}_t^{(f)})_{t \ge 0}$ by letting~$\mathcal{N}_0^{(f)} = 0$ and
	\begin{equation}\label{eq_def_jumpsum} \mathcal{N}^{(f)}_t := \sum_{u \in \Z^d_N}\sum_{s \le t} \mathds{1}\{\zeta_{s-}(u)=0,\; \zeta_{s}(u)= 1\}\cdot f(\psi_N^{-1}(\zeta_{s-} \circ \theta_u)).\end{equation}
		In words, at every time for which there is a birth in~$(\zeta_t)$, the process~$(\mathcal{N}^{(f)}_t)$ is incremented by the value of~$f$  evaluated at the configuration obtained by taking the contact process configuration just before the birth, shifting it so that the birth location becomes the origin, and then transporting it to~$\Z^d$.

Define~$q:\{0,1\}^{\Z^d} \to \mathbb{R}$ by
\begin{equation*}
	q(\zeta):= \mathds{1}\{\zeta(o) = 0\}\cdot |\{x \sim o:\;\zeta(x) = 1\}|.
\end{equation*}
Note that, recalling~\eqref{eq_def_of_R},\color{black}
\begin{equation}
\label{eq_def_R}
	R(\lambda) = \lambda\cdot \int_{\{0,1\}^{\Z^d}} q(\zeta)\; \mu_\lambda(\mathrm{d}\zeta)
\end{equation}
and, for any~$f:\{0,1\}^{\Z^d} \to \R$ that is measurable and bounded,
\begin{equation}
	\label{eq_radon}
\int_{\{0,1\}^{\Z^d}} f(\zeta)\; \hat{\mu}_\lambda(\mathrm{d}\zeta) = \frac{\lambda}{R(\lambda)} \cdot \int_{\{0,1\}^{\Z^d}} f(\zeta)\cdot q(\zeta)\; {\mu}_\lambda(\mathrm{d}\zeta). \end{equation}

We will need the following lemma, whose proof is postponed to Section~\ref{ss_concentration}.
		\begin{lemma}\label{lem_jump_sum} For any local function~$f:\{0,1\}^{\Z^d} \to \mathbb{R}$ and any sequence~$(t_N)_{N \ge 1}$  such that $  t_N/N^{1/4} \xrightarrow{N \to \infty} \infty$ and~$t_N/N^{\log N} \xrightarrow{N \to \infty} 0$, we have
	\begin{equation*}
		\frac{1}{N^dt_N}\cdot  \mathcal{N}^{(f)}_{t_N} \xrightarrow{N \to \infty} \lambda\int f\cdot q\;\mathrm{d}\mu_\lambda \quad \text{in probability}.
	\end{equation*}
\end{lemma}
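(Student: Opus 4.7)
The plan is to apply the Doob--Meyer decomposition $\mathcal{N}^{(f)}_t = M_t + A_t$, where
\begin{equation*}
A_t = \int_0^t \lambda \sum_{u \in \Z^d_N} (f \cdot q)(\psi_N^{-1}(\zeta_s \circ \theta_u))\, ds
\end{equation*}
is the predictable compensator and $(M_t)$ is a mean-zero martingale, and to treat the two pieces separately. For the martingale part, the standard formula for the angle bracket of a compensated counting process gives $\E[M_{t_N}^2] \le C \|f\|_\infty^2 \lambda N^d t_N$, since each birth contributes at most $\|f\|_\infty^2$ to $[M]$ and the expected number of births up to time $t_N$ is $O(N^d t_N)$. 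Chebyshev's inequality together with $N^d t_N \to \infty$ then yields $M_{t_N}/(N^d t_N) \to 0$ in probability.

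For the compensator, set $g := f \cdot q$, which is still a bounded local function. The task is to show
\begin{equation*}
\frac{A_{t_N}}{N^d t_N} = \frac{\lambda}{t_N} \int_0^{t_N} \frac{1}{N^d} \sum_u g(\psi_N^{-1}(\zeta_s \circ \theta_u))\, ds \;\xrightarrow{N \to \infty}\; \lambda \int g\, d\mu_\lambda \quad \text{in probability.}
\end{equation*}
Since $g$ is bounded, the integral over the burn-in window $[0, N^{1/24}]$ contributes $O(N^d \cdot N^{1/24})$ to $A_{t_N}$, which is $o(N^d t_N)$ by the assumption $t_N/N^{1/4} \to \infty$. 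By translation invariance of the torus and of the initial configuration $\zeta_0 \equiv 1$, $\E[g(\psi_N^{-1}(\zeta_s \circ \theta_u))] = \E[g(\psi_N^{-1}(\zeta_s))]$ for every~$u$. I would then show that this common expectation converges to $\int g\, d\mu_\lambda$, uniformly in $s \in [N^{1/24}, N^{\log N}]$: this is a metastability statement asserting that, for times well below the extinction time $\exp\{c_{\mathrm{long}} N^d\}$ (see~\eqref{eq_from_tom_expectation}), the process started from full occupancy is locally indistinguishable from~$\mu_\lambda$. Quantitatively, one can argue this through a coupling with the contact process on~$\Z^d$ from full occupancy (exploiting finite speed of propagation, Proposition~\ref{prop_facts_contact}(b)) when $s$ is at most polynomial in~$N$, and by combining Propositions~\ref{prop_density} and~\ref{prop_coupling}, via~\eqref{eq_coupling_combining} and the Markov property, for larger~$s$.

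Finally, at each fixed $s$ in the relevant window, I would show that $\frac{1}{N^d}\sum_u g(\psi_N^{-1}(\zeta_s \circ \theta_u))$ concentrates around its mean. Since $g$ is local, summands at sites $u,v$ with $|u-v|$ larger than the range of $g$ depend on $\zeta_s$ on disjoint balls; decay of correlations for the supercritical contact process in its metastable regime then gives a variance of order $1/N^d$ for the spatial average. Integrating over $s \in [N^{1/24}, t_N]$ and a final Chebyshev step yield the desired convergence in probability for the compensator, which combined with the martingale bound completes the proof.

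The main obstacle is the quantitative metastability claim, controlling $\E[g(\psi_N^{-1}(\zeta_s))]$ uniformly for $s$ ranging up to $N^{\log N}$: a naive coupling with the process on~$\Z^d$ breaks down at super-polynomial times due to wrap-around effects on the torus, so one must leverage the far longer (exponential in $N^d$) extinction time, together with the density preservation of Proposition~\ref{prop_density} and the coupling statement of Proposition~\ref{prop_coupling}, to argue that the process remains in its metastable state throughout the entire window.
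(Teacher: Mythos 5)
Your martingale step is exactly the paper's: the same Doob--Meyer decomposition, the same $O(N^d t_N)$ bound on the bracket, and the same Chebyshev conclusion (the paper packages this as Lemma~\ref{lem_elem_mc} plus Doob's inequality, but the content is identical). The divergence is in the compensator, where the paper proves Lemma~\ref{lem_compensator} via the \emph{past-truncated} process $\eta_t(u) = \mathds{1}\{\Z^d_N \times \{t-N^{1/4}\} \rightsquigarrow (u,t)\}$: this process is stationary in space and time, its one-site marginal is shown to converge to $\mu_\lambda$ by comparison with the infinite-volume process, and --- crucially --- observables of $\eta$ at times differing by more than $2N^{1/4}$ are \emph{exactly independent}, because they are measurable with respect to disjoint time-slabs of the graphical construction. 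The variance of the space-time average is then bounded by $8N^{1/4}\|g\|_\infty/t_N \to 0$ using only this temporal independence and the hypothesis $t_N/N^{1/4}\to\infty$; no spatial mixing whatsoever is needed. The identification with $(\zeta_t)$ is then done by showing $\zeta_t=\eta_t$ on $[N^{1/4},N^{\log N}]$ with high probability.

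Your route replaces this with (i) uniform convergence of $\E[g(\psi_N^{-1}(\zeta_s\circ\theta_u))]$ to $\int g\,\mathrm{d}\mu_\lambda$ over the window, and (ii) spatial concentration of $\frac{1}{N^d}\sum_u g(\psi_N^{-1}(\zeta_s\circ\theta_u))$ at each fixed $s$. Step (i) is fine in spirit (it is essentially Lemma~\ref{lem_first_moment} combined with the couplings you cite). The gap is in (ii). The observation that the summands depend on $\zeta_s$ restricted to disjoint balls gives nothing by itself: the time-$s$ marginal of the contact process is a strongly correlated field, and what you actually need is a quantitative spatial decay of correlations for $\zeta_s$ (equivalently, for $\mu_\lambda$ plus a transfer to the torus at metastable times), summable enough to make the variance of the spatial average vanish. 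This is a true but genuinely nontrivial input --- it is not proved anywhere in the paper, and any attempt to prove it from the graphical construction (e.g., by arguing that $\zeta_s(u)$ and $\zeta_s(v)$ are nearly determined by disjoint space-time regions) essentially forces you to reinvent the past-truncated process. So either import exponential decay of correlations for the supercritical upper invariant measure as an external citation and carefully transfer it to the torus, or adopt the paper's temporal-decorrelation device, which is self-contained and is the reason the threshold $N^{1/4}$ appears in the hypothesis $t_N/N^{1/4}\to\infty$ in the first place.
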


\begin{proof}[Proof of Proposition~\ref{prop_appearance_new}]
	As already observed, it  is sufficient \color{black} to prove~\eqref{eq_conv_tilde}.  In order to do so,  it suffices by taking linear combinations \color{black} to consider functions of the form
	\[f(\zeta) = \mathds{1}\{\zeta \cap Q_{\Z^d}(o,\ell) = \bar{B}\},\quad \zeta \in \{0,1\}^{\Z^d}\]
	for fixed~$\ell \in \N$ and~$\bar{B} \subseteq Q_{Z^d}(o,\ell)$. The left-hand side of~\eqref{eq_conv_tilde} then becomes
\begin{equation} \label{eq_conv_tilde2}
	\P(\delta_NN^d\cdot \tilde{T}_N \le  t,\; \psi_N^{-1}(\tilde{\mathscr{L}}_N) \cap Q_{\Z^d}(o,\ell) = \bar{B}).
\end{equation}
	The choice of~$\ell$ and~$\bar{B}$ (and hence the function~$f$) will remain fixed for the rest of this proof. 
We observe that
	\[1-f(\zeta) = \mathds{1}\{\zeta \cap Q_{\Z^d}(o,\ell) \neq \bar{B}\}.\]
Moreover, for each~$(u,t) \in \Z^d_N \times [0,\infty)$ for which~$\zeta_{t-}(u) = 0$ and~$\zeta_t(u) = 1$, we either have~$f(\psi^{-1}_N(\zeta_{t-} \circ \theta_u)) = 1$ (in which case~$\mathcal{N}^{(f)}_t = \mathcal{N}^{(f)}_{t-} + 1$ and~$\mathcal{N}^{(1-f)}_t = \mathcal{N}^{(1-f)}_{t-}$) or~$f(\psi^{-1}_N(\zeta_{t-} \circ \theta_u)) = 0$ (in which case~$\mathcal{N}^{(f)}_t = \mathcal{N}^{(f)}_{t-} $ and~$\mathcal{N}^{(1-f)}_t = \mathcal{N}^{(1-f)}_{t-}+1$). Define the times
\[\mathcal{J}_N:=  \inf\left\{\begin{array}{ll} 
	t \ge 0:\; \mathcal{N}^{(f)}_t = \mathcal{N}^{(f)}_{t-}+1,\text{ the birth arrow }\\ \text{that causes the birth at time $t$ has a mark}
\end{array}\right\}\]
and
\[\mathcal{J}_N':=  \inf\left\{\begin{array}{ll} 
	t \ge 0:\; \mathcal{N}^{(1-f)}_t = \mathcal{N}^{(1-f)}_{t-}+1,\text{ the birth arrow }\\ \text{that causes the birth at time $t$ has a mark}
\end{array}\right\}.\] 
Intuitively speaking, whether an arrow is added to~$\mathcal{J}_N$ or~$\mathcal{J}_N'$ depends on whether at the corresponding time $t$, the intersection of the configuration $\zeta_t$ (translated with respect to the place where the birth happened, transported to~$\Z^d$) with $Q_{\mathbb Z^d}(o,\ell)$ equals $\bar{B}$ or not. \color{black}
Note that~$\tilde{T}_N =  \min(\mathcal{J}_N,\mathcal{J}'_N)$, and the probability in~\eqref{eq_conv_tilde2} becomes
\begin{equation} \label{eq_conv_tilde3}\P(\delta_NN^d\cdot \mathcal{J}_N \le   t,\; \mathcal{J}_N < \mathcal{J}_N').\end{equation}
	Rather than treating this probability directly, we first fix~$s,s' > 0$ and consider
\begin{align} 
\nonumber	&\P(\mathcal{J}_N >(\delta_NN^d)^{-1}\cdot s,\;\mathcal{J}'_N > (\delta_NN^d)^{-1}\cdot s') \\[.2cm]\nonumber&= \E\left[ \P\left( \mathcal{J}_N >(\delta_NN^d)^{-1}\cdot s,\;\mathcal{J}'_N > (\delta_NN^d)^{-1}\cdot s'\;\left|\; (\mathcal{N}^{(f)}_t,\mathcal{N}^{(1-f)}_t)_{t \ge 0}\right.\right)\;\right]\\[.2cm]
	\nonumber&= \E\left[ (1-\delta_N\cdot b(\lambda))^{\mathcal{N}^{(f)}_{s/(\delta_NN^d)}+\mathcal{N}^{(1-f)}_{s'/(\delta_NN^d)}}\right]\\[.2cm]
	\label{eq_mess22}&= \E\left[ \exp\left\{\frac{\log(1-\delta_N\cdot b(\lambda))}{\delta_N}\cdot\delta_N \cdot   \left( \mathcal{N}^{(f)}_{s/(\delta_NN^d)} + \mathcal{N}^{(1-f)}_{s'/(\delta_NN^d)}  \right) \right\}\right].
\end{align}
Now, defining
	\[\alpha := \lambda \cdot \int f\cdot q\;\mathrm{d}\mu_\lambda,\qquad \alpha' := \lambda \cdot \int (1-f)\cdot q\;\mathrm{d}\mu_\lambda,\]
	Lemma~\ref{lem_jump_sum} (with~$t_N = s/(\delta_NN^d)$, which satisfies~$t_N/N^{1/4} \to \infty$ by~\eqref{eq_assumption_delta}) implies that
\[ \delta_N \cdot \mathcal{N}^{(f)}_{s/(\delta_NN^d)} = \frac{\mathcal{N}^{(f)}_{s/(\delta_NN^d)}}{N^d\cdot s/(\delta_NN^d)}\cdot s \xrightarrow{N\to \infty} \alpha\cdot s\quad \text{in probability} \]
and similarly,
\[ \delta_N \cdot \mathcal{N}^{(1-f)}_{s'/(\delta_NN^d)}  \xrightarrow{N\to \infty} \alpha' \cdot s' \quad \text{in probability}. \]
Using these convergences in~\eqref{eq_mess22}, we obtain that
\[\P(\mathcal{J}_N >(\delta_NN^d)^{-1}\cdot s,\;\mathcal{J}'_N > (\delta_NN^d)^{-1}\cdot s')\xrightarrow{N \to \infty} \mathrm{e}^{-\alpha b(\lambda) s - \alpha'b(\lambda)s'}.\]
This implies that
\[(\delta_NN^d\cdot \mathcal{J}_N,\;\delta_NN^d\cdot \mathcal{J}'_N) \xrightarrow{N \to \infty} (Z,Z')\quad \text{in distribution,}\]
	where~$Z,Z'$ are independent random variables, with~$Z\sim \mathrm{Exp}(\alpha b(\lambda))$ and~$Z'\sim \mathrm{Exp}(\alpha'b(\lambda))$. Then, the probability in~\eqref{eq_conv_tilde3} converges, as~$N \to \infty$, to
\[\P(Z \le t,\;Z < Z') = (1- \mathrm{e}^{-b(\lambda)(\alpha+\alpha')\cdot t})\cdot \frac{\alpha}{\alpha + \alpha'}. \]
By~\eqref{eq_def_R} and~\eqref{eq_radon}, we have~$\alpha + \alpha' = R(\lambda)$ and
\[(1- \mathrm{e}^{-b(\lambda)(\alpha+\alpha')\cdot t})\cdot \frac{\alpha}{\alpha + \alpha'} = (1-\mathrm{e}^{-b(\lambda)R(\lambda)\cdot t})\cdot \int f\;\mathrm{d}\hat{\mu}_\lambda,\]
as required.
\end{proof}

{
An interesting consequence of Lemma~\ref{lem_jump_sum} is that~$R(\lambda) \le 1$, which is the content of the next proposition. Although we do not need this fact for the proof of our main result, we find it to be of independent interest. Moreover, recall that the limiting process~$(Z_t)_{t \ge 0}$ of Theorem~\ref{thm_main} jumps from~$\lambda > 0$ with rate~$b(\lambda)R(\lambda)$; knowing that~$R(\lambda) \le 1$ tells us that the assumption that this process is non-explosive is not too restrictive (see Remark~\ref{rem_explosive_b}). 

\begin{proposition}\label{prop_leo}
	The quantity~$R(\lambda) =\lambda \cdot  \int_{\{0,1\}^{\Z^d}} q \;\mathrm{d}\mu_\lambda$ is at most~$1$.
\end{proposition}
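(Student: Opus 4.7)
The plan is to use Lemma~\ref{lem_jump_sum} with the constant test function $f\equiv 1$ to convert $R(\lambda)$ into the asymptotic rate of births per site in the torus dynamics, and then bound that rate via a conservation argument that contrasts births against deaths. The intuition is classical: $R(\lambda)$ is the ``effective birth rate'' at the origin under the stationary distribution, and it must be balanced by the ``death rate'' at the origin, which is simply the $\mu_\lambda$-probability of being occupied, so at most $1$.

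\emph{Step 1 (limit of births per unit of space-time).} Apply Lemma~\ref{lem_jump_sum} with $f\equiv 1$. Since $q(\zeta)\ge 0$, the right-hand side equals $\lambda\int q\,\mathrm{d}\mu_\lambda=R(\lambda)$. Fix any sequence $t_N$ satisfying the hypotheses of the lemma, for instance $t_N=N^{1/2}$. Then
\[
\frac{\mathcal{N}^{(1)}_{t_N}}{N^d t_N}\xrightarrow{N\to\infty} R(\lambda)\quad\text{in probability,}
\]
where (recalling~\eqref{eq_def_jumpsum}) $\mathcal{N}^{(1)}_t$ is simply the total number of $0\to 1$ transitions up to time $t$ in the contact process $(\zeta_t)$ on $\Z^d_N$ started from full occupancy.

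\emph{Step 2 (conservation bound on expected number of births).} Let $B_t:=\mathcal{N}^{(1)}_t$ and let $D_t$ denote the total number of $1\to 0$ transitions in $[0,t]$. Counting the net change of occupied sites yields $|\zeta_t|=|\zeta_0|+B_t-D_t$. Since $|\zeta_0|=N^d$ (full occupancy) and $|\zeta_t|\ge 0$, we obtain $B_t\le D_t$. Each death is produced by a mark of a death process $R^u$ occurring while $u$ is occupied, so by Fubini,
\[
\mathbb{E}[D_t]=\int_0^t\sum_{u\in\Z^d_N}\mathbb{P}(\zeta_s(u)=1)\,\mathrm{d}s\le N^d\cdot t.
\]
Therefore $\mathbb{E}[\mathcal{N}^{(1)}_{t_N}]/(N^d t_N)\le 1$ for every $N$.

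\emph{Step 3 (pass to the limit via Fatou).} The non-negative random variables $X_N:=\mathcal{N}^{(1)}_{t_N}/(N^d t_N)$ converge in probability to the constant $R(\lambda)$. Extract a subsequence along which $X_{N_k}\to R(\lambda)$ almost surely. By Fatou's lemma and Step 2,
\[
R(\lambda)=\mathbb{E}\bigl[\lim_{k\to\infty} X_{N_k}\bigr]\le \liminf_{k\to\infty}\mathbb{E}[X_{N_k}]\le 1,
\]
as required.

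The argument has no serious obstacle; the only delicate point is the soft step of transferring Lemma~\ref{lem_jump_sum}'s convergence in probability to an inequality on the limiting constant, which is handled by Fatou after subsequence extraction.
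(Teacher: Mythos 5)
Your proof is correct and follows essentially the same route as the paper's: apply Lemma~\ref{lem_jump_sum} with $f\equiv 1$, use particle conservation to compare the birth count with the death count, and exploit that deaths occur at rate one per occupied site (the paper compares the two counts in probability and invokes the law of large numbers for the death marks, whereas you bound $\E[D_t]$ directly and pass to the limit with Fatou — equivalent bookkeeping). One cosmetic slip: $B_t\le D_t$ follows from $|\zeta_t|\le N^d$, not from $|\zeta_t|\ge 0$ (which gives the reverse-type bound $D_t\le N^d+B_t$); in any case a discrepancy of $N^d$ is harmless after dividing by $N^d t_N$.
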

\begin{proof}
	We take~$f:\{0,1\}^{\Z^d} \to \R$ to be identically equal to one, so the process~$(\mathcal{N}^{(f)}_t)_{t \ge 0}$ defined in~\eqref{eq_def_jumpsum} simply counts the number of births in~$(\zeta_t)_{t \ge 0}$. Lemma~\ref{lem_jump_sum} with~$t_N = N$ implies that
	\begin{equation}\label{eq_conv_for_N}
		\frac{1}{N^{d+1}} \mathcal{N}^{(f)}_{N} \xrightarrow{N \to \infty} \lambda \int_{\{0,1\}^{\Z^d}} q\;\mathrm{d}\mu_\lambda =  R(\lambda)\quad \text{in probability.}
	\end{equation}
	Next, define
	\[\mathcal{M}_t := \sum_{u \in \Z^d_N} \sum_{s \le t} \mathds{1}\{\zeta_{s-}(u) = 1,\;\zeta_s(u) = 0\},\]
	that is,~$\mathcal{M}_t$ is the number of times that a shift~$1 \to 0$ happens anywhere in the torus up to time~$t$. Clearly, for any~$t$,
	\[ \sum_{u \in \Z^d_N} \zeta_t(u) = \sum_{u \in \Z^d_N} \zeta_0(u) - \mathcal{M}_t + \mathcal{N}_t^{(f)} = N^d - \mathcal{M}_t + \mathcal{N}^{(f)}_t,\]
	so~$|\mathcal{M}_t - \mathcal{N}_t^{(f)}| \le 2N^d$, so~\eqref{eq_conv_for_N} gives
	\begin{equation}\label{eq_conv_for_M}
		\frac{1}{N^{d+1}} \mathcal{M}_{N} \xrightarrow{N \to \infty} \lambda \int_{\{0,1\}^{\Z^d}} q\;\mathrm{d}\mu_\lambda =  R(\lambda)\quad \text{in probability.}
	\end{equation}
	On the other hand,~$\mathcal{M}_N$ is at most the number of death marks anywhere in the torus in the time interval~$[0,N]$. Since death marks at each vertex are given by a Poisson process with rate 1, the Law of Large Numbers gives that, for any~$\varepsilon > 0$,
	\[\P\left( \frac{1}{N^{d+1}}\mathcal{M}_N \le 1 + \varepsilon \right) \xrightarrow{N \to \infty} 1.\]
	Comparing this with~\eqref{eq_conv_for_M} implies that~$R(\lambda) \le 1$.
\end{proof}

}

\subsection{Martingale concentration}\label{ss_concentration}
In this section, we prove Lemma~\ref{lem_jump_sum}. The first ingredient we will need is the following elementary inequality involving continuous-time Markov chains.
\begin{lemma}\label{lem_elem_mc}
	Let~$(Z_t)_{t \ge 0}$ be a continuous-time Markov chain on the finite state space~$S$, with jump rates~$r(\cdot,\cdot):S^2 \to [0,\infty)$. Let~$h: S^2 \to \mathbb{R}$ be a function with~$h(a,a) = 0$ for all~$a \in S$. Define processes~$(A_t)_{t \ge 0}$ and~$(B_t)_{t \ge 0}$ by setting~$A_0 = B_0 = 0$ and
	\[A_t:= \sum_{s \le t} h(Z_{s-},Z_s),\qquad B_t:= \int_0^t \sum_{b \in S} r(Z_s,b)\cdot h(Z_s,b)\;\mathrm{d}s,\quad t > 0.\]
Then, for any~$t > 0$ and~$x > 0$ we have
	\[\mathbb{P}\left(\sup_{s \le t}|A_s - B_s| > x\right) \le 4 \max_{a \in S}\left(\sum_{b \in S} r(a,b)\cdot h(a,b)^2\right)\cdot t \cdot \frac{1}{x^2}.\]
\end{lemma}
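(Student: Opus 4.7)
The plan is to recognize $M_t := A_t - B_t$ as a martingale, and then combine Doob's $L^2$ maximal inequality with a standard computation of its predictable quadratic variation.

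First I would verify that $(M_t)_{t \ge 0}$ is a (mean-zero) martingale with respect to the natural filtration of $(Z_t)$. This is a direct consequence of Dynkin's formula (or equivalently, of the Lévy system / compensator description of pure jump Markov processes on a finite state space): the process $A_t$ counts weighted jumps with weight $h(Z_{s-}, Z_s)$, and its predictable compensator is precisely $B_t = \int_0^t \sum_{b} r(Z_s, b)\, h(Z_s, b)\, \mathrm{d}s$. Since the state space $S$ is finite and $h$ is bounded, all integrability questions are trivial, and one gets a genuine (càdlàg) martingale with bounded jumps.

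Next I would apply Doob's $L^2$ maximal inequality, which gives
\[
\E\!\left[\sup_{s \le t} (A_s - B_s)^2 \right] \le 4\, \E[(A_t - B_t)^2] = 4\, \E[\langle M\rangle_t],
\]
and then pass to probabilities via Markov's inequality to obtain
\[
\P\!\left(\sup_{s \le t} |A_s - B_s| > x\right) \le \frac{4\, \E[\langle M\rangle_t]}{x^2}.
\]

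The remaining step is to compute the predictable quadratic variation $\langle M\rangle$. Since $M$ is a pure jump martingale on a finite state space, its jumps coincide with those of $A$, namely $\Delta M_s = h(Z_{s-}, Z_s)$ at each jump time. The compensator of $\sum_{s\le t}(\Delta M_s)^2 = \sum_{s\le t}h(Z_{s-},Z_s)^2$ is
\[
\langle M \rangle_t \;=\; \int_0^t \sum_{b \in S} r(Z_s,b)\, h(Z_s,b)^2\, \mathrm{d}s,
\]
by the same Lévy-system argument used for $B_t$. Bounding the integrand pointwise by $\max_{a \in S} \sum_{b \in S} r(a,b) h(a,b)^2$ yields $\E[\langle M\rangle_t] \le \max_a \sum_b r(a,b) h(a,b)^2 \cdot t$, and substituting this into the Doob bound gives the claimed inequality.

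This is a standard calculation rather than a deep step, so I do not anticipate a real obstacle; the only thing to be careful about is the bookkeeping that identifies $B_t$ as the compensator of $A_t$ and $\langle M\rangle_t$ as the compensator of $\sum_{s}(\Delta M_s)^2$. Both identifications are routine on a finite state space, where one can, if preferred, bypass general semimartingale theory by writing the infinitesimal generator $\mathcal{L}$ acting on functions of $(Z_s, A_s)$ and verifying the martingale property directly.
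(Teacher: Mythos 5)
Your proposal is correct and follows essentially the same route as the paper's proof: Dynkin's formula to identify $M_t = A_t - B_t$ as a martingale, the computation of $\langle M\rangle_t = \int_0^t \sum_{b} r(Z_s,b)\,h(Z_s,b)^2\,\mathrm{d}s$, and then Markov's inequality combined with Doob's $L^2$ maximal inequality and the pointwise bound on the integrand. No gaps.
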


\begin{proof}
	{An application of Dynkin's formula [Lemma 17.22 \cite{Kallenberg}] makes it easy to see that ~$(M_t)_{t \ge 0}:= (A_t - B_t)_{t \ge 0}$ is a martingale. It is elementary to check that it has} quadratic variation
	\[\langle M\rangle_t = \int_0^t \sum_{b \in S} r(Z_s,b)\cdot h(Z_s,b)^2\;\mathrm{d}s.\]
By Markov's inequality and Doob's inequality,
	\[ \P\left(\sup_{s \le t} |M_s| > x \right) \le \frac{\E\left[\sup_{s \le t}(M_s)^2\right]}{x^2} \le \frac{4\cdot \E[(M_t)^2]}{x^2}   = \frac{4\cdot \E[\langle M\rangle_t]}{x^2}.\]
Finally, we bound
	\[\E[\langle M \rangle_t] \le \max_{a \in S}\left(\sum_{b \in S} r(a,b)\cdot h(a,b)^2\right)\cdot t.\]
\end{proof}

Our second ingredient is a convergence result for the integral that will appear in the application we have in mind for Lemma~\ref{lem_elem_mc}. The proof is postponed to Section~\ref{ss_compensator}.

\begin{lemma}\label{lem_compensator}  For any local function~$g:\{0,1\}^{\Z^d} \to \mathbb{R}$  and any sequence~$(t_N)_{N \ge 1}$  such that $  t_N/N^{1/4} \xrightarrow{N \to \infty} \infty$ and~$t_N/N^{\log N} \xrightarrow{N \to \infty} 0$, we have
	\begin{equation*}
		\frac{1}{N^dt_N} \sum_{u \in \Z^d_N} \int_0^{t_N} g(\psi_N^{-1}(\zeta_t \circ \theta_u))\;\mathrm{d}t \xrightarrow{N\to \infty} \int g \;\mathrm{d}\mu_\lambda \quad \text{in probability.}
	\end{equation*}
\end{lemma}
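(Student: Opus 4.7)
I will prove Lemma~\ref{lem_compensator} by a second moment method. Set $X_N := \frac{1}{N^dt_N}\sum_{u\in\Z^d_N}\int_0^{t_N} g(\psi_N^{-1}(\zeta_t\circ\theta_u))\,\mathrm{d}t$ and $c_g:=\int g\,\mathrm{d}\mu_\lambda$. I will show $\E[X_N]\to c_g$ and $\mathrm{Var}(X_N)\to 0$, whence $X_N\to c_g$ in $L^2$ and hence in probability. The starting observation is that the initial condition $\zeta_0\equiv 1$ and the graphical construction are invariant in distribution under the torus shifts $\theta_u$, so $\E[g(\psi_N^{-1}(\zeta_t\circ\theta_u))]=\bar g_N(t):=\E[g(\psi_N^{-1}(\zeta_t))]$ for every $u$, and $\E[X_N]=\frac{1}{t_N}\int_0^{t_N}\bar g_N(t)\,\mathrm{d}t$.

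\textbf{First moment (Step A).} The key estimate is
\[
\sup_{t\in[N^{1/24},N^{\log N}]}\bigl|\bar g_N(t)-c_g\bigr|\xrightarrow{N\to\infty}0,
\]
which I establish in two regimes. For $t\in[N^{1/24},2N^{1/24}]$, the speed bound of Proposition~\ref{prop_facts_contact}(b) lets us couple $\zeta_t$ on the torus with the process $\bar\zeta^1_t$ on $\Z^d$ started from full occupancy: on a ball of fixed radius $\ell$ (containing the support of $g$), the two configurations agree up to the event that a dual path starting from the ball exits $Q(o,N/2)$ backwards in time, which has probability at most $C\mathrm{e}^{-cN}$ since $t\ll N$. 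Then $|\bar g_N(t)-\E[g(\bar\zeta^1_t)]|\le C\|g\|_\infty\mathrm{e}^{-cN}$, and $\E[g(\bar\zeta^1_t)]\to c_g$ as $t\to\infty$ by definition of $\mu_\lambda$ as upper stationary distribution, uniformly on $[N^{1/24},\infty)$. For $t\in[2N^{1/24},N^{\log N}]$, set $s:=t-N^{1/24}$. Proposition~\ref{prop_density} yields $\P(\zeta_s\notin\mathscr{G}_N)\le N^{-\log N}$, and, by the Markov property and Proposition~\ref{prop_coupling} applied at time $s$ with a fresh graphical construction, conditional on $\zeta_s=\nu\in\mathscr{G}_N$ the process agrees after time $N^{1/24}$ with the process started from full occupancy at time $s$ (up to probability $N^{-\log N}$). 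In distribution this gives $\mathcal{L}(\zeta_t)\approx\mathcal{L}(\zeta_{N^{1/24}})$ on local observables, so $|\bar g_N(t)-\bar g_N(N^{1/24})|\to 0$ and we reduce to the first regime. Finally, $|\E[X_N]-c_g|\le 2\|g\|_\infty N^{1/24}/t_N+\sup_{t\in[N^{1/24},t_N]}|\bar g_N(t)-c_g|\to 0$, using the assumption $t_N/N^{1/4}\to\infty$ (so in particular $N^{1/24}/t_N\to 0$) and $t_N/N^{\log N}\to 0$.

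\textbf{Variance (Step B).} By symmetry,
\[
\mathrm{Var}(X_N)=\frac{2}{N^{2d}t_N^2}\sum_{u,v}\int_0^{t_N}\int_0^{t}\mathrm{Cov}\bigl(g(\psi_N^{-1}(\zeta_s\circ\theta_u)),g(\psi_N^{-1}(\zeta_t\circ\theta_v))\bigr)\,\mathrm{d}s\,\mathrm{d}t.
\]
Call the pair $(s,t)$ \emph{good} if $s\ge N^{1/24}$ and $t-s\ge N^{1/24}$, and \emph{bad} otherwise. For good $(s,t)$, I compute the inner covariance by conditioning on $\zeta_s$: writing $\zeta_t$ as the image of $\zeta_s$ under a fresh graphical construction $\mathcal{H}'$ over $[s,t]$, Proposition~\ref{prop_coupling} gives, on $\{\zeta_s\in\mathscr{G}_N\}$, that $\E[g(\psi_N^{-1}(\zeta_t\circ\theta_v))\mid\zeta_s]=\bar g_N(t-s)+O(\|g\|_\infty N^{-\log N})$, uniformly in $v$. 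Combining this with Proposition~\ref{prop_density} and translation invariance,
\[
\mathrm{Cov}(\cdots)=\bar g_N(s)\bigl(\bar g_N(t-s)-\bar g_N(t)\bigr)+O(\|g\|_\infty^2 N^{-\log N}),
\]
and the main term tends to $0$ uniformly on the good region by Step A. For bad $(s,t)$ I use the trivial bound $|\mathrm{Cov}|\le \|g\|_\infty^2$; the Lebesgue measure of the bad region in $[0,t_N]^2$ is $O(N^{1/24}t_N)$. Summing the two contributions (the sum over $u,v$ produces a factor $N^{2d}$ that cancels against the prefactor),
\[
\mathrm{Var}(X_N)\le \varepsilon_N+\frac{C\|g\|_\infty^2 N^{1/24}}{t_N},
\]
with $\varepsilon_N\to 0$; both terms vanish since $t_N/N^{1/4}\to\infty$.

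\textbf{Main obstacle.} The subtle point is the uniform near-stationarity statement in Step A for the large times $t\in[c_0 N,N^{\log N}]$, where one can no longer couple directly to the $\Z^d$ dynamics by the speed bound because the dual processes wrap around the torus. This is precisely the range where we have to combine Propositions~\ref{prop_density} and~\ref{prop_coupling} to ``restart'' the dynamics from time $t-N^{1/24}$, thereby reducing the distribution of $\zeta_t$ (locally) to that of $\zeta_{N^{1/24}}$; the bounds on the exceptional probabilities must be super-polynomially small so that they do not accumulate when combined with the trivial bound $\|g\|_\infty^2$ over the large range $[0,N^{\log N}]^2$, which is why the power $N^{-\log N}$ in Propositions~\ref{prop_density}--\ref{prop_coupling} is used rather than a polynomial bound.
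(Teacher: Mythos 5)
Your proof is correct, and it takes a genuinely different route from the paper's. The paper introduces an auxiliary \emph{past-truncated} process $\eta_t(u):=\mathds{1}\{\Z^d_N\times\{t-N^{1/4}\}\rightsquigarrow(u,t)\}$, which is exactly stationary in space and time; it then (i) shows that the law of $\eta_0$ converges locally to $\mu_\lambda$ by comparison with the infinite-volume construction $\mathds{1}\{-\infty\rightsquigarrow(x,t)\}$ (Lemma~\ref{lem_first_moment}), (ii) gets the variance of the ergodic average for free because $\eta_s$ and $\eta_t$ depend on disjoint slabs of the graphical construction when $|s-t|>2N^{1/4}$, hence are exactly independent, and (iii) shows $\zeta_t=\eta_t$ on the whole window $[N^{1/4},N^{\log N}]$ with probability $1-N^{-\log N}$ (Lemma~\ref{eq_two_proc_c}). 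You instead run the second-moment method directly on $\zeta$: you prove uniform near-stationarity of the one-time marginal $\bar g_N(t)$ on $[N^{1/24},N^{\log N}]$ by combining the speed bound (for moderate $t$, comparing to the $\Z^d$ dynamics and invoking the definition of $\mu_\lambda$) with a restart argument via Propositions~\ref{prop_density} and~\ref{prop_coupling} (for large $t$), and you obtain approximate temporal decorrelation from the same restart, since conditionally on $\zeta_s\in\mathscr{G}_N$ the conditional expectation of $g(\zeta_t\circ\theta_v)$ is nearly deterministic. The paper's device buys exact stationarity and exact independence at the cost of introducing the auxiliary process and the identification lemma; your route avoids the auxiliary process entirely and reuses machinery already needed elsewhere, at the cost of tracking approximation errors in both moments. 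All steps check out (no circularity: Propositions~\ref{prop_density} and~\ref{prop_coupling} are proved independently of this lemma). One minor quibble with your closing remark: after normalizing by $N^{2d}t_N^2$, the per-pair error terms only need to be $o(1)$, so the super-polynomial bound $N^{-\log N}$ is convenient but not actually forced by the size of the integration range.
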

{
The reason for the choice~$N^{1/4}$ will become clear in Section~\ref{ss_compensator}; see the discussion preceding Lemma~\ref{lem_first_moment}.
}

\begin{proof}[Proof of Lemma~\ref{lem_jump_sum}]
	Fix~$f$ and~$(t_N)$ as in the statement of the lemma. We will apply Lemma~\ref{lem_elem_mc} to the process~$(\zeta_t)_{t \ge 0}$. We define the function~$h$ of pairs of configurations~$(\zeta,\zeta') \in (\{0,1\}^{\Z^d_N})^2$ as follows. In case~$\zeta$ and~$\zeta'$ only differ at one site~$u \in \Z^d_N$, with~$\zeta(u) = 0$ and~$\zeta'(u)=1$, we set
	\[h(\zeta,\zeta'):= f(\psi_N^{-1}(\zeta \circ \theta_u)).\]
	In any other case, we set~$h$ to be zero. Note that the process which is called~$(A_t)$ in the notation of Lemma~\ref{lem_elem_mc} is
	\[A_t = \sum_{s \le t}h(\zeta_{s-},\zeta_s) = \mathcal{N}^{(f)}_t.\]

	We now examine what we obtain as the process~$(B_t)$, in the notation of Lemma~\ref{lem_elem_mc}.  Let~$r(\zeta,\zeta')$ denote the rate at which~$(\zeta_t)$ jumps from~$\zeta$ to~$\zeta'$.  We have
	\begin{align*}\sum_{\zeta'} r(\zeta,\zeta')\cdot h(\zeta,\zeta') &=\sum_{u \in \Z^d_N}  \mathds{1}\{\zeta(u) = 0\}\cdot  \lambda\cdot |\{v \sim u:\zeta(v) = 1\}| \cdot  f(\psi^{-1}_N(\zeta\circ \theta_u))\\[.2cm]
		&=\lambda\cdot  \sum_{u\in\Z^d_N} q(\psi^{-1}_N(\zeta \circ \theta_u)) \cdot f(\psi^{-1}_N(\zeta \circ \theta_u)).
	\end{align*}
Thus,
	\[B_t =\lambda\cdot \sum_{u \in \Z^d_N} \int_0^t  q(\psi^{-1}_N(\zeta_s \circ \theta_u))\cdot f(\psi^{-1}_N(\zeta_s \circ \theta_u))\;\mathrm{d}s.\]

	As a last step before applying Lemma~\ref{lem_elem_mc}, we use the bounds $r(\zeta,\zeta') \le 2d\lambda$ and $h(\zeta,\zeta') \le \|f\|_\infty$ \color{black} to obtain
	\[\max_\zeta \sum_{\zeta'} r(\zeta,\zeta')\cdot h(\zeta,\zeta')^2 \le N^d\cdot 2d\lambda\cdot  \|f\|_\infty^2.\]

	Now, for any~$\varepsilon > 0$, the bound in Lemma~\ref{lem_elem_mc}  together with the fact that $\tn>N^{1/4}$ \color{black} gives
	\begin{align*}
		\P\left(\frac{1}{N^dt_N}|A_{t_N} - B_{t_N} | > \varepsilon \right)  \le \frac{4\cdot N^d\cdot 2d\lambda \cdot \|f\|_\infty^2}{\varepsilon^2\cdot N^{2d}\cdot t_N^2} \xrightarrow{N\to \infty}0.
	\end{align*}
	Finally, Lemma~\ref{lem_compensator} implies that~$\frac{ B_{t_N}}{N^dt_N}$ converges to~$\lambda\int(f\cdot q)\mathrm{d}\mu_\lambda$ in probability. With the above convergence, we conclude that~$\frac{A_{t_N}}{N^dt_N}$ converges to the same limit in probability.
\end{proof}

\subsection{Convergence of compensator process}\label{ss_compensator}
In this section, we prove Lemma~\ref{lem_compensator}.  It will be convenient to assume that the Poisson processes in the graphical construction~$H_{\Z^d_N}$ are defined also for negative times, and then we extend the definition of infection paths and the event~$(u,s) \rightsquigarrow (v,t)$ for~$s,t \in \mathbb{R}$,~$s \le t$ in the natural way. We then define the \textit{past-truncated contact process} by
\[\eta_t(u):= \mathds{1}\left\{\Z^d_N \times \{t - N^{1/4}\} \rightsquigarrow (u,t)\right\}\]
for~$t \ge 0$ and~$u \in \Z^d_N$. { In words,~$\eta_t(u)$ equals 1 if and only if there exists some infection path starting from somewhere in the torus at time~$t - N^{1/4}$ and ending at~$u$ at time~$t$. Our interest in this process stems from two facts. First, it is stationary in space and time, in the sense that the distribution of~$\eta_t \circ \theta_u$ does not depend on~$t$ or~$u$. Second, as the following lemma will show, this distribution is close to~$\mu_\lambda$ when~$N$ is large. In the sense that it allows us to sample (albeit approximately) from a stationary distribution by going backwards in time, this process is reminiscent of the \textit{coupling from the past} technique~\cite{PW96}.

The proximity between the law of~$\eta_t$ and~$\mu_\lambda$ is intuitively clear, as we now explain. The  occurrence of the event~$\{\Z^d_N \times \{t - N^{1/4}\} \rightsquigarrow (u,t)\}$ can with high probability be decided by only looking at the graphical construction inside~$Q_{\Z^d_N}(u,\sqrt{N})  \times [t-N^{1/4},t]$ (here,~$\sqrt{N}$ is chosen so that this box is of negligible size in comparison to the torus, and~$N^{1/4}$ is chosen so that the time component is negligible compared to the space component). In particular, the definition of the past-truncated process is not much affected by the fact that the process is happening inside a torus, rather than in infinite volume. Moreover, in infinite volume, the event~$\{\Z^d \times \{t - N^{1/4}\} \rightsquigarrow (u,t)\}$ is very close to the event~$\{-\infty \rightsquigarrow (u,t)\}$ of existence of an infection path from time~$-\infty$ ending at~$(u,t)$. Finally,~$(\mathds{1}\{-\infty \rightsquigarrow (u,t)\})_{u \in \Z^d}$ is distributed as~$\mu_\lambda$, by the definition of~$\mu_\lambda$ as the distributional limit of the contact process started from the identically 1 configuration.
}

\begin{lemma}\label{lem_first_moment}
	For any local function~$g:\{0,1\}^{\Z^d} \to \mathbb{R}$, we have
	\begin{equation*}\lim_{N \to \infty} \E[g(\psi_N^{-1}(\eta_0))] = \int g\;\mathrm{d}\mu_\lambda.\end{equation*}
\end{lemma}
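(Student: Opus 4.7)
The approach is a three-step coupling argument: spatially truncate the event defining $\eta_0$, couple the torus graphical construction with one on $\Z^d$, and identify the infinite-volume limit as $\mu_\lambda$. Since $g$ is local, we fix $\ell$ such that $g(\zeta)$ depends only on $\zeta|_{Q_{\Z^d}(o,\ell)}$.

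\emph{Step 1 (spatial truncation).} By reversing arrows in the graphical construction, the event $\{\Z^d_N \times \{-N^{1/4}\} \rightsquigarrow (u,0)\}$ has the same probability as the event that the forward contact process started from $u$ is nonempty at time $N^{1/4}$, and in any case its occurrence is measurable with respect to the same space-time region. Setting $r_N := \lfloor N^{1/3} \rfloor$ and applying Proposition~\ref{prop_facts_contact}(b) (the torus version follows from the $\Z^d$ statement by a coupling valid as long as the process has not yet propagated distance $N/2$), the probability that some infection path starting outside $Q_{\Z^d_N}(u, r_N)$ in the time window $[-N^{1/4},0]$ reaches $u$ at time $0$ is at most $\exp\{-c_\mathrm{speed} N^{1/4}\}$ for $N$ large. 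A union bound over $u \in \psi_N(Q_{\Z^d}(o,\ell))$ then yields an event $\mathcal{B}_N$ with $\P(\mathcal{B}_N) \to 0$ on whose complement $(\eta_0(\psi_N(x)))_{x \in Q_{\Z^d}(o,\ell)}$ is determined by the graphical construction on $Q_{\Z^d_N}(o,\ell+r_N) \times [-N^{1/4},0]$.

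\emph{Step 2 (coupling with $\Z^d$).} For $N$ large, $\psi_N$ restricts to a graph isomorphism from $Q_{\Z^d}(o,\ell+r_N)$ onto $Q_{\Z^d_N}(o,\ell+r_N)$. We couple $H_{\Z^d_N}$ with a graphical construction $H_{\Z^d}$ on $\Z^d$ (at rate $\lambda$) so that their Poisson processes coincide, via $\psi_N$, on $Q_{\Z^d}(o,\ell+r_N) \times [-N^{1/4},0]$. Letting $\tilde\eta_0(x) := \mathds{1}\{\Z^d \times \{-N^{1/4}\} \rightsquigarrow (x,0)\}$ in the infinite-volume construction, on $\mathcal{B}_N^c$ (together with the analogous event for $H_{\Z^d}$, which also has probability tending to $1$ by the same argument) we have $\eta_0(\psi_N(x)) = \tilde\eta_0(x)$ for every $x \in Q_{\Z^d}(o,\ell)$, and hence $g(\psi_N^{-1}(\eta_0)) = g(\tilde\eta_0)$.

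\emph{Step 3 (limit).} By time-translation invariance of the infinite-volume graphical construction, $\tilde\eta_0$ has the same law as the contact process on $\Z^d$ with rate $\lambda$ started from the all-ones configuration and evaluated at time $N^{1/4}$. Since $N^{1/4} \to \infty$, this distribution converges weakly to $\mu_\lambda$ by the definition of the upper stationary distribution, giving $\E[g(\tilde\eta_0)] \to \int g\,\mathrm{d}\mu_\lambda$. Combined with Step 2 and the boundedness of $g$, this yields the lemma. The main technical obstacle will be the coupling bookkeeping in Steps 1 and 2: ruling out wrap-around around the torus so that the $\Z^d$ speed bound of Proposition~\ref{prop_facts_contact}(b) applies, and building the two graphical constructions jointly so that they coincide on the relevant space-time region under the identification $\psi_N$.
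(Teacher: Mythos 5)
Your proposal is correct and follows essentially the same route as the paper's proof: spatial truncation of the event defining $\eta_0$ via the speed bound and duality, identification of the truncated object with its $\Z^d$ counterpart through the box isomorphism $\psi_N$, and then passage to the limit as $N \to \infty$. The only substantive difference is the final step, where you identify the law of the untruncated infinite-volume object with that of $\zeta^1_{N^{1/4}}$ and invoke the weak convergence $\zeta^1_t \to \mu_\lambda$ defining the upper stationary measure, whereas the paper compares the truncated object with the stationary representation $\xi_0(x)=\mathds{1}\{-\infty \rightsquigarrow (x,0)\}$ via a monotone-convergence-of-events argument; both are valid, and yours is, if anything, slightly more direct.
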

\begin{proof} Let~$g:\{0,1\}^{\Z^d} \to \mathbb{R}$ be a local function, and fix~$\ell \in \N$ such that~$g(\zeta)$ only depends on~$\zeta \cap Q_{\Z^d}(o,\ell)$. 
	Using the same graphical construction~$H_{\Z^d_N}$ from which the past-truncated contact process is obtained, we define
	\begin{equation*}
			\tilde{\eta}_t(u):= \mathds{1}\left\{ (v,s) \rightsquigarrow (u,t) \text{ for some } (v,s) \notin Q_{\Z^d_N}(u,\sqrt{N}) \times [t-N^{1/4},t]\right\},\\
	\end{equation*}
	for~$t \ge 0$ and~$u \in \Z^d_N$.  Note that if the event~$\{\tilde{\eta}_t(u) = 1,\; \eta_t(u) = 0\}$ occurs, then~$(v,s) \rightsquigarrow (u,t)$ for some~$(v,s)$ such that~$s \in (t-N^{1/4},t]$ and~$v \notin Q_{\Z^d_N}(u,\sqrt{N})$.  By~\eqref{eq_constant_speed} and duality, we have that
	\[\P(\eta_0(o) \neq \tilde{\eta}_0(o))\xrightarrow{N \to\infty} 0.\]
Hence, also using a union bound and stationarity,
	\begin{equation}\label{eq_newfconv}
		\begin{split} &|\E[g(\psi_N^{-1}(\eta_0))] - \E[g(\psi_N^{-1}(\tilde{\eta}_0))]| \\ &\le 2\|g\|_\infty \cdot |Q_{\Z^d_N}(o,\ell)|\cdot \P(\eta_0(o) \neq \tilde{\eta_0}(o)) \xrightarrow{N \to \infty} 0.
			\end{split}
	\end{equation}

	Next, let~$H_{\Z^d}$ denote a graphical construction for the contact process with rate~$\lambda$ on~$\Z^d$ (assume that it is defined for both positive and negative times). Using~$H_{\Z^d}$, define
	\begin{equation*}
		\xi_t(x) := \mathds{1}\left\{-\infty \rightsquigarrow (x,t)\right\}
	\end{equation*}
	for~$t \ge 0$ and~$x \in \Z^d$, that is,~$\xi_t(x)$ is the indicator function of the event that for any $s<t$ there exists $y \in \Z^d$ such that $(y,s) \rightsquigarrow (x,t)$\color{black}. Note that~$(\xi_t)_{t \ge 0}$ is a stationary process: the distribution of~$\xi_t$ equals~$\mu_\lambda$ for every~$t$. In particular,
	\begin{equation}\label{eq_conv_compl1}
		\E[g(\xi_0)]=\int g\;\mathrm{d}\mu_\lambda.
	\end{equation}
Next, again using~$H_{\Z^d}$, define 
	\begin{equation*}
		\tilde{\xi}_{N,t}(x):= \mathds{1}\left\{(y,s) \rightsquigarrow (x,t) \text{ for some } (y,s) \notin Q_{\Z^d}(x,\sqrt{N}) \times [t-N^{1/4},t]   \right\}
	\end{equation*}
	for~$t \ge 0$ and~$x \in \Z^d$. We claim that, if~$N$ is large enough that~$\ell + \sqrt{N}\le N/4$, we have 
	\begin{equation}\label{eq_conv_compl2}
		\E[g(\psi_N^{-1}(\tilde{\eta}_0))] = \E[g(\tilde{\xi}_{N,0})].
	\end{equation}
	{
	To prove this, we start observing that, since~$g(\zeta)$ only depends on~$\zeta \cap Q_{\Z^d}(o,\ell)$, it suffices to prove that
	\begin{equation}
		\label{eq_Q_same_distr} \psi_N^{-1}(\tilde{\eta}_0) \cap Q_{\Z^d}(o,\ell) \qquad\text{ has same law as }\qquad \tilde{\xi}_{N,0} \cap Q_{\Z^d}(o,\ell).
	\end{equation}
	Note that
	\[\psi_N^{-1}(\tilde{\eta}_0) \cap Q_{\Z^d}(o,\ell)  = \psi_N^{-1}(\tilde{\eta}_0 \cap Q_{\Z^d_N}(o,\ell)).\]
Also observing that
	\[\bigcup_{u \in Q_{\Z^d_N}(o,\ell)}Q_{\Z^d_N}(u,\sqrt{N}) \subset Q_{\Z^d_N}(o,\sqrt{N}+\ell)\]
	and letting~$\mathcal{H}_N$ denote the restriction of~$H_{\Z^d_N}$ to~$Q_{\Z^d_N}(o,\sqrt{N}+\ell)$, we have
	\[\psi_N^{-1}(\tilde{\eta}_0 \cap Q_{\Z^d_N}(o,\ell)) = \left\{
		\begin{array}{l}
			\psi_N^{-1}(u):\; u \in Q_{\Z^d_N}(o,\ell):\; \text{in } \mathcal{H}_N,\; (v,s) \rightsquigarrow (u,0) \\[.2cm]
			\text{ for some } (v,s) \notin Q_{\Z^d_N}(u,\sqrt{N}) \times [-N^{1/4},0]
		\end{array}
		\right\}.\]
	We transport~$\mathcal{H}_N$ from~$Q_{\Z^d_N}(o,\sqrt{N}+\ell)$ to~$Q_{\Z^d}(o,\sqrt{N}+\ell)$ using~$\psi_N^{-1}$, letting~$\psi_N^{-1}(\mathcal{H}_N)$  denote the graphical construction thus obtained. Then, the right-hand side above equals
\[
	\left\{
		\begin{array}{l}
			x \in Q_{\Z^d}(o,\ell): \; \text{in } \psi_N^{-1}(\mathcal{H}_N), \; (y,s) \rightsquigarrow (x,0) \\[.2cm]
			\text{ for some } (y,s) \notin Q_{\Z^d}(x,\sqrt{N}) \times [-N^{1/4},0]
		\end{array}
		\right\}.
\]
	Using the fact that~$\psi_N^{-1}(\mathcal{H}_N)$ has the same law as the restriction of~$H_{\Z^d}$ to~$Q_{\Z^d}(o,\sqrt{N} + \ell)$, we see that the above has the same law as~$\tilde{\xi}_0(o) \cap Q_{\Z^d}(o,\ell)$, completing the proof of~\eqref{eq_Q_same_distr}, hence also of~\eqref{eq_conv_compl2}.
	}

	Now, by~\eqref{eq_conv_compl1} and~\eqref{eq_conv_compl2}, for~$N$ large we have that
	\begin{align*}
		\left| \E[g(\psi_N^{-1}(\tilde{\eta}_0))] - \int g\;\mathrm{d}\mu_\lambda \right|&= | \E[g(\tilde{\xi}_{N,0}) - g(\xi_0)]|\\&\le 2\|g\|_\infty \cdot |Q_{\Z^d}(o,\ell)| \cdot \P(\tilde{\xi}_{N,0}(o) \neq \xi_0(o)), 
	\end{align*}
	where the inequality follows from a union bound and stationarity.  
	{
	Note that the random variables~$\tilde{\xi}_{N,0}(o)$ are decreasing with~$N$,  and moreover,~$\xi_0(o) = 1$ if and only if~$\tilde{\xi}_{N,0}(o) = 1$ for every~$N$ (equivalently:~${\displaystyle \lim_{N \to \infty} \tilde{\xi}_{N,0}(o) = \xi_0(o)}$). This implies that the events~$E_N:= \{\tilde{\xi}_{N,0}(o) =1,\; \xi_0(o) = 0\}$ are decreasing in~$N$ and~$\cap_{N=1}^\infty E_N = \varnothing$. This gives
	}
	\[\P(\tilde{\xi}_{N,0}(o) \neq \xi(o))\xrightarrow{N \to \infty} 0.\]
	We thus have~$\E[g(\psi_N^{-1}(\tilde{\eta}_0))] \xrightarrow{N \to \infty} \int g\;\mathrm{d}\mu_\lambda$. 
	{Finally, we use Equation~\eqref{eq_newfconv} to observe that the result for $\tilde{\eta}_0$ is also true for ${\eta}_0$, which is the desired convergence.}
\end{proof}

\begin{corollary} For any sequence~$(t_N)_{N \ge 1}$  such that 
	\[t_N/N^{1/4} \xrightarrow{N \to \infty} \infty \quad \text{and} \quad  t_N/N^{\log N} \xrightarrow{N \to \infty} 0\]
	and any local function~$g:\{0,1\}^{\Z^d} \to \mathbb{R}$, we have
	\begin{equation*}
		\frac{1}{N^dt_N} \sum_{u \in \Z^d_N} \int_0^{t_N} g(\psi_N^{-1}(\eta_t \circ \theta_u))\;\mathrm{d}t \xrightarrow{N\to \infty} \int g \;\mathrm{d}\mu_\lambda\quad \text{in probability}.
	\end{equation*}
\end{corollary}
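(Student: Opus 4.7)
The approach is the first- and second-moment method, combined with Chebyshev's inequality. By stationarity in space and time of $(\eta_t \circ \theta_u)$, the expectation of the empirical average on the left-hand side equals $\E[g(\psi_N^{-1}(\eta_0))]$, which converges to $\int g\,\mathrm{d}\mu_\lambda$ by Lemma~\ref{lem_first_moment}. It therefore suffices to show that the variance of the empirical average tends to zero as $N\to\infty$.

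The crucial ingredient for the variance bound is a quantitative spatial-temporal localization of $\eta_t(u)$. In the spirit of the variant $\tilde{\eta}_0$ used in the proof of Lemma~\ref{lem_first_moment}, one defines $\eta^{\mathrm{loc}}_t(u)$ to be the indicator of the event that there exists an infection path from $Q_{\Z^d_N}(u,2\sqrt{N})\times\{t-N^{1/4}\}$ to $(u,t)$ which remains inside $Q_{\Z^d_N}(u,3\sqrt{N})$. Then $\eta^{\mathrm{loc}}_t(u) \le \eta_t(u)$ always, and by running the dual process backwards from $(u,t)$ and applying the linear-growth estimate~\eqref{eq_constant_speed} one obtains $\P(\eta_t(u)\neq \eta^{\mathrm{loc}}_t(u)) \le C\exp(-c\sqrt{N})$, which is super-polynomially small in $N$. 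The essential feature of $\eta^{\mathrm{loc}}$ is that $\eta^{\mathrm{loc}}_t(u)$ is a measurable function of the graphical construction restricted to the space-time box $Q_{\Z^d_N}(u,3\sqrt{N})\times [t-N^{1/4},t]$, giving a concrete finite range of dependence.

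Let $F(t,u) := g(\psi_N^{-1}(\eta_t\circ\theta_u))$ and define $F^{\mathrm{loc}}(t,u)$ analogously from $\eta^{\mathrm{loc}}$. By Cauchy--Schwarz together with the localization estimate, replacing $F$ by $F^{\mathrm{loc}}$ in the empirical average changes its $L^2$ norm by an exponentially small amount (absorbing the locality radius $\ell$ of $g$ into the constants), so it suffices to control the variance of the localized average. The covariance $\mathrm{Cov}(F^{\mathrm{loc}}(t,u),F^{\mathrm{loc}}(s,v))$ vanishes whenever the two space-time boxes (enlarged by $\ell$) are disjoint, leaving only pairs with $\|u-v\|_\infty = O(\sqrt{N})$ and $|t-s| = O(N^{1/4})$. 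A direct count of such pairs, combined with the trivial bound $|\mathrm{Cov}| \le 4\|g\|_\infty^2$, gives
\[
\mathrm{Var}\!\left(\frac{1}{N^d t_N}\sum_{u\in\Z^d_N}\int_0^{t_N} F^{\mathrm{loc}}(t,u)\,\mathrm{d}t\right) = O\!\left(\frac{N^{1/4}}{N^{d/2}\,t_N}\right),
\]
which tends to zero by the assumption $t_N/N^{1/4}\to\infty$.

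The main obstacle is the quantitative localization step, i.e., establishing the bound $\P(\eta_t(u)\neq\eta^{\mathrm{loc}}_t(u)) \le C\exp(-c\sqrt{N})$. This parallels the argument already sketched for $\tilde{\eta}_0$ in Lemma~\ref{lem_first_moment}, but a careful use of~\eqref{eq_constant_speed} is needed to separately rule out (a) infection paths whose start point at time $t-N^{1/4}$ lies outside $Q_{\Z^d_N}(u,2\sqrt{N})$ and (b) paths that exit $Q_{\Z^d_N}(u,3\sqrt{N})$ at some intermediate time; in both scenarios the dual process must traverse a distance of order $\sqrt{N}$ within a time window of length $N^{1/4}$, an event of probability exponentially small in $\sqrt{N}$. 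Once this localization is in place, the remaining variance calculation is a routine counting argument, and the upper bound $t_N\ll N^{\log N}$ is not needed for this corollary.
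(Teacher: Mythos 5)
Your proof is correct, and the first-moment step (stationarity plus Lemma~\ref{lem_first_moment}, then Chebyshev) is exactly what the paper does. Where you diverge is the variance bound. The paper's argument is more economical: it observes that, by the very definition of the past-truncated process, $W_N(u,s) = g(\psi_N^{-1}(\eta_s \circ \theta_u))$ is a function of the graphical construction restricted to the time slab $\Z^d_N \times [s - N^{1/4}, s]$, so $\mathrm{Cov}(W_N(u,s), W_N(v,t))$ is \emph{exactly} zero whenever $|s-t| > 2N^{1/4}$, with no spatial restriction and no approximation needed. Summing the trivial bound $4\|g\|_\infty^2$ over all spatial pairs and only the temporally correlated pairs already gives $\mathrm{Var}(Y_N) = O(N^{1/4}/t_N) \to 0$. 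Your route additionally localizes in space via $\eta^{\mathrm{loc}}$, which costs you an extra lemma (the super-polynomial bound on $\P(\eta_t(u) \neq \eta^{\mathrm{loc}}_t(u))$, which does follow from duality and~\eqref{eq_constant_speed} in the same way as the $\tilde\eta$ comparison inside Lemma~\ref{lem_first_moment}) and an $L^2$ replacement step, but buys a stronger bound $O(N^{1/4}/(N^{d/2}t_N))$; the extra factor $N^{-d/2}$ is not needed here, since only $t_N/N^{1/4}\to\infty$ is used. You are also right that the hypothesis $t_N/N^{\log N}\to 0$ plays no role in this corollary (it is needed later, in Lemma~\ref{lem_compensator}, to compare $\eta_t$ with $\zeta_t$).
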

\begin{proof}
We abbreviate
	\[W_N(u,t):= g(\psi_N^{-1}(\eta_t \circ \theta_u)), \quad Y_N := \frac{1}{N^dt_N} \sum_{u \in \Z^d_N}\int_0^{t_N}W_N(u,t)\;\mathrm{d}t.\]
	By stationarity of~$(\eta_t)$ and Lemma~\ref{lem_first_moment}, we have that
	\[p_N := \E[W_N(u,t)] = \E[Y_N] = \E[g(\psi_N^{-1}(\eta_0))]\xrightarrow{N \to \infty} \int g\;\mathrm{d}\mu_\lambda.\]
  Moreover, we have
	\begin{align*}
		\mathrm{Var}(Y_N) &= \E[(Y_N)^2] - \E[Y_N]^2 \\[.2cm]
		&= \frac{1}{(N^dt_N)^2}\sum_{u \in \Z^d_N} \sum_{v \in \Z^d_N}\int_0^{t_N}\int_0^{t_N} \E[W_N(u,s)\cdot W_N(v,t)]\;\mathrm{d}s\;\mathrm{d}t - p_N^2\\[.2cm]
		&= \frac{1}{(N^dt_N)^2}\sum_{u \in \Z^d_N} \sum_{v \in \Z^d_N}\int_0^{t_N}\int_0^{t_N} \mathrm{Cov}(W_N(u,s),W_N(v,t))\;\mathrm{d}s\;\mathrm{d}t.
	\end{align*}
	We have  using the construction of~$(\eta_t)_{t \ge 0}$ that~$\mathrm{Cov}(W_N(u,s),W_N(v,t)) = 0$ if~$|s-t| > 2N^{1/4}$, \color{black} and we bound the covariance by~$4\|g_\infty\|^2$ otherwise; the right-hand side above is then at most
		\[ \frac{1}{(N^dt_N)^2}\sum_{u \in \Z^d_N} \int_0^{t_N} N^d \cdot 2N^{1/4}\cdot 4\|g\|_\infty\;\mathrm{d}s = \frac{8 N^{1/4} \cdot \|g\|_\infty}{t_N}\xrightarrow{N \to \infty} 0.\]
	Now, since~$\E[Y_N]$ converges to the desired limiting value and~$\mathrm{Var}(Y_N) \xrightarrow{N \to \infty}0$, the result follows from Chebyshev's inequality.
\end{proof}

Our next step is comparing the past-truncated contact process with the contact process started from full occupancy on~$\Z^d_N$. We start with the following.
\begin{lemma}\label{lem_two_proc_c0}
	For~$N$ large enough, the processes~$(\zeta_t)_{t \ge 0}$ (contact process on~$\Z^d_N$  started from full occupancy) and~$(\eta_t)_{t \ge 0}$ (past-truncated contact process on~$\Z^d_N$), obtained from the same graphical construction, satisfy
	\[\P(\zeta_t(o) \neq \eta_t(o))) < \exp\{-N^{1/80}\} \quad \text{for all } t \in [N^{1/4},N^{2\log N}].\]
\end{lemma}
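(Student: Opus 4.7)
Both processes are built from the same graphical construction, so any infection path from $\Z^d_N\times\{0\}$ to $(o,t)$ passes through some $(v,t-N^{1/4})$, giving $\zeta_t(o)\le\eta_t(o)$ a.s.; hence the disagreement event is $\{\eta_t(o)=1,\ \zeta_t(o)=0\}$. Setting $B_s:=\{v\in\Z^d_N:(v,s)\rightsquigarrow(o,t)\}$, we have $\{\zeta_t(o)=1\}=\{B_0\neq\varnothing\}$ (using $\zeta_0\equiv 1$) and $\{\eta_t(o)=1\}=\{B_{t-N^{1/4}}\neq\varnothing\}$. Self-duality of the contact process says that $(B_{t-s})_{s\in[0,t]}$ is distributed as a contact process $(\tilde\zeta_s)_{s\ge 0}$ on $\Z^d_N$ from $\{o\}$, so the problem reduces to showing
\[ \P\bigl(\tilde\zeta_{N^{1/4}}\neq\varnothing,\ \tilde\zeta_t=\varnothing\bigr)<\exp\{-N^{1/80}\} \]
uniformly in $t\in[N^{1/4},N^{2\log N}]$.

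\textbf{Control at time $N^{1/4}$.} Combining Proposition~\ref{prop_facts_contact}(b),(c) with the observation that up to time $N^{1/4}$ the process from $\{o\}$ on the torus agrees with the one on $\Z^d$ (both stay in a ball of radius $c_{\mathrm{speed}}^{-1}N^{1/4}\ll N$ except on an event of probability $\exp\{-cN^{1/4}\}$), I would show that, except on an event of probability $\le 3\exp\{-cN^{1/4}\}$, either $\tilde\zeta_{N^{1/4}}=\varnothing$ or
\[ \tilde\zeta_{N^{1/4}}\subseteq Q_{\Z^d_N}(o,c_{\mathrm{speed}}^{-1}N^{1/4})\quad\text{and}\quad|\tilde\zeta_{N^{1/4}}|\ge c_{\mathrm{full}}N^{1/4}. \]
By the strong Markov property it then suffices to bound, uniformly in initial configurations $A\subseteq Q_{\Z^d_N}(o,c_{\mathrm{speed}}^{-1}N^{1/4})$ with $|A|\ge c_{\mathrm{full}}N^{1/4}$, the probability $\P_A(\tilde\zeta_{t-N^{1/4}}=\varnothing)$.

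\textbf{Survival beyond $t$.} I would split the remaining time into two phases. Phase one, of duration $C_0N$ for a large enough $C_0=C_0(\lambda)$: using the shape / complete convergence theorem for the supercritical contact process on $\Z^d$, coupled to the torus via the speed bound (the $\Z^d$-process from $A$ stays in $Q_{\Z^d}(o,N/2)$ up to time $\sim N/(4c_{\mathrm{speed}})$), I argue that the torus process enters $\mathscr{G}_N$ with probability $\ge 1-\exp\{-cN^{1/4}\}$, noting that by Proposition~\ref{prop_facts_contact}(a) the $\Z^d$-process from $A$ survives forever with probability $\ge 1-\exp\{-c|A|\}$. Phase two, once in $\mathscr{G}_N$: Proposition~\ref{prop_coupling} couples the process with the full-occupancy process within additional time $N^{1/24}$, and I would then appeal to a strengthening of Proposition~\ref{prop_density} (implicit in Mountford's metastability bound behind~\eqref{eq_from_tom_expectation}), namely that the full-occupancy process on $\Z^d_N$ has extinction time $\ge\exp\{c_\mathrm{long}N^d/2\}$ with probability $\ge 1-\exp\{-cN^d\}$, to conclude that it does not die before time $t$.

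\textbf{Main obstacle.} The subtle step is this final survival estimate: Proposition~\ref{prop_density} as stated controls only times up to $N^{\log N}$ with failure probability $N^{-\log N}$, and naive iteration to cover the interval up to $N^{2\log N}$ yields a useless bound. One must use the sharper tail control on the torus extinction time mentioned above, which dwarfs the polylogarithmic time scale $N^{2\log N}$ and, combined with the $\exp\{-cN^{1/4}\}$ error terms from the earlier steps, yields the required $\exp\{-N^{1/80}\}$ bound.
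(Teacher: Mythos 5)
Your duality reduction and the dichotomy ``either the dual process started from $\{o\}$ dies by time $N^{1/4}$, or it survives past time $t$'' are exactly the paper's opening moves (the paper packages the dichotomy as Lemma~\ref{lem_well_behaved} with $\varepsilon=1/4$, which is where the exponent $1/80=\varepsilon/20$ comes from). Where you diverge is in how survival up to $N^{2\log N}$ is obtained. The paper never lets the surviving dual process leave a box of radius $N^{1/4}$: after a short burn-in producing more than $N^{1/32}$ particles inside $Q_{\Z^d_N}(o,N^{1/4}/2)$, Lemma~\ref{lem_numbers} (a union bound over disjoint sub-boxes, each sustaining the infection for an exponentially long time by the finite-box metastability of Lemmas~\ref{lem_from_tom} and~\ref{lem_individual}) gives survival \emph{inside that box} up to time $\exp\{N^{1/320}\}\gg N^{2\log N}$, with failure probability of order $\exp\{-N^{1/64}\}$. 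You instead propose to spread over the whole torus, enter $\mathscr{G}_N$, couple with full occupancy, and invoke whole-torus metastability. That route is viable in principle, but as written it has two genuine gaps.

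First, your Phase one is the hardest step and is not supplied by citing the shape/complete convergence theorem: you need the event ``by time $C_0N$ the configuration is in $\mathscr{G}_N$'' to fail with probability at most $\exp\{-cN^{1/4}\}$, which requires a quantitative large-deviation spreading estimate, not the a.s.\ shape theorem. Moreover, the identification with the $\Z^d$ process is only valid until infection paths wrap around the torus, and since the asymptotic spreading speed is strictly smaller than the maximal speed $c_{\mathrm{speed}}^{-1}$, wrap-around necessarily occurs \emph{before} the process can fill the torus; so the reduction to $\Z^d$ cannot cover all of Phase one. Second, and more concretely, Proposition~\ref{prop_coupling} as stated only gives failure probability $N^{-\log N}=\exp\{-(\log N)^2\}$, which is much \emph{larger} than the target $\exp\{-N^{1/80}\}$; you correctly flag this defect for Proposition~\ref{prop_density}, but then route Phase two through Proposition~\ref{prop_coupling}, which reintroduces exactly the same problem (the estimates inside its proof are stretched-exponential, but you would have to re-derive them rather than quote the stated result). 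Both issues disappear if you abandon the ambition of filling the torus: all that is needed is survival of the dual process, and confining it to $Q_{\Z^d_N}(o,N^{1/4})$ and applying Lemma~\ref{lem_numbers} there is both sufficient and much cheaper.
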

The proof of this lemma is postponed to the Appendix. {
The constant~$1/80$ is somewhat arbitrary, in the sense that it comes from a sequence of choices of powers of~$N$, each smaller than the previous one, with no attempt of being sharp at each step -- this is done in the proofs of Lemmas~\ref{lem_numbers} and Lemma~\ref{lem_well_behaved} in the Appendix. } We now obtain a consequence of it.
\begin{lemma}\label{eq_two_proc_c}
	For~$N$ large enough, the processes~$(\zeta_t)_{t \ge 0}$ (contact process on~$\Z^d_N$  started from full occupancy) and~$(\eta_t)_{t \ge 0}$ (past-truncated contact process on~$\Z^d_N$), obtained from the same graphical construction, satisfy
	\[\P(\zeta_t = \eta_t \text{ for all } t \in [N^{1/4},N^{\log(N)}]) > 1- N^{-\log N}.\]
\end{lemma}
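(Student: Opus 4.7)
The plan is to upgrade the pointwise-in-$(t,u)$ estimate of Lemma~\ref{lem_two_proc_c0} to a uniform bound via translation invariance, a fine time mesh, and a short coupling argument across mesh intervals.

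First, since the graphical construction $H_{\Z^d_N}$ and the initial configuration $\mathds{1}_{\Z^d_N}$ are invariant in distribution under lattice shifts, Lemma~\ref{lem_two_proc_c0} extends to every vertex, yielding $\P(\zeta_t(u)\neq\eta_t(u))<\exp\{-N^{1/80}\}$ for every $u\in\Z^d_N$ and every $t\in[N^{1/4},N^{2\log N}]$. Taking a polynomially fine mesh $t_k := N^{1/4}+k\cdot N^{-4d}$, $k=0,1,\ldots,M$, with $M\le N^{5d+\log N}$ chosen so that $t_M\ge N^{\log N}$, and union-bounding over $k$ and $u$, I obtain
\begin{equation*}
\P\bigl(\exists k,u:\ \zeta_{t_k}(u)\neq\eta_{t_k}(u)\bigr)\le M\cdot N^d\cdot \exp\{-N^{1/80}\}=o(N^{-\log N}),
\end{equation*}
since the super-polynomial decay of $\exp\{-N^{1/80}\}$ absorbs any polynomial prefactor.

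The remaining and main step is to transfer equality from the mesh to the continuum. On the event of mesh equality, $\zeta_{t_k}=\eta_{t_k}=\zeta^{(t_k)}_{t_k}$, where $\zeta^{(t_k)}$ denotes the contact process built from $H_{\Z^d_N}$ and started from $\mathds{1}_{\Z^d_N}$ at time $t_k-N^{1/4}$; since $\zeta$ and $\zeta^{(t_k)}$ share the graphical construction after time $t_k$, they coincide for all $t\ge t_k$. Thus proving $\zeta_t=\eta_t=\zeta^{(t)}_t$ for $t\in[t_k,t_{k+1}]$ reduces to showing $\zeta^{(t_k)}_t=\zeta^{(t)}_t$, which I would derive from Proposition~\ref{prop_coupling} applied to the pair of contact processes started from $\mathds{1}$ at times $t_k-N^{1/4}$ and $t-N^{1/4}$ respectively (choosing the parameter $\varepsilon$ in that proposition smaller than $1/48$, so that its coupling time $N^{12\varepsilon}$ fits inside the $N^{1/4}$ gap); Proposition~\ref{prop_density} supplies the needed density hypothesis $\zeta^{(t_k)}_{t-N^{1/4}}\in\mathscr{G}_N$.

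The main obstacle is that Proposition~\ref{prop_coupling} gives the conclusion at a single $t$, whereas Lemma~\ref{eq_two_proc_c} requires it simultaneously for the uncountable family $t\in[t_k,t_{k+1}]$. This would be addressed by restricting the coupling step to a further sub-mesh of $t$'s of polynomially small spacing, applying Proposition~\ref{prop_coupling} at each sub-mesh point (the union-bound cost being easily absorbed by its $N^{-\log N}$ failure probability), and then extending to $t$'s between sub-mesh points by a short deterministic argument exploiting the facts that, with overwhelming probability, no Poisson event affects the relevant vertices in the tiny sub-mesh interval, and that both $\zeta^{(t_k)}_t$ and $\zeta^{(t)}_t$ are piecewise constant in $t$ between Poisson events. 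Combining the three steps then yields $\P(\zeta_t=\eta_t\ \forall\,t\in[N^{1/4},N^{\log N}])>1-N^{-\log N}$ for $N$ sufficiently large.
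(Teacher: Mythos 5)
Your first step (translation invariance plus a union bound over the mesh points $t_k$ and the vertices $u$, using the stretched-exponential bound $\exp\{-N^{1/80}\}$ from Lemma~\ref{lem_two_proc_c0}) is fine, and your reduction of the interval $[t_k,t_{k+1}]$ to the identity $\zeta^{(t_k)}_t=\zeta^{(t)}_t$ is also correct. The gap is in the coupling step, and it is quantitative but fatal: covering $[N^{1/4},N^{\log N}]$ by a (sub-)mesh of polynomially small spacing requires of order $N^{\log N + C}$ mesh points, while Propositions~\ref{prop_density} and~\ref{prop_coupling} each fail with probability of order $N^{-\log N}=\exp\{-(\log N)^2\}$. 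The union bound therefore produces a total error of order $N^{\log N + C}\cdot N^{-\log N}=N^{C}$, which is vacuous. Your parenthetical claim that ``the union-bound cost is easily absorbed by its $N^{-\log N}$ failure probability'' conflates the stretched-exponential bound $\exp\{-N^{1/80}\}$ (which does absorb quasi-polynomially many terms, as in your first step) with the merely quasi-polynomial bound $N^{-\log N}$ (which does not). To rescue this route you would need versions of Propositions~\ref{prop_density} and~\ref{prop_coupling} with stretched-exponential failure probabilities, which you neither state nor prove.

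For contrast, the paper avoids any union bound over time entirely: it sets $\tau:=\inf\{t\ge N^{1/4}:\eta_t\neq\zeta_t\}$ and observes that, conditionally on $\tau\le N^{\log N}$, with probability at least $\mathrm{e}^{-(1+2d\lambda)}$ the discrepant vertex receives no Poisson updates during $[\tau,\tau+1]$, so the discrepancy persists for a full unit of time. Hence
\[
\mathrm{e}^{-(1+2d\lambda)}\,\P(\tau\le N^{\log N})\;\le\;\E\left[\int_{N^{1/4}}^{N^{\log N}+1}\mathds{1}\{\eta_t\neq\zeta_t\}\,\mathrm{d}t\right]
=\int_{N^{1/4}}^{N^{\log N}+1}\P(\eta_t\neq\zeta_t)\,\mathrm{d}t,
\]
and the integrand is at most $N^d\exp\{-N^{1/80}\}$ by Lemma~\ref{lem_two_proc_c0} and a union bound over vertices. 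This converts the pointwise-in-$t$ estimate into a uniform one at the cost of only a constant factor and a factor $N^{\log N}$ from the length of the integration interval, both of which are harmless against $\exp\{-N^{1/80}\}$. If you want to keep your mesh-based outline, you would need to replace the appeal to Propositions~\ref{prop_density} and~\ref{prop_coupling} by an argument of this persistence type (or by stretched-exponential analogues of those propositions).
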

\begin{proof}
	Let
	\[\tau:= \inf\{t \ge N^{1/4}:\;\eta_t \neq \zeta_t\}.\]
	We observe that
	\[\mathrm{e}^{-(1+2d\lambda)}\cdot \P(\tau \le N^{\log(N)}) \le \P(\tau \le N^{\log(N)},\;\eta_t \neq \zeta_t \text{ for all } t \in [\tau, \tau+1]) ,\]
	where the exponential factor is obtained by prescribing that a vertex where a disparity exists at time~$\tau$ has no updates in the unit time interval~$[\tau,\tau+1]$. Next, note that the right-hand side is bounded from above by
	\begin{equation}\label{eq_com_t}\E\left[\int_{N^{1/4}}^{N^{\log(N)} + 1} \mathds{1}\{\eta_t \neq \zeta_t\}\;\mathrm{d}t\right] = \int_{N^{1/4}}^{N^{\log(N) }+1} \P(\eta_t \neq \zeta_t)\;\mathrm{d}t. \end{equation}
		By Lemma~\ref{lem_two_proc_c0} and duality, we have that
	\[\P(\eta_t(u) \neq \zeta_t(u)) < \exp\{-N^{1/80}\}.\]
	for any~$u \in \Z^d_N$ and~$t \in [N^{1/4},N^{\log(N)}+1]$. Combining this with a union bound, we see that the right-hand side of~\eqref{eq_com_t} is smaller than
	\[(N^{\log(N)}+1 - N^{1/4})\cdot N^d\cdot \mathrm{e}^{-N^{-1/80}}.\]
	We have thus proved that
	\[\P(\tau \le N^{\log(N)}) \le \mathrm{e}^{1+2d\lambda}\cdot (N^{\log(N)}+1 - N^{1/4})\cdot N^d\cdot \mathrm{e}^{-N^{-1/80}},\]
	so the result follows by taking~$N$ large enough.
\end{proof}

\begin{proof}[Proof of Lemma~\ref{lem_compensator}]
	The statement follows from Lemma~\ref{eq_two_proc_c} and the observation that, in the event that~$\zeta_t = \eta_t$  for all $t \in [ N^{1/4},N^{\log(N)}]$, we have
	\begin{align*}
		&\left|\frac{1}{N^dt_N} \sum_{u \in \Z^d_N} \int_0^{t_N} g(\psi_N^{-1}(\zeta_t \circ \theta_u))\;\mathrm{d}t- \frac{1}{N^dt_N} \sum_{u \in \Z^d_N} \int_0^{t_N} g(\psi_N^{-1}(\eta_t \circ \theta_u))\;\mathrm{d}t\right|\\[.2cm]
		&\hspace{7cm}\le \frac{N^d \cdot N^{1/4}\cdot 2\|g\|_\infty}{N^d \cdot t_N} \xrightarrow{N \to \infty}0.
	\end{align*}
 {In the second line we use that by assumption $(t_N)_{N \ge 1}$  is such that $  t_N/N^{1/4} \xrightarrow{N \to \infty} \infty$ }
\end{proof}

\section{Fixation of a mutant}\label{s_fixation}

\subsection{Competition in~$\Z^d$} \label{ss_competition_lattice}
We now start the work towards proving Propositions~\ref{prop_fixation_one_new},~\ref{prop_no_fixation} and~\ref{prop_fixation_three_new}.  We will mostly deal with the two-type contact process (on either~$\Z^d$ or~$\Z^d_N$), which will be denoted by~$(\xi_t)_{t \ge 0}$ throughout. We will generally denote the birth rates of types 1 and 2 by~$\lambda$ and~$\lambda'$, respectively, assuming throughout Section~\ref{ss_competition_lattice} that~$\lambda'>\lambda$\color{black}. Occasionally we will need to assume that the process is obtained from a graphical construction (with basic arrows with rate~$\lambda$ and extra arrows with rate~$\lambda'-\lambda$), as described in Section~\ref{ss_two_type_prelim}. 

We will abbreviate, for~$\xi \in \{0,1,2\}^{\Z^d}$, 
\[\xi^{(i)} := \{x:\xi(x) = i\}, \quad i\in\{1,2\}.\]
Let~$Q$ be a box of~$\Z^d$ or~$\Z^d_N$, and let~$\xi$ be a two-type contact process configuration on a set that contains~$Q$. We say that \textit{$Q$ is good for~$\xi$} if~$\xi^{(1)} \cap Q = \varnothing$ and additionally, letting~$r$ denote the radius of~$Q$, every sub-box of~$Q$ with radius~$r^{1/24}$ intersects~$\xi^{(2)}$. {
The exponent~$1/24$ is chosen for the proof of Lemma~\ref{lem_propagation} below, which is carried out in the Appendix; it is chosen as twice the constant~$12$ that appears in the statement of Proposition~\ref{prop_coupling}. Recall that in that proposition, inside a box of radius~$N$, a certain coupling is said to happen with high probability by time~$N^{12\varepsilon}$, provided that every sub-box of radius~$N^{\varepsilon}$ is occupied at time zero. In the proof of Lemma~\ref{lem_propagation}, we need this coupling in a box of radius of order~${\ell}$ and within time~$\sqrt{\ell}$, which requires sub-boxes of radius~$(\sqrt{\ell})^{1/12} = \ell^{1/24}$ to be occupied at time zero.
}

The following two lemmas can be obtained from the proof of the main result in~\cite{MPV20}; we give details in Section~\ref{s_second_a} in the Appendix.
\begin{lemma}[Appearance of good boxes]\label{lem_appearance} Assume that~$\lambda' > \lambda_c(\Z^d)$ and~$\lambda' > \lambda$. There exist~$\sigma_0 >0$ and~$\ell_0 > 0$ (depending on~$\lambda,\lambda'$) such that the following holds for all~$\ell \ge \ell_0$. For the two-type contact process~$(\xi_t)_{t \ge 0}$ with rates~$\lambda,\lambda'$ and~$\xi_0(o) = 2$, we have
	\[\P(Q_{\Z^d}(o,\ell) \text{ is good for } \xi_{\ell^2}) > \sigma_0.\]
\end{lemma}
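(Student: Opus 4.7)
The strategy is to combine the block construction underlying the main survival result of~\cite{MPV20} with a short equilibration phase analogous to Propositions~\ref{prop_density} and~\ref{prop_coupling} in order to turn mere survival of type~2 into full invasion of $Q_{\Z^d}(o,\ell)$ plus density of type~2 inside it. The timescale $\ell^2$ is comfortably larger than either of the two timescales we shall need: the linear time $\Theta(\ell)$ for type~2 to invade a spatial distance $\ell$, and the equilibration time $\Theta(\ell^{1/2})$ needed to reach the upper invariant regime in sub-boxes of radius $\ell^{1/24}$ (a value chosen precisely so that, via the exponent~$12$ appearing in Proposition~\ref{prop_coupling}, one has $(\ell^{1/24})^{12} = \ell^{1/2}$).

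First I would invoke the block construction of~\cite{MPV20}. At a sufficiently large fixed scale $k$, one defines a good-block event, measurable with respect to the graphical construction in a bounded space-time region of size $\Theta(k)$, which implies that type~2 has spread throughout a spatial cube of radius $\Theta(k)$ (at some ``corner'' offset) and simultaneously type~1 has been cleared from a slightly larger buffer cube. This event has probability sufficiently close to $1$ to dominate supercritical oriented percolation. Iterating from the seed $\xi_0(o)=2$ and using standard properties of supercritical oriented percolation (linear growth of the wet cluster conditional on the percolation cluster of the origin being infinite), I obtain that with uniformly positive probability there exists a time $t^\star = O(\ell) \le \ell^2/2$ and a constant $\rho > 0$ such that at time $t^\star$:
\begin{itemize}
\item $\xi^{(1)}_{t^\star} \cap Q_{\Z^d}(o,2\ell) = \varnothing$, i.e.\ type~1 has been cleared from a buffer region around $Q_{\Z^d}(o,\ell)$;
\item $\xi^{(2)}_{t^\star}$ intersects $Q_{\Z^d}(o,\ell)$ in a set of density at least $\rho$, namely in a union of good-block centers.
\end{itemize}

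Second, for $t \in [t^\star, \ell^2]$ the absence of type~1 in the buffer $Q_{\Z^d}(o,2\ell)$ implies that, inside $Q_{\Z^d}(o,\ell)$, the restriction of $(\xi_t)$ to type~$2$ evolves as a supercritical contact process on the buffer region with rate $\lambda'$, up to boundary effects from the buffer (which can be controlled by a standard coupling with the process on an even larger region, using finite speed of propagation; in time $\ell^2 - t^\star \le \ell^2$ information travels at most $O(\ell^2)$, so choosing the buffer large enough makes this harmless). Since $t^\star \le \ell^2/2$ and the sub-box radius is $\ell^{1/24}$, the $\Z^d$-analogues of Propositions~\ref{prop_coupling} and~\ref{prop_density}---obtained by the same renormalization arguments used in their proofs in Section~\ref{ss_proofs_classical}---applied with $N = \ell$ and $\varepsilon = 1/24$ imply that, with probability going to $1$ as $\ell \to \infty$, the process at time $\ell^2$ couples with the process started from full occupancy of type~$2$ on the buffer, and the latter intersects every sub-box of $Q_{\Z^d}(o,\ell)$ of radius $\ell^{1/24}$. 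Combining the positive probability of invasion at time $t^\star$ with the high-probability density at time $\ell^2$ yields $\sigma_0 > 0$ as required.

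The main obstacle is the careful extraction from~\cite{MPV20} of a block construction that provides not only survival but also (i) clearance of type~$1$ in a slightly enlarged buffer block (rather than just inside the block where type~$2$ is present), and (ii) control over a positive density of type~$2$ at the end of the invasion phase. The rest is bookkeeping: the three spatial scales $\ell^{1/24} \ll \ell \ll \ell^2$ match up with the block scale, invasion scale and equilibration scale, so the various error probabilities in the two-stage argument combine without issue into a uniform lower bound $\sigma_0$.
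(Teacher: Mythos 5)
There is a genuine gap in the second stage of your argument. You establish that type~$1$ is absent from the buffer $Q_{\Z^d}(o,2\ell)$ \emph{at the single time} $t^\star = O(\ell)$, and then treat type~$2$ inside $Q_{\Z^d}(o,\ell)$ as an autonomous one-type contact process for the whole interval $[t^\star,\ell^2]$. But this interval has length of order $\ell^2$, during which type~$1$ individuals outside the buffer spread a distance $\Theta(\ell^2)\gg \ell$ and can reinvade $Q_{\Z^d}(o,\ell)$. Your proposed fix --- ``choosing the buffer large enough'' via finite speed of propagation --- cannot work: the buffer would need radius $\Omega(\ell^2)$, yet by time $t^\star=O(\ell)$ type~$2$, started from the single seed at the origin, has only had time to clear type~$1$ from a region of radius $O(\ell)$. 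The obstruction is not one of information speed but of competition: what actually keeps type~$1$ out is that the dense type-$2$ front expels it at linear speed. This is exactly the content of Lemma~\ref{lem_input1} (Lemma~3.3 of~\cite{MPV20}), which is a \emph{space-time} statement --- type~$1$ never appears in the growing cone $Q_{\Z^d}(o,\tfrac m2+\beta t)$ --- and it is the key input your proposal omits. Without it, the claim that ``the restriction of $(\xi_t)$ to type~$2$ evolves as a supercritical contact process on the buffer'' for all $t\in[t^\star,\ell^2]$ is unjustified.

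For comparison, the paper's proof is considerably lighter than your first stage as well: it does not re-run the block construction. It simply forces, by an explicit positive-probability event in the graphical construction on $[0,1]$, that $\xi_1\equiv 2$ on a fixed box $Q_{\Z^d}(o,\ell_0)$, and then applies Corollary~\ref{lem_input15}: the growing cone of Lemma~\ref{lem_input1} contains $Q_{\Z^d}(o,\ell)$ by time $\ell^2$, while the density of type~$2$ there is obtained not by an equilibration/coupling argument but by the observation that (after reducing by the monotone coupling $\preceq$ to the case $\lambda>\lambda_c(\Z^d)$ with no empty sites initially) the set of \emph{occupied} sites dominates a supercritical contact process started from full occupancy, so every sub-box of radius $\ell^{1/24}$ is occupied by~\eqref{eq_const_death} and duality; absence of type~$1$ then forces those occupants to be type~$2$. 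If you want to salvage your two-stage scheme, you would need to replace your fixed-time clearance at $t^\star$ by the dynamic cone statement (or re-derive it), at which point you have essentially reconstructed the paper's argument.
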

{
We emphasize that the only assumption on the initial configuration in the above lemma is that the origin is occupied by a type 2 individual. The statement is the most interesting in the case where~$\xi_0(x) = 1$ for all~$x \in \Z^d \backslash \{o\}$.
}

\begin{lemma}[Propagation of good boxes]\label{lem_propagation} Assume that~$\lambda' > \lambda_c(\Z^d),\;\lambda' > \lambda$ and~$\varepsilon > 0$. There exists~$\ell_1 > 0$ (depending on~$\lambda,\lambda',\varepsilon$) such that the following holds for all~$\ell \ge \ell_1$. Let~$(\xi_t)_{t \ge 0}$ denote the two-type contact process on~$\Z^d$ with rates~$\lambda,\lambda'$. Then, for any~$\bar{t} \ge \ell^{1+\varepsilon}$ we have
	\begin{equation*}
		{ Q_{\Z^d}(o,\ell) \text{ good for } \xi_0 \; \Longrightarrow} \quad \P\left(\begin{array}{c}\text{the boxes }\{Q_{\Z^d}(x,\ell):\;\|x\|_\infty \le \ell\} \\[.1cm]\text{are all good for } \xi_{\bar{t}}\end{array}\right) > 1 -  \exp(-(\log \ell)^2).\end{equation*}	
In particular,
	\begin{equation*}
		{ Q_{\Z^d}(o,\ell) \text{ good for } \xi_0 \; \Longrightarrow} \quad \P(\xi_t^{(2)} \neq \varnothing \; \text{for all } t) \ge 1-\exp\{-(\log \ell)^2\}. 
	\end{equation*}
	\end{lemma}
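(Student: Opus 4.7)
The plan is to boost Lemma~\ref{lem_appearance} into a high-probability event by running many nearly independent trials inside $[0,\bar t]$, covering the target region by a grid of good sub-boxes, and then deriving survival by iteration across dyadic scales.

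First, I would exploit that inside $Q_{\Z^d}(o,\ell)$ the subpopulation $\xi_t^{(2)}$ evolves as a classical $\lambda'$-contact process until type-1 enters from outside. By the speed bound~\eqref{eq_constant_speed} applied to an upper bound on the spread of type-1 (namely the classical $\lambda$-contact process started from $\xi_0^{(1)}$), no type-1 particle reaches $Q_{\Z^d}(o,\ell/2)$ by time $\sqrt{\ell}$, except with probability $\exp(-c\sqrt{\ell})$. A box-adapted version of Proposition~\ref{prop_coupling} (with the role of $N^\varepsilon$ played by $\ell^{1/24}$, which is valid because $Q_{\Z^d}(o,\ell)$ being good means every sub-box of radius $\ell^{1/24}$ is touched by type-2) then shows that by time $\sqrt{\ell}$ the configuration $\xi_{\sqrt{\ell}}^{(2)} \cap Q_{\Z^d}(o,\ell/2)$ has coupled with the classical $\lambda'$-contact process started from full occupancy of $Q_{\Z^d}(o,\ell/2)$, except on an event of probability $\ell^{-\log\ell}$; in particular, the inner half-box is densely populated by type~2 and stays so for a long time.

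Next, I would set $\ell' := \ell^{1/2}$ and fix a grid of target centers $\{y_1,\ldots,y_M\} \subseteq Q_{\Z^d}(o,2\ell)$ with spacing $\ell'/4$ and $M = O(\ell^{d/2})$, then partition $[\sqrt{\ell},\bar t]$ into $L := \lfloor(\bar t - \sqrt{\ell})/\ell\rfloor \ge \ell^\varepsilon/2$ sub-intervals each of length $(\ell')^2 = \ell$. For each $y_i$ and each sub-interval, by iterating the above coupling, with overwhelming probability there is a type-2 particle close to $y_i$ at the start of the sub-interval; conditionally on this, Lemma~\ref{lem_appearance} applied at scale $\ell'$ produces a good $Q_{\Z^d}(y_i,\ell')$ at the right endpoint of the sub-interval with probability at least $\sigma_0$. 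Using~\eqref{eq_constant_speed} to decouple successive trials into disjoint space-time cylinders up to an error of $\exp(-c\ell)$, the probability that all $L$ trials fail for a given $y_i$ is at most $(1-\sigma_0)^L + L\exp(-c\ell) \le \exp(-c'\ell^\varepsilon)$. A union bound over $i$ then gives that with probability at least $1-\exp(-(\log\ell)^2)$, every $Q_{\Z^d}(y_i,\ell')$ is good at time $\bar t$. The overlapping coverage of $Q_{\Z^d}(o,2\ell)$ together with the inequality $(\ell')^{1/24} = \ell^{1/48} < \ell^{1/24}$ would then imply that every $Q_{\Z^d}(x,\ell)$ with $\|x\|_\infty \le \ell$ is good at time $\bar t$, yielding the first statement.

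For the second statement, I would iterate the first across dyadic scales: if $Q_{\Z^d}(o,2^k\ell)$ is good at time $t_k := \sum_{j<k}(2^j\ell)^{1+\varepsilon}$, then $Q_{\Z^d}(o,2^{k+1}\ell)$ is good at $t_{k+1}$, except on an event of probability $\exp(-(\log 2^k\ell)^2)$. These failure probabilities are summable in $k$ and the total error can be absorbed into the constant by enlarging $\ell_1$, giving perpetual non-emptiness of $\xi_t^{(2)}$ with the stated probability. The main obstacle I anticipate is the decoupling in the middle paragraph: justifying that the $L$ successive applications of Lemma~\ref{lem_appearance} depend on essentially disjoint portions of the graphical construction, for which I would rely on the speed bound~\eqref{eq_constant_speed} to localize each trial to a bounded space-time cylinder, in the style of~\cite{MPV20}.
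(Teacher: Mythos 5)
Your opening step (insulating $Q_{\Z^d}(o,\ell)$ for time $\sqrt{\ell}$ via the speed bound, and using the density of type~$2$ in sub-boxes of radius $\ell^{1/24}$ together with a box version of Proposition~\ref{prop_coupling} to couple $\xi^{(2)}$ with the fully occupied $\lambda'$-contact process on the inner box by time $\sqrt{\ell}=(\ell^{1/24})^{12}$) is exactly the first half of the paper's argument. The dyadic iteration you sketch for the second statement is also the right idea. But the core of your proof --- amplifying the positive-probability Lemma~\ref{lem_appearance} by running $L\approx \ell^{\varepsilon}$ trials per grid point --- has two genuine gaps that make the argument circular.

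First, the assertion that ``with overwhelming probability there is a type-2 particle close to $y_i$ at the start of each sub-interval'' is unsupported for $y_i$ ranging over $Q_{\Z^d}(o,2\ell)$. The coupling you established only controls type~$2$ inside $Q_{\Z^d}(o,\ell/2)$, and only up to the time when type~$1$ (which surrounds the good box and can reinvade after time of order $\sqrt{\ell}$) is excluded. That type~$2$ maintains a presence near arbitrary points of $Q_{\Z^d}(o,2\ell)$ for all of $[\sqrt{\ell},\bar t]$ is essentially the statement being proved. Second, even when a trial succeeds, Lemma~\ref{lem_appearance} only gives you a good box $Q_{\Z^d}(y_i,\ell')$ at the \emph{end of that sub-interval}; concluding that it is still good at time $\bar t$ requires propagating it forward, i.e., the lemma itself. (This trials-plus-propagation structure is precisely Lemma~\ref{lem_alternatives}, which the paper proves \emph{using} Lemma~\ref{lem_propagation} as an input.)

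The missing ingredient is the high-probability exclusion result for the stronger type started from a \emph{fully occupied} box (Lemma~\ref{lem_input1}, taken from \cite[Lemma~3.3]{MPV20}): with probability $1-\exp\{-\sqrt{m}\}$, type~$1$ never enters the linearly growing region $Q_{\Z^d}(o,\tfrac m2+\beta t)$. The paper combines this with the monotone coupling for the order $1\preceq 0\preceq 2$ (Lemma~\ref{lem_monotone_coupling}): one compares $\xi_{\sqrt{\ell}}$ with the auxiliary process started from type~$2$ filling $Q_{\Z^d}(o,\ell/4)$, empty on the rest of $Q_{\Z^d}(o,\ell)$, and type~$1$ everywhere outside, shows domination at time $\sqrt{\ell}$ using your opening step, and then lets Lemma~\ref{lem_input1} (via Corollary~\ref{lem_input2}) produce all the good boxes at time $\bar t\ge \ell^{1+\varepsilon}$ directly, with no amplification of $\sigma_0$ needed. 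Without that input your scheme cannot get off the ground, since both of the gaps above are exactly instances of the exclusion phenomenon it provides.
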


	As a consequence, we now prove the following.
	\begin{lemma}\label{lem_alternatives} Assume that~$\lambda' > \lambda_c(\Z^d)$ and~$\lambda' > \lambda$. There exists~$\ell_2 > 0$ (depending on~$\lambda,\lambda'$) such that the following holds for all~$\ell \ge \ell_2$. For the two-type contact process~$(\xi_t)_{t \ge 0}$ on~$\Z^d$  with rates~$\lambda,\lambda'$ and started from any initial configuration, we have
	\begin{equation*}
		\P\left(\{\xi^{(2)}_{\ell^3} = \varnothing\}\cup \left\{\begin{array}{c} \exists x \in \Z^d:\; Q_{\Z^d}(x,\ell) \\[.1cm]\text{is good for } \xi_{\ell^3} \end{array} \right\}\right) > 1-2\exp\{-(\log \ell)^2\}.
	\end{equation*}
\end{lemma}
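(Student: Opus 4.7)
The plan is to combine Lemma~\ref{lem_appearance} (which produces a good box of radius~$\ell$ with probability at least $\sigma_0$ after time $\ell^2$, starting from any configuration with $\xi_0(o) = 2$) and Lemma~\ref{lem_propagation} (which says a good box persists, with very high probability, for any $\bar t \ge \ell^{1+\varepsilon}$). The idea is that, on the event that type~2 has not gone extinct by time $\ell^3$, we perform $K$ essentially independent attempts at creating a good box, each inside a disjoint time window of length $\ell^2$. Choosing $K$ only poly-logarithmic in $\ell$ keeps $K\ell^2 \ll \ell^3$, leaving ample time for Lemma~\ref{lem_propagation} to carry the good box forward to $\ell^3$, while simultaneously making the failure probability $(1-\sigma_0)^K$ at most $\exp\{-(\log \ell)^2\}$.

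If $\xi_0^{(2)} = \varnothing$ the statement is trivial, since type~2 stays extinct forever, so we may assume $\xi_0^{(2)} \ne \varnothing$. Set $K := \lceil 2\sigma_0^{-1} (\log \ell)^2 \rceil$ and $t_k := k\ell^2$ for $k = 0, \ldots, K$; for $\ell$ large, $t_K \le \ell^3/2$. For each $k < K$, on $\{\xi_{t_k}^{(2)} \ne \varnothing\}$ let $X_k$ denote the lexicographically smallest vertex in $\xi_{t_k}^{(2)}$ (which is $\mathcal{F}_{t_k}$-measurable), and define
\[
A_k := \{\xi_{t_k}^{(2)} \ne \varnothing\} \cap \{Q_{\Z^d}(X_k, \ell) \text{ is good for } \xi_{t_{k+1}}\}.
\]
The strong Markov property at $t_k$, translation invariance, and Lemma~\ref{lem_appearance} then yield
\[
\P(A_k \mid \mathcal{F}_{t_k}) \ge \sigma_0 \cdot \mathds{1}\{\xi_{t_k}^{(2)} \ne \varnothing\}.
\]

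Since extinction of type~2 is permanent, $\{\xi_{\ell^3}^{(2)} \ne \varnothing\} \subseteq \{\xi_{t_K}^{(2)} \ne \varnothing\}$, and a simple induction on $k$ (peeling one attempt at a time and using the tower property) gives
\begin{equation}\label{eq:plan_noattempt}
	\P\biggl( \{\xi_{\ell^3}^{(2)} \ne \varnothing\} \cap \bigcap_{k=0}^{K-1} A_k^c \biggr) \le (1-\sigma_0)^K \le \exp\{-(\log \ell)^2\}
\end{equation}
for $\ell$ large. On the complementary event $F := \bigcup_{k=0}^{K-1} A_k$, let $\tau := \min\{k : A_k \text{ occurs}\}$. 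Since $\ell^3 - t_{\tau+1} \ge \ell^3/2 \ge \ell^{1+1}$ for $\ell$ large, the strong Markov property at $t_{\tau+1}$, translation invariance, and Lemma~\ref{lem_propagation} applied with $\varepsilon = 1$ give, on $\{\tau = k\}$,
\[
\P\bigl(Q_{\Z^d}(X_k, \ell) \text{ not good for } \xi_{\ell^3} \,\bigm|\, \mathcal{F}_{t_{k+1}}\bigr) \le \exp\{-(\log \ell)^2\}.
\]
Summing over $k$ and combining with \eqref{eq:plan_noattempt} by a union bound produces the desired $2\exp\{-(\log \ell)^2\}$ bound.

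The argument carries no genuine analytic obstacle; the only delicate point is the strong-Markov/measurability bookkeeping around the random center $X_k$, which is harmless because both Lemma~\ref{lem_appearance} and Lemma~\ref{lem_propagation} hold uniformly over all admissible initial configurations and because the attempts are separated by deterministic time gaps.
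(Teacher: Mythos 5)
Your proof is correct and follows essentially the same strategy as the paper's: repeated attempts to produce a good box via Lemma~\ref{lem_appearance} in disjoint time windows of length $\ell^2$, followed by Lemma~\ref{lem_propagation} (with $\varepsilon=1$) applied at the first successful attempt to carry the good box forward to time $\ell^3$. The only cosmetic difference is that the paper uses $\ell$ attempts (giving a $(1-\sigma_0)^\ell$ failure bound) whereas you use $O((\log\ell)^2)$ attempts; both suffice.
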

\begin{proof}
	Fix~$\lambda,\lambda'$ as in the statement. Let~$\ell_0=\ell_0(\lambda,\lambda')$ be the constant of Lemma~\ref{lem_appearance}, and, taking~$\varepsilon = 1$, let~$\ell_1=\ell_1(\lambda,\lambda',\varepsilon)$ be the constant of Lemma~\ref{lem_propagation}. Assume that~$\ell\ge \max(\ell_0,\ell_1)$; for now assume that~$\ell \in \N$.

	For~$i = 1,\ldots, \ell$, let~$E_i$ denote the event that there exists~$x \in \Z^d$ such that $Q_{\Z^d}(x,\ell)$ is good for~$\xi_{i\ell^2}$. By Lemma~\ref{lem_appearance}, we have that
	\[\P(E_i\mid \xi^{(2)}_{(i-1)\ell^2} \neq \varnothing) > \sigma_0.\]
	 Since~$\xi^{(2)}_t = \emptyset$ implies that~$\xi^{(2)}_{t'} = \emptyset$ for all~$t' \ge t$, \color{black} gives
	\[\P\left( (\cup_{i=1}^\ell E_i)^c \cap \{\xi^{(2)}_{\ell^3} \neq \varnothing\} \right) < (1-\sigma_0)^\ell.\]
	By Lemma~\ref{lem_propagation}, if~$E_i$ occurs for some~$i\le \ell$, then with high probability there will also be a good box at time~$\ell^3$, that is,
	\[\P\left(  E_\ell^c\cap(\cup_{i=1}^\ell E_i)\right) < \exp\{-(\log \ell)^2\}.\]
	Putting these inequalities together we obtain
	\[\P(\{\xi^{(2)}_{\ell^3} \neq \varnothing \}\cap E_\ell^c) < (1-\sigma_0)^\ell + \exp\{-(\log \ell)^2\}.\]
	By taking~$\ell$ large enough, the desired bound is now easily seen to hold (also for non-integer~$\ell$).
\end{proof}

	\subsection{Box approximation}\label{ss_box_approx}
	Let~$Q = Q(x,r)$ be a box in~$\Z^d$ or~$\Z^d_N$ (with~$N > r$), and~$t > 0$. Also let~$\lambda' > \lambda$ and let~$H$ be a graphical construction in the space-time set~$Q \times [0,t]$ for a two-type contact process with rates~$\lambda$ and~$\lambda'$. We say that~$Q \times [0,t]$ is \textit{insulated in~$H$} if every infection path in~$H$ (where we allow infection paths to follow both basic and extra arrows) is contained in a set of the form~$Q' \times [0,t]$, where~$Q'$ is a sub-box of~$Q$ with radius~$r/10$.
	\begin{remark}\label{rem_insulate}
		{
			The motivation for this definition, and the name `insulated', is as follows. Let~$B, B'$ be disjoint subsets of~$\Z^d$. Also let~$r \in \N$ and~$H$ be a graphical construction for the two-type contact process with rates~$\lambda,\lambda'$ (with~$\lambda' > \lambda$\color{black}) in~$\Z^d$. Define:	\begin{itemize}
				\item $(\xi_t)_{t \ge 0}$, the two-type contact process on~$\Z^d$ constructed from~$H$, started from~$\xi_0$ given by
					\[\xi_0(x) = \begin{cases} 1&\text{if } x \in B;\\ 2&\text{if } x \in B';\\0&\text{otherwise;}\end{cases}\]
					\item $(\tilde{\xi}_t)_{t \ge 0}$, the two-type contact process on~$Q_{\Z^d}(o,r)$ constructed from the restriction of~$H$ to~$Q_{\Z^d}(o,r)$, started from~$\tilde{\xi}_0$ given by
						\[\tilde{\xi}_0(x) = \begin{cases} 1&\text{if } x \in B \cap Q_{\Z^d}(o,r);\\ 2&\text{if } x \in B'\cap Q_{\Z^d}(o,r);\\0&\text{otherwise.}\end{cases}\]
	\end{itemize}
		Now, let~$t > 0$ and assume that~$Q_{\Z^d}(o,r) \times [0,t]$ is insulated in~$H$ (or more precisely, in the restriction of~$H$ to this space-time set). Then, for~$0 \le s \le t$, any discrepancy between~$\xi_s$ and~$\tilde{\xi}_s$ inside~$Q_{\Z^d}(o,r)$ is contained in the set $Q_{\Z^d}(o,r) \backslash Q_{\Z^d}(o,9r/10)$, so:
		\begin{align*} &\xi^{(1)}_s \cap Q_{\Z^d}(o,9r/10) = \tilde{\xi}^{(1)}_s \cap Q_{\Z^d}(o,9r/10),\\
		&\xi^{(2)}_s \cap Q_{\Z^d}(o,9r/10) = \tilde{\xi}^{(2)}_s \cap Q_{\Z^d}(o,9r/10),\quad \text{ for all } s \le t.\end{align*}  
		 The same observation naturally holds for a graphical construction in the torus and its restriction to a torus box. Figure~\ref{fig_insulation} illustrates these processes.
		}
	\end{remark}

\begin{figure}[htb]
\begin{center}
\setlength\fboxsep{0cm}
\setlength\fboxrule{0cm}
\fbox{\includegraphics[width=\textwidth]{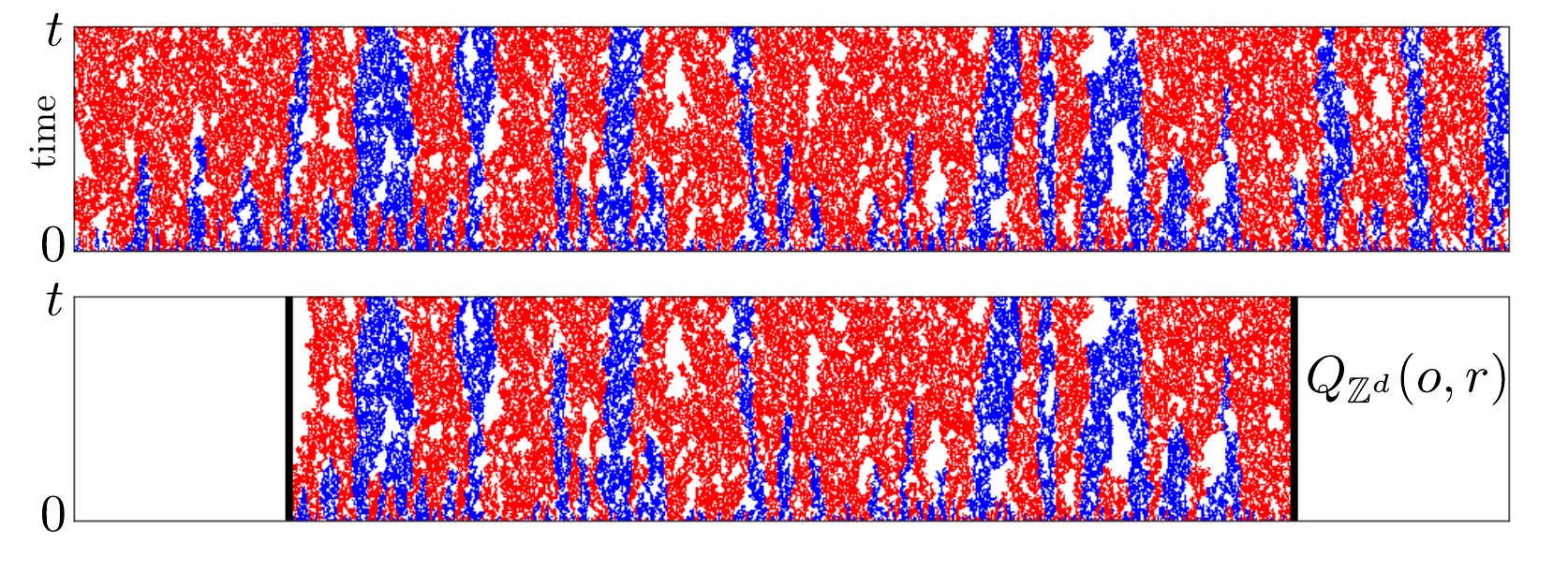}}
\end{center}
	\caption{\label{fig_insulation} {Simulation of the two-type contact process on $\mathbb{Z}^d$ (above) coupled with the two-type contact restricted to a box (below), with same initial configuration inside this box. Type 1 is depicted in blue and type 2 in red.}}
\end{figure}

With the above observation at hand, we will use space-time sets of the form~$Q \times [0,t]$ in order to transfer results we have obtained for the two-type process on~$\Z^d$ (namely, Lemma~\ref{lem_propagation} and Lemma~\ref{lem_alternatives}) to the two-type process on~$\Z^d_N$. This will depend on guaranteeing that insulation in~$Q \times [0,t]$ has high probability if~$t$ is small compared to the radius of~$Q$:
	\begin{lemma}\label{lem_insulation}
		For any~$\lambda,\lambda' \in (0,\infty)$ with~$\lambda \le \lambda'$, there exists~$r_0 > 0$ such that the following holds for any~$r \ge r_0$. Let~$Q = Q(x,r)$ be a box (either in~$\Z^d$ or in~$\Z^d_N$, with~$N > 2r$) and~$H$ be a graphical construction for a two-type contact process with rates~$\lambda,\lambda'$ in~$Q$. Then,~$Q \times [0,\sqrt{r}]$ is insulated in~$H$ with probability higher than~$1 - \exp\{-(\log r)^2\}$.
	\end{lemma}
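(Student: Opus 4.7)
The plan is to prove Lemma~\ref{lem_insulation} by showing that in time $\sqrt{r}$, even the fastest infection paths can only travel a distance of order $\sqrt{r} \ll r/10$. The key input is Proposition~\ref{prop_facts_contact}(b): the supercritical contact process spreads at linear speed with exponential tails.

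First I would observe a domination: infection paths in the two-type graphical construction $H$ follow either basic arrows (rate $\lambda$) or extra arrows (rate $\lambda' - \lambda$), so for any $(x,s)$ the set of points reachable via infection paths is stochastically dominated by the one-type contact process $(\eta^{x,s}_u)_{u \ge s}$ with rate $\lambda'$ started from $\{x\}$ at time $s$, built from the same Poisson data. Thus it is enough to control the spatial range of these one-type envelope processes in time $\sqrt{r}$. By Proposition~\ref{prop_facts_contact}(b) applied to such an envelope (and using monotonicity of contact processes to pass from $\Z^d$ to the domain $Q$, and from a single endpoint time to the maximum over $[0, \sqrt r]$ via a routine union bound over integer times, or equivalently via a comparison with first-passage percolation), for each starting point $(x,s)$,
\[
\P\bigl(\exists\, u \in [s, s+\sqrt{r}] : \eta^{x,s}_u \not\subseteq Q_{\Z^d}(x, c_{\mathrm{speed}}^{-1}\sqrt{r})\bigr) \le \exp\{-c_{\mathrm{speed}}\sqrt{r}/2\}
\]
for $r$ large enough.

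Next I would handle the uncountable family of possible starting times. Because birth arrows form a Poisson process, almost surely every infection path inside $Q \times [0,\sqrt r]$ can be extended backwards to have its starting time coincide with either $0$ or an arrow tail inside $Q \times [0,\sqrt r]$. It therefore suffices to take a union bound over starting points $(x, s)$ with $x \in Q$ and $s$ either equal to $0$ or an integer in $[0,\sqrt r]$ (and use a small extension argument to absorb the sub-integer slack), giving at most $O(r^{d} \cdot \sqrt r)$ starting points. The resulting failure probability is bounded by
\[
C\, r^{d+\frac{1}{2}} \exp\{-c_{\mathrm{speed}}\sqrt{r}/2\},
\]
which is smaller than $\exp\{-(\log r)^2\}$ once $r$ is sufficiently large.

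Finally, on the complementary event, every infection path $\gamma$ inside $Q \times [0,\sqrt r]$ has spatial diameter at most $2c_{\mathrm{speed}}^{-1}\sqrt{r}$, which for $r$ large is much smaller than $r/10$. A short geometric argument then shows that the trajectory $\gamma$ fits inside a sub-box of $Q$ of radius $r/10$: if the enclosing box of $\gamma$ already lies in $Q_{\Z^d}(x, 9r/10)$ one just re-centers at its center, while otherwise one shifts the center toward $x$ by at most $r/10 - 2c_{\mathrm{speed}}^{-1}\sqrt{r}$ coordinates-wise, which still contains $\gamma$ because the diameter leaves enough slack. The main technical obstacle is really the uncountable starting-time issue, and more subtly the small geometric argument at the boundary of $Q$; both are resolved by choosing $r$ large enough that $\sqrt r$ is negligible compared with $r/10$.
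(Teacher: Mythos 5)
Your proposal is correct and follows essentially the same route as the paper's proof: bound the displacement of infection paths via the speed estimate of Proposition~\ref{prop_facts_contact}(b) (applied to the rate-$\lambda'$ envelope), reduce the uncountable family of starting times to integer times by a short backward-extension (no-death-mark) argument, and conclude with a union bound over the polynomially many space-time starting points. The only cosmetic difference is that you phrase the conclusion in terms of the diameter of each path plus a re-centering step near the boundary, whereas the paper directly bounds the probability that a path started at $(y,s)$ exits $Q(y,r/10)$; both yield the stated insulation event.
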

	\begin{proof}
		For~$(y,s) \in Q \times [0,\sqrt{r}]$, let~$A(y,s)$ denote the event that there exists an infection path (which may use both basic and extra arrows)  that starts at~$(y,s)$ and intersects~$Q(y,r/10)^c \times [s,\sqrt{r}]$. By our speed bound~\eqref{eq_constant_speed}, we have
		\[\P(A(y,s)) < \exp\{-\sqrt{r}\}.\]
		Moreover, for any~$k \in \{0,\ldots, \lfloor \sqrt{r}\rfloor\}$, by imposing that there are no death marks at~$y$ in a time interval of length at most one, we have
		\[\P(A(y,k)) \ge \frac{1}{\mathrm{e}} \cdot \P\left( \bigcup_{s \in [k,(k+1)\wedge \sqrt{r}]} A(y,s)\right),\]
		so the probability on the right-hand side is at most~$\exp\{-\sqrt{r}+1\}$. The result now follows from a union bound over all~$y$ and all time intervals~$[0,1]$, $[1,2]$, $\ldots$, $[\lfloor \sqrt{r}\rfloor,\sqrt{r}]$, and taking~$r$ large enough.
	\end{proof}

	Fix~$\lambda,\lambda'$ with~$\lambda' > \lambda > \lambda_c(\Z^d)$. Also fix~$r \in \N$ and let~$B \subseteq Q_{\Z^d}(o,r)\backslash \{o\}$.  Let~$(\xi_t)_{t \ge 0}$ denote the two-type contact process on~$Q_{\Z^d}(o,r)$ where type 1 has rate~$\lambda$ and type~$2$ has rate~$\lambda'$, and initial configuration~$\xi_0$ given by
	\begin{equation}\label{eq_initial_same}\xi_0(x) = \begin{cases} 1&\text{if } x \in B;\\ 2&\text{if } x = o;\\ 0&\text{otherwise.}\end{cases}\end{equation}
	We define
	\begin{equation} \label{eq_def_of_calSr}
		S^{\mathrm{box}}_r(\lambda,\lambda';B) := \P(\xi^{(2)}_{\sqrt{r}} \neq \varnothing). 
	\end{equation}

\begin{lemma}[Box approximation to survival in~$\Z^d$]\label{lem_box_approx_Zd} Assume that~$\lambda'>\lambda > \lambda_c(\Z^d)$. There exists~$r_1 > 0$ such that for all~$r \ge r_1$, we have	
	\[\sup_{B \subseteq \Z^d\backslash \{o\}}|S^{\mathrm{box}}_r(\lambda,\lambda';B\cap Q_{\Z^d}(o,r)) - S(\lambda,\lambda';B)|< \exp\{-(\log r)^{3/2}\}.\]
\end{lemma}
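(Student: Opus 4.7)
The plan has two parts: first, use the insulation bound and a speed estimate to show that the box-survival event at time $\sqrt{r}$ coincides (up to a small error) with the corresponding lattice event; second, use the multiscale argument of Lemmas~\ref{lem_alternatives} and~\ref{lem_propagation} to pass from survival at time $\sqrt{r}$ to survival forever in the lattice.

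Realize both processes via a single graphical construction on $\Z^d$ with rates $\lambda,\lambda'$; take $(\xi_t)_{t\ge 0}$ on $\Z^d$ started from type~$1$ on $B$ and type~$2$ at $o$, and $(\tilde{\xi}_t)_{t\ge 0}$ on $Q:=Q_{\Z^d}(o,r)$ built from the restriction to $Q$, started from type~$1$ on $B\cap Q$ and type~$2$ at $o$. Consider the event $\mathrm{Ins}$ that $Q\times [0,\sqrt{r}]$ is insulated, for which Lemma~\ref{lem_insulation} gives $\P(\mathrm{Ins}^c)\le \exp\{-(\log r)^2\}$, and the event $\mathrm{Sp}$ that every infection path (using basic or extra arrows) starting at $(o,0)$ stays inside $Q_{\Z^d}(o, c^{-1}_{\mathrm{speed}}\sqrt{r})$ throughout $[0,\sqrt{r}]$, for which Proposition~\ref{prop_facts_contact}(b) applied to the rate-$\lambda'$ one-type contact process (which stochastically dominates the range of type~$2$) gives $\P(\mathrm{Sp}^c)\le \exp\{-c\sqrt{r}\}$ for a suitable $c>0$. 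On $\mathrm{Ins}$, Remark~\ref{rem_insulate} gives $\xi_s=\tilde{\xi}_s$ on $Q_{\Z^d}(o,9r/10)$ for all $s\in [0,\sqrt{r}]$; on $\mathrm{Sp}$, all type-$2$ individuals of $\xi$ lie in $Q_{\Z^d}(o,c^{-1}_\mathrm{speed}\sqrt{r})\subseteq Q_{\Z^d}(o,9r/10)$ for $r$ large. Hence on $\mathrm{Ins}\cap \mathrm{Sp}$ one has $\xi^{(2)}_{\sqrt{r}}=\tilde{\xi}^{(2)}_{\sqrt{r}}$, yielding
\[
\left|S^{\mathrm{box}}_r(\lambda,\lambda';B\cap Q_{\Z^d}(o,r))-\P(\xi^{(2)}_{\sqrt{r}}\neq\varnothing)\right| \le \P(\mathrm{Ins}^c)+\P(\mathrm{Sp}^c).
\]

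To compare $\P(\xi^{(2)}_{\sqrt{r}}\neq \varnothing)$ with $S(\lambda,\lambda';B)=\P(\xi^{(2)}_t\neq\varnothing\ \forall t)$, choose $\ell:=r^{1/6}$ so that $\ell^3=\sqrt{r}$. Applying Lemma~\ref{lem_alternatives} at time $\sqrt{r}$ gives
\[
\P\left(\{\xi^{(2)}_{\sqrt{r}}=\varnothing\}\cup \left\{\exists x\in\Z^d:\, Q_{\Z^d}(x,\ell)\text{ good for }\xi_{\sqrt{r}}\right\}\right)\ge 1-2\exp\{-(\log \ell)^2\}.
\]
On the event that a good box exists at time $\sqrt{r}$, the strong Markov property together with Lemma~\ref{lem_propagation} (translating so that the good box is centered at the origin) gives that type~$2$ survives forever with conditional probability at least $1-\exp\{-(\log \ell)^2\}$. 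Combining these two estimates,
\[
0\le \P(\xi^{(2)}_{\sqrt{r}}\neq\varnothing)-S(\lambda,\lambda';B)\le 3\exp\{-(\log \ell)^2\} = 3\exp\{-\tfrac{1}{36}(\log r)^2\}.
\]
Putting everything together, since $\tfrac{1}{36}(\log r)^2 \gg (\log r)^{3/2}$ for $r$ large, the total error is at most $\exp\{-(\log r)^{3/2}\}$ for all $r\ge r_1$ with $r_1$ sufficiently large. The bound is uniform in $B$ because $\mathrm{Ins}$ and $\mathrm{Sp}$ depend only on the graphical construction and Lemma~\ref{lem_alternatives} holds from any initial configuration.

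The main technical point, and the reason for introducing the notion of insulation at all, is to decouple $\xi$ from $\tilde{\xi}$ on the deep interior of $Q$ uniformly in the external type-$1$ configuration $B\setminus Q$: this is handled by Remark~\ref{rem_insulate}, which exploits that infection paths entering $Q$ from outside, on the insulation event, remain within a sub-box of radius $r/10$ of their entry point and therefore cannot reach $Q_{\Z^d}(o,9r/10)$, where the entire type-$2$ cloud is confined by the speed bound.
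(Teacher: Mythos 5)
Your proof is correct and follows essentially the same route as the paper's: the same coupling of the lattice and box processes via a common graphical construction, insulation (Lemma~\ref{lem_insulation} and Remark~\ref{rem_insulate}) to identify the two survival events at time $\sqrt{r}$, and then Lemmas~\ref{lem_alternatives} and~\ref{lem_propagation} with $\ell = r^{1/6}$ to upgrade survival at time $\sqrt{r}$ to survival forever. The only cosmetic difference is your explicit speed event $\mathrm{Sp}$, which the paper absorbs into the insulation event itself (insulation already confines all infection paths emanating from $(o,0)$ to a sub-box of radius $r/10$).
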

	\begin{proof}
		Let~$\lambda,\lambda'$ be as in the statement. Fix~$r\ge r_0$, where~$r_0$ is the constant of Lemma~\ref{lem_insulation}; throughout the proof we will assume that~$r$ is large enough that other requirements are satisfied. Also fix~$B \subseteq \Z^d \backslash \{o\}$. Let~$(\xi_t)_{t \ge 0}$ be the two-type contact process on~$\Z^d$ with~$\xi_0$ as in~\eqref{eq_initial_same}, and let~$(\tilde{\xi}_t)_{t \ge 0}$ be the two-type contact process on~$Q_{\Z^d}(o,r)$, with $\tilde{\xi}_0$ equal to the restriction of~$\xi_0$ to~$Q_{\Z^d}(o,r)$,  obtained from the same graphical construction as~$(\xi_t)_{t \ge 0}$ (restricted to the box). {
Hence, these two processes are as in Figure~\ref{fig_insulation}.
  }

		Define the three ``bad events''
		\begin{align*}&\Xi_1:= \{\xi^{(2)}_{\sqrt{r}} \neq \tilde{\xi}^{(2)}_{\sqrt{r}}\},\\[.2cm]
			&\Xi_2 := \{\xi^{(2)}_{\sqrt{r}} \neq \varnothing\} \cap \{\nexists x:\; Q_{\Z^d}(x,r^{1/6})\text{ is good for }\xi_{\sqrt{r}}\},\\[.2cm]
			&\Xi_3 := \{\exists x:\;Q_{\Z^d}(x,r^{1/6}) \text{ is good for } \xi_{\sqrt{r}}\}\cap \{\exists t: \xi^{(2)}_t = \varnothing\}. 
		\end{align*}
Also letting~$\Xi:= \Xi_1 \cup \Xi_2 \cup \Xi_3$, we have the following equalities between events:
		\begin{align*}&\{\tilde{\xi}^{(2)}_{\sqrt{r}} \neq \varnothing\} \;\cap\; \Xi^c =  \{\xi^{(2)}_t \neq \varnothing\; \forall t\}\;\cap\; \Xi^c,\\[.2cm]
		& \{\tilde{\xi}^{(2)}_{\sqrt{r}} = \varnothing\} \;\cap\; \Xi^c = \{\exists t:\;\xi^{(2)}_t = \varnothing\}\;\cap\; \Xi^c.\end{align*}
		This implies that~$\{\tilde{\xi}^{(2)}_{\sqrt{r}} \neq \varnothing\} \triangle \{\xi^{(2)}_t \neq \varnothing \;\forall t\} \subseteq \Xi$, where~$\triangle$ denotes symmetric difference. Hence,
		\[|S(\lambda,\lambda';B) - S^{\mathrm{box}}_r(\lambda,\lambda';B \cap Q_{\Z^d}(o,r))| = |\P(\xi^{(2)}_t \neq \varnothing \;\forall t) - \P(\tilde{\xi}^{(2)}_{\sqrt{r}} \neq \varnothing)|\le \P(\Xi). \]
		We now bound the probabilities of the bad events. First, Lemma~\ref{lem_insulation} implies that~$\P(\Xi_1) < \exp\{-(\log r)^2\}$. Second, Lemma~\ref{lem_alternatives} implies that~$\P(\Xi_2) < 2\exp\{-(\log((r^{1/6}))^2\}$ if~$r^{1/6} \ge \ell_2$. Third, Lemma~\ref{lem_propagation}  implies that~$\P(\Xi_3) \le \exp\{-(\log(r^{1/6}))^2\}$ if~$r^{1/6} \ge \ell_1$. A union bound now concludes the proof.
	\end{proof}

	\begin{proof}[Proof of Proposition~\ref{prop_fixation_three_new}]
		Let~$r_1$ be the constant of Lemma~\ref{lem_box_approx_Zd}, corresponding to~$\lambda,\lambda'$. Fix~$\varepsilon > 0$ and take~$r_1' \ge r_1$ such {that~$\exp\left\{-(\log (r_1'))^{3/2}\right\} < \varepsilon/2$.} Then, for~$\ell \ge r_1'$, noting that~$Q_{\Z^d}(o,\ell) \cap Q_{\Z^d}(o,r_1') = Q_{\Z^d}(o,r_1')$, Lemma~\ref{lem_box_approx_Zd} { applied twice} gives
		\begin{align*}
			&\left| S(\lambda,\lambda';B) - S_r^{\mathrm{box}}(\lambda,\lambda';B \cap Q_{\Z^d}(o,r_1'))\right| < \varepsilon/2,\\
			&\left| S(\lambda,\lambda';B\cap Q_{\Z^d}(o,\ell)) - S_r^{\mathrm{box}}(\lambda,\lambda';B \cap Q_{\Z^d}(o,r_1'))\right| < \varepsilon/2,
		\end{align*}
		so the result readily follows.
	\end{proof}

\subsection{Competition in~$\Z^d_N$}\label{ss_competition_torus}
We now turn to the task of approximating the outcome of a competition in the torus using the survival-in-the-box probability~$S_r^\mathrm{box}(\lambda,\lambda';A)$ introduced earlier. 
{
We will have statements in the torus mimicking statements for~$\Z^d$: Lemma~\ref{lem_propagation_torus} below is the counterpart of Lemma~\ref{lem_propagation}, and Lemma~\ref{lem_alternatives_torus} below is the counterpart of Lemma~\ref{lem_alternatives}. 
In the torus, the radius of the sub-boxes to be deemed good will be~$N^{1/12}$. This is chosen because we want the time length corresponding to~$\ell^3$ in Lemma~\ref{lem_alternatives} to be~$N^{1/4}$ in the torus setting (which yields~$\ell =N^{1/12}$).
}
\begin{lemma}[Propagation of good boxes in~$\Z^d_N$]\label{lem_propagation_torus} For any~$\varepsilon > 0$ and any~$\lambda,\lambda'$ with~$\lambda' > \lambda_c(\Z^d)$ and~$\lambda'>\lambda$, there exists~$N_1 \in \N$ such that the following holds for all~$N \ge N_1$. Let~$(\xi_t)_{t \ge 0}$ denote a two-type contact process on~$\Z^d_N$ with birth rates $\lambda,\lambda'$. If~$Q_{\Z^d_N}(o,N^{1/12})$ is good for~$\xi_0$, letting~$\bar{s}:=N^{\frac{1}{12}+\varepsilon}$, we have 
	\begin{align}\label{eq_prob_prop_tor}
		\P\left( \begin{array}{c} \text{the boxes }\\ \{Q_{\Z^d_N}(u,N^{1/12}):u \in Q_{\Z^d_N}(o,N^{1/12})\}\\
  \text{ are all good for } \xi_{\bar{s}}\end{array}\right) > 1 - \exp\{-(\log N)^{3/2}\}.
	\end{align}
\end{lemma}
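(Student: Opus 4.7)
The strategy is to transfer Lemma~\ref{lem_propagation} from~$\Z^d$ to the torus using the insulation technique of Section~\ref{ss_box_approx}. I will give the argument for the main case of small~$\varepsilon$ (say $\varepsilon < 5/12$); the general case follows by iterating the small-$\varepsilon$ result starting from intermediate good boxes.

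Set $\ell := N^{1/12}$ and $r := \lceil \bar s^2 \rceil = \lceil N^{1/6 + 2\varepsilon} \rceil$, so that $\sqrt r \ge \bar s$ and, for $N$ large, $r < N/3$ so that $Q^* := Q_{\Z^d_N}(o,r)$ is strictly contained in the torus and $Q_{\Z^d_N}(o, 2\ell) \subset Q_{\Z^d_N}(o, 9r/10)$. Using the bijection $\psi_N$ from~\eqref{eq_def_of_psi}, identify $Q^*$ with $Q_{\Z^d}(o,r)$ and transport the restriction of the torus graphical construction to~$\Z^d$, extending it by independent marks/arrows outside~$Q_{\Z^d}(o,r)$; let $(\xi^*_t)_{t\ge 0}$ be the two-type contact process on~$\Z^d$ thus obtained, with initial configuration equal to the image of $\xi_0|_{Q^*}$ under~$\psi_N^{-1}$ inside $Q_{\Z^d}(o,r)$ and equal to zero outside.

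Apply Lemma~\ref{lem_insulation} twice (once to the torus graphical construction on~$Q^*$, once to the $\Z^d$ graphical construction on $Q_{\Z^d}(o,r)$): with probability at least $1 - 2\exp(-(\log r)^2)$, both $Q^* \times [0,\bar s]$ and $Q_{\Z^d}(o,r) \times [0,\bar s]$ are insulated. On this event, Remark~\ref{rem_insulate} implies that $\xi_t$ and $\xi^*_t$ agree via $\psi_N$ throughout the interior box $Q_{\Z^d_N}(o,9r/10)$ for every $t \in [0,\bar s]$ (the $\Z^d$ process agrees with its restriction to $Q_{\Z^d}(o,r)$ in the interior because the initial configuration outside~$Q_{\Z^d}(o,r)$ is zero and insulation prevents any type-2 or type-1 mass from escaping). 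Since goodness of a box is a local condition preserved by $\psi_N$, and since $Q_{\Z^d_N}(o,\ell)$ is good for $\xi_0$ by hypothesis, $Q_{\Z^d}(o,\ell)$ is good for $\xi^*_0$.

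Now apply Lemma~\ref{lem_propagation} to~$(\xi^*_t)$ with the same $\varepsilon$: since $\bar s = \ell^{1+12\varepsilon} \ge \ell^{1+\varepsilon}$, for $N$ large (so that $\ell \ge \ell_1$), with probability at least $1 - \exp(-(\log \ell)^2)$ all boxes $\{Q_{\Z^d}(y,\ell) : \|y\|_\infty \le \ell\}$ are good for $\xi^*_{\bar s}$; these boxes lie inside $Q_{\Z^d}(o,2\ell) \subset Q_{\Z^d}(o,9r/10)$, so via the coupling, all corresponding torus boxes $\{Q_{\Z^d_N}(u,\ell) : u \in Q_{\Z^d_N}(o,\ell)\}$ are good for $\xi_{\bar s}$. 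A union bound gives a total failure probability of at most $2\exp(-(\log r)^2) + \exp(-(\log \ell)^2)$, and each of these two bounds is of the form $\exp(-c(\log N)^2)$, hence smaller than $\exp(-(\log N)^{3/2})$ for $N$ large. The main technical point is the careful bookkeeping in the coupling between the torus, its restriction to $Q^*$, and the corresponding $\Z^d$ process; the hard probabilistic estimates (insulation and $\Z^d$-propagation) are already available from Lemmas~\ref{lem_insulation} and~\ref{lem_propagation}.
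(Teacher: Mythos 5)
Your proof is correct and follows essentially the same route as the paper: transfer Lemma~\ref{lem_propagation} from~$\Z^d$ to the torus by coupling the torus process, its restriction to a large sub-box, and a $\Z^d$ process via~$\psi_N$, using Lemma~\ref{lem_insulation} and Remark~\ref{rem_insulate} to control the discrepancies. The only (immaterial) difference is that the paper fixes the insulating box radius at~$N^{1/2}$, whereas you take it to be~$\bar s^2$ — which in fact handles the regime of larger~$\varepsilon$ more explicitly than the paper does.
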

\begin{proof}

	Fix~$N \in \N$ and let~$\xi_0 \in \{0,1,2\}^{\Z^d_N}$ be such that~$Q_{\Z^d_N}(o,N^{1/12})$ is good for~$\xi_0$. Let~$H$ be a graphical construction for a two-type contact process with rates~$\lambda,\lambda'$ in~$\Z^d_N$  and~$\tilde{H}$ be the restriction of~$H$ to~$Q_{\Z^d_N}(o,N^{1/2})$. We now define:
	\begin{itemize}
		\item $(\xi_t)_{t \ge 0}$, the two-type contact process on~$\Z^d_N$ constructed from~$H$ and started from~$\xi_0$;
		\item $(\tilde{\xi}_t)_{t \ge 0}$, the two-type contact process on~$Q_{\Z^d_N}(o,N^{1/2})$ constructed from~$\tilde{H}$, started from~$\tilde{\xi}_0$, the restriction of~$\xi_0$ to~$Q_{\Z^d_N}(o,N^{1/2})$.
	\end{itemize}

	We now lift~$\xi_0$ to~$\{0,1,2\}^{\Z^d}$ by defining~$\kappa_0$ by
	\[\kappa_0(x) = \begin{cases}1&\text{if } x \in \psi_N^{-1}(\xi_0^{(1)});\\[.1cm]2&\text{if } x \in \psi_N^{-1}(\xi_0^{(2)});\\[.1cm]0&\text{otherwise}.\end{cases}  \]
		We now take a graphical construction~$\mathcal{H}$ for a two-type contact process with rates~$\lambda,\lambda'$ on~$\Z^d$. Let~$\tilde{\mathcal{H}}$ be the restriction of~$\mathcal{H}$ to~$Q_{\Z^d}(o,N^{1/2})$. Define:
	\begin{itemize}
		\item $(\kappa_t)_{t \ge 0}$, the two-type  contact process on~$\Z^d$ constructed from~$\mathcal{H}$, started from~$\kappa_0$;
		\item $(\tilde{\kappa}_t)_{t \ge 0}$, the two-type contact process on~$Q_{\Z^d}(o,N^{1/2})$ constructed from~$\tilde{\mathcal{H}}$, started from~$\tilde{\kappa}_0$, the restriction of~$\kappa_0$ to~$Q_{\Z^d}(o, N^{1/2})$.
	\end{itemize}

	Now, Remark~\ref{rem_insulate} implies that, letting~$\bar{s} := N^{\frac{1}{12} + \varepsilon}$,
	\begin{align}\nonumber &\left\{ \begin{array}{c} \{Q_{\Z^d}(x,N^{1/12}):x \in Q_{\Z^d}(o,N^{1/12})\} \\ \text{are all good for } \kappa_{\bar{s}}  \end{array} \right\} \cap \left\{\begin{array}{c}Q_{\Z^d}(o,N^{1/2}) \times [0,\bar{s}]\\ \text{ is insulated in }\tilde{\mathcal{H}} \end{array}\right\}\\[.2cm]
		&=\left\{ \begin{array}{c} \{Q_{\Z^d}(x,N^{1/12}):x \in Q_{\Z^d}(o,N^{1/12})\} \\ \text{are all good for } \tilde{\kappa}_{\bar{s}}  \end{array} \right\} \cap \left\{\begin{array}{c}Q_{\Z^d}(o,N^{1/2}) \times [0,\bar{s}]\\ \text{ is insulated in }\tilde{\mathcal{H}} \end{array}\right\}.	\label{eq_inter_event}
\end{align}

This together with Lemma~\ref{lem_propagation} and Lemma~\ref{lem_insulation} gives that, for~$N$ large enough, the probability  of the event in~\eqref{eq_inter_event} is larger than~$1 - \exp\{-(\log N^{1/2})^2\}- \exp\{-(\log N^{1/12})^2\}$. Moreover, this probability only involves the graphical construction inside~$Q_{\Z^d}(o,N^{1/2})\times [0,\bar{s}]$, so it is the same as the probability  of
\[\left\{ \begin{array}{c} \{Q_{\Z^d_N}(u,N^{1/12}):u \in Q_{\Z^d_N}(o,N^{1/12}) \}\\ \text{are all good for } \tilde{\xi}_{\bar{s}} \end{array} \right\} \cap \left\{\begin{array}{c}Q_{\Z^d_N}(o,N^{1/2}) \times [0,\bar{s}]\\ \text{ is insulated in }{\tilde{H}} \end{array}.\right\}\]
	Finally note that, again by Remark~\ref{rem_insulate}, the event above is contained in the event inside the probability in~\eqref{eq_prob_prop_tor}.
\end{proof}

\begin{corollary}\label{cor_propagate_torus}
	For any~$\varepsilon > 0$ and any~$\lambda,\lambda'$ with~$\lambda' > \lambda_c(\Z^d)$ and~$\lambda'>\lambda$, there exists~$N_2 \in \N$ such that the following holds for all~$N \ge N_2$. Let~$(\xi_t)_{t \ge 0}$ denote a two-type contact process on~$\Z^d_N$ with birth rates $\lambda,\lambda'$. If~$Q_{\Z^d_N}(o,N^{1/12})$ is good for~$\xi_0$, then for any~$t \in [\tfrac34 N^{1+\varepsilon},\;N^{\log(N)}]$ we have
	\begin{equation*}\P(\xi^{(1)}_t = \varnothing,\;\xi^{(2)}_t \in \mathscr{G}_N) > 1- \exp\{-(\log N)^{5/4}\}.\end{equation*}
\end{corollary}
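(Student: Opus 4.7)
The plan is to iterate Lemma~\ref{lem_propagation_torus} in order to reach, well before time $\tfrac34 N^{1+\varepsilon}$, a configuration in which type~$1$ is extinct and type~$2$ lies in $\mathscr{G}_N$, and then to invoke~\eqref{eq_coupling_combining} to propagate this property throughout $[\tfrac34 N^{1+\varepsilon}, N^{\log N}]$.

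Concretely, I would first apply Lemma~\ref{lem_propagation_torus} with exponent $\varepsilon' := \varepsilon/2$, writing $\bar s := N^{\frac{1}{12}+\varepsilon'}$, and set $K := \lceil N^{11/12}\rceil$ so that $K N^{1/12} \ge N$. For $k \in \{0,\ldots,K\}$ define $E_k$ to be the event that every box $Q_{\Z^d_N}(u,N^{1/12})$ with $u \in Q_{\Z^d_N}(o, k N^{1/12})$ is good for $\xi_{k\bar s}$; here $Q_{\Z^d_N}(o,r)$ is interpreted as $\Z^d_N$ whenever $r \ge N/2$, and $E_0$ holds by hypothesis. The inductive step is: on $E_{k-1}$, cover $Q_{\Z^d_N}(o, k N^{1/12})$ by $O(k^d)$ translates $Q_{\Z^d_N}(v_i, N^{1/12})$ whose centers $v_i$ lie on a spacing-$N^{1/12}$ sublattice of $Q_{\Z^d_N}(o,(k-1)N^{1/12})$. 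Each such $Q_{\Z^d_N}(v_i, N^{1/12})$ is good for $\xi_{(k-1)\bar s}$, so the strong Markov property at time $(k-1)\bar s$ combined with the translation-invariant version of Lemma~\ref{lem_propagation_torus} centered at $v_i$ shows that the conditional probability that some $Q_{\Z^d_N}(u, N^{1/12})$ with $u \in Q_{\Z^d_N}(v_i, N^{1/12})$ fails to be good for $\xi_{k\bar s}$ is at most $\exp\{-(\log N)^{3/2}\}$. A union bound over $v_i$ and over $k = 1, \ldots, K$ gives
\[ \P(E_K^c) = O(K^{d+1}) \cdot \exp\{-(\log N)^{3/2}\}, \]
which is at most $\tfrac12 \exp\{-(\log N)^{5/4}\}$ for $N$ large, since $(\log N)^{3/2}$ dominates $(\log N)^{5/4} + C \log N$.

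On $E_K$, at the intermediate time $t_* := K \bar s = O(N^{1+\varepsilon/2})$, every box of radius $N^{1/12}$ in $\Z^d_N$ is good, so $\xi^{(1)}_{t_*} = \varnothing$ and $\xi^{(2)}_{t_*} \in \mathscr{G}_N$ (as every sub-box of radius $N^{1/12 \cdot 1/24} = N^{1/288}$ meets $\xi^{(2)}_{t_*}$). For $N$ large one has $t_* + N^{1/24} \le \tfrac34 N^{1+\varepsilon}$. Since type~$1$ cannot reappear from the empty configuration, $(\xi^{(2)}_{t_*+s})_{s \ge 0}$ subsequently evolves as a classical contact process on $\Z^d_N$ with rate $\lambda' > \lambda_c(\Z^d)$ starting from a $\mathscr{G}_N$-configuration; applying~\eqref{eq_coupling_combining} via the strong Markov property at $t_*$, with probability at least $1 - 2 N^{-\log N}$ we have $\xi^{(2)}_{t_*+s} \in \mathscr{G}_N$ for every $s \in [N^{1/24}, N^{\log N}]$, which translates into $\xi^{(2)}_t \in \mathscr{G}_N$ for every $t$ in the required interval (and $\xi^{(1)}_t = \varnothing$ throughout, as type~$1$ was already extinct at $t_*$). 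Summing the two error bounds yields the desired $\exp\{-(\log N)^{5/4}\}$ for $N$ large.

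The main obstacle is the bookkeeping of the union bound over the roughly $N^{11/12}$ iterative applications of Lemma~\ref{lem_propagation_torus}: one must verify that the polynomial-in-$N$ cost of enumerating box centers is absorbed by the stretched-exponential decay $\exp\{-(\log N)^{3/2}\}$ of that lemma while still beating the target $\exp\{-(\log N)^{5/4}\}$, which explains the slightly weaker exponent appearing in the statement of the corollary.
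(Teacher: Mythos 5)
Your proposal is correct and follows essentially the same route as the paper's proof: iterate the torus propagation lemma (Lemma~\ref{lem_propagation_torus}) roughly $N^{11/12}$ times with a polynomial union bound absorbed by the $\exp\{-(\log N)^{3/2}\}$ decay, observe that once all $N^{1/12}$-boxes are good type~$1$ is extinct and $\xi^{(2)}\in\mathscr{G}_N$, and then run the one-type process forward via~\eqref{eq_coupling_combining}. The only cosmetic differences are that you shrink the time step to $N^{1/12+\varepsilon/2}$ to land before $\tfrac34 N^{1+\varepsilon}$, whereas the paper keeps $N^{1/12+\varepsilon}$ but stops the iteration at $\lfloor\tfrac23 N^{11/12}\rfloor$ steps, and your per-step union bound over $O(k^d)$ box centers versus the paper's cruder $N^d$.
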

\begin{proof}
	Define~$U_0 := Q_{\Z^d_N}(o,N^{1/12})$ and recursively,
	\[U_{k+1}:= \bigcup_{u \in U_{k}} Q_{\Z^d_N}(u,N^{1/12}),\quad k \in \N_0.\]
	Note that~$U_k = \Z^d_N$ for~$k \ge k_\star:=\lfloor \tfrac23 N^{\frac{11}{12}}\rfloor$. Let~$\bar{s}:= N^{\frac{1}{12}+\varepsilon}$, and for~$k \in \N$, define
	\[E_k := \left\{ \text{the boxes } \{Q_{\Z^d_N}(u,N^{1/12}): u \in U_k\} \text{ are good for } \xi_{k\bar{s}}\right\}.\]
	We bound
	\begin{align*} \P((E_{k_\star})^c) &\le \P((E_1)^c) + \sum_{k = 0}^{k_\star - 1} \P((E_{k+1})^c\mid E_k).\end{align*}
		By Lemma~\ref{lem_propagation_torus} (combined with a union bound), we have that
		\[\P((E_1)^c) \le \exp\{-(\log N)^{3/2}\}\qquad \P((E_{k+1})^c|E_k) \le   N^d\cdot \exp\{-(\log N)^{3/2}\},\]
so
	\[\P((E_{k_\star})^c) \le N^{d+1}\cdot \exp\{-(\log N)^{3/2}\}.\]

	Next, let~$\bar{t}:= k_\star\cdot \bar{s}$. Note that~$\bar{t} \le \tfrac23 N^{1+\varepsilon}$. On the event~$E_{k_\star}$, we have that~$\xi_{\bar{t}}^{(1)} = \varnothing$ (so~$\xi_t^{(1)} = \varnothing$ for all~$t \ge \bar{t}$) and~$\xi_{\bar{t}}^{(2)}$ intersects all boxes of radius~$N^{1/288}$($=N^{1/12}N^{1/24}$) \color{black} in~$\Z^d_N$ (so~$\xi_{\bar{t}}^{(2)} \in \mathscr{G}_N$). Hence, on~$E_{k_\star}$,~$(\xi^{(2)}_t)_{t \ge \bar{t}}$ behaves as a one-type contact process whose initial configuration belongs to~$\mathscr{G}_N$. Since~$\tfrac34 N^{1+\varepsilon} - \bar{t} \ge (\tfrac34 - \tfrac23)\cdot N^{1+\varepsilon}$, we can apply~\eqref{eq_coupling_combining}  to obtain, for any~$t \in [\tfrac34 N^{1+\varepsilon}, \;N^{\log(N)}]$,
	\[\P(\xi^{(2)}_t \in \mathscr{G}_N \mid  E_{k_\star}) < 2\exp\{-(\log N)^2\} .\]
\end{proof}

Next, we have the following counterpart of Lemma~\ref{lem_alternatives}.
\begin{lemma}\label{lem_alternatives_torus}
	For any~$\lambda,\lambda' > 0$ with~$\lambda' > \lambda_c(\Z^d)$ and~$\lambda' > \lambda$, there exists~$N_3 \in \N$ such that the following holds for all~$N \ge N_3$. For the two-type contact process~$(\xi_t)_{t \ge 0}$ on~$\Z^d_N$  with rates~$\lambda,\lambda'$ and started from any initial configuration, we have
	\begin{equation*}
		\P\left(\{\xi^{(2)}_{N^{1/4}} = \varnothing\}\cup \left\{\begin{array}{c} \exists u \in \Z^d_N:\; Q_{\Z^d_N}(u,N^{1/12}) \\\text{is good for } \xi_{N^{1/4}} \end{array} \right\}\right) > 1-{ \exp\{-(\log N)^{3/2}\}}.
	\end{equation*}
\end{lemma}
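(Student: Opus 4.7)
The strategy is to mimic the proof of Lemma~\ref{lem_alternatives} on the torus, using the insulation mechanism of Lemma~\ref{lem_insulation} and Remark~\ref{rem_insulate} to transfer statements from $\Z^d$ to $\Z^d_N$. Set $\ell := N^{1/12}$, so that $\ell^3 = N^{1/4}$, and partition $[0,\ell^3]$ into $\ell$ subintervals of length $\ell^2 = N^{1/6}$. For $i = 1,\ldots,\ell$, define
\[E_i := \bigl\{\exists u \in \Z^d_N:\; Q_{\Z^d_N}(u,\ell) \text{ is good for } \xi_{i\ell^2}\bigr\}.\]
Since the absence of type $2$ is absorbing (a type-$2$ birth requires a type-$2$ parent), $\{\xi^{(2)}_{N^{1/4}} \neq \varnothing\} \subseteq \bigcap_{i=0}^\ell \{\xi^{(2)}_{i\ell^2} \neq \varnothing\}$.

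\emph{Appearance step.} I first show that on $\{\xi^{(2)}_{(i-1)\ell^2} \neq \varnothing\}$ one has $\P(E_i \mid \mathcal F_{(i-1)\ell^2}) \ge \sigma_0 - o(1)$, where $\sigma_0$ is the constant from Lemma~\ref{lem_appearance}. By measurable selection, fix $u^\star \in \xi^{(2)}_{(i-1)\ell^2}$ and consider the torus box $Q_\star := Q_{\Z^d_N}(u^\star, \ell^4)$, which embeds in the torus since $\ell^4 = N^{1/3} < N/2$. Lemma~\ref{lem_insulation} with $r = \ell^4$ shows that $Q_\star \times [(i-1)\ell^2,\, i\ell^2]$ is insulated with probability at least $1 - \exp\{-(\log\ell^4)^2\}$, as $\sqrt{\ell^4} = \ell^2$. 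On the insulation event, Remark~\ref{rem_insulate} couples the torus process inside $Q_{\Z^d_N}(u^\star, 9\ell^4/10)$ with a $\Z^d$ two-type contact process $(\kappa_t)$ obtained by lifting the configuration via $\psi_N^{-1} \circ \theta_{u^\star}^{-1}$, so that $\kappa_{(i-1)\ell^2}(o) = 2$. Lemma~\ref{lem_appearance} applied to $(\kappa_t)$ yields probability at least $\sigma_0$ that $Q_{\Z^d}(o,\ell)$ is good for $\kappa_{i\ell^2}$; since $\ell \ll 9\ell^4/10$, this transfers to goodness of $Q_{\Z^d_N}(u^\star,\ell)$ for $\xi_{i\ell^2}$. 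Iterating over $i$ via the strong Markov property,
\[\P\bigl(\{\xi^{(2)}_{N^{1/4}} \neq \varnothing\} \cap (\cup_{i=1}^\ell E_i)^c\bigr) \le (1 - \sigma_0 + o(1))^\ell \le \exp\{-cN^{1/12}\}\]
for some $c > 0$.

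\emph{Propagation step and main obstacle.} Conditional on $E_i$ for some $i \le \ell - 1$, I propagate a good box to time $\ell^3$ by iteratively applying Lemma~\ref{lem_propagation_torus} with $\varepsilon = 1/12$, so that each application advances time by $\bar s := \ell^2 = N^{1/6}$, aligning with the discretization. By translation invariance and the Markov property, starting from a good $Q_{\Z^d_N}(u_i,\ell)$ at time $i\ell^2$, a single application of Lemma~\ref{lem_propagation_torus} produces a good $Q_{\Z^d_N}(u_i,\ell)$ at time $(i+1)\ell^2$; iterating yields a good box at every subsequent $j\ell^2$. The main technical point is that the stated bound $\exp\{-(\log N)^{3/2}\}$ in Lemma~\ref{lem_propagation_torus} is not sharp enough to survive iteration and summation: its proof actually yields the stronger $\exp\{-c'(\log N)^2\}$ for some $c' > 0$, and this strengthening is essential, since after at most $\ell$ iterations and a union bound over the first-occurrence index $i$,
\[\P\bigl(E_\ell^c \cap \cup_{i=1}^\ell E_i\bigr) \le \ell^2 \exp\{-c'(\log N)^2\} \le \exp\{-(\log N)^{3/2}\}\]
for $N$ large enough. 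Combining this with the appearance estimate and the inclusion noted at the outset concludes the proof.
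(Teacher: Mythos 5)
Your argument is correct, and the time discretization and the appearance step (insulation of $Q_{\Z^d_N}(u^\star,\ell^4)\times[(i-1)\ell^2,i\ell^2]$, lift to $\Z^d$, apply Lemma~\ref{lem_appearance}) coincide with what the paper does. Where you genuinely diverge is the propagation step. The paper does \emph{not} iterate the torus propagation lemma: conditional on a good box $Q_{\Z^d_N}(u,N^{1/12})$ at time $iN^{1/6}$, it performs a \emph{single} application of the $\Z^d$ propagation result (Lemma~\ref{lem_propagation}, which is valid for any $\bar t\ge \ell^{1+\varepsilon}$, so one shot covers the whole remaining window) combined with one insulation of $Q_{\Z^d_N}(u,\sqrt N)\times[iN^{1/6},N^{1/4}]$ — admissible via Lemma~\ref{lem_insulation} precisely because the total remaining time is at most $N^{1/4}=\sqrt{\sqrt N}$. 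This yields an error of order $\exp\{-(\log N^{1/12})^2\}+\exp\{-(\log N^{1/2})^2\}$ per conditioning index, and the union bound over $i\le N^{1/12}$ closes immediately. Your route instead iterates Lemma~\ref{lem_propagation_torus} about $N^{1/12}$ times, and you correctly identify the resulting obstacle: the \emph{stated} bound $\exp\{-(\log N)^{3/2}\}$ of that lemma loses to the polynomial number of iterations (a factor $N^{1/6}$ cannot be absorbed into $\exp\{-(\log N)^{3/2}\}$). Your fix — observing that the proof of Lemma~\ref{lem_propagation_torus} actually delivers $\exp\{-c'(\log N)^2\}$, namely $\exp\{-(\log N^{1/2})^2\}+\exp\{-(\log N^{1/12})^2\}$ — is legitimate and makes the iteration work, and it is in fact the same mechanism the paper uses in Corollary~\ref{cor_propagate_torus} for the much longer time scale $N^{1+\varepsilon}$, where a single insulation is impossible. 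So both arguments are sound; the paper's one-shot version is slightly cleaner here because the time horizon $N^{1/4}$ is short enough that it never needs to re-open the proof of Lemma~\ref{lem_propagation_torus} or strengthen its stated constant.
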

{
This can be proved with the same approach used in the proof of Lemma~\ref{lem_propagation_torus}, so we only sketch the argument.
\begin{proof}[Sketch of proof.]
    We divide the time interval~$[0,N^{1/4}]$ into~$N^{1/12}$ intervals of equal length~$N^{1/6}$.
    For~$i=1,\ldots,N^{1/12}$, let~$E_i$ be the event that there is some~$u\in \Z^d_N$ such that~$Q_{\Z^d_N}(u,N^{1/12})$ is good for~$\xi_{iN^{1/6}}$. Combining Lemma~\ref{lem_appearance} with box insulation, we have
    \[\P(E_{i+1} \mid \xi_{iN^{1/6}}^{(2)} \neq \varnothing) > \sigma_0\]
    if~$N$ is large enough. This gives
    \[\P\left(  (\cup_{i=1}^{N^{1/12}}  E_i)^c  \cap \{\xi^{(2)}_{N^{1/4}} \neq \varnothing\} \right) < (1-\sigma_0)^{N^{1/12}}.\]
    Next, for some~$i \le N^{1/12}$, condition on the event that~$E_i$ occurs and let~$u \in \Z^d_N$ be such that~$Q_{\Z^d_N}(u,N^{1/12})$ is good for~$\xi_{iN^{1/6}}$.
    Combining Lemma~\ref{lem_propagation} with box insulation, we obtain that with (conditional) probability above~$1 - \exp\{-(\log(N^{1/12}))^2\} - \exp\{-(\log(N^{1/2}))^2\}$, there will also be a good box at time~$N^{1/4}$ (the box insulation here involves the box~$Q_{\Z^d_N}(u,\sqrt{N}) \times [iN^{1/6},\;N^{1/4}]$). The argument is then completed as in the proof of Lemma~\ref{lem_alternatives}.
\end{proof}
}

\begin{lemma}\label{lem_box_approx_ZdN} For any~$\lambda,\lambda' > 0$ with~$\lambda' > \lambda > \lambda_c(\Z^d)$, there exists~$N_4 \in \N$ such that the following holds for all~$N \ge N_4$. Let~$(\xi_t)_{t \ge 0}$ denote the two-type contact process on~$\Z^d_N$ with rates~$\lambda,\lambda'$ started from~$\xi_0$ given by
	\[\xi_0(u) = \begin{cases} 1& \text{if } u \in B;\\2 &\text{if } u = o;\\ 0&\text{otherwise,}\end{cases}\]
	where~$B \in \mathscr{G}_N$,~$o \notin B$. Then, 
 { for~$\tn = N^{1+\frac{\varepsilon_0}{2}}$ as in~\eqref{eq_def_of_tn},}
	\[ \left| \P(\xi^{(1)}_{\tn} = \varnothing,\; \xi^{(2)}_{\tn} \in \mathscr{G}_N) - S(\lambda,\lambda';\psi^{-1}_N(B))\right| < 2 \exp\{-(\log N)^{5/4}\}\]
	and
	\[\left| \P(\xi^{(1)}_{\tn} \in \mathscr{G}_N,\;\xi^{(2)}_{\tn} = \varnothing) - (1- S(\lambda,\lambda';\psi^{-1}_N(B) )  )\right|< 2 \exp\{-(\log N)^{5/4}\}.\]
\end{lemma}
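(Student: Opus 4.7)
My plan is to pivot at the intermediate time $s_N := N^{1/4}$, use box insulation to relate torus probabilities at time $s_N$ to the infinite-volume survival probability $S(\lambda,\lambda';\psi_N^{-1}(B))$, and then split on the dichotomy of Lemma~\ref{lem_alternatives_torus}: a ``propagation'' branch that Corollary~\ref{cor_propagate_torus} carries to $\tn$, and an ``extinction'' branch where $\xi^{(1)}$ refills a small defect around $o$ as a one-type contact process.

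For the early-time comparison, lift $\xi_0$ to $\{0,1,2\}^{\Z^d}$ via $\psi_N^{-1}$ and let $(\kappa_t)$ be the two-type process on $\Z^d$ with this initial condition. Lemma~\ref{lem_insulation} applied to $Q_{\Z^d_N}(o,N^{1/2})$ and $Q_{\Z^d}(o,N^{1/2})$ up to time $s_N = \sqrt{N^{1/2}}$, via Remark~\ref{rem_insulate}, gives
\begin{equation*}
\bigl|\P(\xi^{(2)}_{s_N} \neq \varnothing) - S^{\mathrm{box}}_{N^{1/2}}(\lambda,\lambda';\psi_N^{-1}(B)\cap Q_{\Z^d}(o,N^{1/2}))\bigr| \leq 2\exp\{-(\log N^{1/2})^2\},
\end{equation*}
and combining with Lemma~\ref{lem_box_approx_Zd} yields $|\P(\xi^{(2)}_{s_N} \neq \varnothing) - S(\lambda,\lambda';\psi_N^{-1}(B))| = O(\exp\{-(\log N)^{3/2}\})$. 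By Lemma~\ref{lem_alternatives_torus}, up to probability $\exp\{-(\log N)^{3/2}\}$ the event $\{\xi^{(2)}_{s_N} \neq \varnothing\}$ equals the event $G$ that some torus box of radius $N^{1/12}$ is good for $\xi_{s_N}$. On $G$, Corollary~\ref{cor_propagate_torus} applied with $\varepsilon = \varepsilon_0/4$ to $(\xi_{s_N + t})_{t \geq 0}$, using $\tn - s_N \in [\tfrac{3}{4} N^{1+\varepsilon_0/4}, N^{\log N}]$ for large $N$, gives $\xi^{(1)}_\tn = \varnothing$, $\xi^{(2)}_\tn \in \mathscr{G}_N$ with conditional probability at least $1 - \exp\{-(\log N)^{5/4}\}$. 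Combined with the trivial inclusion $\{\xi^{(1)}_\tn = \varnothing,\, \xi^{(2)}_\tn \in \mathscr{G}_N\} \subseteq \{\xi^{(2)}_{s_N} \neq \varnothing\}$, this gives the first inequality.

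The main obstacle is the extinction branch required for the second inequality, namely showing $\P(\xi^{(1)}_\tn \in \mathscr{G}_N \mid \xi^{(2)}_{s_N} = \varnothing) \geq 1 - \exp\{-(\log N)^{5/4}\}$. By Proposition~\ref{prop_facts_contact}(b) with overwhelming probability type 2 never leaves $Q_{\Z^d_N}(o,N^{1/2})$ during $[0, s_N]$, so outside this box $\xi^{(1)}$ is unaffected by type 2; an attractive-coupling argument via the graphical construction shows that $\xi^{(1)}_{s_N}$ restricted to $Q_{\Z^d_N}(o,N^{1/2})^c$ dominates a classical contact process on that subgraph started from $B \cap Q_{\Z^d_N}(o,N^{1/2})^c$, so by Proposition~\ref{prop_density} (restricted to the subgraph) $\xi^{(1)}_{s_N}$ intersects every torus box of radius $N^{1/288}$ lying outside $Q_{\Z^d_N}(o,N^{1/2})$. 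After $s_N$, $(\xi^{(1)}_t)$ is a one-type supercritical contact process: using the linear-speed estimate of Proposition~\ref{prop_facts_contact}(b), it fills the $N^{1/2}$-radius defect in additional time of order $N^{1/2}$, after which Proposition~\ref{prop_coupling} gives $\xi^{(1)}_t \in \mathscr{G}_N$ up to time $N^{\log N}$, and in particular at $t = \tn$. The corresponding upper bound $\P(\xi^{(1)}_\tn \in \mathscr{G}_N, \xi^{(2)}_\tn = \varnothing) \leq \P(\xi^{(2)}_\tn = \varnothing)$ is controlled symmetrically using $G$ and Corollary~\ref{cor_propagate_torus}; collecting all errors at level $\exp\{-(\log N)^{5/4}\}$ completes the proof.
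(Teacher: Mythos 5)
Your overall architecture --- pivoting at $N^{1/4}$, insulating $Q_{\Z^d_N}(o,N^{1/2})$ to identify the survival probability with $S^{\mathrm{box}}_{N^{1/2}}$ and hence with $S(\lambda,\lambda';\psi_N^{-1}(B))$ via Lemma~\ref{lem_box_approx_Zd}, the dichotomy of Lemma~\ref{lem_alternatives_torus}, and Corollary~\ref{cor_propagate_torus} on the survival branch --- is exactly the paper's, and that part is sound. The gap is in the extinction branch, which you correctly identify as the main obstacle but do not actually close. You justify the claim that the type-1 population ``fills the $N^{1/2}$-radius defect in additional time of order $N^{1/2}$'' by citing Proposition~\ref{prop_facts_contact}(b); but that is an \emph{upper} bound on the spread of the contact process, so it cannot show that type 1 reaches, let alone densely occupies, the defect. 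The tool that produces density is the coupling of Proposition~\ref{prop_coupling}, but at density scale $N^{\varepsilon}$ it needs time $N^{12\varepsilon}$: with a defect of radius $N^{1/2}$ one is forced to take $N^{\varepsilon}\approx N^{1/2}$ and wait time of order $N^{6}$, which exceeds $\tn=N^{1+\varepsilon_0/2}$ unless $\varepsilon_0>10$. Relatedly, invoking ``Proposition~\ref{prop_density} restricted to the subgraph'' to get density of $\xi^{(1)}_{s_N}$ outside $Q_{\Z^d_N}(o,N^{1/2})$ is not a legitimate application of that proposition, which concerns the process on the full torus started from \emph{full} occupancy; here the initial condition is only $\mathscr{G}_N$-dense and the region is a box complement, so one would have to redo that argument (e.g.\ via Lemma~\ref{lem_numbers} on sub-boxes).

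The paper sidesteps all of this with a single observation you are missing: since both types die at rate one and give birth at rate at least $\lambda$, the set-valued process $(\{u:\xi_t(u)\neq 0\})_{t\ge 0}$ of occupied sites of \emph{either} type stochastically dominates a one-type rate-$\lambda$ contact process on $\Z^d_N$ started from $B\cup\{o\}\supseteq B\in\mathscr{G}_N$. By \eqref{eq_coupling_combining}, this occupied set belongs to $\mathscr{G}_N$ at time $\tn$ except on an event of probability at most $2N^{-\log N}$; on the extinction branch $\xi^{(2)}_{\tn}=\varnothing$, so the occupied set is exactly $\xi^{(1)}_{\tn}$, which is therefore in $\mathscr{G}_N$ with no defect-filling argument needed. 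Substituting this domination for your extinction-branch analysis closes the gap, after which your error bookkeeping at level $\exp\{-(\log N)^{5/4}\}$ goes through.
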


\begin{proof}
	Fix~$N$, to be assumed to be large enough, and~$B \in \mathscr{G}_N$,~$o \notin B$.  Let~$(\xi_t)_{t \ge 0}$ denote the two-type process on~$\Z^d_N$, { with rates~$\lambda,\lambda'$} as in the statement, and~$(\tilde{\xi}_t)_{t \ge 0}$ the two-type process on $Q_{\Z^d_N}(o,N^{1/2})$ started from~$\tilde{\xi}_0$ given by
	\[\tilde{\xi}_0(u) = \begin{cases} 1& \text{if } u \in B \cap Q_{\Z^d_N}(o,N^{1/2});\\2 &\text{if } u = o;\\ 0&\text{otherwise,}\end{cases}\]
	obtained from the same graphical construction as~$(\xi_t)_{t \ge 0}$ (restricted to the box). Note that
	\begin{equation} \label{eq_aux_SZdn}  \P(\tilde{\xi}^{(2)}_{N^{1/4}} \neq \varnothing)= S^{\mathrm{box}}_{N^{1/2}}(\lambda,\lambda';B\cap Q_{\Z^d_N}(o,N^{1/2})).\end{equation}

	Define the four ``bad events''
	\begin{align*}
		&\Xi_1:= \{\xi_{N^{1/4}} \text{ and } \tilde{\xi}_{N^{1/4}} \text{ are not equal inside } Q_{\Z^d_N}(o,\tfrac12N^{1/2})\},\\[.2cm]
		&\Xi_2:= \{\xi^{(2)}_{N^{1/4}} \neq \varnothing\} \cap \{\nexists u:\;Q_{\Z^d_N}(u,N^{1/{12}}) \text{ is good for } \xi_{N^{1/4}}\},\\[.2cm]
		&\Xi_3 := \{\exists u:\;Q_{\Z^d_N}(u,N^{1/{12}}) \text{ is good for } \xi_{N^{1/4}}\} \cap \{\xi^{(2)}_{\tn} = \varnothing\},\\[.2cm]
		&\Xi_4 := \{\{u: X_{N,\tn}(u) \neq 0\} \notin \mathscr{G}_N\},
	\end{align*}
 {where~$\tn$ is again as in~\eqref{eq_def_of_tn}.}
Also let~$\Xi := \Xi_1 \cup \Xi_2 \cup \Xi_3 \cup \Xi_4$. We then have
	\begin{align*}
		&\{\tilde{\xi}_{N^{1/4}}^{(2)} \neq \varnothing\} \;\cap\; \Xi^c = \{\xi_{\tn}^{(1)} = \varnothing,\;\xi^{(2)}_{\tn} \in \mathscr{G}_N\}\; \cap\; \Xi^c,\\[.2cm]
		&\{\tilde{\xi}_{N^{1/4}}^{(2)}= \varnothing\}\; \cap\; \Xi^c = \{\xi^{(1)}_{\tn} \in \mathscr{G}_N,\;\xi_{\tn}^{(2)} = \varnothing\}\; \cap\; \Xi^c,	
	\end{align*}
	so, also using~\eqref{eq_aux_SZdn},
	\begin{align*}
		& \left| \P(\xi_{\tn}^{(1)} = \varnothing,\;\xi^{(2)}_{\tn} \in \mathscr{G}_N) - S^{\mathrm{box}}_{N^{1/2}}(\lambda,\lambda';B\cap Q_{\Z^d_N}(o,N^{1/2}))\right| \le \P(\Xi), \\[.2cm]
		& \left| \P(\xi^{(1)}_{\tn} \in \mathscr{G}_N,\;\xi_{\tn}^{(2)}= \varnothing) - (1- S^{\mathrm{box}}_{N^{1/2}}(\lambda,\lambda';B \cap Q_{\Z^d_N}(o,N^{1/2})))  )\right| \le \P(\Xi).
	\end{align*}

		We now bound the probabilities of the bad events. First,~$\Xi_1$ is contained in the event that the space-time set~$Q_{\Z^d_N}(o,N^{1/2}) \times [0,N^{1/4}]$ is not insulated; by Lemma~\ref{lem_insulation}, this has probability smaller than~$\exp\{-(\log N^{1/2})^2 \}$. Next, the probabilities of~$\Xi_2$ and~$\Xi_3$ are bounded using Lemma~\ref{lem_alternatives_torus} (with~$N \ge N_3$) and Corollary~\ref{cor_propagate_torus} (with~$\varepsilon = \varepsilon_0/2$ and~$N \ge N_2$, noting that~$\tn - N^{1/4} \in [\tfrac34 N^{1+\frac{\varepsilon_0}{2}},N^{\log(N)}]$), respectively. For~$\Xi_4$, we observe that, since~$\lambda'>\lambda >\lambda_c(\Z^d)$, the process~$(\{u:\xi_t(u)\neq \varnothing\})_{t \ge 0}$ stochastically dominates a (one-type) contact process with rate~$\lambda > \lambda_c(\Z^d)$ on~$\Z^d_N$, started from a configuration in~$\mathscr{G}_N$, so by~\eqref{eq_coupling_combining}, we have~$\P(\Xi_4) < 2N^{-\log N}$.

		To conclude, note that Lemma~\ref{lem_box_approx_Zd} gives
	\begin{equation*}
		|S^{\textrm{box}}_{N^{1/2}}(\lambda,\lambda';B \cap Q_{\Z^d_N}(o,N^{1/2})) - S(\lambda,\lambda';\psi^{-1}_N(B))| < \exp\{-(\log N^{1/2})^{3/2}\}.
	\end{equation*}
\end{proof}

\subsection{Back to the adaptive contact process: proofs of Propositions~\ref{prop_fixation_one_new} and~\ref{prop_no_fixation}} \label{ss_proofs_adap}
\begin{proof}[Proof of Proposition~\ref{prop_fixation_one_new}]
	Fix~$\lambda' > \lambda > \lambda_c(\Z^d)$ and~$\varepsilon > 0$. Let~$N$ be large (to be chosen later), and fix~$B \subseteq \Z^d_N \backslash \{o\}$. We take a coupling of~$(X_{N,t})_{t \ge 0}$, the adaptive contact process on~$\Z^d_N$, and~$(\xi_t)_{t \ge 0}$, the two-type contact process on~$\Z^d_N$ with rates~$\lambda,\lambda'$, with initial conditions, respectively, given by
	\[X_{N,0}(u) = \begin{cases} \lambda&\text{if } u \in B;\\ \lambda'&\text{if } u = o;\\0&\text{otherwise;}\end{cases}\qquad \xi_0(u) = \begin{cases} 1 &\text{if } u \in B;\\ 2&\text{if } u = o;\\0&\text{otherwise.}\end{cases}\]
	First take a probability space where~$(X_{N,t})_{t \ge 0}$ is defined. Then, for~$t < T_N$ set
	\[\xi^{(1)}_t = \{u:\;X_{N,t}(u) = \lambda\},\qquad \xi^{(2)}_t = \{u:\;X_{N,t}(u) = \lambda'\}.\]
	Next, recall that~$\mathcal{X}_N$ denotes the position of the newborn mutant on~$\{T_N < \infty\}$. In case the mutant is a child of an individual of type~$\lambda$, set
	\[\xi^{(1)}_{T_N} = \xi^{(1)}_{T_N-} \cup \{\mathcal{X}_N\},\qquad \xi^{(2)}_{T_N} = \xi^{(2)}_{T_N-},\]
and in case the mutant is a child of an individual of type~$\lambda'$, set
	\[\xi^{(1)}_{T_N} = \xi^{(1)}_{T_N-} ,\qquad \xi^{(2)}_{T_N} = \xi^{(2)}_{T_N-}\cup \{\mathcal{X}_N\}.\]
	Finally, enlarging the probability space, on the event~$\{T_N < \infty\}$, let~$(\xi_t)_{t > T_N}$ continue evolving as a two-type contact process, independently of~$(X_{N,t})_{t > T_N}$  (conditionally on $(X_{N,t},\xi_t)_{0 \le t \le T_N}$).

With this coupling, we have that
	\[
		\left\{\begin{array}{r} T>\tn,\; \{u:X_{N,\tn}(u) = \lambda'\} \in \mathscr{G}_N,\\[.2cm]\{u: X_{N,\tn}(u) = \lambda\} = \varnothing\end{array}\right\} = \{T > \tn,\; \xi^{(1)}_{\tn} = \varnothing,\;\xi^{(2)}_{\tn} \in \mathscr{G}_N\}
	\]
	and
	\[
		\left\{\begin{array}{r} T>\tn,\; \{u:X_{N,\tn}(u) = \lambda\} \in \mathscr{G}_N,\\[.2cm]\{u: X_{N,\tn}(u) = \lambda'\} = \varnothing\end{array}\right\} = \{T > \tn,\; \xi^{(1)}_{\tn} \in \mathscr{G}_N,\;\xi^{(2)}_{\tn} = \varnothing\}.
	\]
	Hence, both
	\[\left|\sum_{B' \in \mathscr{G}_N} \mathcal{V}(\lambda,\lambda';B,B')- \P(\xi^{(1)}_{\tn} = \varnothing,\;\xi^{(2)}_{\tn} \in \mathscr{G}_N) \right|\]
		and
	\[\left|\sum_{B' \in \mathscr{G}_N} \bar{\mathcal{V}}(\lambda,\lambda';B,B')- \P(\xi^{(1)}_{\tn} \in \mathscr{G}_N,\;\xi^{(2)}_{\tn} = \varnothing) \right|\]
	are bounded from above by~$\P(T \le \tn)$, which can be made as small as desired by taking~$N$ large, by Lemma~\ref{lem_bound_Ttn}. To conclude, Lemma~\ref{lem_box_approx_ZdN} shows that, for~$N$ large, uniformly in~$B$,~$\P(\xi^{(1)}_{\tn} = \varnothing,\;\xi^{(2)}_{\tn} \in \mathscr{G}_N)$ is close to~$S(\lambda,\lambda';\psi_N^{-1}(B))$, and~$\P(\xi^{(1)}_{\tn} \in \mathscr{G}_N,\;\xi^{(2)}_{\tn} = \varnothing)$ is close to~$1-S(\lambda,\lambda';B)$.
\end{proof}

\begin{proof}[Proof of Proposition~\ref{prop_no_fixation}]
	The statement can be proved with the same coupling as in the previous proof, and using Corollary~\ref{cor_propagate_torus}, with the roles of types 1 and 2 reversed. We omit the details.
\end{proof}

\newpage
\section{Appendix}\label{s_appendix}

\subsection{Proofs of results for classical contact process}\label{ss_proofs_classical}
In this section, we prove three results about the contact process on~$\Z^d_N$ that were stated earlier: Proposition~\ref{prop_density}, Proposition~\ref{prop_coupling} and Lemma~\ref{lem_two_proc_c0}. Before we can carry out these proofs, we need some preliminary work.  We start with an easy consequence of~\eqref{eq_from_tom_expectation}.
\begin{lemma}\label{lem_from_tom}
 Let $N\in\N$ and~$\lambda > \lambda_c(\Z^d)$, and let\color{black}~$c_\mathrm{long}$ be the constant from~\eqref{eq_from_tom_expectation}. Then, for the contact process~$(\zeta_t)_{t \ge 0}$ on~$Q_{\Z^d}(o,N)$ started from full occupancy, we have
	\begin{equation}
		\mathbb{P}\big(\zeta_{\exp\big\{\frac{c_\mathrm{long}}{2}\cdot N^d\big\}} \neq \varnothing\big) > 1 - \exp\Big\{-\frac{c_\mathrm{long}}{2}\cdot N^d\Big\}.
	\end{equation}
\end{lemma}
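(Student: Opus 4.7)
The plan is to combine a subadditive property of the tail of the extinction time $\tau = \tau(Q_{\Z^d}(o,N))$ with the asymptotic control on $\E[\tau]$ given by~\eqref{eq_from_tom_expectation}.

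First, I would establish a submultiplicative bound on the tail of $\tau$. Since the contact process is attractive and full occupancy is the maximal initial condition, the extinction time $\tau_\eta$ starting from any configuration $\eta$ satisfies $\tau_\eta \preceq \tau$ (stochastic domination), which under the graphical construction is pointwise. Conditioning on $\mathcal{F}_{t_1}$ and applying the Markov property at time $t_1$, one gets
\begin{equation*}
\P(\tau > t_1 + t_2) = \E\!\left[\P(\tau_{\zeta_{t_1}} > t_2)\cdot \mathds{1}\{\zeta_{t_1} \neq \varnothing\}\right] \le \P(\tau > t_1)\cdot \P(\tau > t_2).
\end{equation*}
Iterating this, with $p := \P(\tau > T)$ for a given $T > 0$, we obtain $\P(\tau > kT) \le p^k$ for all $k \in \N_0$.

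Second, I would derive the deterministic inequality $\P(\tau \le T) \le T/\E[\tau]$ by writing
\begin{equation*}
\E[\tau] = \int_0^\infty \P(\tau > t)\, \mathrm{d}t \le \sum_{k=0}^\infty T\cdot \P(\tau > kT) \le T\sum_{k=0}^\infty p^k = \frac{T}{1-p},
\end{equation*}
and rearranging.

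Third, I would invoke~\eqref{eq_from_tom_expectation}: for any fixed $\varepsilon > 0$, there exists $N_0$ such that $\E[\tau] \ge \exp\{(c_\mathrm{long}-\varepsilon)N^d\}$ for all $N \ge N_0$. Setting $T = \exp\{c_\mathrm{long} N^d/2\}$ then gives
\begin{equation*}
\P(\tau \le T) \le \exp\left\{-\left(\tfrac{c_\mathrm{long}}{2}-\varepsilon\right) N^d\right\}.
\end{equation*}
Since $(\zeta_t)$ is started from full occupancy, $\{\zeta_T \neq \varnothing\} = \{\tau > T\}$, so the above gives the claimed bound for $N$ large enough (the mismatch between $c_\mathrm{long}/2 - \varepsilon$ and $c_\mathrm{long}/2$ is absorbed for large $N$; equivalently, one may redefine $c_\mathrm{long}$ as any constant strictly smaller than the limit in~\eqref{eq_from_tom_expectation}, the statement being used only qualitatively in later sections).

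The only subtle point is the matching of constants between the limiting relation~\eqref{eq_from_tom_expectation} and the explicit exponents in the bound; this is handled purely by choosing $\varepsilon$ small and $N$ sufficiently large. Neither the attractiveness-based subadditivity nor the integral bound requires any new input beyond the Markov property and monotonicity of the contact process under the standard graphical coupling.
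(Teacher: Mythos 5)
Your proof is correct and follows essentially the same route as the paper: both arguments reduce to the inequality $\P(\tau \le T) \le T/\E[\tau]$ obtained from the renewal structure at multiples of $T$ under the graphical construction (the paper dominates $\tau$ by $T$ times a geometric random variable, which is just a repackaging of your tail submultiplicativity plus the layer-cake integral), and then plug in~\eqref{eq_from_tom_expectation}. The $\varepsilon$-mismatch in the constant that you flag at the end is present in the paper's proof as well and is handled there in the same implicit way.
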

\begin{proof}
	Assume that { the contact process~$(\zeta_t)$ on $Q_{\Z^d}(o,N)$} is obtained from a graphical construction, and let~$\tau := \inf\{s \ge 0:\;\zeta_t = \varnothing\}$.
	The statement follows from~\eqref{eq_from_tom_expectation} and taking~$t = \exp\{\tfrac{c_\mathrm{long}}{2}N^d\}$ in the well-known inequality
	\begin{equation}\label{eq_well_known}
		\mathbb{P}(\tau \le t) \le \frac{t}{\mathbb{E}[\tau]},\qquad t > 0.
	\end{equation}
	For completeness, we now reproduce the proof of~\eqref{eq_well_known}.
	Fix~$t > 0$ and define
	\[\hat{\tau} := \inf\{s \in t\cdot \mathbb{N}:\; Q_{\Z^d}(o,N)\times \{s-t\} \not \rightsquigarrow  Q_{\Z^d}(o,N)\times \{s\}\}.\]
	Note that~$\tau \le \hat{\tau}$ and that~$\hat{\tau}$ is distributed as~$t \cdot X$, where~$X$ is a geometric random variable with parameter~$p:= \mathbb{P}(\tau \le t)$. This gives
	\[\mathbb{E}[\tau] \le \mathbb{E}[\hat{\tau}] = t\cdot \frac{1}{p} = \frac{t}{\mathbb{P}(\tau \le t)},\]
	so~\eqref{eq_well_known} follows.
\end{proof}

\begin{proof}[Proof of Proposition~\ref{prop_density}]
	Let~$Q = Q_{\Z^d_N}(u,N^\varepsilon)$ be a box of radius~$N^\varepsilon$ in~$\Z^d_N$. We bound
	\begin{align*}
		\P(\zeta^1_t \cap Q \neq \varnothing\; \forall t \le N^{\log N}) &= \P(\Z^d_N \times \{0\} \rightsquigarrow Q \times \{t\} \text{ for all } t \le N^{\log N})\\
		&\ge \P(Q \times \{0\} \rightsquigarrow Q \times \{t\} \text{ inside $Q$ for all } t \le N^{\log N})\\
		&\ge 1 - \exp\Big\{-\frac{c_\mathrm{long}}{2}\cdot N^{\varepsilon d}\Big\},
	\end{align*}
	where the last inequality follows from Lemma~\ref{lem_from_tom}. The statement of the proposition now follows from a union bound over all choices of~$Q$.
\end{proof}

The following coupling result is essentially taken from~\cite{M93}.
\begin{lemma}\label{lem_die_or_couple}
	For any~$\lambda > \lambda_c(\Z^d)$, the following holds for~$N$ large enough. Let~$(\zeta_t)_{t \ge 0}$ and~$(\zeta^1_t)_{t \ge 0}$ be two contact processes on~$Q_{\Z^d}(o,N)$ with rate~$\lambda$ obtained with the same graphical construction, with~$\zeta_0^1 \equiv 1$ and~$\zeta_0$ an arbitrary initial configuration. Then, for any~$t \ge N^6$ we have
	\[\mathbb{P}(\zeta_{t} \neq \varnothing,\; \zeta_{t} \neq \zeta^1_{t}) < \mathrm{e}^{-N}.\]
\end{lemma}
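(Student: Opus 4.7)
The proof combines monotonicity with iterating a one-step coupling estimate over $\Theta(N^5)$ consecutive time intervals of length $\Theta(N)$.

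\textit{Monotonicity and background density.} Attractiveness of the contact process and the shared graphical construction yield $\zeta_s\subseteq\zeta^1_s$ for every $s\ge 0$; hence the event $E_s:=\{\zeta_s=\varnothing\}\cup\{\zeta_s=\zeta^1_s\}$ is absorbing in the sense that once it holds it holds thereafter. Moreover, the box analogue of Proposition~\ref{prop_density} (obtained by the same proof, with $Q_{\Z^d}(o,N)$ in place of the torus) gives that, with probability at least $1-N^{-\log N}$, the configuration $\zeta^1_s$ intersects every sub-box of radius $N^{1/288}$ of $Q_{\Z^d}(o,N)$ for all $s\in[1,N^{\log N}]$; let $F$ denote this event.

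\textit{One-step coupling estimate.} I would show that there exist constants $C_*,\alpha_*>0$, depending only on $\lambda$ and $d$, such that for $N$ large, on $F\cap\{\zeta_s\ne\varnothing\}$ and for every $s\le t-C_*N$,
\begin{equation}\label{eq:plan-os}
\P\bigl(E_{s+C_*N}\,\bigm|\,\mathcal F_s\bigr)\ge\alpha_*.
\end{equation}
To verify~\eqref{eq:plan-os}, pick $y\in\zeta_s$ and let $(\eta_r)_{r\ge s}$ be the contact process on $Q_{\Z^d}(o,N)$ started from $\{y\}$ at time $s$, built from the same graphical construction, so that $\eta_r\subseteq\zeta_r$ by monotonicity. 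The Bezuidenhout--Grimmett renormalization (\cite{BG90}; see also~\cite{L99}) embeds a supercritical oriented-percolation cluster rooted at $(y,s)$ which, with positive probability bounded below uniformly in $N$, makes $\eta_{s+C_*N/2}$ intersect every sub-box of radius $N^{1/288}$ of $Q_{\Z^d}(o,N)$. On that event $\zeta_{s+C_*N/2}$ is equally dense, and the box analogue of Proposition~\ref{prop_coupling}---proved by the same method, relying only on Proposition~\ref{prop_density} and finite-speed estimates and hence not circular here---couples $\zeta$ with $\zeta^1$ within a further time of length $N^{1/24}\ll C_*N/2$, with conditional probability $1-o(1)$.

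\textit{Iteration.} Partition $[1,t]$ into $k:=\lfloor(t-1)/(C_*N)\rfloor$ successive intervals of length $C_*N$. By the strong Markov property and~\eqref{eq:plan-os}, induction gives $\P(E_{1+iC_*N}^c\cap F)\le(1-\alpha_*)^i$. Since $t\ge N^6$, we have $k\ge N^5/(2C_*)$ for $N$ large, so
\[\P(E_t^c)\le\P(E_{1+kC_*N}^c\cap F)+\P(F^c)\le(1-\alpha_*)^k+N^{-\log N}<\mathrm{e}^{-N}\]
for $N$ large. The monotonicity paragraph then propagates $E_{1+kC_*N}$ to $E_t$, which is exactly the complement of the event in the lemma.

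\textit{Principal difficulty.} The only substantive ingredient is~\eqref{eq:plan-os}, and inside it the fill-up claim that the contact process from a single site of $Q_{\Z^d}(o,N)$ attains density uniformly throughout $Q_{\Z^d}(o,N)$ in time linear in $N$, with probability bounded below in $N$. This is a classical supercriticality phenomenon accessible through the Bezuidenhout--Grimmett block construction; because the target bound $\mathrm{e}^{-N}$ is modest (rather than, say, $\mathrm{e}^{-N^d}$), a crude comparison with supercritical oriented site percolation suffices, and care is needed only to keep the argument independent of Proposition~\ref{prop_coupling} to avoid circularity.
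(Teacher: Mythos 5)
Your iteration skeleton is exactly the one the paper uses: the event $\{\zeta_s=\varnothing\}\cup\{\zeta_s=\zeta^1_s\}$ is absorbing, one proves a ``die or couple'' estimate on a time window of length $o(N^5)$ with failure probability bounded away from $1$, and iterates $\gtrsim N$ times over $[0,N^6]$. The difference is that the paper obtains the one-step estimate by citing Proposition~3.1 of~\cite{M93} as a black box (uniform over initial sets $A$, window length $a_N=o(N^5)$, failure probability $<\mathrm{e}^{-1}$), whereas you try to rebuild it from the Bezuidenhout--Grimmett construction plus the coupling machinery of Section~3. Your reconstruction has genuine gaps.

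First, a quantitative error that makes your final display false as written: $\P(F^c)\le N^{-\log N}=\mathrm{e}^{-(\log N)^2}$, which is \emph{much larger} than $\mathrm{e}^{-N}$, so $(1-\alpha_*)^k+N^{-\log N}<\mathrm{e}^{-N}$ fails. Indeed no density event built from sub-boxes of radius $N^{1/288}$ can have failure probability below $\mathrm{e}^{-cN^{d/288}}$, so the global conditioning on $F$ is incompatible with the target bound. (The event $F$ is actually dispensable: once $\zeta$ is dense at some time $s'$, couple it with the process restarted from full occupancy at time $s'$ and use the sandwich $\zeta\subseteq\zeta^1\subseteq{}$full to conclude $\zeta=\zeta^1$; but you would need to rewrite the argument this way, and also note that $F$ is not $\mathcal F_s$-measurable, so ``on $F$, $\P(\cdot\mid\mathcal F_s)\ge\alpha_*$'' is not well posed.) Second, the claimed non-circularity of invoking ``the box analogue of Proposition~\ref{prop_coupling}'' is unjustified: the paper's proof of Proposition~\ref{prop_coupling} uses Lemma~\ref{lem_die_or_couple} itself, through the events $E_2(Q)$, so it does \emph{not} rely ``only on Proposition~\ref{prop_density} and finite-speed estimates.'' One could try to salvage this with an induction on scales (the sub-boxes there have radius $N^{2\varepsilon}\ll N$), but you neither set that up nor identify a base case. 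Third, the remaining substantive claim --- that the process started from a single site of the box becomes dense throughout $Q_{\Z^d}(o,N)$ within time $C_*N$ with probability bounded below uniformly in $N$, including boundary effects --- is precisely the content of the Mountford result the paper cites, and your proposal leaves it at the level of ``classical and accessible.'' So the proposal is a correct plan of the same shape as the paper's proof, but the self-contained substitute for the cited input is not established.
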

\begin{proof}
	Under a single graphical construction, for each~$A \subseteq Q_{\Z^d}(o,N)$, we take a contact process~$(\zeta^A_t)_{t \ge 0}$ started from~$\zeta_0^A = A$. Proposition~3.1 in~\cite{M93} states that there exists a sequence~$(a_N)_{N \ge 1}$ with $a_N/N^5 \xrightarrow{N \to \infty} 0$ such that
	\[\sup_{A \subseteq Q_{\Z^d}(o,N)}\mathbb{P}(\zeta_{a_N}^{A}\neq \varnothing,\;\zeta_{a_N}^A \neq \zeta_{a_N}^1) \xrightarrow{N\to \infty} 0 \]
	(note that the result is stated in that reference for~$d=2$, but as observed there, this is done for simplicity and the same holds in any dimension, {and the choice of $N^5$ does not change with the dimension}). By monotonicity, this also implies that
	\[\sup_{\substack{A,A' \subseteq Q_{\Z^d}(o,N)\\ A \subseteq A'}}\mathbb{P}(\zeta_{a_N}^{A}\neq \varnothing,\;\zeta_{a_N}^A \neq \zeta_{a_N}^{A'}) \xrightarrow{N\to \infty} 0. \]
	Assume that~$N$ is large enough that the supremum above is smaller than~$\mathrm{e}^{-1}$; then, for any~$t > a_N$,
	\begin{align*}&\mathbb{P}(\zeta_{t} \neq \varnothing,\; \zeta_{t} \neq \zeta^1_{t})\\&= \sum_{\substack{A,A' \subseteq Q_{\Z^d}(o,N)\\\varnothing \subsetneq A \subsetneq A'}} \mathbb{P}(\zeta_{t - a_N} = A,\; \zeta_{t - a_N}^1 =  A') \\[-.4cm]
	& \hspace{3.5cm} \cdot \mathbb{P}(\zeta_{t} \neq \varnothing,\; \zeta_{t} \neq \zeta^1_{t} \mid \zeta_{t - a_N} = A,\; \zeta_{t - a_N}^1 =  A') \\[.2cm]
	&\le \mathrm{e}^{-1} \cdot  \P(\zeta_{t - a_N} \neq \varnothing,\; \zeta_{t- a_N} \neq \zeta^1_{t-a_N}).\end{align*}
	Iterating this we get
	\[\mathbb{P}(\zeta_{t} \neq \varnothing,\; \zeta_{t} \neq \zeta^1_{t}) \le \mathrm{e}^{-\lfloor t/a_N\rfloor},\]
	so the desired bound follows, since~$\lfloor N^6/a_N \rfloor > N$ when~$N$ is large.
\end{proof}

\begin{lemma}\label{lem_individual}
	Let~$\lambda > \lambda_c(\Z^d)$ and~$c_\mathrm{long}$ be as in Lemma~\ref{lem_from_tom}. Let~$(\zeta_t)_{t \ge 0}$ be the contact process on~$Q_{\Z^d}(o,N)$ with~$\zeta_0(o) = 1$  and $\zeta_0(x)$ arbitrary for $x \in Q_{\Z^d}(o,N) \setminus \{ o \}$\color{black}. Then,
	\[\mathbb{P}\Big(\zeta_{\exp\big\{\frac{c_\mathrm{long}}{2}\cdot (N/2)^d\big\}} \neq \varnothing\Big) > \frac{1}{N^d}.\]
\end{lemma}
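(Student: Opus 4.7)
By the standard graphical-construction monotonicity the survival probability is non-decreasing in the initial configuration, so it suffices to treat the minimal case $\zeta_0 = \mathds{1}_{\{o\}}$. The plan is then to apply Lemma~\ref{lem_from_tom} to a central sub-box around $o$, combine it with self-duality to find some vertex of that sub-box whose singleton gives good survival, and finally use translation invariance and monotonicity in the ambient graph to transfer that bound to the origin.

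Set $M = \lceil N/2 \rceil$, $B = Q_{\Z^d}(o, M)$, and $T = \exp\{(c_{\mathrm{long}}/2)(N/2)^d\}$. Since $M \ge N/2$, Lemma~\ref{lem_from_tom} applied to $B$ gives $\P(\eta_T \neq \varnothing) \ge 1 - \exp\{-(c_{\mathrm{long}}/2)M^d\}$ for $(\eta_t)_{t \ge 0}$ the contact process on $B$ started from full occupancy (using that $\exp\{(c_{\mathrm{long}}/2)M^d\} \ge T$ and that the survival event is monotone decreasing in time). Self-duality of the contact process inside $B$ then gives, for each $v \in B$, $\P(\eta_T(v)=1) = \P(\zeta^{\{v\},B}_T \neq \varnothing)$, where $\zeta^{\{v\},B}$ denotes the contact process on $B$ started from $\{v\}$; summing,
\[
\sum_{v\in B}\P(\zeta^{\{v\},B}_T \neq \varnothing) \;=\; \E[|\eta_T|] \;\ge\; \P(\eta_T \neq \varnothing),
\]
and pigeonhole produces some $v^\star \in B$ with $\P(\zeta^{\{v^\star\},B}_T \neq \varnothing) \ge \P(\eta_T \neq \varnothing)/|B|$.

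To convert this into a lower bound on the survival starting from the origin, I would use translation invariance to identify $\zeta^{\{v^\star\},B}$ in law with $\zeta^{\{o\}, B - v^\star}$, the process on the translated box $B - v^\star = Q_{\Z^d}(-v^\star, M)$ started from $\{o\}$. When $v^\star$ is central enough in $B$ that $B - v^\star \subseteq Q_{\Z^d}(o, N)$, monotonicity in the ambient graph (the contact process on a larger graph, obtained by restricting the same graphical construction, dominates the one on a subgraph) yields
\[
\P(\zeta_T \neq \varnothing) \;\ge\; \P(\zeta^{\{o\}, B - v^\star}_T \neq \varnothing) \;=\; \P(\zeta^{\{v^\star\},B}_T \neq \varnothing) \;\ge\; \frac{\P(\eta_T \neq \varnothing)}{|B|},
\]
which is of order $1/N^d$ since $|B| = (2M+1)^d \sim N^d$ and $\P(\eta_T \neq \varnothing)$ is exponentially close to $1$ for $N$ large. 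The main (and really the only nontrivial) obstacle is a bit of constant/parity bookkeeping: the centrality condition $\|v^\star\|_\infty \le N - M$ needed for $B - v^\star \subseteq Q_{\Z^d}(o, N)$ is not automatic for every $v^\star \in B$ when $N$ is odd (there $2M = N + 1$), so one has to pigeonhole only over $v^\star$ lying in the slightly smaller sub-box $Q_{\Z^d}(o, N-M)$ and use that the exponential closeness of $\P(\eta_T \neq \varnothing)$ to $1$ provides enough slack to absorb the small discrepancy and obtain the strict bound $> 1/N^d$ for $N$ sufficiently large.
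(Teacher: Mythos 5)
Your argument is essentially the paper's: the paper also finds a favourable starting vertex $x^\star$ in a half-radius central box by averaging the survival probability of singletons over that box (it gets the inequality $\sum_{x}\P((x,0)\rightsquigarrow Q(o,N/2)\times\{T\}\text{ inside }Q(o,N/2))\ge \P(\eta_T\neq\varnothing)$ by a direct union bound over the starting points of infection paths, where you get the same sum via self-duality and $\E[|\eta_T|]\ge\P(\eta_T\neq\varnothing)$ — a purely cosmetic difference), and then translates $Q(o,N/2)$ by $-x^\star$ inside $Q(o,N)$ and uses monotonicity in the ambient graph.

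The one step I would not accept as written is your repair of the parity issue. Restricting the pigeonhole to $v^\star\in Q(o,N-M)$ requires a lower bound on $\sum_{v\in Q(o,N-M)}\P(\zeta^{\{v\},B}_T\neq\varnothing)=\E[|\eta_T\cap Q(o,N-M)|]$, and the ``exponential closeness of $\P(\eta_T\neq\varnothing)$ to $1$'' gives you no control over whether the surviving population sits entirely in the boundary shell $B\setminus Q(o,N-M)$: that shell contains order $N^{d-1}$ vertices, so you cannot absorb its contribution by subtraction either. The issue disappears entirely if you take $M=\lfloor N/2\rfloor$ (equivalently, note that $Q_{\Z^d}(o,N/2)$ is by definition the set $\{y:\|y\|_\infty\le N/2\}=Q_{\Z^d}(o,\lfloor N/2\rfloor)$, so every translate $Q(-v^\star,M)$ with $v^\star$ in that box is automatically contained in $Q(o,N)$); the resulting loss of a constant in the exponent of the time scale, like the factor $(N+1)^d$ versus $N^d$ in the denominator, is the kind of non-sharpness the paper explicitly tolerates.
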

\begin{proof}
	Taking a graphical construction for the process on~$Q_{\Z^d}(o,N)$, by Lemma \ref{lem_from_tom} and a union bound we have
	\begin{equation*} 1-\mathrm{e}^{-\frac{c_\mathrm{long}}{2}\cdot (\frac{N}{2})^d} < \mathbb{P}\left( Q_{\Z^d}(o,\tfrac{N}{2})\times \{0\} \rightsquigarrow Q_{\Z^d}(o,\tfrac{N}{2}) \times \{\mathrm{e}^{c(N/2)^d} \}\text{ inside } Q_{\Z^d}(o,\tfrac{N}{2})\right),
	\end{equation*}
	 The probability is bounded from above by
	\[\sum_{x \in Q_{\Z^d}(o,N/2)} \mathbb{P}\left( (x,0) \rightsquigarrow Q_{\Z^d}(o,\tfrac{N}{2})\times \{\mathrm{e}^{c(N/2)^d} \} \text{ inside } Q_{\Z^d}(o,\tfrac{N}{2})\right).\]
	Since this sum is larger than~$1-\mathrm{e}^{-\frac{c_\mathrm{long}}{2}\cdot (\frac{N}{2})^d}$, there exists~$x^* \in Q_{\Z^d}(o,\tfrac{N}{2})$ for which the term inside the sum is larger than~$(1-\mathrm{e}^{-\frac{c_\mathrm{long}}{2}\cdot (\frac{N}{2})^d} )/|Q_{\Z^d}(o,\tfrac{N}{2})| \ge 1/N^d$. Finally, since~$o \in Q_{\Z^d}(-x^*,\tfrac{N}{2}) \subseteq Q_{\Z^d}(o,N)$, we have
	\begin{align*}&\mathbb{P}\left((o,0) \rightsquigarrow Q_{\Z^d}(o,N) \times \{\mathrm{e}^{c(N/2)^d} \}\right) \\
		&\ge \mathbb{P}\left((o,0) \rightsquigarrow Q_{\Z^d}(-x^*,\tfrac{N}{2}) \times \{\mathrm{e}^{c(N/2)^d}\} \text{ inside }Q_{\Z^d}(-x^*,\tfrac{N}{2})\right)\\
	&= \mathbb{P}\left( (x^*,0) \rightsquigarrow Q_{\Z^d}(o,\tfrac{N}{2})\times \{\mathrm{e}^{c(N/2)^d} \} \text{ inside } Q_{\Z^d}(o,\tfrac{N}{2})\right).\end{align*}
\end{proof}

\begin{lemma}\label{lem_numbers}
	Let~$\lambda > \lambda_c(\Z^d)$. For any~$\varepsilon>0$, the following holds for~$N$ large enough.  Let~$(\zeta_t)_{t \ge 0}$ be the contact process with rate~$\lambda$ on~$Q_{\Z^d}(o,N)$, started from~$\zeta_0$ with~$|\zeta_0\cap Q_{\Z^d}(o,\tfrac{N}{2})| \ge N^\varepsilon$. Then,
	\begin{equation}
		\mathbb{P}(\zeta_{\exp\{N^{\varepsilon/10}\}} \neq \varnothing) > 1-\exp\{-N^{\varepsilon/2}\}.
	\end{equation}
\end{lemma}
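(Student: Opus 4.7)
The plan is to find many \emph{disjoint} sub-boxes inside $Q_{\Z^d}(o,N/2)$, each containing at least one initially occupied site, and run the contact process restricted to each. The restricted processes will be independent (they use disjoint portions of the graphical construction) and each will survive for an extremely long time with at least a mild polynomial probability, thanks to Lemma~\ref{lem_individual}. Combining via independence will produce a super-polynomially small extinction probability for the full process.

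Concretely, I will fix the scale $M := \lfloor N^{\varepsilon/(5d)} \rfloor$ and perform a greedy selection: pick any $x_1 \in \zeta_0 \cap Q_{\Z^d}(o,N/2)$, discard all occupied sites in $Q_{\Z^d}(x_1,2M)$, pick any remaining occupied site $x_2$, discard all occupied sites in $Q_{\Z^d}(x_2,2M)$, and iterate. Each iteration removes at most $(4M+1)^d$ occupied sites, so the procedure yields $K \geq N^\varepsilon/(4M+1)^d \geq c N^{4\varepsilon/5}$ centers $x_1,\ldots,x_K$ such that the boxes $Q_{\Z^d}(x_i,M)$ are pairwise disjoint and all contained in $Q_{\Z^d}(o,N)$ (since $M \ll N$ once $N$ is large). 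Let $\zeta^{(i)}$ denote the contact process on $Q_{\Z^d}(x_i,M)$ built from the restriction of the graphical construction to that sub-box, with initial configuration $\zeta_0 \cap Q_{\Z^d}(x_i,M)$.

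By disjointness, the processes $\zeta^{(i)}$ are mutually independent. By translation invariance, monotonicity in the initial condition, and Lemma~\ref{lem_individual} (applied with the box radius $M$ in place of $N$), we get
\[\P\!\left(\zeta^{(i)}_{T^*} \neq \varnothing\right) \geq M^{-d}, \qquad T^* := \exp\!\left\{\tfrac{c_\mathrm{long}}{2}\,(M/2)^d\right\}.\]
The restricted graphical construction is a sub-collection of the full one, so $\zeta^{(i)}_t \subseteq \zeta_t$ for every $t$; in particular, if some $\zeta^{(i)}_{T^*}$ is non-empty, then so is $\zeta_{T^*}$. Using independence,
\[\P(\zeta_{T^*} = \varnothing) \leq \prod_{i=1}^K \P\!\left(\zeta^{(i)}_{T^*} = \varnothing\right) \leq \left(1 - M^{-d}\right)^K \leq \exp\{-K M^{-d}\}.\]
Since $\varnothing$ is absorbing, $\zeta_{T^*} \neq \varnothing$ forces $\zeta_s \neq \varnothing$ for every $s \leq T^*$, so it will suffice to verify $T^* \geq \exp\{N^{\varepsilon/10}\}$.

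The only real (and modest) obstacle is to tune $M$ so both bounds are strong enough simultaneously. With the choice $M = N^{\varepsilon/(5d)}$ one has $K M^{-d} \gtrsim N^{3\varepsilon/5} \gg N^{\varepsilon/2}$, delivering the required extinction bound, while $(M/2)^d \gtrsim N^{\varepsilon/5} \gg N^{\varepsilon/10}$, delivering $T^* \geq \exp\{N^{\varepsilon/10}\}$. In fact any power $M = N^{\alpha}$ with $\varepsilon/(10d) < \alpha d < \varepsilon/4$ would work equally well; the present choice leaves ample slack on both ends.
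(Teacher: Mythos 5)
Your proposal is correct and follows essentially the same route as the paper's proof: extract on the order of $N^{\varepsilon}/M^{d}$ pairwise disjoint sub-boxes of radius $M=N^{\Theta(\varepsilon/d)}$ around initially occupied sites, apply Lemma~\ref{lem_individual} in each to get survival probability at least $M^{-d}$ up to time $\exp\{c(M/2)^d\}$, and use independence of the restricted graphical constructions to make the joint extinction probability super-polynomially small. The only differences are cosmetic (a greedy selection instead of a collision count, and the exponent $\varepsilon/(5d)$ in place of the paper's $\varepsilon/(8d)$), and your parameter choices satisfy both required inequalities with room to spare.
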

\begin{proof} We assume given a graphical construction for the process on~$Q_{\Z^d}(o,N)$. 
	Let~$\alpha := \frac{\varepsilon}{8d}$. For each~$x \in \zeta_0\cap Q_{\Z^d}(o,\tfrac{N}{2})$, there are fewer than~$(5N^\alpha)^d$ elements~$y \in \zeta_0\cap Q_{\Z^d}(o,\tfrac{N}{2})$ such that~$Q_{\Z^d}(x,N^\alpha) \cap Q_{\Z^d}(y,N^\alpha) \neq \varnothing$. Hence, letting~$k:= \lfloor N^\varepsilon/(5N^\alpha)^d \rfloor$, there exist~$x_1,\ldots, x_k \in \zeta_0\cap Q_{\Z^d}(o,\tfrac{N}{2})$ such that the boxes~$Q_{\Z^d}(x_i,N^\alpha)$, for~$1 \le i \le k$, are all disjoint. For any~$t \ge 0$ we have	
	\[\{\zeta_{t} \neq \varnothing\} \supseteq \bigcup_{i=1}^k \{(x_i,o) \rightsquigarrow Q_{\Z^d}(x_i,N^\alpha) \times \{t\} \text{ inside } Q_{\Z^d}(x_i,N^\alpha)\}.\]
	Let~$T := \exp\big\{\frac{c_\mathrm{long}}{2}\cdot (\frac{N^\alpha}{2})^d\big\}$. By Lemma~\ref{lem_individual} applied  to each of the boxes $Q_{\Z^d}(x_i,N^\alpha)$ we obtain
	\begin{align*}
	\mathbb{P}(\zeta_{T} \neq \varnothing) \ge 1-\left(1 -\frac{1}{(N^\alpha)^d} \right)^k \ge 1-\exp\left\{-\frac{k}{N^{\alpha d}}\right\} > 1- \exp\{-N^{\varepsilon/2}\}, 	\end{align*}
	where the last inequality follows  from the definition of~$k$ and the choice of~$\alpha$. To conclude, note that~$T > \exp\{N^{\varepsilon/10}\}$ if~$N$ is large enough.
\end{proof}

\begin{lemma}\label{lem_well_behaved}
	For any~$\lambda > \lambda_c(\Z^d)$ and~$\varepsilon > 0$, the following holds for~$N$ large enough. Given a graphical construction for the contact process with rate~$\lambda$ on~$\mathbb{Z}^d_N$, we have
	\begin{equation}\label{eq_two_linew} \begin{split}
		&\mathbb{P}\left(\begin{array}{l}\{(o,0) \not \rightsquigarrow \mathbb{Z}^d_N \times \{N^{\varepsilon}\}\} \;\cup \\[.1cm] \{ (o,0) \rightsquigarrow Q_{\Z^d_N}(o,N^\varepsilon) \times \{N^{2\log N}\} \text{ inside } Q_{\Z^d_N}(o,N^\varepsilon)\}\end{array}\right) \\
	&\hspace{6.5cm}> 1-\exp\{-N^{\varepsilon/20}\}.\end{split}	\end{equation}
\end{lemma}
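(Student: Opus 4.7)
The plan is to show that conditional on the first event failing, i.e.\ on $\{(o,0) \rightsquigarrow \Z^d_N \times \{N^\varepsilon\}\}$, the configuration of the process at some earlier time $s_0 \ll N^\varepsilon$ is already so dense, and so well localized near the origin, that Lemma~\ref{lem_numbers} (applied to the contact process restricted to the box $Q_{\Z^d_N}(o,N^\varepsilon)$) forces survival well beyond time $N^{2\log N}$. Concretely, I choose an intermediate time scale $s_0 := N^{\varepsilon/2}$: on a very likely event, every infection path up to time $s_0$ is confined to $Q_{\Z^d_N}(o, N^\varepsilon/2)$, and on this same event, once the process is alive, it already carries order $N^{\varepsilon/2}$ occupied sites inside that smaller box.

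More precisely, let $(\zeta_t)_{t \ge 0}$ denote the process started from $\{o\}$. For the first ingredient, the speed bound of Proposition~\ref{prop_facts_contact}(b), together with the fact that the torus process and the $\Z^d$ process agree up to time $s_0$ when $N$ is large, gives
\[
\P\big(\zeta_t \subseteq Q_{\Z^d_N}(o, N^\varepsilon/2) \text{ for all } t \le s_0\big) \ge 1 - \exp\{-c_{\mathrm{speed}} N^{\varepsilon/2}\}.
\]
For the second ingredient, Proposition~\ref{prop_facts_contact}(c) applied at time $s_0$ gives
\[
\P\big(|\zeta_{s_0}| > c_{\mathrm{full}} N^{\varepsilon/2} \,\big|\, \zeta_{s_0} \ne \varnothing\big) > 1 - \exp\{-c_{\mathrm{full}} N^{\varepsilon/2}\},
\]
and the complementary event inside our target union, $\{\zeta_{N^\varepsilon} \neq \varnothing\}$, implies $\zeta_{s_0} \neq \varnothing$. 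Combining these two estimates yields that, up to an error of at most $2\exp\{-cN^{\varepsilon/2}\}$, whenever the first event of the union fails, the restricted process inside $Q_{\Z^d_N}(o, N^\varepsilon)$ equals $\zeta_{s_0}$ and has at least $c_{\mathrm{full}} N^{\varepsilon/2}$ occupied sites, all lying inside the half-size box $Q_{\Z^d_N}(o, N^\varepsilon/2)$.

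I then apply Lemma~\ref{lem_numbers} to the process restricted to $Q_{\Z^d_N}(o, N^\varepsilon)$, starting from time $s_0$, with its side-length parameter $M := N^\varepsilon$ and its exponent parameter $\alpha := 1/3$: for $N$ large, $c_{\mathrm{full}} N^{\varepsilon/2} \ge M^\alpha = N^{\varepsilon/3}$, so the hypothesis is satisfied. The strong Markov property then produces, with conditional probability at least $1 - \exp\{-M^{\alpha/2}\} = 1 - \exp\{-N^{\varepsilon/6}\}$, an infection path from $\zeta_{s_0}$ (hence, by concatenation with the localized path of Step~1, from $(o,0)$) to $Q_{\Z^d_N}(o, N^\varepsilon) \times \{s_0 + \exp\{N^{\varepsilon/30}\}\}$ that stays inside $Q_{\Z^d_N}(o, N^\varepsilon)$. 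Since $\exp\{N^{\varepsilon/30}\} > N^{2\log N}$ for $N$ large, this is exactly the second event in the union. Summing the three exceptional probabilities gives a total error of order $\exp\{-cN^{\varepsilon/6}\}$, which is $o(\exp\{-N^{\varepsilon/20}\})$.

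The only slightly delicate point is the concatenation of paths at time $s_0$: the path from $(o,0)$ produced by Step~1 lies inside $Q_{\Z^d_N}(o, N^\varepsilon/2) \subsetneq Q_{\Z^d_N}(o, N^\varepsilon)$, while the path produced by Lemma~\ref{lem_numbers} from time $s_0$ onward is by definition an infection path inside $Q_{\Z^d_N}(o, N^\varepsilon)$; their endpoints match (both describe the same configuration $\zeta_{s_0}$ on the good event of Step~1), so the concatenation is indeed a valid infection path inside $Q_{\Z^d_N}(o, N^\varepsilon)$. The numerology of the exponents ($\varepsilon/2$, $\varepsilon/3$, $\varepsilon/6$, $\varepsilon/20$, $\varepsilon/30$) is not sharp and is chosen only to make Lemma~\ref{lem_numbers} comfortably applicable while still beating both $N^{2\log N}$ and $\exp\{-N^{\varepsilon/20}\}$.
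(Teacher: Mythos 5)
Your proposal is correct and follows essentially the same route as the paper's proof: an intermediate time of order $N^{\varepsilon/2}$ at which the cluster is either extinct or simultaneously dense and confined near the origin, followed by an application of Lemma~\ref{lem_numbers} to the process restricted to $Q_{\Z^d_N}(o,N^\varepsilon)$ and a concatenation of infection paths. The only differences are cosmetic choices of exponents (the paper uses $S=N^{\varepsilon/2}/2$, a density threshold $\sqrt{S}$, and invokes Lemma~\ref{lem_numbers} with exponent $1/8$ rather than $1/3$), which do not affect the argument.
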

\begin{proof}
	Let us define the events
	\begin{align*} &E_1:=\{(o,0) \not \rightsquigarrow \mathbb{Z}^d_N \times \{N^{\varepsilon}\}\},\\
		&E_2:= \{ (o,0) \rightsquigarrow Q_{\Z^d_N}(o,N^\varepsilon) \times \{N^{\log N}\} \text{ inside } Q_{\Z^d_N}(o,N^\varepsilon)\}.
	\end{align*}
	We let~$\zeta_t:= \{x:(o,0) \rightsquigarrow (x,t)\}$, so that~$(\zeta_t)_{t \ge 0}$ is a contact process on~$\mathbb{Z}^d_N$ started from~$\zeta_0 = \{o\}$. Let~$S:= N^{\varepsilon/2}/2$ and define the events
	\begin{align}\label{eq_events_referee} E_1':= \{\zeta_S = \varnothing\},\qquad E_2':=\big\{|\zeta_S| \ge \sqrt{S},\; \cup_{t \le S} \zeta_t \subseteq Q_{\Z^d_N}(o,S^2)\big\}.\end{align}
	{ Using Proposition~\ref{prop_facts_contact}, we have that}
	\begin{equation} \label{eq_bound_prim_prim}\mathbb{P}(E_1'\cup E_2') > 1-\exp\{-\sqrt{S}\}\end{equation}
	if~$N$ is large enough.  

	If~$N$ is large enough, on the event~$E_2'$ we have
	\begin{equation}
		|\zeta_S| > N^{\varepsilon/8},\quad \zeta_S \subseteq Q_{\Z^d_N}(o,N^{\varepsilon}/2),\quad (o,0) \rightsquigarrow \zeta_S \times \{S\} \text{ inside } Q_{\Z^d_N}(o,N^{\varepsilon}/2).
	\end{equation}
	Then, by Lemma~\ref{lem_numbers} applied to the restriction of the graphical construction to the space-time set~$Q_{\Z^d_N}(o,N^{\varepsilon}) \times [S,\infty)$, we have that
	\[\mathbb{P}(E_2'' \mid E_2') > 1- \exp\{-N^{\varepsilon{/16}}\},\]
	where
	\[E_2'':=  \left\{ \zeta_S \times \{S\} \rightsquigarrow Q_{\Z^d_N}(o,N^\varepsilon) \times \{S+\exp\{N^{\varepsilon/80} \} \} \text{ inside } Q_{\Z^d_N}(o,N^\varepsilon)  \right\}.\]
	Using this bound and~\eqref{eq_bound_prim_prim}, we obtain  for~$N$ sufficiently large \color{black}
	\begin{equation*}
		\P(E_1' \cup E_2'') > 1- \exp\{-\sqrt{S}\} - \exp\{-N^{\varepsilon/16}\} > 1 - \exp\{-N^{\varepsilon/20}\}.
	\end{equation*}
	Finally, note that~$E_1' \subseteq E_1$ and~$E_2'' \subseteq E_2$ { by the definition of~$E_1'$ and~$E_2'$ in~\eqref{eq_events_referee}}, so~$\P(E_1 \cup E_2) \ge \P(E_1' \cup E_2'')$.
\end{proof}

We are now ready to give the proofs of the two remaining results from Sections 3 and 4.
\begin{proof}[Proof of Lemma~\ref{lem_two_proc_c0}]
	Using translation invariance and duality, we bound, for any~$(u,t) \in \Z^d_N \times [N^{1/4},N^{2\log N}]$,
	\begin{align*}
		\P(\eta_t(u) \neq \zeta_t(u)) &= \P(\Z^d_N \times \{t-N^{1/4}\} \rightsquigarrow (u,t),\; \Z^d_N \times \{0\} \not \rightsquigarrow (u,t))\\
		&= \P((o,0) \rightsquigarrow \Z^d_N \times \{N^{1/4}\},\; (o,0) \not \rightsquigarrow \Z^d_N \times \{t\}).
	\end{align*}
	By Lemma~\ref{lem_well_behaved} with~$\varepsilon = \frac{1}{4}$, the right-hand side is smaller than~$ \exp\{-N^{1/80}\}.$
\end{proof}

	\begin{proof}[Proof of Proposition~\ref{prop_coupling}]
		Since~$\{\zeta_s = \zeta^1_s\} \subseteq \{\zeta_t = \zeta^1_t\}$ when~$s \le t$, it suffices to prove that
		\[
			\mathbb{P}(\zeta_{N^{12\varepsilon}} \neq \zeta^1_{N^{12\varepsilon}})< \exp\{-(\log N)^2\}.\]
		Let~$\mathcal{Q}$ denote the set of all boxes of radius~$N^{2\varepsilon}$ contained in~$\mathbb{Z}^d_N$. For~$Q \in \mathcal{Q}$ and~$A \subseteq Q$, we define
		\[\zeta^{A,Q}_t:= \{x \in Q: A \times \{0\} \rightsquigarrow (x,t) \text{ inside }Q\},\quad t \ge 0.\]
		Then,~$(\zeta^{A,Q}_t)_{t \ge 0}$ is a contact process on~$Q$ started from~$A$ occupied. Also note that
		\begin{equation}\label{eq_compss}
			\zeta^{1}_{N^{12\varepsilon}} \ge \zeta^{Q,Q}_{N^{12\varepsilon}} \qquad \text{and}\qquad \zeta_{N^{12\varepsilon}} \ge \zeta^{\zeta_0\cap Q,Q}_{N^{12\varepsilon}}.
		\end{equation}
  { That is, at time~$N^{12\varepsilon}$ (and indeed at any time), the process on~$\Z^d_N$ started from full occupancy is larger than the process restricted to~$Q$ started from full occupancy, and the process on~$\Z^d_N$ started from~$\zeta_0$ is larger than the process restricted to~$Q$ started from~$\zeta_0 \cap Q$.}
	
  { For each~$Q \in \mathcal{Q}$, let~$E_1(Q)$ be the event that the process restricted to~$Q$ and started from~$\zeta_0 \cap Q$ is alive at time~$N^{12\varepsilon}$:}
		\[ E_1(Q) := \left\{ \zeta^{\zeta_0 \cap Q,Q}_{N^{12\varepsilon}} \neq \varnothing\right\} ,\quad Q \in \mathcal{Q}.\]
		Given a box~$Q = Q_{\Z^d_N}(x,N^{2\varepsilon}) \in \mathcal{Q}$, the assumption of the proposition implies that every sub-box of radius~$N^\varepsilon$ that is contained in~$Q_{\Z^d_N}(x,\frac12 N^{2\varepsilon})$ intersects~$\zeta_0$. This implies that
		\[|\zeta_0 \cap Q_{\Z^d_N}(x,\tfrac12N^{2\varepsilon})| \ge (\tfrac12 N^{2\varepsilon}/N^{\varepsilon})^d = 2^{-d}N^{d\varepsilon} > N^{d\varepsilon/2} = (N^{2\varepsilon})^{d/4}.\]
		Hence, by Lemma~\ref{lem_numbers} (applied to the process~$(\zeta^{\zeta_0 \cap Q,Q}_t)$), we obtain
		\[\mathbb{P}(E_1(Q)) > \mathbb{P}\left(\zeta^{\zeta_0 \cap Q,Q}_{\exp\{N^{d\varepsilon/20} \}} \neq \varnothing \right) > 1-\exp\{-N^{d\varepsilon/4}\}.\]
		{ Next, for each~$Q \in \mathcal{Q}$, let~$E_2(Q)$ be the event that at time~$N^{12\varepsilon}$, the process restricted to~$Q$ and started from~$\zeta_0 \cap Q$ is either empty or has already coupled with the process restricted to~$Q$ and started from full occupancy:} 
		\[ E_2(Q) :=  \left\{\text{either }\; \zeta^{\zeta_0\cap Q,Q}_{N^{12\varepsilon}} = \varnothing \;\text{ or }\; \zeta^{\zeta_0\cap Q,Q}_{N^{12\varepsilon}} =  \zeta^{Q,Q}_{N^{12\varepsilon}}\right\},\quad Q \in \mathcal{Q}.\]
		By Lemma~\ref{lem_die_or_couple}, we have
		\[\mathbb{P}(E_2(Q)) > 1 - \exp\{-N^{2\varepsilon}\}\]
		for any~$Q \in \mathcal{Q}$, if~$N$ is large enough.

		Finally, for each~$x \in \mathbb{Z}^d_N$, 
  {
let~$E_3(x)$ be the event that either (1) the space-time point~$(x,N^{12\varepsilon})$ cannot be reached by an infection path started from anywhere in the torus at time~$0$, or (2) this space-time point can be reached by an infection path that starts at time 0, and spatially stays inside the box~$Q_{\Z^d_N}(x,N^\varepsilon)$. Formally,
  }
		\begin{align*} E_3(x) := \left\{\begin{array}{l} \text{either }\; \mathbb{Z}^d_N \times \{0\} \not \rightsquigarrow (x,N^{12\varepsilon})\\[.1cm] \text{ or }\; Q_{\Z^d_N}(x,N^{\varepsilon}) \times \{0\} \rightsquigarrow (x,N^{12\varepsilon}) \text{ inside } Q_{\Z^d_N}(x,N^{\varepsilon})\end{array}\right\}.
		\end{align*}
		By self-duality of the contact process and Lemma~\ref{lem_well_behaved}, we have
		\[\mathbb{P}(E_3(x)) > 1 - \exp\{-N^{\varepsilon^2/20}\}.\]
		
		The lower bounds on probabilities we have found imply that, letting
		\[E:= \left(\bigcap_{Q \in \mathcal{Q}} (E_1(Q) \cap E_2(Q))\right) \cap \left(\bigcap_{x \in \mathbb{Z}^d_N} E_3(x)\right),\]
		we have
		\begin{align*}\mathbb{P}(E) &> 1-|\mathcal{Q}|\cdot (\exp\{-N^{d\varepsilon/4}\}+\exp\{-N^{2\varepsilon}\}) - |\mathbb{Z}^d_N|\cdot \exp\{-N^{\varepsilon^2/20}\} \\&> 1 - \exp\{-(\log N)^2\}\end{align*}
		if~$N$ is large enough. 

		We now prove that if $E$ occurs, then $\zeta_{N^{12\varepsilon}} = \zeta^1_{N^{12\varepsilon}}$. The inequality $\zeta_{N^{12\varepsilon}} \le \zeta^1_{N^{12\varepsilon}}$ holds trivially. For the reverse inequality, fix $x \in \mathbb{Z}^d_N$ such that $\zeta^1_{N^{12\varepsilon}}(x) = 1$.  This means that $\mathbb{Z}^d_N \times \{0\} \rightsquigarrow (x,N^{12\varepsilon})$. Since $E_3(x)$ occurs, we obtain that $Q_{\Z^d_N}(x,N^{\varepsilon}) \times \{0\} \rightsquigarrow (x,N^{12\varepsilon})$ inside $Q_{\Z^d_N}(x,N^{\varepsilon})$. Abbreviating $Q = Q_{\Z^d_N}(x,N^{2\varepsilon})$, we also obtain $\zeta^{Q,Q}_{N^{12\varepsilon}}(x) = 1$. Next, since $E_1(Q)$ occurs we have $\zeta^{\zeta_0 \cap Q,Q}_{N^{12\varepsilon}} \neq \varnothing$, and then since $E_2(Q)$ occurs we have $\zeta^{\zeta_0 \cap Q,Q}_{N^{12\varepsilon}} = \zeta^{Q,Q}_{N^{12\varepsilon}}$. In particular, $\zeta^{\zeta_0 \cap Q,Q}_{N^{12\varepsilon}}(x) = 1$. By~\eqref{eq_compss}, we also obtain~$\zeta_{N^{12\varepsilon}}(x) = 1$.
	\end{proof}

\subsection{Proofs of results for two-type contact process}\label{s_second_a}

Throughout this section, given rates~$\lambda, \lambda' > 0$, we always assume that, in a two-type contact process,~$\lambda$ denotes the birth rate of type~$1$ and~$\lambda'$ denotes the birth rate of type~$2$. 
 Recall from Section~\ref{ss_competition_lattice} the definition of good boxes for a two-type configuration. We will study the appearance and propagation of good boxes here. \color{black}

{
Define the total order~$\preceq$ on~$\{0,1,2\}$ by setting~$1 \preceq 0 \preceq 2$. Define a partial order on~$\{0,1,2\}^{\Z^d}$ (also denoted~$\preceq$, abusing notation) by 
\[\xi \preceq \xi' \quad \text{if and only if}\quad \xi(x) \preceq \xi'(x)\; \text{for all }x \in \Z^d.\]
Note that~$\xi \preceq \xi'$ if and only if
\[\{x:\xi(x) = 1\} \supseteq \{x:\xi'(x) = 1\} \quad \text{and}\quad \{x:\xi(x) = 2\} \subseteq \{x:\xi'(x) = 2\}.\]
The following is an elementary property of the two-type contact process. We include the proof for completeness. \begin{lemma}
	\label{lem_monotone_coupling}
	If~$(\xi_t)$ and~$(\xi'_t)$ are two two-type contact processes on~$\Z^d$ with same rates and obtained from the same graphical construction, then
\begin{equation}\label{eq_order}
	\xi_0 \preceq \xi_0' \quad \Longrightarrow \quad \xi_t \preceq \xi'_t \quad \text{for all } t \ge 0.
\end{equation}
\end{lemma}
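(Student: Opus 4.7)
The plan is to use the graphical construction directly and verify that the partial order $\preceq$ is preserved under each event in the driving Poisson processes. Since the infinite-volume process of Section~\ref{ss_two_type_prelim} is defined as a pointwise limit of finite-volume processes $(\xi_{r,t})_{t \ge 0}$ on $Q_{\Z^d}(o,r)$ built from the restriction $H_r$ of the graphical construction, it suffices to prove $\xi_{r,t} \preceq \xi'_{r,t}$ for all $r$ and $t$, and then pass to the pointwise limit in $r$. The restrictions of $\xi_0$ and $\xi'_0$ to $Q_{\Z^d}(o,r)$ inherit the order relation at time $0$, and within each finite box both processes perform only finitely many updates in any bounded time interval, with neither configuration changing between consecutive updates. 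Hence it is enough to show that every individual event preserves $\preceq$.

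This reduces to a mechanical case analysis by event type. For a death mark at some vertex $x$, both processes set $\xi_t(x) = \xi'_t(x) = 0$, trivially preserving the order. For a basic birth arrow $(x,y)$, abbreviate $a = \xi_{t-}(x)$, $b = \xi_{t-}(y)$, $a' = \xi'_{t-}(x)$, $b' = \xi'_{t-}(y)$ with $a \preceq a'$ and $b \preceq b'$; the update sends $b$ to $a$ in $\xi$ when $a \in \{1,2\}$ and $b = 0$ (leaving $b$ unchanged otherwise), and analogously for $\xi'$. Checking each admissible combination of $(a,a',b,b') \in \{0,1,2\}^4$ yields $\xi_t(y) \preceq \xi'_t(y)$. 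The extra birth arrow is handled the same way, the only difference being that it fires only when the source site has type $2$.

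The underlying reason the verification works is that the choice of order $1 \preceq 0 \preceq 2$ is tailored to this dynamics: deaths push type-$1$ sites up and type-$2$ sites down, births of type $1$ are downward and births of type $2$ are upward, and the asymmetry that extra arrows are usable only by type $2$ is compatible with the order, since whenever $\xi$ can use an extra arrow from $x$ (i.e.\ $a = 2$), one automatically has $a' = 2$ as well. The only potential obstacle is patience with the case analysis; no single sub-case is subtle.

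Finally, having established $\xi_{r,t} \preceq \xi'_{r,t}$ on $Q_{\Z^d}(o,r)$ for every $r$ and every $t \ge 0$, taking $r \to \infty$ and using the pointwise limit definition $\xi_t(x) = \lim_{r \to \infty} \xi_{r,t}(x)$ (and similarly for $\xi'$) yields $\xi_t \preceq \xi'_t$ for all $t \ge 0$, completing the proof.
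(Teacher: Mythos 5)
Your proof is correct and follows essentially the same route as the paper's: both arguments reduce the claim to checking that each type of event in the graphical construction (death mark, basic arrow, extra arrow) is monotone for $\preceq$, and both observe that the order $1 \preceq 0 \preceq 2$ is exactly what makes the asymmetric use of extra arrows compatible with monotonicity. Your additional remark about passing from finite boxes to the infinite-volume limit is a sensible way to justify reducing to finitely many updates, and is consistent with the construction in Section~\ref{ss_two_type_prelim}.
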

\begin{proof}
Recall the description of the graphical construction of the two-type contact process (including death marks, basic arrows and extra arrows) in Section~\ref{ss_two_type_prelim}. In order to prove the statement of the lemma, it suffices to prove that the effects of death marks, basic arrows and extra arrows are all non-decreasing with respect to~$\preceq$. More precisely, we need to check that given~$\xi \preceq \xi'$, if~$\bar{\xi}$ and~$\bar{\xi}'$ are obtained from~$\xi$ and~$\xi'$, respectively, by performing the same operation (death mark, basic arrow, extra arrow, in the same location), then we have~$\bar{\xi} \preceq \bar{\xi}'$. This is trivial for death marks, so we will only discuss arrows.

	Given~$\xi \in \{0,1,2\}^{\Z^d}$ and~$(x,y) \in \Z^d$, let~$\Gamma^{(x,y)}_\mathrm{basic}(\xi) \in \{0,1,2\}^{\Z^d}$ be the configuration obtained from~$\xi$ by applying the effect of a basic arrow from~$x$ to~$y$. That is:
	\[[\Gamma^{(x,y)}_\mathrm{basic}(\xi)](z) := 
	\begin{cases}
		\xi(z) &\text{if } z \neq y;\\
		\psi_\mathrm{basic}(\xi(x),\xi(y)) &\text{if } z = y,
	\end{cases}
	\]
	where~$\psi_\mathrm{basic}: \{0,1,2\}^2 \to \{0,1,2\}$ is given by
	\[\psi_\mathrm{basic}(a,b) := 
	\begin{cases}
		1&\text{if } b = 1 \text{ or } [a=1,\;b=0];\\
		0&\text{if } a = b = 0;\\
		2&\text{if } b = 2 \text{ or } [a=2,\;b=0].
	\end{cases}\]
	We claim that~$\Gamma^{(x,y)}_\mathrm{basic}$ is non-decreasing with respect to~$\preceq$. This is easily seen by checking that~$\psi_\mathrm{basic}$ is non-decreasing, in the sense that if~$a \preceq a'$ and~$b \preceq b'$, then~$\psi_\mathrm{basic}(a,b) \preceq \psi_\mathrm{basic}(a',b')$. Indeed, in case~$b = 1$ or~$b'=2$, the relation~$\psi_\mathrm{basic}(a,b) \preceq \psi_\mathrm{basic}(a',b')$ is immediate, and in case~$b = b' = 0$, it follows from~$\psi_\mathrm{basic}(a,0) = a \preceq a' = \psi_\mathrm{basic}(a',0)$.

	Next, given~$\xi\in \{0,1,2\}^{\Z^d}$ and~$(x,y) \in \Z^d$, let~$\Gamma^{(x,y)}_\mathrm{extra}(\xi) \in \{0,1,2\}^{\Z^d}$ be the configuration obtained from~$\xi$ by applying the effect of a extra arrow from~$x$ to~$y$:
	\[[\Gamma^{(x,y)}_\mathrm{extra}(\xi)](z) := 
	\begin{cases}
		\xi(z) &\text{if } z \neq y;\\
		\psi_\mathrm{extra}(\xi(x),\xi(y)) &\text{if } z = y,
	\end{cases}
	\]
	where~$\psi_\mathrm{extra}: \{0,1,2\}^2 \to \{0,1,2\}$ is given by
	\[\psi_\mathrm{extra}(a,b) := 
	\begin{cases}
		1&\text{if } b = 1;\\
		0&\text{if } b = 0 \text{ and } a \neq 2;\\
		2&\text{if } b = 2 \text{ or } [a=2,\;b=0].
	\end{cases}\]
	The same argument as for~$\Gamma^{(x,y)}_\mathrm{basic}$ shows that this is non-decreasing with respect to~$\preceq$.
\end{proof}
}

The following lemma can be obtained by the same proof as that of Lemma~3.3 in~\cite{MPV20} (note that the roles we take here for types~$1$ and~$2$ are reversed with respect to this reference). We omit the details, to avoid introducing more notation and terminology. 
\begin{lemma}\label{lem_input1}
	Assume that~$\lambda' > \lambda_c(\Z^d)$ and~$\lambda' > \lambda$. There exist~$\beta > 0$ and~$m_0 \in \N$ such that the following holds for all~$m \ge m_0$. If~$(\xi_t)_{t \ge 0}$ is the two-type contact process on~$\Z^d$ with rates~$\lambda$ and~$\lambda'$ started from a configuration~$\xi_0$ such that~$\xi_0(x) = 2$ for all~$x \in Q_{\Z^d}(o,m)$  (with~$\xi_0(x)$ being arbitrary for any~$x \in \Z^d \setminus Q_{\Z^d}(o,m)$)\color{black}, then
	\[\P(\xi_t(x) \neq 1 \text{ for all }(x,t) \text{ such that }  x \in Q_{\Z^d}(o,\tfrac{m}{2} + \beta t)) > 1 - \exp\{-\sqrt{m}\}.\]
\end{lemma}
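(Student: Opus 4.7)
The plan is to use monotonicity to reduce to a worst-case initial condition and then apply the renormalisation argument of~\cite{MPV20} to bound the probability of any type-1 invasion of the growing cone. Concretely, I introduce the worst-case initial configuration
\[\tilde\xi_0(x):= \begin{cases} 2, & x \in Q_{\Z^d}(o, m), \\ 1, & x \in \Z^d \setminus Q_{\Z^d}(o, m). \end{cases}\]
For any initial configuration $\xi_0$ satisfying the hypothesis of the lemma, one checks that $\tilde\xi_0 \preceq \xi_0$ in the order $1\preceq 0 \preceq 2$, so Lemma~\ref{lem_monotone_coupling} yields $\tilde\xi_t \preceq \xi_t$ and hence $\tilde\xi_t^{(1)} \supseteq \xi_t^{(1)}$ for all $t \ge 0$. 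Therefore it suffices to prove the desired bound for the worst-case process $(\tilde\xi_t)_{t\ge 0}$, in which type 1 is maximally abundant outside $Q_{\Z^d}(o,m)$.

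For this worst case, the key mechanism I would exploit is a \emph{shielding} effect: since type-1 individuals can only birth at empty sites via basic (rate-$\lambda$) arrows, any type-1 particle appearing inside the cone at time $t$ corresponds to a basic-arrow infection path from some $(y,0)$ with $y \notin Q_{\Z^d}(o, m)$ to $(x,t)$ that avoids all type-2 sites along the way. Inside the region that type 2 has colonised, the density of empty sites is controlled by the upper stationary measure $\mu_{\lambda'}$ of a supercritical rate-$\lambda'$ contact process, and is bounded above by $1-\rho_{\lambda'} < 1$; so each unit of length traversed by a type-1 infection path in the shielded region costs a factor strictly smaller than one. To quantify this, I would set up a spatial--temporal renormalisation in blocks of side $L$ and duration $\tau$, declare a block ``good'' when its local two-type configuration dominates a fixed type-2 reference configuration, and verify, using Proposition~\ref{prop_facts_contact}, that (i) good blocks occur with probability arbitrarily close to $1$ (tuning $L,\tau$); (ii) good blocks propagate in space and time, dominating a supercritical oriented percolation of positive expansion speed $c=c(\lambda,\lambda')>0$; and (iii) good blocks rule out type 1 in their interiors. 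Choosing $\beta<c$ ensures that the cone $Q_{\Z^d}(o, m/2+\beta t)$ remains inside the expanding good region, and a Peierls bound on chains of bad blocks separating the cone from the good region yields a probability estimate of order $\exp(-\sqrt{m})$, the $\sqrt{m}$ arising from balancing the perimeter of a putative bad contour against the number of contours of that size.

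The main obstacle is implementing the renormalisation so that the spreading speed $c$ depends only on $\lambda'$ (so that $\beta$ can be chosen independently of $\lambda$) and so that the definition of good block is robust to the adversarial presence of type-1 neighbours. This is precisely what is carried out in \cite[Lemma~3.3]{MPV20} by adapting the Bezuidenhout--Grimmett block construction to the two-type setting; the details to be presented in Section~\ref{s_second_a} would follow that blueprint, keeping track of the quantitative constants well enough to yield the claimed $\exp(-\sqrt{m})$ bound.
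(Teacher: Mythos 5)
Your proposal matches the paper's treatment: the paper does not prove this lemma from scratch but obtains it ``by the same proof as that of Lemma~3.3 in~\cite{MPV20}'' (with the roles of the two types reversed), which is exactly the renormalisation/block route you sketch, and your preliminary reduction to the worst-case initial condition via Lemma~\ref{lem_monotone_coupling} is valid and consistent with how the paper performs such reductions elsewhere (e.g.\ in the proof of Corollary~\ref{lem_input15}). Two cosmetic quibbles only: a type-1 infection path need only find its target site empty at each birth time (not ``avoid all type-2 sites along the way''), and there is no need for $\beta$ to be independent of $\lambda$ --- the lemma permits $\beta$ to depend on both rates.
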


\begin{corollary}\label{lem_input15}
	Assume that~$\lambda' > \lambda_c(\Z^d)$ and~$\lambda' > \lambda$, and let~$\beta$ be as in Lemma~\ref{lem_input1}. Then, there exists~$m_1 > 0$ such that the following holds for all~$m \ge m_1$ and~$\ell \ge m$. Let~$(\xi_t)_{t \ge 0}$ be the two-type contact process on~$\Z^d$ with rates~$\lambda$ and~$\lambda'$ started from a configuration~$\xi_0$ such that~$\xi_0(x) = 2$ for all~$x \in Q_{\Z^d}(o,m)$ (with~$\xi_0(x)$ being arbitrary for any~$x \in \Z^d \setminus Q_{\Z^d}(o,m)$)\color{black}. Then, 
	\[\P(Q_{\Z^d}(o,\ell) \text{ is good for } \xi_t) >1- \exp\{-\sqrt{m}\} - \exp\{-\ell^{d/25}\}\]
 { for all~$t > \frac{2\ell - m}{2\beta}$}.
\end{corollary}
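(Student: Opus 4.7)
The plan is to verify the two defining conditions for $Q_{\Z^d}(o,\ell)$ being good for $\xi_t$: absence of type~$1$ inside the box, and intersection of every sub-box of radius~$\ell^{1/24}$ with $\xi_t^{(2)}$. The first condition follows directly from Lemma~\ref{lem_input1}; the second requires a coupling with a one-type contact process together with a density estimate, and is the more involved step.

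For the first condition, let $E_1$ be the event that $\xi_s(x) \neq 1$ for all $s\ge 0$ and $x \in Q_{\Z^d}(o, m/2 + \beta s)$. Lemma~\ref{lem_input1} gives $\P(E_1) \ge 1-\exp\{-\sqrt{m}\}$, and the assumption $t > (2\ell-m)/(2\beta)$ is equivalent to $\ell < m/2+\beta t$. Hence on $E_1$ the box $Q_{\Z^d}(o,\ell)$ is contained in the type-$1$-free cone at time $t$, producing the error term $\exp\{-\sqrt{m}\}$.

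For the density condition, introduce the auxiliary one-type contact process $(\hat\zeta_s)_{s\ge 0}$ with rate $\lambda'$, obtained from the same graphical construction as $\xi$ by retaining all death marks but only those basic/extra birth arrows whose two endpoints both lie in $Q_{\Z^d}(o, m/2+\beta s)$ at the time $s$ of the arrow, and started from $\hat\zeta_0=\mathds{1}_{Q_{\Z^d}(o, m/2)}$. An induction on the successive events of this thinned graphical construction shows that $\hat\zeta_s \subseteq \xi_s^{(2)}$ for all $s\ge 0$ on $E_1$: the only way the inclusion could fail across an arrow $y\to x$ inside the cone would be if $\xi_{s-}(x)=1$, which $E_1$ forbids. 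It therefore suffices to show that $\hat\zeta_t$ intersects every sub-box of radius $\ell^{1/24}$ in $Q_{\Z^d}(o,\ell)$ with probability at least $1-\exp\{-\ell^{d/25}\}$.

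This density estimate for $\hat\zeta_t$ is the main technical obstacle. The process $\hat\zeta$ is supercritical (rate $\lambda'>\lambda_c(\Z^d)$) and evolves inside a ball expanding at rate~$\beta$ from the large, fully occupied initial ball $Q_{\Z^d}(o,m/2)$. Standard supercritical contact process techniques apply: via a Bezuidenhout--Grimmett block construction, $\hat\zeta$ stochastically dominates a supercritical oriented percolation cluster which, for $m$ sufficiently large and by time $t$, covers the cone with positive density. A union bound over the $O(\ell^d)$ sub-boxes of radius~$\ell^{1/24}$ inside $Q_{\Z^d}(o,\ell)$, each of which is missed by the percolation cluster with probability at most $\exp\{-c\ell^{d/24}\}$ coming from the number of independent renormalized blocks it contains, then yields the $\exp\{-\ell^{d/25}\}$ term and completes the bound.
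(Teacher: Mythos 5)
Your treatment of the first defining condition (no type~$1$ in $Q_{\Z^d}(o,\ell)$ at time $t$) is exactly the paper's: Lemma~\ref{lem_input1} plus the observation that $t>\frac{2\ell-m}{2\beta}$ means $Q_{\Z^d}(o,\ell)\subseteq Q_{\Z^d}(o,\tfrac m2+\beta t)$. The gap is in the density condition. Your strategy is to make the type-$2$ population itself do the work: you couple $\xi^{(2)}$ from below with a rate-$\lambda'$ contact process $\hat\zeta$ confined to the expanding cone and started from $Q_{\Z^d}(o,m/2)$, and then assert that $\hat\zeta_t$ is dense in $Q_{\Z^d}(o,\ell)$ by ``standard supercritical techniques.'' This requires $\hat\zeta$ to travel from radius $m/2$ to radius $\ell$ and \emph{densely colonize} the whole box within time $t$, which may be taken arbitrarily close to $(2\ell-m)/(2\beta)$. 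That only works if the asymptotic spread rate (in $\|\cdot\|_\infty$) of the rate-$\lambda'$ contact process strictly exceeds $\beta$ — nothing in the statement of Lemma~\ref{lem_input1} guarantees this, and you neither prove it nor arrange for it (e.g.\ by shrinking $\beta$, which the later applications would tolerate but which you would have to say). Even granting a favourable $\beta$, the sub-boxes of $Q_{\Z^d}(o,\ell)$ adjacent to its boundary are, at the minimal time $t$, sitting essentially on the frontier of the cone, where the density of a spreading (and cone-restricted) contact process is governed by frontier/shape-theorem effects rather than by the stationary density; obtaining a uniform per-box failure probability $\exp\{-c\,\ell^{d/24}\}$ there is a genuinely delicate renormalization argument, not a routine union bound. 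So as written the second half of your proof does not close.

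The paper sidesteps all of this. Using the monotone coupling \eqref{eq_order}, it first reduces to the extremal initial condition $\xi_0=2$ on $Q_{\Z^d}(o,m)$ and $\xi_0=1$ everywhere else (goodness is increasing for $\preceq$, and one may also raise $\lambda$ to lie in $(\lambda_c(\Z^d),\lambda')$). Then the \emph{union} of the two types, $(\{x:\xi_s(x)\neq 0\})_{s\ge 0}$, dominates a one-type rate-$\lambda$ contact process started from full occupancy on all of $\Z^d$, so by duality and the extinction bound \eqref{eq_const_death} each sub-box $Q'$ of radius $\ell^{1/24}$ is occupied at time $t$ except with probability $\exp\{-c_{\mathrm{death}}|Q'|\}$ — a bound valid at \emph{every} time $t$ and requiring no spread estimate whatsoever. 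Intersecting this event with the event of Lemma~\ref{lem_input1} forces every occupied site of $Q_{\Z^d}(o,\ell)$ to be of type~$2$, which is precisely goodness. If you want to salvage your route, you would need to either import a quantitative shape/colonization theorem for the rate-$\lambda'$ process at speed $>\beta$, or switch to the paper's ``density of the union, minus the absence of $1$'s'' argument, which is the intended (and far shorter) proof.
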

\begin{proof} Using elementary stochastic domination considerations involving~\eqref{eq_order}, it is easy to see that it is sufficient to prove the statement under the additional assumptions that~$\lambda > \lambda_c(\Z^d)$ and that the initial configuration is
	\[\xi_0(x) = \begin{cases} 2&\text{if } x \in Q_{\Z^d}(o,m);\\ 1&\text{otherwise.}\end{cases}\]
Let~$m_1$ be a large constant to be chosen later (for now,~$m_1 \ge m_0$, where~$m_0$ is the constant of Lemma~\ref{lem_input1}), and fix~$m,\ell,t$ as in the statement {(so that~$\tfrac{m}{2} + \beta t > \ell$)}.  		Noting that~$Q_{\Z^d}(o,\tfrac{m}{2} + \beta t) \supseteq Q_{\Z^d}(o,\ell)$, Lemma~\ref{lem_input1} implies that the event
	 \[E_1 := \{\xi_{t}(x) \neq 1 \text{ for all } x \in Q_{\Z^d}(o,\ell)\}\]
	 has probability larger than~$1 - \exp\{-\sqrt{m}\}$. Moreover, since the initial configuration~$\xi_0$ has no empty sites and~$\lambda'> \lambda$, we have that the process~$(\{x:\xi_t(x) \neq 0\})_{t \ge 0}$ stochastically dominates a contact process~$(\zeta_t)_{t \ge 0}$ on~$\Z^d$ with rate~$\lambda$ and started from~$\zeta_0 \equiv 1$. Hence, by~\eqref{eq_const_death}  and duality, the event
	 \[E_2 := \{\{x:\xi_{t}(x) \neq 0\} \text{ intersects all boxes of radius $\ell^{1/24}$ inside $Q_{\Z^d}(o,\ell)$}\}\]
	 has probability above~$1 - \ell^d\cdot \exp\{-c_\mathrm{death}(\lambda)\cdot \ell^{d/24}\}$, { (recall that $c_\mathrm{death}$ was introduced in  \eqref{eq_const_death})}. If~$m_1$ is large enough (and hence~$\ell$ is large enough, since~$\ell \ge m \ge m_1$), this probability is above~$1-\exp\{-\ell^{d/25}\}$. To conclude, note that if~$E_1 \cap E_2$ occurs, we have that~$\{x:\xi_{t}(x) = 1\}$ does not intersect~$Q_{\Z^d}(o,\ell)$, and~$\{x:\xi_{t}(x) = 2\}$ intersects all boxes of radius~$\ell^{1/24}$ contained in~$Q_{\Z^d}(o,\ell)$; therefore,~$Q_{\Z^d}(o,\ell)$ is good for~$\xi_{t}$.
\end{proof}

\begin{proof}[Proof of Lemma~\ref{lem_appearance}]
	We let~$\ell_0 := m_1$ and assume that~$\ell \ge \ell_0$. By using the fact that~$\xi_0(o) = 2$ and setting up the Poisson marks in the graphical construction in the time interval~$[0,1]$ in an appropriate way, it is easy to see that there exists~$\alpha_0 > 0$ (depending only on~$\lambda,\lambda',\ell_0$) such that
	\[\P(\xi_1(x) = 2 \text{ for all } x \in Q_{\Z^d}(o,\ell_0)) > \alpha_0.\]
	Increasing~$\ell_0$ if necessary, we have~$\frac{\ell_0}{2} + \beta \cdot (\ell^2 - 1) > \ell$ for any~$\ell \ge \ell_0$.  Then, by Corollary~\ref{lem_input15} and the Markov property,
	\begin{align*}&\P(Q_{\Z^d}(o,\ell) \text{ is good for } \xi_{\ell^2} \mid \xi_1(x) = 2 \text{ for all } x \in Q_{\Z^d}(o,\ell_0)) \\
	&\hspace{3cm}> 1-\exp\{-\sqrt{\ell_0}\}-\exp\{-\ell^{d/25}\}.\end{align*}
	The result now follows, with~$\sigma_0 := \alpha_0 \cdot (1-\exp\{-\sqrt{\ell_0}\}-\exp\{-\ell_0^{d/25}\})$.
\end{proof}

The following corollary has a very similar proof to that of Corollary~\ref{lem_input15}, so we omit the details.
\begin{corollary}\label{lem_input2}
	Assume that~$\lambda' > \lambda_c(\Z^d)$ and~$\lambda' > \lambda$. For any~$\varepsilon > 0$, the following holds for~$\ell$ large enough. Assume that~$(\xi_t)_{t \ge 0}$ is the two-type contact process on~$\Z^d$ with rates~$\lambda$ and~$\lambda'$ and started from a configuration~$\xi_0 $ such that~$\xi_0(x) = 2$ for all~$x \in Q_{\Z^d}(o,\ell)$ { (with~$\xi_0(x)$ being arbitrary for any~$x \in \Z^d \backslash Q_{\Z^d}(o,\ell)$) and~$\bar{t} \ge \ell^{1+\varepsilon}$.} Then,
	\[\mathbb{P}(\text{the boxes } \{Q_{\Z^d}(x,\ell):\|x\|_\infty \le 10\ell\} \text{ are all good for $\xi_{ \bar{t}}$}) > 1- \exp\{-\ell^{d/25}\}.\]
\end{corollary}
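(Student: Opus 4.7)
I would follow the same approach as in the proof of Corollary~\ref{lem_input15}, upgrading the conclusion from a single box to the family $\{Q_{\Z^d}(x,\ell):\|x\|_\infty \le 10\ell\}$, all of which are contained in $Q_{\Z^d}(o,11\ell)$. Using the monotone coupling of Lemma~\ref{lem_monotone_coupling}, I would first reduce to the case $\lambda > \lambda_c(\Z^d)$ and to the canonical initial configuration $\xi_0'$ defined by $\xi_0'(x) = 2$ for $x \in Q_{\Z^d}(o,\ell)$ and $\xi_0'(x) = 1$ otherwise. Indeed, filling the complement of $Q_{\Z^d}(o,\ell)$ with type-1 individuals only adds type-1 sites and removes type-2 sites, so $\xi_0' \preceq \xi_0$ and hence $\xi_t' \preceq \xi_t$ for all $t \ge 0$; it follows that $Q_{\Z^d}(x,\ell)$ being good for $\xi_{\bar t}'$ implies the same for $\xi_{\bar t}$. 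Similarly, enlarging the type-1 rate $\lambda$ only makes the target event harder under coupling, so establishing the statement for rates $\lambda > \lambda_c(\Z^d)$ suffices.

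Next, I would combine two separate high-probability events. Applying Lemma~\ref{lem_input1} with $m = \ell$, the event
\[
E_1 := \{\xi_{\bar t}'(x) \neq 1 \text{ for all } x \in Q_{\Z^d}(o,\tfrac{\ell}{2}+\beta\bar t)\}
\]
holds with probability at least $1-\exp\{-\sqrt{\ell}\}$. Since $\bar t \ge \ell^{1+\varepsilon}$, for $\ell$ sufficiently large we have $\tfrac{\ell}{2}+\beta\bar t \ge 11\ell$, so on $E_1$ the whole region $\bigcup_{\|x\|_\infty \le 10\ell} Q_{\Z^d}(x,\ell) \subseteq Q_{\Z^d}(o,11\ell)$ is free of type 1. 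In parallel, since $\xi_0'$ has no empty sites and $\lambda'>\lambda$, the set-valued process $(\{x:\xi_t'(x)\neq 0\})_{t \ge 0}$ stochastically dominates a single-type contact process on $\Z^d$ with rate $\lambda > \lambda_c(\Z^d)$ started from $\zeta_0 \equiv 1$. By \eqref{eq_const_death} together with duality, the probability that this single-type process fails to meet a fixed sub-box of radius $\ell^{1/24}$ at time $\bar t$ is at most $\exp\{-c_\mathrm{death}\,\ell^{d/24}\}$, and a union bound over the at most $O(\ell^d)$ such sub-boxes contained in $Q_{\Z^d}(o,11\ell)$ yields that the event
\[
E_2 := \{\{x:\xi_{\bar t}'(x)\neq 0\} \text{ meets every sub-box of radius } \ell^{1/24} \text{ inside } Q_{\Z^d}(o,11\ell)\}
\]
has probability at least $1 - O(\ell^d)\exp\{-c_\mathrm{death}\,\ell^{d/24}\}$.

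Finally, on $E_1 \cap E_2$ every $Q_{\Z^d}(x,\ell)$ with $\|x\|_\infty \le 10\ell$ is free of type 1 and each of its sub-boxes of radius $\ell^{1/24}$ is hit by some non-empty site, which must therefore be of type 2; hence each such $Q_{\Z^d}(x,\ell)$ is good for $\xi_{\bar t}'$, and consequently for $\xi_{\bar t}$. Combining the bounds on $\P(E_1)$ and $\P(E_2)$ and absorbing the polynomial prefactor into a slightly smaller exponent gives the required failure bound $\exp\{-\ell^{d/25}\}$ for $\ell$ large enough. I do not expect any genuine obstacle here: the argument is a direct adaptation of the proof of Corollary~\ref{lem_input15}, with the only bookkeeping being to verify that $\tfrac{\ell}{2}+\beta\bar t \ge 11\ell$ under the assumption $\bar t \ge \ell^{1+\varepsilon}$ and that the union bound scales appropriately with the enlarged region $Q_{\Z^d}(o,11\ell)$.
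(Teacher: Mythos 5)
Your proof is correct and is essentially the argument the paper has in mind: the paper omits the proof of this corollary, stating only that it is "very similar" to that of Corollary~\ref{lem_input15}, and your adaptation (monotone reduction to the canonical configuration, Lemma~\ref{lem_input1} with $m=\ell$ to clear type~1 from $Q_{\Z^d}(o,11\ell)$, and the single-type domination plus \eqref{eq_const_death} and duality with a union bound over sub-boxes of radius $\ell^{1/24}$) is exactly that adaptation. The only caveat, inherited from the paper's own cavalier treatment of constants, is that absorbing the $\exp\{-\sqrt{\ell}\}$ term into $\exp\{-\ell^{d/25}\}$ implicitly uses $d/25<1/2$, which holds for all dimensions of practical interest.
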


\begin{proof}[Proof of Lemma~\ref{lem_propagation}]
	Fix~$\varepsilon > 0$ and let~$\ell > 0$ be large, to be chosen during the proof. {Also fix~$\bar{t} \ge \ell^{1+\varepsilon}$, as in the statement of the lemma.}
	Let~$H$ be a graphical construction for the two-type contact process on~$\Z^d$ with rates~$\lambda$ and~$\lambda'$. We assume that this graphical construction is used for the process~$(\xi_t)_{t \ge 0}$ that appears in the statement of the lemma (started from a configuration~$\xi_0$ for which~$Q_{\Z^d}(o,\ell)$ is good), as well as all other processes that appear in the proof.

	Let~$(\zeta_t)_{t \ge 0}$ be the process started from
	\[\zeta_0(x) := \begin{cases}1&\text{if } x \notin Q_{\Z^d}(o,\ell);\\ 2&\text{if } x \in Q_{\Z^d}(o,\ell/4);\\ 0&\text{otherwise.}\end{cases}\]
Define the events
	\[E_1:= \{\text{the boxes }\{Q_{\Z^d}(x,\ell): \|x\|_\infty \le  \ell\} \text{ are all good for $\zeta_{\bar{t}}$}\}\]
	and
	\[E_2:= \{\zeta_{\sqrt{\ell}} \preceq \xi_{\sqrt{\ell}}\}.\]
	Note that, by~\eqref{eq_order}, on~$E_2$ we have~$\zeta_t \preceq \xi_t$ for all~$t \ge \sqrt{\ell}$; in particular, this holds for~$t = { \bar{t}}$. Hence, on~$E_1 \cap E_2$ we have that the boxes~$\{Q_{\Z^d}(x,\ell):\|x\|_\infty \le \ell\}$ are all good for~$\xi_{ \bar{t}}$. 

	Hence, the proof will be complete once we show that~$\P(E_1^c)$ and~$\P(E_2^c)$ are both much smaller than~$\exp\{-(\log \ell)^2\}$. For~$\P(E_1^c)$, this follows readily from Corollary~\ref{lem_input2}. In treating the bound for~$\P(E_2^c)$, we will only outline the main steps, since very similar ideas already appeared in Section~\ref{s_fixation}: box insulation arguments and bounds on the speed of the contact process. In what follows, we will say that an event occurs ``\textit{with high probability}'' when its complement occurs with probability much smaller than~$\exp\{-(\log \ell)^2\}$ as~$\ell \to \infty$.

	We claim that
	\begin{equation}\label{eq_for_three_steps}\{x: \xi_{\sqrt{\ell}}(x) = 2\} \supseteq \{x: \zeta_{\sqrt{\ell}}(x) = 2\} \quad \text{with high probability}.\end{equation}
	To argue for this, we follow three steps. First, consider the process~$(\xi'_t)_{t \ge 0}$ started from~$\xi_0'$ defined by
	\[\xi_0'(x) = \begin{cases}\xi_0(x)&\text{if } x \in Q_{\Z^d}(o,\ell);\\0&\text{otherwise.}\end{cases}\]
		By using a box insulation argument, we can show that, with high probability,
		\begin{equation}\label{eq_first_step}\{x: \xi_{\sqrt{\ell}}(x) = 2\} \cap Q_{\Z^d}(o,\ell/2) = \{x: \xi'_{\sqrt{\ell}}(x) = 2\} \cap Q_{\Z^d}(o,\ell/2).\end{equation}
		Second, let the process~$(\zeta_t')_{t \ge 0}$ be started from
		\[\zeta_0'(x)= \begin{cases} 2&\text{if } x \in Q_{\Z^d}(o,\ell/4);\\0&\text{otherwise.}\end{cases}\]
			Using a speed bound for the contact process (cf.~\eqref{eq_constant_speed})\color{black}, we have that with high probability,
			\[\{x:\zeta_{\sqrt{\ell}}'(x) = 2\} \subseteq Q_{\Z^d}(o,\ell/2).\]
			Then, using the fact that~$Q_{\Z^d}(o,\ell)$ is good for~$\xi'_0$, together with the coupling in Proposition~\ref{prop_coupling}, we obtain that with high probability,
	\begin{equation}\label{eq_second_step}\{x:\zeta'_{\sqrt{\ell}}(x) = 2\} \subseteq \{x:\xi'_{\sqrt{\ell}}(x) = 2\} \cap Q_{\Z^d}(o,\ell/2).\end{equation}
		Third, by another box insulation argument, we have that with high probability,
		\begin{equation}\label{eq_third_step} \{x:\zeta'_{\sqrt{\ell}}(x) = 2\} = \{x:\zeta_{\sqrt{\ell}}(x) = 2\}.\end{equation}
			Now, if~\eqref{eq_first_step},~\eqref{eq_second_step} and~\eqref{eq_third_step} all occur, then we have
			\[ \{x:\xi_{\sqrt{\ell}}(x) = 2\} \supseteq \{x:\zeta_{\sqrt{\ell}}(x) = 2\},\]
			proving~\eqref{eq_for_three_steps}.

			We now claim that
			\begin{equation} \label{eq_again_steps}
			\{x:\xi_{\sqrt{\ell}}(x) = 1\} \subseteq \{x: \zeta_{\sqrt{\ell}}(x) = 1\} \quad \text{with high probability}.\end{equation}
	To prove this, we define yet another process~$(\zeta_t'')_{t \ge 0}$, started from
	\[\zeta_0''(x) = \begin{cases}1&\text{if } x \notin Q_{\Z^d}(o,\ell);\\0&\text{otherwise.}\end{cases}\]
		By stochastic domination considerations we have that
		\[\{x:\xi_{\sqrt{\ell}}(x) = 1\} \subseteq \{x:\zeta''_{\sqrt{\ell}}(x) = 1\}.\]
		Next, by a box insulation argument, with high probability we have
		\[\{x:\zeta''_{\sqrt{\ell}}(x) = 1\} = \{x: \zeta_{\sqrt{\ell}}(x) = 1\}.\]
		Putting these facts together gives~\eqref{eq_again_steps}. Taking~\eqref{eq_for_three_steps} and~\eqref{eq_again_steps} together completes the proof.
\end{proof}

%
%

\begin{acks}[Acknowledgments]
	{
 The authors would like to thank the three anonymous referees for their careful reading of this work and suggestions.
 
 The authors thank Leonardo Rolla for giving a suggestion that led to the proof of Proposition~\ref{prop_leo}.}

The research of AT was partially supported by the ERC Synergy under
Grant No.\ 810115 - DYNASNET.

The research of AGC was partially supported by CONACYT Ciencia Basica (CB-A1-S-14615).
\end{acks}

\end{document}